\documentclass[10pt]{amsart}
\usepackage{xypic,amscd,amssymb,latexsym}
\usepackage{amsmath}	
\setlength{\parindent}{0.5em}

\textwidth=14.5cm \oddsidemargin=1cm \evensidemargin=1cm

\usepackage{pstricks}
\usepackage{pst-poly}

\usepackage{stmaryrd}

\begin{document}
\pagenumbering{arabic}

\newtheorem{theorem}{Theorem}[section]
\newtheorem{lemma}[theorem]{Lemma}
\newtheorem{proposition}[theorem]{Proposition}
\newtheorem{corollary}[theorem]{Corollary}
\newtheorem{definition}[theorem]{Definition}
\newtheorem{remark}[theorem]{Remark}

\newcommand{\vs}[0]{\vspace{2mm}}

\newcommand{\upker}[4]{
\left\lceil
\begin{array}{cc}
#1 & #2 \\
#3 & #4 \\
\end{array}
\right\rceil
}

\newcommand{\downker}[4]{
\left\lfloor
\begin{array}{cc}
#1 & #2 \\
#3 & #4 \\
\end{array}
\right\rfloor
}

\address{Department of Mathematics, Yale University, 10 Hillhouse ave., New Haven, CT 06511, USA}

\author{Igor B. Frenkel}

\author{Hyun Kyu Kim}

\numberwithin{equation}{section}

\title{Quantum Teichm\"{u}ller space from quantum plane}

\begin{abstract}
We derive the quantum Teichm\"{u}ller space, previously constructed by Kashaev and by Fock and Chekhov, from tensor products of a single canonical representation of the modular double of the quantum plane. We show that the quantum dilogarithm function appears naturally in the decomposition of the tensor square, the quantum mutation operator arises from the tensor cube, the pentagon identity from the tensor fourth power of the canonical representation, and an operator of order three from isomorphisms between canonical representation and its left and right duals. We also show that the quantum universal Teichm\"{u}ller space is realized in the infinite tensor power of the canonical representation naturally indexed by rational numbers including the infinity. This suggests a relation to the same index set in the classification of projective modules over the quantum torus, the unitary counterpart of the quantum plane, and points to a new quantization of the universal Teichm\"{u}ller space.
\end{abstract}

\maketitle

\tableofcontents

\date{May 2010}

\section{Introduction}

It was known for a long time that the Teichm\"{u}ller space of an oriented surface, with possible punctures and boundaries, possesses a canonical Weil-Petersson symplectic form (see \cite{A}, \cite{Wolpert}). But only more recently Kashaev \cite{Kash98} and, independently, Fock and Chekhov \cite{Fo}, \cite{FC} were able to construct a quantization of the classical Teichm\"{u}ller space. This quantization was achieved thanks to an ingenious systems of coordinates, first studied by Thurston \cite{Th} and Penner \cite{Penner}, that are based on ideal triangulations of an oriented surface equipped with a hyperbolic metric. In the Kashaev variant of the Penner coordinates the canonical Poisson structure has a diagonal form and its quantization is given by the Heisenberg algebra
\begin{align}
\label{1_eq:Heisenberg}
[p_j, x_k] = \frac{\delta_{jk}}{2\pi i}, \quad j,k \in I
\end{align}
indexed by the set of ideal triangles $I$. The Fock-Chekhov quantization can be easily derived from Kashaev's via a Heisenberg subalgebra of \eqref{1_eq:Heisenberg} in a certain basis corresponding to quantization of Thurston coordinates (see \cite{GL}). The key point of the Kashaev and Fock-Chekhov quantization is the verification of the consistency of the quantization under the change of the ideal triangulations. In the Kashaev approach this consistency follows from a certain projective representation of the group $G_I$ with generators $a_j, t_{jk}, p_{jk}$, where $j,k$ belong to a finite set $I$, with the relations for all $j,k,\ell \in I$
\begin{align}
\label{1_eq:rel1}
a_j^3 & = e, \\
\label{1_eq:rel2}
a_j t_{jk} a_k & = a_k t_{kj} a_j, \\
\label{1_eq:rel3}
t_{jk} a_j t_{kj} & = a_j a_k p_{jk}, \\
\label{1_eq:rel4}
t_{k\ell} t_{jk} & = t_{jk} t_{j\ell} t_{k\ell},
\end{align}
and $p_{jk}$ satisfies the relations of an involution of $j,k \in I$ in the permutation group of the set $I$ (here $p_{jk}$ is not to be confused with one of the Heisenberg generator $p_j$ as in \eqref{1_eq:Heisenberg}).
The projective representation of $G_I$ can be viewed as a representation of the central extension $\widehat{G}_I$ of $G_I$ with the corresponding generators $\widehat{a}_{j}$, $\widehat{t}_{jk}$, $\widehat{p}_{jk}$ and the central element $z$ satisfying the same relations \eqref{1_eq:rel1}, \eqref{1_eq:rel2}, \eqref{1_eq:rel4}, and a modification of \eqref{1_eq:rel3}:
\begin{align}
\label{1_eq:rel3_modi}
\widehat{t}_{jk} \widehat{a}_j \widehat{t}_{kj}
= z \widehat{a}_j \widehat{a}_k \widehat{p}_{jk}
\end{align}
with $z$ represented by the identity times $\zeta = e^{-2\pi i (b+b^{-1})^2/24}$.

\vs

The main ingredient of the Kashaev representation of the group $\widehat{G}_I$, as well as of the Fock-Chekhov construction, is the remarkable function introduced previously by Faddeev and Kashaev under the name of quantum dilogarithm \cite{FK}. Though this function, in different guises, was known long time ago \cite{B}, \cite{Shin}, it became especially important since the works of Faddeev and Kashaev \cite{FK}, \cite{F} and its central role in quantization of the Teichm\"{u}ller spaces \cite{Kash98}, \cite{Kash99}, \cite{Fo}, \cite{FC}. In fact the main goal of the Faddeev-Kashaev paper \cite{FK} was to quantize the Rogers pentagon identity for the classical dilogarithm function thus providing a representation of the subgroup of $G_I$ with the generators $t_{jk}$, $j,k \in I$, and the relations \eqref{1_eq:rel4}.

\vs

Though the representation of the group $\widehat{G}_I$ via the quantum dilogarithm does provide a quantization of the Teichm\"{u}ller space the formulas involved in the construction look ad hoc and require further elucidation. In particular, it is well known that the pentagon identity arises from an associativity constraint for a tensor category. If the tensor category is also rigid it yields in addition a duality constraint which satisfies some additional relations. This leads to a natural question: is the representation of the group $\widehat{G}_I$ and therefore the quantum Teichm\"{u}ller space originate from a special rigid tensor category? In this paper we show that it is indeed the case and the appropriate rigid tensor category comes from the representation theory of a rather basic Hopf algebra associated to the quantum plane! The latter is the noncompact version of the modular quantum torus and is generated by the four self-adjoint operators $X,Y,\widetilde{X}, \widetilde{Y}$ satisfying the following relations (cf. \cite{F})
\begin{align}
XY = q^2 YX, \quad
\widetilde{X}\widetilde{Y} = \widetilde{q}^{2} \widetilde{Y}\widetilde{X},
\end{align}
where
\begin{align}
q = e^{\pi ib^2}, \quad \widetilde{q} = e^{\pi i b^{-2}},
\quad b\in (0,\infty), ~ b^2\notin \mathbb{Q}.
\end{align}
This algebra, denoted by $B_{q\widetilde{q}}$ in this paper, has one natural representation $\pi$ in the representation space $\mathcal{H}$ of the Heisenberg algebra
\begin{align}
[p,x] = \frac{1}{2\pi i}
\end{align}
via
\begin{align}
\pi (X) = e^{-2\pi b p}, \quad
\pi(Y) = e^{2\pi b x},\quad
\widetilde{X} = e^{-2\pi b^{-1}p}, \quad
\widetilde{Y} = e^{2\pi b^{-1}x}.
\end{align}
To ensure a nontrivial tensor category we define the comultiplication as follows
\begin{align}
\label{1_eq:coproduct}
\Delta X = X\otimes X, \quad
\Delta Y = Y\otimes X + 1\otimes Y, \quad
\Delta \widetilde{X} = \widetilde{X}\otimes \widetilde{X}, \quad
\Delta \widetilde{Y} = \widetilde{Y}\otimes \widetilde{X} + 1 \otimes \widetilde{Y}.
\end{align}
From \eqref{1_eq:coproduct} and the counit $\epsilon(X)=1=\epsilon(\widetilde{X})$, $\epsilon(Y) = 0 = \epsilon(\widetilde{Y})$, the antipode $S$ is uniquely determined, yielding a Hopf algebra structure on $B_{q\widetilde{q}}$, which can be viewed as the Borel subalgebra of the modular double of $\mathcal{U}_q(\mathfrak{sl}(2,\mathbb{R}))$.

\vs

The key property of $\mathcal{H}$ crucial to our construction is that its tensor square is decomposed into a direct integral of irreducible representations equivalent to $\mathcal{H}$. As a result we obtain an isomorphism
\begin{align}
\label{1_eq:tensor_square_isomorphism}
\mathcal{H} \otimes \mathcal{H} \cong M \otimes \mathcal{H},
\end{align}
where $M\cong Hom_{B_{q\widetilde{q}}} (\mathcal{H}, \mathcal{H} \otimes \mathcal{H})$ is the ``multiplicity'' space. After an identification of $\mathcal{H}$ and $M$ with $L^2(\mathbb{R})$ we can express \eqref{1_eq:tensor_square_isomorphism} by a certain integral transform based on the quantum dilogarithm function. Then the canonical isomorphism
\begin{align}
\label{1_eq:isomorphism_triple}
(\mathcal{H}_1 \otimes \mathcal{H}_2) \otimes \mathcal{H}_3
\cong
\mathcal{H}_1 \otimes (\mathcal{H}_2 \otimes \mathcal{H}_3)
\end{align}
yields an operator
\begin{align}
\label{1_eq:T_operator}
{\bf T}: M_{43}^6 \otimes M_{12}^4 \stackrel{\sim}{\longrightarrow} M_{23}^5 \otimes M_{15}^6,
\end{align}
which is often referred to as `quantum mutation operator', and it can be explicitly identified using the realization $M\cong L^2(\mathbb{R})$. By construction, the operator ${\bf T}$ satisfies the pentagon identity \eqref{1_eq:rel4}.

\vs

Another important property of $\mathcal{H}$ that we use is isomorphism with the dual representations
\begin{align}
\label{1_eq:isomorphism_with_dual}
\mathcal{H}' \cong \mathcal{H} \cong {}' \mathcal{H}
\end{align}
where $\mathcal{H}'$ (resp. ${}'\mathcal{H}$) denotes the dual space with the action of the quantum plane algebra defined to be dual to $\pi \circ S$ (resp. $\pi \circ S^{-1}$). The pairing gives the intertwining operators
\begin{align}
\label{1_eq:pairing_intertwiner}
\mathcal{H}' \otimes \mathcal{H} \to \mathbb{C}, \quad
\mathcal{H} \otimes {}' \mathcal{H} \to \mathbb{C}.
\end{align}
Then the composition of the operators \eqref{1_eq:isomorphism_with_dual} and \eqref{1_eq:pairing_intertwiner} provides the canonical isomorphisms
\begin{align}
\label{1_eq:isomorphism_Hom_Inv}
Hom_{B_{q\widetilde{q}}}(\mathcal{H}_3, \mathcal{H}_1 \otimes \mathcal{H}_2) \cong Inv(\mathcal{H}_1 \otimes \mathcal{H}_2 \otimes \mathcal{H}_3') \cong Inv({}'\mathcal{H}_1 \otimes \mathcal{H}_2 \otimes \mathcal{H}_3) \cong Hom_{B_{q\widetilde{q}}}(\mathcal{H}_1, \mathcal{H}_2 \otimes \mathcal{H}_3)
\end{align}
and yields another operator
\begin{align}
\label{1_eq:A_operator}
{\bf A}: M_{12}^3 \stackrel{\sim}{\longrightarrow} M_{23}^1
\end{align}
which after realizations of $M_{12}^3$ and $M_{23}^1$ becomes an explicit operator in $L^2(\mathbb{R})$. The natural requirement that ${}' \mathcal{H}' \cong \mathcal{H}$ is the identity map implies that ${\bf A}^3 = 1$ representing \eqref{1_eq:rel1}. One can also deduce two other relations \eqref{1_eq:rel2} and \eqref{1_eq:rel3_modi} that involve both operators ${\bf T}$ and ${\bf A}$. Thus tensor products of a single representation of the quantum plane and its equivalent dual yield a faithful representation of the group $\widehat{G}_I$.

\vs

To compare our representation of the group $\widehat{G}_I$ with the original Kashaev projective representation we embed our operator ${\bf A}$ into a one-parameter family of operators ${\bf A}^{(m)}$ that differ from ${\bf A}$ by the factor $e^{\pi (m-1)(b+b^{-1})p}$ so that ${\bf A} \equiv {\bf A}^{(1)}$. We show that for any $m\in \mathbb{R}$, the pair of operators ${\bf T}$, ${\bf A}^{(m)}$ provides a representation of the group $\widehat{G}_I$ with the central element $z$ represented by the identity times $\zeta^{1-m^2}$. Note that the Kashaev original representation of $\widehat{G}_I$ by certain explicit operators $\widetilde{\bf T}_{jk}$, $\widetilde{\bf A}_j$, $j,k \in I$, yields the central extension corresponding to $m=0$. One of the main results of this paper is the equivalence of our representation of the group $\widehat{G}_I$ realized by ${\bf T}_{jk}$, ${\bf A}_j^{(0)}$, $j,k \in I$ and the Kashaev representation. More explicitly, we find a unitary operator $U$ in $L^2(\mathbb{R})$ such that
\begin{align}
(U^{-1} \otimes U^{-1}) {\bf T}_{jk} (U \otimes U) = \widetilde{{\bf T}}_{jk}, \quad U^{-1} {\bf A}_j^{(0)} U = \widetilde{{\bf A}}_j,
\quad i, j \in I ~(i\neq j).
\end{align}
Considering unitary transformation of the one-parameter family $U^{-1} {\bf A}^{(m)}_j U = \widetilde{{\bf A}}^{(m)}_j$ we are able to extend the Kashaev representation for all values $m\in \mathbb{R}$, of the central extension.

\vs

As soon as we have a representation of $\widehat{G}_I$ we can apply it to quantization of Teichm\"{u}ller space for various surfaces as it was done in the original work  \cite{Kash98}, \cite{Fo}, \cite{FC}. In this paper we consider just a simple example of a disk with $n$ distinguished points on the boundary to illustrate an advantage of a representation theoretic approach. Though our quantization of the Teichm\"{u}ller space is equivalent to the one of Kashaev we do not need to use an initial triangulation of the surface; the independence of the quantum Teichm\"{u}ller space on a triangulation is built into our construction.

\vs

Moreover, our construction can be generalized to higher rank quantum algebras that are the modular doubles of the Borel subalgebras of quantum groups $\mathcal{U}_q \mathfrak{g}$ and can be applied to quantization of higher Teichm\"{u}ller spaces, previously constructed in \cite{FG}.

\vs

We conclude the paper with the quantization of the universal Teichm\"{u}ller space which can be viewed as a limit of the example of a disk with $n$ distinguished points on the boundary when $n$ tends to infinity, and we obtain
\begin{align}
Inv\left( \otimes_{r\in \widehat{\mathbb{Q}}} \mathcal{H}_r\right), \quad \mathcal{H}_r \cong \mathcal{H}, \quad \forall r.
\end{align}
It is interesting to note that $\widehat{\mathbb{Q}} = \mathbb{Q} \cup \{\infty\} \cong \Gamma_\infty \backslash PSL_2(\mathbb{Z})$, where $\Gamma_\infty$ is the subgroup of upper triangular matrices, also appears as a natural index set in the classification of the projective modules for the quantum torus, which can be viewed as a unitary counterpart of the quantum plane. This observation leads us to conjecture a new quantization of the universal Teichm\"{u}ller space, discussed at the end of our paper.

\vs

\noindent{\bf Acknowledgments.} H. K. would like to thank Ivan Ip for helpful discussions. The research of I. F. was supported by NSF grant DMS-0457444.

%%%%%%%%%%%%%%%%%
%%%%%%%%%%%%%%%%%
%%%%%%%%%%%%%%%%%
%% 2
\section{Quantum dilogarithm}

\subsection{Introduction to quantum dilogarithm}
\label{2_subsection:intro_quantum_dilogarithm}

We mainly follow \cite{Rui97} and \cite{Rui05} for technical reference for this section; readers should consult those two papers for any omitted details. Fix $b\in (0,\infty)$ with $b^2\notin \mathbb{Q}$; we will use the following symbols frequently throughout this paper:
\begin{align*}
q = e^{\pi i b^2}, \quad
\widetilde{q} = e^{\pi i b^{-2}}, \quad
a = (b+b^{-1})/2, \quad
\chi = \frac{\pi}{24}(b^2 + b^{-2}), \quad
\zeta = e^{-\pi i a^2/3}.
\end{align*}
For $z\in \mathbb{C}$, the symbols $\Re z$ and $\Im z$ will mean real part and imaginary part of $z$, respectively. Consider the function $G$ defined as
\begin{align}
\label{2_eq:definition_G}
G(z)
= \exp\left(
i \int_0^\infty \frac{dy}{y} \left(
\frac{\sin (2y z)}{2 \sinh (b y) \sinh (b^{-1} y)}
- \frac{z}{y} \right)
\right),
\quad |\Im z| < a.
\end{align}
In the strip $|\Im z| < a$, the function $G$ is a well-defined (it's not difficult to prove the convergence of the integral) analytic function with no zeros in the strip, satisfying the functional equations
\begin{align}
\label{2_eq:G_defining_relations}
\frac{G(z+ib^{\pm 1}/2)}{G(z-ib^{\pm 1}/2)} = 2\cosh(\pi b^{\pm 1}z).
\end{align}
By using \eqref{2_eq:G_defining_relations}, $G$ can be analytically continued to the entire complex plane to a meromorphic function with simple zeros at
$$
z_{kl} \equiv ia + ikb + il b^{-1}, \quad k,l \in \mathbb{N} = \{0,1,2,\ldots\}
$$
and simple poles at $-z_{kl}$. This meromorphic function $G$ is actually the unique ``minimal'' solution of \eqref{2_eq:G_defining_relations} with $G(0)=1$; here ``minimal'' means $\ln G(z)$ is polynomially bounded in $|\Im z| < \max(b,b^{-1})/2$. , i.e.  $\exists c,d>0$ and $N\in \mathbb{N}$ s.t.
$$
|\ln G(z)| < c + d|z|^k, \quad \forall z\in \{|\Im z| \le \max(b,b^{-1})/2\}.
$$
It is easy to see from \eqref{2_eq:definition_G} that $|G(x)|=1$ for all $x\in\mathbb{R}$, and
\begin{align*}
G(z)G(-z)=1.
\end{align*}
Asymptotic behavior of $G$ is
\begin{align}
\label{2_eq:G_asymptotics}
G(z) e^{\pm (i \chi + \frac{\pi i}{2}z^2)} = 1 + O(e^{-\rho|\Re z|}), \quad \Re z \to \pm \infty, \quad \rho<2\pi \min(b,b^{-1}),
\end{align}
where the implied constant can be chosen uniformly for $\Im z$ varying over compact subsets of $\mathbb{R}$.
Note that the defining relations \eqref{2_eq:G_defining_relations} can be written as 
\begin{align}
\label{2_eq:G_defining_relations2}
G(z+ib^{\pm 1}) = (e^{\pi i b^{\pm 2}/2} e^{\pi b^{\pm 1}z} + e^{-\pi i b^{\pm 2}/2} e^{-\pi b^{\pm 1}z}) G(z).
\end{align}

In this paper, we will take advantage of the following two functions:
\begin{align}
\label{2_eq:S_R_def}
S_R(z) & := G(z - ia)
e^{i \chi + \frac{\pi i}{2} (z-ia)^2}, \\
\label{2_eq:S_L_def}
S_L(z) & := G(z - ia) e^{-i\chi -\frac{\pi i}{2} (z-ia)^2}.
\end{align}
We can find out from \eqref{2_eq:G_defining_relations2} that the defining relations for $S_R(z)$ and $S_L(z)$ are
\begin{align}
\label{2_eq:S_R_S_L_relations}
S_R(z+ib^{\pm 1})
= (1 - e^{-2\pi b^{\pm 1}z})S_R(z), \quad
S_L(z+ib^{\pm 1})
= (1 - e^{2\pi b^{\pm 1}z})S_L(z).
\end{align}
Let $p = \frac{1}{2\pi i} \frac{d}{dz}$, so that $e^{-2\pi b^{\pm 1}p}$ is the shift operator (by the amount $ib^{\pm 1}$), i.e. $e^{-2\pi b^{\pm 1}p} f(z) = f(z+ib^{\pm 1})$. Using this, the above relations for $S_R$ and $S_L$ can be written as 
\begin{align*}
(e^{-2\pi b^{\pm 1}p} + e^{-2\pi b^{\pm 1}z}) S_R = S_R, \quad
(e^{-2\pi b^{\pm 1}p} + e^{2\pi b^{\pm 1}z}) S_L = S_L.
\end{align*}
Observe that the Fourier transform maps $z$ to $-p$ and $p$ to $z$. Thus, it sends the defining relations of $S_L$ exactly to those of $S_R$. By uniqueness, the Fourier transform of $S_L$ has to coincide with $S_R$ up to a constant. Ruijsenaars \cite{Rui05} made precise sense of this:

\begin{proposition}\label{2_prop:G_Fourier} One has
\begin{align}
\label{2_eq:G_Fourier}
\int_\mathbb{R} e^{2\pi i z w} S_R(w) d\Re w
= e^{- 2i\chi} e^{-\pi i/4} S_L(z),
\qquad \Im z, \Im w>0, \quad
\Im z + \Im w < a.
\end{align}
\end{proposition}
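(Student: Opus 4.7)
My approach is to show that the left-hand side $F(z) := \int_\mathbb{R} e^{2\pi i z w} S_R(w) \, d\Re w$ satisfies the same first-order difference equations as $S_L$, conclude $F(z) = C \cdot S_L(z)$ by a uniqueness argument, and finally pin down the constant $C$. To begin, I would verify absolute convergence under the hypothesis $\Im z, \Im w > 0$ and $\Im z + \Im w < a$. Using \eqref{2_eq:S_R_def} and \eqref{2_eq:G_asymptotics}, $|S_R(w)|$ tends to $1$ as $\Re w \to +\infty$ and is $O(e^{2\pi(a - \Im w)\Re w})$ as $\Re w \to -\infty$ along the horizontal contour. Combined with $|e^{2\pi i z w}| = e^{-2\pi(\Im z)(\Re w) - 2\pi(\Re z)(\Im w)}$, the integrand decays like $e^{-2\pi(\Im z)\Re w}$ at $+\infty$ and like $e^{2\pi(a - \Im z - \Im w)\Re w}$ at $-\infty$, giving convergence exactly when $\Im z + \Im w < a$.

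Next, to extract the difference equations, I replace $z$ by $z + ib^{\pm 1}$ to obtain
\begin{align*}
F(z + ib^{\pm 1}) = \int_\mathbb{R} e^{2\pi i z w} e^{-2\pi b^{\pm 1} w} S_R(w) \, d\Re w.
\end{align*}
Separately, since $S_R$ is holomorphic in the upper half-plane (its poles, by \eqref{2_eq:S_R_def} and the pole pattern of $G$, all lie in $\Im w \le 0$), I shift the contour in $F(z)$ by $w \mapsto w + ib^{\pm 1}$ and apply $S_R(w + ib^{\pm 1}) = (1 - e^{-2\pi b^{\pm 1} w}) S_R(w)$ from \eqref{2_eq:S_R_S_L_relations} to get
\begin{align*}
F(z) = e^{-2\pi b^{\pm 1} z}\int_\mathbb{R} e^{2\pi i z w}(1 - e^{-2\pi b^{\pm 1} w}) S_R(w)\, d\Re w,
\end{align*}
which rearranges to $F(z + ib^{\pm 1}) = (1 - e^{2\pi b^{\pm 1} z}) F(z)$, precisely the relation \eqref{2_eq:S_R_S_L_relations} satisfied by $S_L$.

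For uniqueness, the ratio $R(z) := F(z)/S_L(z)$ is then holomorphic (after extending $F$ off its strip of convergence via its own difference equations, using that $S_L$ is nonvanishing in a strip), doubly periodic with purely imaginary periods $ib$ and $ib^{-1}$, and polynomially bounded (by the minimality of $S_L$ and a corresponding bound for $F$ inherited from \eqref{2_eq:G_asymptotics}). Since $b^2 \notin \mathbb{Q}$, the subgroup $b\mathbb{Z} + b^{-1}\mathbb{Z}$ is dense in $\mathbb{R}$, so $R$ is constant on each vertical line; Cauchy--Riemann then forces $R \equiv C$ globally.

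The main obstacle is identifying the constant $C = e^{-2i\chi}e^{-\pi i/4}$. I would tackle this by asymptotic matching. Splitting $F$ into contributions from the $\Re w \to \pm\infty$ tails, the contribution from the left tail where $S_R(w) \sim e^{2i\chi + \pi i(w-ia)^2}$ reduces, after completing the square and a legal contour deformation to the saddle, to a Fresnel integral $\int e^{\pi i u^2}\,du = e^{\pi i/4}$, producing a factor $e^{-\pi i/4}$ together with the prefactor $e^{2i\chi}$. Matching the leading behavior against $S_L(z) \sim e^{-2i\chi - \pi i(z-ia)^2}$ as $\Re z \to +\infty$ (from \eqref{2_eq:S_L_def} and \eqref{2_eq:G_asymptotics}) cancels the quadratic phases, and the remaining prefactors combine to the claimed $C = e^{-2i\chi}e^{-\pi i/4}$; the genuinely delicate step is controlling the error from the $+\infty$ tail and intermediate range, which requires the uniform bounds on $G$ in \eqref{2_eq:G_asymptotics}.
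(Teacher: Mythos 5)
The paper does not prove this proposition; it is quoted from Ruijsenaars \cite{Rui05}, so there is no internal proof to compare against. Your proposal must be judged on its own merits, and while the overall strategy (show $F(z):=\int e^{2\pi izw}S_R(w)\,d\Re w$ satisfies the $S_L$ difference equations, invoke uniqueness, fix the constant) is the natural one, two of its steps contain genuine gaps.

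The first gap is in deriving the difference equation by the contour shift $w\mapsto w+ib^{\pm 1}$. Since $a=(b+b^{-1})/2$, for $b\neq 1$ one always has $\min(b,b^{-1})<a<\max(b,b^{-1})$. Your own convergence analysis shows the integrand decays at $\Re w\to-\infty$ only when $\Im z+\Im w<a$; every intermediate contour $\Im w+t$ with $0\le t\le b^{\pm1}$ must stay in this region, so the shift by $i\max(b,b^{-1})$ always exits the strip of absolute convergence, regardless of how small $\Im z$ and $\Im w$ are taken. You get only one of the two functional equations by this method, and with just one equation the ratio $F/S_L$ is merely $i\min(b,b^{-1})$-periodic, which does not feed your density-of-periods argument. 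Some other mechanism (conditional convergence with oscillatory estimates, a two-sided Barnes/Mellin representation, or, as Ruijsenaars does, a Hilbert-space unitarity argument for $\mathcal{T}_R,\mathcal{T}_L$) is needed to get the second equation.

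The second gap is the determination of $C$, which you flag as delicate but do not close. The saddle sits at $w_s=-z+ia$, i.e.\ at $\Im w=a-\Im z$, which is exactly on the boundary of the admissible strip: along that height the modulus of the integrand is flat as $\Re w\to-\infty$, so the ``Fresnel integral'' there is only conditionally convergent. Moreover, on any admissible contour at height $c<a-\Im z$ the crude modulus bound gives $|F|=O(e^{-2\pi c\Re z})$, which is \emph{larger} than the target $|S_L(z)|\asymp e^{-2\pi(a-\Im z)\Re z}$; thus the non-saddle contributions are not subdominant by size, and one must exploit oscillatory cancellation (steepest-descent deformation, integration by parts against the quadratic phase, or similar). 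Your final constant $e^{4i\chi-\pi ia^{2}+\pi i/4}=e^{-2i\chi}e^{-\pi i/4}$ is correct, but the derivation as written does not justify discarding the transition region and the $\Re w>0$ tail.
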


By taking suitable limits of $\Im z$ and $\Im w$ in \eqref{2_eq:G_Fourier}, we obtain some specializations of \eqref{2_eq:G_Fourier}, which make sense as functionals on $W\subset L^2(\mathbb{R},dx)$, the space of finite $\mathbb{C}$-linear combinations of the functions
\begin{align}
\label{2_eq:def_W}
e^{-Ax^2/2 + Bx} P(x), 
\quad
\mbox{where $P(x)$ is a polynomial in $x$, and $A\in \mathbb{R}_{>0}$, $B\in \mathbb{C}$.}
\end{align}
The elements of $W$ rapidly decrease at $\pm \infty$, and are analytic; moreover, $W$ is closed under the Fourier transform.
When dealing with integrals whose integrands have poles on the real line, we will modify the contour of integration near the poles, so that the contour takes a detour around the pole via a small half circle. To specify whether the small half circle is located above or below the pole, we introduce the notation $\int_\mathbb{R} \sim du^{\cap v}$, which means the modified contour avoids the pole $v$ via a small half circle above $v$ (it could've been also denoted by $\int_{\Omega_v} \sim du$); similarly, the symbol $du^{\cup v}$ will mean that the contour is modified with small half circle below $v$, throughout the paper. When there are multiple poles on the real line, we use combination of these symbols, e.g. $\int_\mathbb{R} \sim du^{\cap v_1 \cup v_2}$.

\begin{corollary}\label{2_cor:Fourier_transform} One has
\begin{align}
\label{2_eq:G_Four_+_+}
\int_\mathbb{R} e^{2\pi i xu} G (u-ia) e^{\frac{\pi i}{2} u^2} e^{\pi a u} du^{\cap 0} & = e^{2i\chi} e^{\pi i /4} G(x-ia) e^{-\frac{\pi i}{2}x^2} e^{-\pi ax}, \\
\label{2_eq:G_Four_-_-}
\int_\mathbb{R} e^{-2\pi i xu} G(x-ia) e^{-\frac{\pi i }{2} x^2} e^{-\pi a x} dx^{\cap 0} & = e^{-2i\chi}e^{-\pi i/4} G(u-ia) e^{\frac{\pi i}{2}u^2} e^{\pi a u}, \\
\label{2_eq:G_Four_+_-}
\int_\mathbb{R} e^{2\pi i u x} G(u-ia) e^{\frac{\pi i}{2}u^2} e^{-\pi a u} du^{\cap 0} & = e^{-i\chi} G(x) e^{-\frac{\pi i}{2} x^2}, \\
\label{2_eq:G_Four_-_0}
\int_\mathbb{R} G(x) e^{-\frac{\pi i}{2}x^2} e^{-2\pi i xu} dx & = e^{i\chi} G(u-ia) e^{\frac{\pi i}{2}u^2} e^{-\pi a u},
\end{align}
as functionals on $W$, in the following sense: when evaluating on elements of $W$, we always avoid the pole $x=0$ of $G(x-ia)$ from above; for example, \eqref{2_eq:G_Four_+_+} is in the sense of
\begin{align}
\label{2_eq:G_Four_+_+_evaluated}
\int_{\mathbb{R}} \left( \int_\mathbb{R} e^{2\pi i xu} G (u-ia) e^{\frac{\pi i}{2} u^2} e^{\pi a u} f(x) dx\right) du^{\cap 0} = \int_\mathbb{R} e^{2i\chi} e^{\pi i /4} G(x-ia) e^{-\frac{\pi i}{2}x^2} e^{-\pi ax} f(x) dx^{\cap 0},
\end{align}
for functions $f(x) \in W$.
\end{corollary}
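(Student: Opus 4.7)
The plan is to derive all four identities from Proposition~\ref{2_prop:G_Fourier} in three moves: (i) expand $S_R,S_L$ from \eqref{2_eq:S_R_def}--\eqref{2_eq:S_L_def} via $(z-ia)^2 = z^2 - 2iaz - a^2$, obtaining
\begin{align*}
S_R(w) = G(w-ia)\, e^{i\chi - \pi i a^2/2}\, e^{\pi i w^2/2}\, e^{\pi a w}, \qquad S_L(z) = G(z-ia)\, e^{-i\chi + \pi i a^2/2}\, e^{-\pi i z^2/2}\, e^{-\pi a z};
\end{align*}
(ii) approach the boundary of the validity strip $\{\Im z,\Im w > 0,\ \Im z + \Im w < a\}$ in two distinct ways to get \eqref{2_eq:G_Four_+_+} and \eqref{2_eq:G_Four_+_-}; and (iii) Fourier-invert those to extract the remaining pair \eqref{2_eq:G_Four_-_-}, \eqref{2_eq:G_Four_-_0}. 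Plugging the expansions into Proposition~\ref{2_prop:G_Fourier} and simplifying with the numerical identity $-4i\chi + \pi i a^2 = 2i\chi + \pi i/2$ (immediate from $\chi = \pi(b^2+b^{-2})/24$, $a^2 = (b^2+b^{-2}+2)/4$) reduces the proposition to
\begin{align*}
\int e^{2\pi i zw}\, G(w-ia)\, e^{\pi i w^2/2}\, e^{\pi a w}\, dw = e^{2i\chi + \pi i/4}\, G(z-ia)\, e^{-\pi i z^2/2}\, e^{-\pi a z},
\end{align*}
valid in the original strip.

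For \eqref{2_eq:G_Four_+_+} I would let $\Im z, \Im w \to 0^+$. The horizontal $w$-contour, initially above the unique real-axis-adjacent pole of $G(w-ia)$ at $w=0$, collapses to $\mathbb{R}$ with a detour above $w=0$, exactly the $\cap 0$ prescription. To justify the limit in the functional sense \eqref{2_eq:G_Four_+_+_evaluated}, pair both sides of the strip identity against $f \in W$, integrate in $x$ first, and use that the Gaussian decay of $\hat f(-w)=\int e^{2\pi i xw} f(x)\,dx$ absorbs the at-most-exponential growth of $G(w-ia)\, e^{\pi i w^2/2}\, e^{\pi a w}$ in $\Re w$ (controlled by \eqref{2_eq:G_asymptotics}); Fubini and dominated convergence then deliver \eqref{2_eq:G_Four_+_+}. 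For \eqref{2_eq:G_Four_+_-} I would instead send $\Im w \to 0^+$ but $\Im z \to a^-$, writing $z = x + i(a - \epsilon)$. A direct computation, using regularity of $G$ at the origin ($G(0)=1$), collapses $S_L(x+ia)$ to $G(x)\, e^{-i\chi}\, e^{-\pi i x^2/2}$, while on the left the extra factor $e^{2\pi i (ia)w} = e^{-2\pi a w}$ coming from $\Im z = a$ converts $e^{\pi a w}$ into $e^{-\pi a w}$; collecting constants via $-4i\chi + \pi i a^2/2 - \pi i/4 = -i\chi$ produces \eqref{2_eq:G_Four_+_-}, with the $\cap 0$ detour inherited from the $\Im w \to 0^+$ side.

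Finally, \eqref{2_eq:G_Four_-_-} and \eqref{2_eq:G_Four_-_0} follow by applying Fourier inversion to \eqref{2_eq:G_Four_+_+} and \eqref{2_eq:G_Four_+_-} respectively: multiply by $e^{-2\pi i x u'}$, integrate in $x$, and invoke Plancherel on $W$, which is closed under Fourier transform and consists of Schwartz functions. The $\cap 0$ detours reappear on precisely those sides carrying $G(\cdot - ia)$, in accordance with the convention adopted in the statement of the corollary. The main technical difficulty will be the second boundary limit: as $\Im z \to a^-$, the integrand $G(w-ia)\, e^{\pi i w^2/2}\, e^{-\pi a w}$ grows exponentially as $\Re w \to -\infty$ (the $e^{-\pi a w}$ and the leading-order asymptotic of $G(w-ia)$ no longer cancel), so the identity is meaningful only after pairing with $f \in W$, and the dominated-convergence argument must exploit the Gaussian decay of $\hat f$ rather than any decay of the integrand itself.
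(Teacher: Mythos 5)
Your plan follows the paper's own proof closely: the first and third formulas are obtained as boundary limits of Proposition~\ref{2_prop:G_Fourier} (limits $\Im z,\Im w\to 0^+$ for \eqref{2_eq:G_Four_+_+}, and $\Im w\to 0^+$, $\Im z\to a^-$ for \eqref{2_eq:G_Four_+_-}), with the functional-on-$W$ interpretation justified by pairing against $f$, Fubini, and dominated convergence using \eqref{2_eq:G_asymptotics}; the remaining two formulas then follow by substituting $\hat f$ for $f$ and using that $W$ is closed under Fourier transform, exactly as the paper does. The one slip is in your closing technical remark: by \eqref{2_eq:G_asymptotics} one has $|G(w-ia)|\sim e^{\pi a w}$ as $\Re w\to -\infty$, so the product $G(w-ia)e^{\pi i w^2/2}e^{-\pi a w}$ has modulus tending to $1$ there — it is bounded, not exponentially growing. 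This is the same bounded-but-nondecaying behavior as the first integrand exhibits at $\Re w\to+\infty$, so the two boundary limits pose the same difficulty, and in both cases the required integrability comes from the Gaussian decay of $\hat f\in W$; there is no asymmetry. This does not affect the validity of the proof, only the commentary on where the difficulty lies.
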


\begin{proof}
By using \eqref{2_eq:G_asymptotics}, the dominated convergence theorem (applied as $\Im w \searrow 0$) allows us to obtain \eqref{2_eq:G_Fourier} (equality as functions) for the case $\Im w = 0$ (then we need $0<\Im z<a$). Then, we multiply $f(z) = e^{-Az^2 + Bz} P(z)$ to both sides of \eqref{2_eq:G_Fourier} (for $\Im w=0$) and integrate w.r.t. $\Re z$. Using \eqref{2_eq:G_asymptotics} and the decaying property of $f$, we can prove that the integrands of LHS and RHS are both integrable. Hence we can use Fubini's theorem for LHS, to switch the order of integration ($d\Re z$ and $du^{\cap 0}$). Now, modify the contour on the RHS to $(d\Re z)^{\cap 0}$; this is possible because we don't hit the pole of $G$ by doing so. Using \eqref{2_eq:G_asymptotics} and the decaying property of $f$ and $\mathcal{F}^{-1}_{\Re z} f$ (inverse Fourier transform of $f$ w.r.t. $\Re z$), we can show that it's possible to use the dominated convergence theorem as $\Im z \searrow 0$, which yields \eqref{2_eq:G_Four_+_+_evaluated}. Note that the contour for RHS shouldn't be modified to $(d\Re z)^{\cup 0}$ instead, because then the contour will hit the pole $z=0$ of $G(z-ia)$ either when we modify the contour from $d\Re z$ to $(d\Re z)^{\cup 0}$, or when we take the limit $\Im z \searrow 0$.

\vs

Then, \eqref{2_eq:G_Four_-_-} is obtained by Fourier transform applied to \eqref{2_eq:G_Four_+_+}; putting the Fourier transform of $f(x) = e^{-Ax^2+Bx}P(x)$ (where $P$ is a polynomial) into the place of $f$ in \eqref{2_eq:G_Four_+_+_evaluated} yields the result:
\begin{align*}
\int_\mathbb{R} \int_\mathbb{R} e^{-2\pi i xu} G(x-ia) e^{-\frac{\pi i }{2} x^2} e^{-\pi a x} f(u) du dx^{\cap 0} = \int_\mathbb{R} e^{-2i\chi}e^{-\pi i/4} G(u-ia) e^{\frac{\pi i}{2}u^2} e^{\pi a u} f(u) du^{\cap 0}.
\end{align*}
For \eqref{2_eq:G_Four_+_-}: as done above, taking the limit $\Im w\searrow 0$ for \eqref{2_eq:G_Fourier} can be performed without difficulty. We then proceed as we did to obtain \eqref{2_eq:G_Four_+_+_evaluated}, to get
\begin{align}
\label{2_eq:G_Four_+_-_evaluated}
\int_\mathbb{R}\int_\mathbb{R} e^{2\pi i u x} G(u-ia) e^{\frac{\pi i}{2}u^2} e^{-\pi a u} f(x) dx du^{\cap 0} = \int_\mathbb{R} e^{-i\chi} G(x) e^{-\frac{\pi i}{2} x^2} f(x) dx,
\end{align}
for $f(x) = e^{-Ax^2+Bx}P(x)$. Putting the Fourier transform of $f$ into the place of $f$ in \eqref{2_eq:G_Four_+_-_evaluated} yields the result for \eqref{2_eq:G_Four_-_0}:
\begin{align*}
\int_\mathbb{R}\int_\mathbb{R} G(x) e^{-\frac{\pi i}{2}x^2} e^{-2\pi i xu} f(u) du dx = \int_\mathbb{R} e^{i\chi} G(u-ia) e^{\frac{\pi i}{2}u^2} e^{-\pi a u} f(u) du^{\cap 0}.
\end{align*}
\end{proof}

\begin{remark}
For the moment, we made sense of Corollary \ref{2_cor:Fourier_transform} only as functionals on $W$ but not as  distributions on usual Schwartz space, because we need the test functions to have analytic continuation, since we deal with integrals along contours which are not just real lines. However, later in this paper, we will be more general and will treat distributions on a newly defined Schwartz space associated to a $*$-algebra of interest, which replaces the usual Schwartz space to suit better for our purposes. See \S\ref{3_subsection:quantum_plane_modular_double}  and \S\ref{4_subsection:intertwining_operator}.
\end{remark}

Woronowicz \cite{Woro} proves a formula equivalent to another specialization obtained by taking limit as $\Im w \nearrow a$ and $\Im z\searrow 0$. 

\vs

Volkov \cite{V} asserts that the ``tau-binomial theorem'' can be proved similarly as the Fourier transform formula \eqref{2_eq:G_Fourier}; one specialization of the tau-binomial formula is as follows.

\begin{proposition} One has
\label{2_prop:tau_binomial}
\begin{align}
\label{2_eq:G_new_tau_binomial_prime}
{\renewcommand\arraystretch{2}
% 1.0 is standard
\begin{array}{l}
\displaystyle
\int_\mathbb{R} G (z-v-ia) G (y-z-ia) e^{-i\pi z( y-v + 2w)} e^{-2\pi az} dz^{\cup y \cap v} \\
\displaystyle
= \frac{G(y-v-ia)G(w-ia)}{G(y-v+w-ia)} e^{-\pi i (y+v)w} e^{\frac{i\pi}{2} (-y^2+v^2)} e^{- \pi a (y+v)},
\end{array}
}
\end{align}
as functionals on $W$; to be precise, \eqref{2_eq:G_new_tau_binomial_prime} means
\begin{align*}
{\renewcommand\arraystretch{2}
% 1.0 is standard
\begin{array}{l}
\displaystyle
\int_\mathbb{R} \left( \int_\mathbb{R} G(y-v+w-ia) G (z-v-ia) G (y-z-ia) e^{-i\pi z( y-v + 2w)} e^{-2\pi az} f(w) dw^{\cap (v-y)} \right) dz^{\cup y \cap v} \\
\displaystyle
= \int_\mathbb{R} G(y-v-ia)G(w-ia) e^{-\pi i (y+v)w} e^{\frac{i\pi}{2} (-y^2+v^2)} e^{- \pi a (y+v)} f(w) dw^{\cap 0},
\end{array}
}
\end{align*}
for $f\in W$.
\end{proposition}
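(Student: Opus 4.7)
My plan is to mimic the strategy that Ruijsenaars uses for Proposition \ref{2_prop:G_Fourier}: first establish the identity as an equality of meromorphic functions for parameter values with imaginary parts chosen so that both sides are absolutely convergent integrals along $\mathbb{R}$ with no poles on the contour, and then deduce the stated "functionals on $W$" version by taking limits and tracking how contour detours appear. Concretely, I fix $y,v$ with $\Im y$ small positive and $\Im v$ small negative so that the poles $z=y$ of $G(y-z-ia)$ and $z=v$ of $G(z-v-ia)$ lie off the real axis, and pick $\Im w$ small so that the phase factor $e^{-2\pi i z w}$ combined with the asymptotics \eqref{2_eq:G_asymptotics} of the two $G$-factors in $z$ yields absolute integrability along $d\Re z$; for such parameters the RHS is a meromorphic expression with no poles on the real $w$-axis.

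The heart of the argument is a uniqueness-from-difference-equations step in the variable $w$. On the RHS, applying \eqref{2_eq:G_defining_relations2} to $G(w-ia)$ in the numerator and $G(y-v+w-ia)$ in the denominator under $w\mapsto w+ib^{\pm 1}$ produces an explicit $\sinh$-type multiplier. On the LHS, the shift $w\mapsto w+ib$ multiplies the integrand by $e^{2\pi b z}$, which can be absorbed by translating the $z$-contour by $-ib$ (no residues are crossed for the chosen imaginary parts) and then using \eqref{2_eq:G_defining_relations2} in reverse to rewrite $G(z-v-ia+ib)\, G(y-z-ia+ib)$ in terms of $G(z-v-ia)\, G(y-z-ia)$ times $\sinh$-factors; a short computation shows that these $\sinh$-factors telescope to match the RHS multiplier, and the analogous $b\leftrightarrow b^{-1}$ identity holds. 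Combined with asymptotic estimates as $\Re w\to\pm\infty$ obtained by steepest-descent applied to \eqref{2_eq:G_asymptotics} for each of the three $G$-factors, the minimality characterization recalled after \eqref{2_eq:G_defining_relations} forces the ratio LHS$/$RHS to be identically $1$.

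To upgrade this pointwise identity to the "functionals on $W$" statement, I pair with $f\in W$ of the form \eqref{2_eq:def_W}, integrate over $d\Re w$, and let the imaginary parts of $y,v,w$ tend to their limiting real values in a prescribed order. The limit $\Im y\searrow 0$ and $\Im v\nearrow 0$ pushes the poles at $z=y$ and $z=v$ across the $z$-contour from opposite sides, forcing exactly the detour $dz^{\cup y\cap v}$; similarly the pole of $G(w-ia)^{-1}$ at $w=0$ and the pole of $1/G(y-v+w-ia)$ at $w=v-y$ produce the $dw^{\cap 0}$ and $dw^{\cap(v-y)}$ prescriptions. The Fubini and dominated-convergence justifications needed to swap the $dz$- and $dw$-integrations and to take these limits inside are completely parallel to those in the proof of Corollary \ref{2_cor:Fourier_transform}, relying only on the rapid decay of $f$ together with \eqref{2_eq:G_asymptotics}.

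The main obstacle I anticipate is the verification of the shift recursion on the LHS: the $\sinh$-factors produced by \eqref{2_eq:G_defining_relations2} applied to \emph{two} different $G$-factors must telescope to the \emph{single-}$G$ multipliers on the RHS, which requires either a clever algebraic simplification or a careful contour-shift-plus-residue accounting involving the poles $z=v$ and $z=y$ together with their $ib$-translates. A secondary difficulty is the asymptotic matching as $\Re w\to\pm\infty$: one must track cancellations among the Gaussian phase, the exponential factor $e^{-2\pi a z}$, and the three $G$-asymptotics from \eqref{2_eq:G_asymptotics} (two in $z$, one in $w$) to confirm that the LHS genuinely reproduces the combined Gaussian/exponential growth of $e^{-i\pi(y+v)w}\, G(w-ia)/G(y-v+w-ia)$.
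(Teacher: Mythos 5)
The paper does not actually prove Proposition~\ref{2_prop:tau_binomial}: it cites Volkov~\cite{V} and notes only that the tau\nobreakdash-binomial can be proved ``similarly to'' the Fourier transform formula of Ruijsenaars, so there is no internal proof to compare against. Your outer scaffolding is sound and mirrors what is standard for identities of this type: establish the meromorphic identity in a parameter region where all contours are free of poles, then pair with $f\in W$ and take limits of the imaginary parts to generate exactly the prescribed $dz^{\cup y\cap v}$, $dw^{\cap 0}$ and $dw^{\cap(v-y)}$ detours, with Fubini and dominated convergence justified by \eqref{2_eq:G_asymptotics} and the decay of $W$, just as in the proof of Corollary~\ref{2_cor:Fourier_transform}. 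That last step is fine as you describe it.

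The genuine gap is in the difference-equation step that you yourself flag. After multiplying the integrand by $e^{2\pi b z}$ and translating $z\mapsto z-ib$, the two $G$-factors become $G(z-v-ia-ib)$ and $G(y-z-ia+ib)$ (one shifted by $-ib$, one by $+ib$ -- not both by $+ib$ as written), and \eqref{2_eq:G_defining_relations2} converts them into a ratio
\[
\frac{\cosh\bigl(\pi b(y-z-ia)+\tfrac{\pi i b^2}{2}\bigr)}{\cosh\bigl(\pi b(z-v-ia)-\tfrac{\pi i b^2}{2}\bigr)}
\]
which still depends on $z$ and remains inside the integral; it cannot ``telescope'' to the $z$-independent multiplier $R(w+ib)/R(w)$, which is a finite ratio built from $S_R(w)$ and $S_R(y-v+w)$ only. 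So a pure contour translation with ``no residues crossed'' cannot close the recursion, and there is no clean pointwise cancellation to hope for. What actually works for Barnes-type integrals of this shape is either (i) a total $b$-difference (summation-by-parts) identity: construct $h(z,w)$ so that $\mu_1(w)\,I(z,w)-\mu_2(w)\,e^{2\pi b z}\,I(z,w) = h(z+ib,w)-h(z,w)$ with $h$ decaying, hence $\mu_1 L(w)=\mu_2 L(w+ib)$; or (ii) an honest residue accounting in which the poles at $z=v$ (or $z=y$) crossed during the shift are precisely what closes the recursion. Either route still requires the asymptotic matching and the minimality characterization you invoke, but without one of these devices the pivotal claim ``these $\sinh$-factors telescope to match the RHS multiplier'' is simply false as stated, and the proof does not go through.
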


\vs

Volkov \cite{V} also asserts that sending $w$ to $p-y$ in \eqref{2_eq:G_new_tau_binomial_prime} yields:
\begin{proposition}
\label{2_prop:G_delta}
One has
\begin{align}
\label{2_eq:G_delta1}
\int_\mathbb{R} G(x-p-ia) G (y-x-ia) e^{-\pi i x(y-p)} e^{2\pi a x} dx^{\cup y \cap p}
& = \delta(y-p) e^{\pi a (p+y)}, \\
\label{2_eq:G_delta2}
\int_\mathbb{R} G (x-p-ia) G(y-x-ia) e^{\pi i x(y-p)} e^{-2\pi a x} dx^{\cup y \cap p}
& = \delta(y-p) e^{-\pi a (p+y)},
\end{align}
as functionals on $W$; \eqref{2_eq:G_delta2} is in the sense of
$$
\int_\mathbb{R} \int_\mathbb{R} G (x-p-ia) G(y-x-ia) e^{\pi i x(y-p)} e^{-2\pi a x} f(y) dy^{\cap x} dx^{\cap p} = e^{-\pi a (p+y)} f(p),
$$
for $f\in W$, and similarly for \eqref{2_eq:G_delta1}.
\end{proposition}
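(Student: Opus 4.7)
The plan is to follow Volkov's suggestion and specialize the tau-binomial identity \eqref{2_eq:G_new_tau_binomial_prime} by setting $v = p$ and $w = p - y$. First, the algebraic side of the substitution: with these choices the exponent $-i\pi z(y - v + 2w)$ collapses to $-i\pi z(p - y) = i\pi z(y-p)$, so the $z$-integrand on the left of \eqref{2_eq:G_new_tau_binomial_prime} matches (after renaming $z \mapsto x$) the $x$-integrand in \eqref{2_eq:G_delta2}, and the contour $dz^{\cup y \cap v}$ becomes $dx^{\cup y \cap p}$. On the right, the exponential factors simplify to $e^{\frac{i\pi}{2}(y^2-p^2)} e^{-\pi a(y+p)}$, while the $G$-factors become the formal expression $\frac{G(y-p-ia)\, G(p-y-ia)}{G(-ia)}$.

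The analytic heart of the argument is to recognize this formal quotient, multiplied by $e^{\frac{i\pi}{2}(y^2-p^2)}$, as the distribution $\delta(y-p)$. First I would derive the closed form
\begin{align*}
G(u-ia)\, G(-u-ia) = \frac{G(u-ia)}{G(u+ia)} = \frac{1}{4\sinh(\pi b u)\sinh(\pi b^{-1} u)}
\end{align*}
by combining $G(z)G(-z) = 1$ with two shifts of the functional equation \eqref{2_eq:G_defining_relations}, exhibiting the characteristic $\sim 1/(4\pi^2 u^2)$ singularity at $u = y - p = 0$. To make sense of the division by $G(-ia)$ (itself at a pole of $G$), I would regularize by replacing $w = p - y$ with $w = p - y + \epsilon$, $\epsilon \to 0^+$, and analyze the limit of
\begin{align*}
\frac{G(u-ia)\, G(\epsilon - u - ia)}{G(\epsilon - ia)} \;\sim\; r\left(\frac{1}{u} - \frac{1}{u-\epsilon}\right),
\end{align*}
where $r$ denotes the residue of $G$ at $-ia$ (computable from the leading expansion of \eqref{2_eq:G_defining_relations} near the zero at $ia$, consistent with the double-pole calculation above). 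The contour prescription $dx^{\cup y \cap p}$ fixes definite $\pm i0$ conventions for the two coalescing poles in $u$, so that by the Sokhotski--Plemelj formula this Poisson-kernel-type combination converges distributionally to $\delta(y - p)$ times the required constant. The remaining factor $e^{\frac{i\pi}{2}(y^2 - p^2)}$ equals $1$ at $y = p$, and combining with $e^{-\pi a(y+p)}$ yields \eqref{2_eq:G_delta2}.

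For \eqref{2_eq:G_delta1} I would either repeat the argument from a sign-flipped version of \eqref{2_eq:G_new_tau_binomial_prime} (obtained by a reflection $z \mapsto -z$ of the $z$-contour), or take the complex conjugate of \eqref{2_eq:G_delta2} using $\overline{G(z)} = G(-\bar z)$ (verified directly from the integral representation \eqref{2_eq:definition_G}) together with the substitution $x \mapsto p + y - x$ that correctly flips the contour prescription.

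The main obstacle is the rigorous implementation of the $\epsilon \to 0$ limit: one must carefully track how the contour prescription $dx^{\cup y \cap p}$ on the left translates into the $\pm i0$ regularization of the coalescing poles in the $y$-plane, so that the Sokhotski--Plemelj analysis produces $\delta(y - p)$ with exactly the right constant rather than a principal value or a derivative of $\delta$. Once this is done, the rest of the proof is bookkeeping of exponential factors and residues of $G$ at $-ia$.
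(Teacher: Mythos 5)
The paper does not actually carry out a proof of this proposition; it presents the statement as \emph{Volkov's assertion} (``sending $w$ to $p-y$ in \eqref{2_eq:G_new_tau_binomial_prime} yields'') and immediately defers the rigorous justification to Ruijsenaars via Corollary~\ref{2_cor:E_orthogonality}. When you unwind the definitions of $\mathcal{E}_L$ and $\mathcal{E}_R$, Corollary~\ref{2_cor:E_orthogonality} is exactly Proposition~\ref{2_prop:G_delta} up to renaming and an overall factor $e^{\pm\frac{\pi i}{2}(y^2-w^2)}e^{\mp\pi a(y+w)}$, and the corollary in turn follows from Theorem~\ref{2_thm:T_R_T_L}: the delta function is the completeness relation for the unitary transforms $\mathcal{T}_L=\mathcal{T}_R^*$. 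That route sidesteps regularization entirely.

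You have instead chosen to develop the Volkov heuristic into an argument. The algebra of the $w\mapsto p-y$ specialization is correct, and your closed form $G(u-ia)G(-u-ia)=1/(4\sinh(\pi b u)\sinh(\pi b^{-1}u))$ is right. But the analytic core is not closed, and you know it: the $\epsilon\to 0$ analysis as written does not actually produce $\delta(y-p)$. Two concrete problems: (i) your regularization shifts the second pole by a \emph{real} $\epsilon$, giving $r\bigl(\tfrac{1}{u}-\tfrac{1}{u-\epsilon}\bigr)$, which is neither a Poisson kernel $\tfrac{\epsilon}{u^2+\epsilon^2}$ (that requires an imaginary shift) nor a Sokhotski--Plemelj setup (that is about $u\pm i0$, not $u-\epsilon$ with $\epsilon\in\mathbb{R}$); as a function on the real line your expression tends pointwise to $\epsilon/u^2\to 0$ away from the origin, and the mass that survives the limit depends entirely on how the contour prescription in $x$ translates into a $\pm i0$ prescription in the $y$-plane. (ii) That translation is precisely the delicate point: the $dx^{\cup y\cap p}$ contour is a statement about the $x$-integral, and converting it into a statement about the location of the coalescing poles of the resulting $y$-distribution is not addressed. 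Without this, one cannot rule out getting a principal value, a derivative of $\delta$, or the wrong multiplicative constant. You flag this as ``the main obstacle,'' which is fair, but it is exactly the content of the proposition, so the proof is not complete. The conjugation/reflection argument for \eqref{2_eq:G_delta1} is fine modulo tracking how $\overline{(\ \cdot\ )}$ and $x\mapsto p+y-x$ act on the $\cup/\cap$ labels, but that step is secondary. If you want a rigorous argument within the tools of this paper, you should instead verify Corollary~\ref{2_cor:E_orthogonality} directly from Theorem~\ref{2_thm:T_R_T_L} and then reduce Proposition~\ref{2_prop:G_delta} to it by the exponential bookkeeping above.
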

Ruijsenaars \cite{Rui05} proves Proposition \ref{2_prop:G_delta} without using \eqref{2_eq:G_new_tau_binomial_prime}; see Corollary \ref{2_cor:E_orthogonality} in the next subsection. Different people use different versions of the quantum dilogarithm; see Appendix A of \cite{Rui05} for details.

%%%%%%%%%%%%%%%%
%%%%%%%%%%%%%%%%

\subsection{Quantum dilogarithm integral transformation}

Define
\begin{align}
\label{2_eq:E_R_def}
& \mathcal{E}_R(z,w) \equiv e^{2\pi iz w} S_R(z-w), \\
\label{2_eq:E_L_def}
& \mathcal{E}_L(z,w) \equiv e^{-2\pi i z w} S_L(z-w),
\end{align}
where $S_R$ and $S_L$ are as defined in \eqref{2_eq:S_R_def} and \eqref{2_eq:S_L_def}.  Then it's easy to check
\begin{align}
\label{2_eq:E_R_relation}
e^{2\pi b^{\pm 1} p_y} \mathcal{E}_R(x,y)
&= (e^{2\pi b^{\pm 1} x} - e^{2\pi b^{\pm 1} y}) \mathcal{E}_R(x,y), \\
\label{2_eq:E_L_relation}
e^{-2\pi b^{\pm 1} p_y} \mathcal{E}_L(y,x)
& = (e^{2\pi b^{\pm 1} x} - e^{2\pi b^{\pm 1} y}) \mathcal{E}_L(y,x),
\end{align}
where $p_y = \frac{1}{2\pi i} \frac{d}{dy}$. As in eq (3.36) of \cite{Rui05}, define the integral transformations $\mathcal{T}_R$ and $\mathcal{T}_L$ by
\begin{align}
\label{2_eq:T_R_T_L_def}
\mathcal{T}_\sigma : W\subset L^2(\mathbb{R}, dy) \to L^2(\mathbb{R}, dx), \quad
\phi(y) \mapsto \int_\mathbb{R} \mathcal{E}_\sigma (x, y) \phi(y) dy^{\cup x}, \quad
\sigma = R,L,
\end{align}
where $W$ is as in \eqref{2_eq:def_W}. The integrals are well-defined due to asymptotic property \eqref{2_eq:G_asymptotics} and decaying property of $\phi\in W$.

\vs

A part of principal result of \cite{Rui05} (Theorem 4.3 there) is:

\begin{theorem}
\label{2_thm:T_R_T_L}
The transforms $\mathcal{T}_R$ and $\mathcal{T}_L$ are unitary operators related by
$$
\mathcal{T}_L = \mathcal{T}_R^*.
$$
\end{theorem}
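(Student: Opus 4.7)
The plan is to deduce the theorem in two stages: first establish the formal adjoint relation $\mathcal{T}_L = \mathcal{T}_R^*$ on the dense subspace $W \subset L^2(\mathbb{R})$, and then use Proposition \ref{2_prop:G_delta} to show $\mathcal{T}_R \mathcal{T}_L = \mathcal{T}_L \mathcal{T}_R = I$ on $W$. Combining these two facts gives $\|\mathcal{T}_R\phi\|^2 = \langle\phi,\mathcal{T}_L\mathcal{T}_R\phi\rangle = \|\phi\|^2$, so the a priori densely defined transforms extend to mutually inverse isometries on $L^2(\mathbb{R})$, hence to unitaries with $\mathcal{T}_L = \mathcal{T}_R^{-1} = \mathcal{T}_R^*$.

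For the adjoint relation, the key observation is that the integral representation \eqref{2_eq:definition_G} gives $\overline{G(u+iv)} = G(-u+iv)$ for $|v|<a$, since the real part of the exponent is even in $u$ and the imaginary part is odd. Propagating this to the boundary $v=-a$ via the functional equation \eqref{2_eq:G_defining_relations2} (by analytic continuation away from the simple pole at $u=0$) produces $\overline{S_L(z)} = S_R(-z)$ and hence $\overline{\mathcal{E}_L(y,x)} = \mathcal{E}_R(x,y)$ for real $x,y$. For $\phi,\psi\in W$, the asymptotic bound \eqref{2_eq:G_asymptotics} together with the Gaussian decay of elements of $W$ makes the double integral for $\langle \mathcal{T}_R\phi,\psi\rangle$ absolutely convergent after a slight deformation of the $y$-contour into the lower half plane, so Fubini delivers $\langle \mathcal{T}_R\phi,\psi\rangle = \langle \phi,\mathcal{T}_L\psi\rangle$.

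For $\mathcal{T}_R\mathcal{T}_L\psi = \psi$, I would combine \eqref{2_eq:S_R_def}--\eqref{2_eq:S_L_def} via the algebraic identity $(x-y-ia)^2-(y-z-ia)^2 = (x-2y+z)(x-z-2ia)$ to express the composed kernel as
\[
\mathcal{E}_R(x,y)\,\mathcal{E}_L(y,z) = G(x-y-ia)\,G(y-z-ia)\,e^{\frac{\pi i}{2}(x^2-z^2)+\pi a(x+z)}\,e^{\pi i y(x-z)-2\pi a y}.
\]
After exchanging the order of integration, the inner $y$-integral matches \eqref{2_eq:G_delta2} under the relabeling (their $x,y,p$) $\mapsto$ (my $y,x,z$) and produces $\delta(x-z)\,e^{-\pi a(x+z)}$; the surviving prefactor equals $1$ on the support of the delta, so $(\mathcal{T}_R\mathcal{T}_L\psi)(x) = \psi(x)$. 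The identity $\mathcal{T}_L\mathcal{T}_R = I$ follows from the entirely parallel computation using \eqref{2_eq:G_delta1} in place of \eqref{2_eq:G_delta2}.

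The main obstacle will be the contour bookkeeping when applying Fubini. In the original form the $z$-contour is $\cup y$ and the $y$-contour is $\cup x$; after the swap the $y$-contour must simultaneously be $\cup x$ (avoiding the pole at $y=x$ coming from $\mathcal{E}_R$) and $\cap z$ (avoiding the pole at $y=z$ coming from $\mathcal{E}_L$), which is exactly the contour prescribed in \eqref{2_eq:G_delta2}. The cleanest route is to first shift $z$ to $\mathbb{R}-i\epsilon_1$ and $y$ to $\mathbb{R}-i\epsilon_2$ with $\epsilon_1>\epsilon_2>0$ so that no poles cross any contour during the interchange, verify the identity in that regime, and then take $\epsilon_j\downarrow 0$ using the dominated convergence theorem with the bounds from \eqref{2_eq:G_asymptotics}, in the spirit of the argument already used in the proof of Corollary \ref{2_cor:Fourier_transform}.
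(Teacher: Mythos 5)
The paper does not prove this theorem: it quotes it verbatim from Ruijsenaars \cite{Rui05} (Theorem~4.3 there), and then derives Corollary~\ref{2_cor:E_orthogonality} from it. So your proposal is not a reconstruction of an argument in the paper but a substitute for the citation, and as such it is welcome to inspect on its own terms.

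Your conjugation relation is correct and cleanly stated: from $|G(x)|=1$ on $\mathbb{R}$ and $G(z)G(-z)=1$, the reflection principle gives $\overline{G(\bar w)}=G(-w)$, whence $\overline{G(z-ia)}=G(-z-ia)$ for $z\in\mathbb{R}$ and therefore $\overline{S_L(z)}=S_R(-z)$ and $\overline{\mathcal{E}_L(y,x)}=\mathcal{E}_R(x,y)$. The algebraic rewriting of $\mathcal{E}_R(x,y)\mathcal{E}_L(y,z)$ and the matching to \eqref{2_eq:G_delta2} under the relabeling you describe are also correct (I checked the exponents), and the delta relation \eqref{2_eq:G_delta1} gives $\mathcal{T}_L\mathcal{T}_R=I$ by the parallel computation. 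Your awareness of the contour bookkeeping, and the strategy of verifying the interchange with $\Im$-shifted contours and then passing to the limit, is in the spirit of the proof of Corollary~\ref{2_cor:Fourier_transform}.

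Two points deserve attention. First, there is a potential circularity that you should make explicit. You use Proposition~\ref{2_prop:G_delta} to get the orthogonality, but the paper's own remark notes that Ruijsenaars derives Proposition~\ref{2_prop:G_delta} \emph{from} Corollary~\ref{2_cor:E_orthogonality}, which is a consequence of the very theorem you are trying to prove. Your argument is non-circular only if Proposition~\ref{2_prop:G_delta} is established along the independent route via Volkov's tau-binomial (Proposition~\ref{2_prop:tau_binomial}); this is indeed how the paper orders its material, but you should say so, since otherwise a reader following the Ruijsenaars route for Proposition~\ref{2_prop:G_delta} would see you deriving the theorem from a corollary of itself.

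Second, the chain $\|\mathcal{T}_R\phi\|^2 = \langle\phi,\mathcal{T}_L\mathcal{T}_R\phi\rangle$ applies the formal adjoint identity with $\psi=\mathcal{T}_R\phi$, which has not been shown to lie in $W$, so the relation as stated only holds on a dense domain you don't control. The clean way around this is to compute $\langle\mathcal{T}_R\phi_1,\mathcal{T}_R\phi_2\rangle$ directly for $\phi_1,\phi_2\in W$ as a triple integral, conjugate the $\mathcal{E}_R$ kernel into $\mathcal{E}_L$, apply Fubini to do the $x$-integral first, and invoke the orthogonality relation there; this yields $\langle\phi_1,\phi_2\rangle$ without ever needing $\mathcal{T}_R\phi$ to land in $W$. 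With that adjustment and the circularity caveat, your proof plan is sound.
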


\begin{corollary}\label{2_cor:E_orthogonality}
One has
\begin{align}
\label{2_eq:E_orthogonality}
\int_\mathbb{R} \mathcal{E}_L (y,x) \mathcal{E}_R (x,w) dx^{\cap w \cup y} = \delta(w-y)
= \int_\mathbb{R} \mathcal{E}_R(y,x) \mathcal{E}_L(x,w) dx^{\cap w\cup y},
\end{align}
as functionals on $W$, in the sense that
\begin{align}
\label{2_eq:E_orthogonality_precise_first}
\int_\mathbb{R} \left( \int_\mathbb{R} \mathcal{E}_L (y,x) \mathcal{E}_R (x,w) \phi(w) dw^{\cup x} \right) dx^{\cup y} = \phi(y)
\end{align}
for $\phi \in W$, and similarly for the second equality of \eqref{2_eq:E_orthogonality}.
\end{corollary}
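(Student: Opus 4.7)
The strategy is to read this corollary off of Theorem \ref{2_thm:T_R_T_L} by unfolding the operator identity $\mathcal{T}_L \mathcal{T}_R = I = \mathcal{T}_R \mathcal{T}_L$ in kernel form. These operator identities are immediate from the theorem: since $\mathcal{T}_L = \mathcal{T}_R^{\ast}$ and both $\mathcal{T}_R, \mathcal{T}_L$ are unitary on $L^2(\mathbb{R})$, their compositions in either order are the identity.

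First I would fix $\phi \in W$ and expand $(\mathcal{T}_L \mathcal{T}_R \phi)(y)$ by applying \eqref{2_eq:T_R_T_L_def} twice. The inner application gives $(\mathcal{T}_R\phi)(x) = \int_\mathbb{R} \mathcal{E}_R(x,w)\phi(w)\,dw^{\cup x}$, and the outer gives $(\mathcal{T}_L\mathcal{T}_R\phi)(y) = \int_\mathbb{R} \mathcal{E}_L(y,x)(\mathcal{T}_R\phi)(x)\,dx^{\cup y}$. By the operator identity the iterated integral equals $\phi(y)$, which is precisely the precise statement \eqref{2_eq:E_orthogonality_precise_first}. The second equality of \eqref{2_eq:E_orthogonality}, with the roles of $\mathcal{E}_R$ and $\mathcal{E}_L$ swapped, follows by the identical argument from $\mathcal{T}_R\mathcal{T}_L = I$. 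The formal contour label $dx^{\cap w\cup y}$ appearing in the informal formulation \eqref{2_eq:E_orthogonality} is consistent with this: regarded jointly in $x$, the integrand has a pole at $x=w$ coming from $\mathcal{E}_R(x,w)$ (whose other poles in $x$ lie in the lower half-plane), to be circumvented from above, and a pole at $x=y$ coming from $\mathcal{E}_L(y,x)$ (whose other poles lie in the upper half-plane), to be circumvented from below, matching the iterated contour prescriptions inherited from \eqref{2_eq:T_R_T_L_def}.

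The principal technical point requiring care is that $\mathcal{T}_R\phi$ does not in general lie in $W$, so direct application of the integral formula \eqref{2_eq:T_R_T_L_def} to $\mathcal{T}_R\phi$ needs justification. I would combine the asymptotic estimate \eqref{2_eq:G_asymptotics}, which bounds $|\mathcal{E}_\sigma(x,y)|$ by controlled Gaussian factors, with the rapid decay of $\phi\in W$, to show that $(\mathcal{T}_R\phi)(x)$ decays fast enough in $\Re x$ for the outer integral with kernel $\mathcal{E}_L(y,x)$ to converge absolutely. Once absolute convergence of the iterated integral is in hand, pointwise equality with $\phi(y)$ follows from the $L^2$-level identity $\mathcal{T}_L\mathcal{T}_R\phi = \phi$ together with continuity of both sides in $y$; no independent verification of contour choices is required beyond those already built into the definitions of $\mathcal{T}_R$ and $\mathcal{T}_L$.
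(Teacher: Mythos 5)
Your proposal is correct and takes essentially the same approach the paper intends: the paper gives no explicit proof, presenting the result as a direct corollary of Theorem \ref{2_thm:T_R_T_L} ($\mathcal{T}_L=\mathcal{T}_R^*$ unitary, hence $\mathcal{T}_L\mathcal{T}_R=I=\mathcal{T}_R\mathcal{T}_L$), and unfolding these operator identities via the kernel definition \eqref{2_eq:T_R_T_L_def} gives precisely the iterated integral \eqref{2_eq:E_orthogonality_precise_first}. Your identification of the technical point (that $\mathcal{T}_R\phi\notin W$ so the iterated-integral realization of the composition requires a convergence/continuity argument) and your account of how the combined contour prescription $dx^{\cap w\cup y}$ arises from the two $\cup$-contours of the individual transforms are both accurate.
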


We will study integral transformations slightly modified from $\mathcal{T}_L, \mathcal{T}_R$.

%%%%%%%%%%%%%%%%%
%%%%%%%%%%%%%%%%%
%%%%%%%%%%%%%%%%%
%% 3
\section{Modular double of quantum plane Hopf algebra}

%%%%%%%%%%
%%%%%%%%%%
\subsection{The quantum plane}
\label{3_subsection:quantum_plane}

The quantum plane, the principal object of our study, is the free algebra over $\mathbb{C}$ generated by two elements $X$ and $Y$ with relation $XY = q^2 YX$, where
\begin{align}
\label{3_eq:def_q}
q = e^{\pi i b^2}, \quad
b\in (0,\infty), ~b^2 \notin \mathbb{Q},
\end{align}
equipped with a $*$-structure given by $X^*= X$ and $Y^*=Y$. As explained in \cite{Sc}, this algebra can be viewed as the coordinate algebra of the `quantum two-dimensional real vector space'. Meanwhile, its unitary counterpart, `the quantum torus', has the $*$-structure given by $X^*=X^{-1}$ and $Y^* = Y^{-1}$, and can be viewed as the coordinate algebra of the `quantum two-dimensional torus'.

\vs

Recall that a {\it Hopf $*$-algebra} is a Hopf algebra $B$ over $\mathbb{C}$ equipped with a $*$-algebra structure which is compatible with the coalgebra structure, i.e. the comultiplication and the counit are $*$-homomorphisms. A $*$-structure on $B$ is a conjugate-linear map $*:B\to B$ such that $(u^*)^*=u$, $(u_1 u_2)^* = u_2^* u_1^*$, $1^*=1$.
\begin{definition}
\label{3_def:quantum_plane}
(quantum plane)
The Hopf $*$-algebra $B_{q}$ is defined as:
\begin{align*}
{\renewcommand\arraystretch{1.3} % (1.0 is for standard)
\begin{array}{rl}
\mbox{generators}: & X^{\pm 1},~ Y \\
\mbox{relations}: & XY = q^{2} YX \\
\mbox{star-structure}: & X^* = X, \quad Y^* = Y \\
\mbox{coproduct}: & \Delta X = X \otimes X, \quad
\Delta Y = Y \otimes X + 1\otimes Y \\
\mbox{antipode}: & S(X) = X^{-1}, \quad S(Y) = -YX^{-1} \\
\mbox{counit}: & 
\epsilon(X) = 1, \quad
\epsilon(Y) = 0
\end{array}}
\end{align*}
and $\mathcal{B}_q$ is defined as its subalgebra generated by $X,Y$.
\end{definition}

`$B$' stands for the `Borel subalgebra' of $\mathcal{U}_q(\mathfrak{sl}(2,\mathbb{R}))$.

\begin{proposition}
\label{3_prop:hopf_algebra}
The algebra $B_q$ indeed satisfies the Hopf $*$-algebra axioms.
\end{proposition}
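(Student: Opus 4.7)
The plan is to verify each Hopf $*$-algebra axiom on the generators $X,X^{-1},Y$ and extend by (anti-)multiplicativity. The work splits naturally into three pieces: well-definedness of $\Delta$, $\epsilon$, $S$ (and $*$) as (anti-)algebra maps; the coalgebra and antipode axioms; and compatibility of $*$ with the coalgebra structure.

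First I would show that $\Delta$ extends to an algebra homomorphism $B_q \to B_q \otimes B_q$. The only nontrivial relation is the $q$-commutation, for which
\[
\Delta(X)\Delta(Y) = (X\otimes X)(Y\otimes X + 1\otimes Y) = XY\otimes X^2 + X\otimes XY = q^2\bigl(YX\otimes X^2 + X\otimes YX\bigr) = q^2\Delta(Y)\Delta(X),
\]
and $\Delta(X^{\pm 1})\Delta(X^{\mp 1}) = 1\otimes 1$ is trivial; similarly $\epsilon$ clearly extends. For $S$ to extend to an algebra anti-homomorphism, I would derive $X^{-1}Y = q^{-2}YX^{-1}$ from $XY = q^2YX$ and then check
\[
q^2 S(X)S(Y) = q^2 X^{-1}(-YX^{-1}) = -YX^{-2} = (-YX^{-1})X^{-1} = S(Y)S(X) = S(q^2YX).
\]

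Next I would check coassociativity, the counit property, and the antipode identities $m\circ(S\otimes\mathrm{id})\circ\Delta = \eta\circ\epsilon = m\circ(\mathrm{id}\otimes S)\circ\Delta$ on generators, which suffices since all structure maps are (anti-)algebra morphisms. For $X$ every identity is immediate because $\Delta(X) = X\otimes X$. For $Y$, both $(\Delta\otimes\mathrm{id})\Delta(Y)$ and $(\mathrm{id}\otimes\Delta)\Delta(Y)$ equal $Y\otimes X\otimes X + 1\otimes Y\otimes X + 1\otimes 1\otimes Y$; the counit axioms reduce to $\epsilon(Y)X + \epsilon(1)Y = Y$; and the antipode axiom to $S(Y)X + Y = -YX^{-1}\cdot X + Y = 0 = \epsilon(Y)\cdot 1$, together with its right-handed analogue.

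Finally, the $*$-structure is a well-defined conjugate-linear anti-involution compatible with the defining relation (since $|q| = 1$, one has $(q^2YX)^* = q^{-2}X^*Y^* = q^{-2}XY = YX = (XY)^*$), and $\Delta$, $\epsilon$ are $*$-homomorphisms by direct inspection, e.g.\ $\Delta(Y^*) = Y\otimes X + 1\otimes Y = (Y\otimes X)^* + (1\otimes Y)^* = \Delta(Y)^*$. For completeness I would note the consistency $(S\circ *)^2 = \mathrm{id}$ on generators, using $S^2(Y) = S(-YX^{-1}) = XYX^{-1} = q^2 Y$.

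I do not anticipate a genuine obstacle: the verification is bookkeeping. The mildly delicate step is keeping $q$-powers and factor orders straight when moving $X^{\pm 1}$ past $Y$ in the antipode and $*$-compatibility computations; the only place a sign or a $q$-power error could bite is in the identities $S(Y)S(X) = S(XY)$ and $m\circ(S\otimes\mathrm{id})\circ\Delta(Y) = 0$.
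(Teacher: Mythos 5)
The paper states this proposition without proof, treating the verification as a routine exercise. Your direct check on generators---that $\Delta,\epsilon$ extend to algebra homomorphisms and $S$ to an anti-homomorphism respecting $XY=q^2YX$, that coassociativity, the counit axiom, and the antipode identities hold, and that $*$ is a well-defined anti-involution with $\Delta,\epsilon$ being $*$-homomorphisms---is the standard argument and is correct.
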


\vs

We now study a class of operator representations of $\mathcal{B}_q$. A class of ``well-behaved'' representations of $\mathcal{B}_q$ by operators on a Hilbert space is classified in \cite{Sc}; they are named ``integrable'' representations. If we further require that the operators representing $X$ and $Y$ be positive definite, then there exists a unique irreducible representation having these properties up to unitary equivalence; this representation is given by the actions
\begin{align}
\label{3_eq:def_repres_B_q}
\pi(X) = e^{-2\pi b p}, \quad \pi (Y) = e^{2\pi bx},
\end{align}
densely defined on $L^2(\mathbb{R})$, where $p = \frac{1}{2\pi i} \frac{d}{dx}$ (so $p,x$ form a Heisenberg pair: $[p,x] = \frac{1}{2\pi i}$), so that $(\pi(X)f)(x) = f(x+ib)$.

\vs

For the treatment about domain of $\pi$, we follow Goncharov's approach \cite{Goncharov}. We first let $\mathcal{B}_q$ act via $\pi$ on the dense subspace $W$ of $L^2(\mathbb{R},dx)$, as defined in \eqref{2_eq:def_W}.
Then $\pi(X), \pi(Y)$ are symmetric unbounded operators on $W$. A quick summary of results of \cite{Sc} is as follows:
\begin{proposition}
\label{3_prop:integrable_representation}
The pair $(\pi(X), \pi(Y))$ of positive definite operators given in \eqref{3_eq:def_repres_B_q} defines an irreducible ``integrable'' operator representation of (the $*$-algebra) $\mathcal{B}_q$, i.e. $\exists k \in \mathbb{Z}$ s.t. for any $t,s\in \mathbb{R}$, the unitary operators $\pi(X)^{it}, \pi(Y)^{is}$ satisfy
$
\pi(X)^{it} \pi(Y)^{is} = e^{i(-2\pi b^2 +2\pi k)ts} \pi(Y)^{is} \pi(X)^{it},
$
and $\pi(X)$, $\pi(Y)$ have self-adjoint extensions in $L^2(\mathbb{R})$.
Furthermore, such $\pi$ is uniquely determined up to unitary equivalence.
\end{proposition}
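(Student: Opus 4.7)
The plan is to decompose the proposition into three claims---self-adjointness, the Weyl form with some $k\in\mathbb{Z}$, and irreducibility together with uniqueness---and handle each by standard operator-theoretic means built on the Stone--von Neumann theorem.

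First, I would establish self-adjointness and positivity. The generators $p=\frac{1}{2\pi i}\frac{d}{dx}$ and multiplication by $x$ are essentially self-adjoint on $W$: the Hermite functions lie in $W$ and are common analytic vectors for $p$ and $x$, so Nelson's analytic vector theorem (or a direct deficiency-index computation) gives essential self-adjointness. Taking the self-adjoint closures $\bar p,\bar x$, the bounded Borel functional calculus produces positive self-adjoint extensions $\pi(X)=e^{-2\pi b\bar p}$ and $\pi(Y)=e^{2\pi b\bar x}$, settling the self-adjointness assertion.

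Second, I would verify the Weyl relation with $k=0$. After taking self-adjoint closures, $\pi(X)^{it}=e^{-2\pi ibt\bar p}$ acts as translation by $-bt$ and $\pi(Y)^{is}=e^{2\pi ibs\bar x}$ as multiplication by $e^{2\pi ibsx}$, so a one-line computation gives
\begin{align*}
\pi(X)^{it}\pi(Y)^{is}f(x)=e^{2\pi ibs(x-bt)}f(x-bt)=e^{-2\pi ib^2 ts}\,\pi(Y)^{is}\pi(X)^{it}f(x),
\end{align*}
which is the integrability relation with $k=0$. Equivalently, applying Baker--Campbell--Hausdorff to the central commutator $[-2\pi ibt\bar p,\,2\pi ibs\bar x]=-2\pi ib^2 ts$ reproduces the same constant, and consistency with the formal relation $XY=q^2YX$ drops out at $t=s=1$.

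Third, irreducibility and uniqueness both follow from Stone--von Neumann. The Weyl pair $(e^{-2\pi ibt\bar p},\,e^{2\pi ibs\bar x})$ is (after rescaling $\bar p\mapsto b\bar p$, $\bar x\mapsto b\bar x$) the standard Schr\"odinger system on $L^2(\mathbb{R})$, which is irreducible; hence any bounded operator commuting with all $\pi(X)^{it}$ and all $\pi(Y)^{is}$ is scalar. For uniqueness, given another positive integrable irreducible representation $\pi'$ on $\mathcal{H}'$, positivity yields unique self-adjoint $P',X'$ with $\pi'(X)=e^{-2\pi bP'}$ and $\pi'(Y)=e^{2\pi bX'}$, and the Weyl relation translates to $[P',X']=\frac{1}{2\pi i}$ in Weyl form. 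Stone--von Neumann then produces a unitary $U:\mathcal{H}'\to L^2(\mathbb{R})$ intertwining $(P',X')$ with $(\bar p,\bar x)$, and therefore intertwining $\pi'$ with $\pi$.

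The main obstacle---and the step where genuine care is required---is reconciling the integer $k$: distinct values of $k$ formally correspond to rescaled effective Planck constants $-2\pi b^{2}+2\pi k$ and would yield inequivalent Weyl pairs by Stone--von Neumann. So to land on \emph{the} representation $\pi$ one must either restrict to $k=0$ at the outset or invoke Schm\"udgen's classification \cite{Sc} to confirm that, among positive irreducible integrable representations of $\mathcal{B}_q$ in the sense of the proposition, the value $k=0$ is forced. Granting this, the remaining work reduces to the essential self-adjointness of $p,x$ on $W$ and routine bookkeeping with the functional calculus.
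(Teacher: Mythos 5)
Your argument is correct and, unlike the paper, actually supplies a proof: in the paper this proposition is stated with no proof at all, merely as ``a quick summary of results of \cite{Sc},'' i.e.\ it is a direct quotation of Schm\"udgen's classification. So your proposal is not taking a ``different route'' from the paper so much as it is providing a route where the paper gives none.

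The main steps check out. Essential self-adjointness of $p$ and $x$ on $W$ follows from the analytic-vector argument you outline (the Hermite basis lies in $W$). The Weyl computation $\pi(X)^{it}\pi(Y)^{is}f(x)=e^{2\pi i b s(x-bt)}f(x-bt)=e^{-2\pi i b^2 ts}\pi(Y)^{is}\pi(X)^{it}f(x)$ is exactly the integrability relation with $k=0$. Irreducibility and uniqueness within $k=0$ then come from Stone--von Neumann, as you say.

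The caveat you raise in the last paragraph is the one genuine subtlety, and you have diagnosed it correctly. The proposition as stated has ``$\exists k\in\mathbb{Z}$'' as part of the definition of integrability, so a priori a positive irreducible integrable representation with $k\neq 0$ would be a legitimate competitor; yet by Stone--von Neumann such a representation (having effective commutator $\frac{1}{2\pi i}(1-k/b^2)$ after writing $\pi(X)=e^{-2\pi bP}$, $\pi(Y)=e^{2\pi bQ}$) cannot be unitarily equivalent to the $k=0$ one. Therefore the ``uniquely determined up to unitary equivalence'' clause cannot be proved purely from Stone--von Neumann; one must either read it as uniqueness for a fixed $k$, or, as you suggest, lean on Schm\"udgen's classification to exclude or separate the other values of $k$. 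Since the paper is in effect citing Schm\"udgen for precisely that point, your proof plus this acknowledged appeal to \cite{Sc} matches the intended level of rigor. If you want a fully self-contained argument, the missing ingredient is exactly the piece of Schm\"udgen's classification that resolves the dependence on $k$, and you should state explicitly which of the two readings of the uniqueness clause you are proving.
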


%%%%%%%%%%
%%%%%%%%%%
\subsection{The modular double of the quantum plane}
\label{3_subsection:quantum_plane_modular_double}

Let $q = e^{\pi i b^2}$, $b>0$, $b^2\notin \mathbb{Q}$, as in \eqref{3_eq:def_q}. For the special complex number
$
\widetilde{q} = e^{\pi i b^{-2}},
$
the two quantum plane algebras $B_q$ and $B_{\widetilde{q}}$ can be put together nicely to form `the modular double', the notion first introduced by Faddeev (\cite{F}) :

\begin{definition}\label{3_def:modular_double_quantum_plane}
(modular double of quantum plane)
The Hopf $*$-algebra $B_{q\widetilde{q}}$ is defined as:
\begin{align*}
{\renewcommand\arraystretch{1.3} % (1.0 is for standard)
\begin{array}{rl}
\mbox{generators}: &  X^{\pm 1},~ Y,~ \widetilde{X}^{\pm 1},~ \widetilde{Y} \\
\mbox{relations}: & XY = q^{2} YX, \quad \widetilde{X} \widetilde{Y} = \widetilde{q}^{2} \widetilde{Y} \widetilde{X}, \\
& [X,\widetilde{X}]
= [X,\widetilde{Y}]
= [Y,\widetilde{X}]
= [Y,\widetilde{Y}]
= 0 \\
\mbox{star-structure}: & X^* = X, \quad Y^* = Y, \quad \widetilde{X}^* = \widetilde{X}, \quad \widetilde{Y}^* = \widetilde{Y} \\
\mbox{coproduct}: &  \Delta X = X \otimes X, \quad \Delta Y = Y \otimes X + 1\otimes Y, \quad \Delta \widetilde{X} = \widetilde{X} \otimes \widetilde{X}, \quad \Delta \widetilde{Y} = \widetilde{Y} \otimes \widetilde{X} + 1\otimes \widetilde{Y} \\
\mbox{antipode}: & S(X) = X^{-1}, \quad S(Y) = -YX^{-1}, \quad S(\widetilde{X}) = \widetilde{X}^{-1}, \quad S(\widetilde{Y}) = -\widetilde{Y}\widetilde{X}^{-1} \\
\mbox{counit}: & \epsilon(X) = 1, \quad \epsilon(Y) = 0, \quad \epsilon(\widetilde{X}) = 1, \quad \epsilon(\widetilde{Y}) = 0
\end{array}
}
\end{align*} 
and $\mathcal{B}_{q\widetilde{q}}$ is defined as its subalgebra generated by $X,Y,\widetilde{X}, \widetilde{Y}$. Whenever clear from the context, $\mathcal{B}$ will denote $\mathcal{B}_{q\widetilde{q}}$ throughout.
\end{definition}

\begin{remark}
When $\mathcal{B}$ is represented as operators on a Hilbert space, we will require $\widetilde{X} = X^{1/b^2}$ and $\widetilde{Y} = Y^{1/b^2}$ as operators.
\end{remark}

\begin{proposition}
\label{3_prop:hopf_algebra_modular_double}
The algebra $B_{q\widetilde{q}}$ satisfies the Hopf $*$-algebra axioms.
\end{proposition}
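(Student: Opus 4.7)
The plan is to reduce the verification to Proposition \ref{3_prop:hopf_algebra} by identifying $B_{q\widetilde{q}}$ with the tensor product Hopf $*$-algebra $B_q \otimes B_{\widetilde{q}}$. Applying Proposition \ref{3_prop:hopf_algebra} with $b$ replaced by $b^{-1}$ (and $q$ replaced by $\widetilde{q}$) endows $B_{\widetilde{q}}$ with a Hopf $*$-algebra structure; the standard construction then equips $B_q \otimes B_{\widetilde{q}}$ with a Hopf $*$-algebra structure whose coproduct, counit, antipode, and involution all act componentwise (with the usual flip in the coproduct).

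Next I would construct an algebra homomorphism $\varphi : B_q \otimes B_{\widetilde{q}} \to B_{q\widetilde{q}}$ on generators by $X \otimes 1 \mapsto X$, $Y \otimes 1 \mapsto Y$, $1 \otimes \widetilde{X} \mapsto \widetilde{X}$, $1 \otimes \widetilde{Y} \mapsto \widetilde{Y}$. It is well defined because the defining relations of $B_{q\widetilde{q}}$ in Definition \ref{3_def:modular_double_quantum_plane} split exactly into the $B_q$-relations among $X,Y$, the $B_{\widetilde{q}}$-relations among $\widetilde{X},\widetilde{Y}$, and the pairwise commutativity of the two sets of generators, which is precisely what the algebra tensor product imposes. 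Surjectivity is evident on generators; for injectivity I would either give a PBW normal form argument showing that the monomials $X^i Y^j \widetilde{X}^k \widetilde{Y}^\ell$ are linearly independent, or, more conceptually, invoke the faithful operator representation $\pi$ of \eqref{3_eq:def_repres_B_q} extended by $\pi(\widetilde{X}) = e^{-2\pi b^{-1} p}$, $\pi(\widetilde{Y}) = e^{2\pi b^{-1} x}$, whose four images manifestly satisfy all the stated relations and have linearly independent monomials.

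Finally, I would transport the Hopf $*$-algebra structure across $\varphi$ and check on generators that it matches Definition \ref{3_def:modular_double_quantum_plane}: for instance $\Delta_{B_q \otimes B_{\widetilde{q}}}(Y \otimes 1) = (Y \otimes 1)\otimes (X \otimes 1) + (1 \otimes 1)\otimes (Y \otimes 1)$ corresponds via $\varphi$ to $\Delta Y = Y \otimes X + 1 \otimes Y$, and analogously for $\widetilde{Y}$; the counit, antipode, and $*$-involution agree on the four generators by direct inspection. Since both structures extend via algebra (anti)homomorphisms or conjugate-linear antihomomorphisms, agreement on generators propagates to all of $B_{q\widetilde{q}}$, and the Hopf $*$-algebra axioms transfer wholesale.

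The main obstacle I expect is establishing injectivity of $\varphi$, i.e.\ showing that the relations of $B_{q\widetilde{q}}$ do not collapse the algebra below $B_q \otimes B_{\widetilde{q}}$. The operator representation above is the cleanest route but technically uses material from the following subsection; alternatively a standard diamond/PBW argument, treating each $q$- or $\widetilde{q}$-commutation and each cross-commutation as a rewriting rule, suffices and is entirely routine. Everything else is a one-line check on generators.
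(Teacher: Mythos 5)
The paper states both Proposition~\ref{3_prop:hopf_algebra} and Proposition~\ref{3_prop:hopf_algebra_modular_double} without proof, treating the verification of the Hopf $*$-algebra axioms as routine, so there is no paper argument to compare against. Your reduction to the tensor-product Hopf $*$-algebra $B_q \otimes B_{\widetilde{q}}$ is correct and is a clean way to organize the check: the defining relations in Definition~\ref{3_def:modular_double_quantum_plane} are precisely the tensor-product relations, and each structure map (componentwise coproduct with the middle flip, componentwise counit, antipode $S_{B_q} \otimes S_{B_{\widetilde{q}}}$, and componentwise $*$) matches the one prescribed there when read through $\varphi$; you have verified representative generators and the extension to all of $B_{q\widetilde{q}}$ by (anti-)multiplicativity is standard. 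The one nontrivial step, which you correctly flag, is injectivity of $\varphi$. A PBW argument on the monomials $X^i Y^j \widetilde{X}^k \widetilde{Y}^\ell$ ($i,k\in\mathbb{Z}$, $j,\ell\in\mathbb{N}$) is the purely algebraic route and is indeed routine here since each defining relation is a single rewriting rule with no overlap ambiguities beyond the obvious ones. If you instead use the operator representation, note that its faithfulness on $B_{q\widetilde{q}}$ is itself a (mild) claim: after normal-ordering, $\pi(X^iY^j\widetilde{X}^k\widetilde{Y}^\ell)$ is, up to a scalar, $e^{2\pi(bj+b^{-1}\ell)x}e^{-2\pi(bi+b^{-1}k)p}$, and the condition $b^2\notin\mathbb{Q}$ is what makes the exponent pairs $(bj+b^{-1}\ell,\,bi+b^{-1}k)$ separate the indices, after which linear independence of such Weyl operators is classical. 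Either route closes the gap, so the proposal is complete modulo that standard input.
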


We now let $\mathcal{B}$ act on $W$, by letting $\widetilde{X}, \widetilde{Y}$ act in a similar manner as in \eqref{3_eq:def_repres_B_q}, with $b$ replaced by $b^{-1}$. 
Then $W$ is a representation of the $*$-algebra $\mathcal{B}$ via the action $\pi$. Denote by $(\cdot,\cdot)$ the inner product of $L^2(\mathbb{R})$.

\begin{definition}
\label{3_def:Schwartz_space}
The Schwartz space $\mathcal{S}_\mathcal{B}$ for the $*$-algebra $\mathcal{B}$ is a subspace of $L^2(\mathbb{R})$ consisting of vectors $f$ such that the functional $w\mapsto (f, \pi(u) w)$ on $W$ is continuous for the $L^2$-norm, for any fixed $u\in \mathcal{B}$.
\end{definition}

The Schwartz space $\mathcal{S}_{\mathcal{B}}$ for the $*$-algebra $\mathcal{B}$ is the common domain of definition of operators from $\mathcal{B}$ in $L^2(\mathbb{R})$ (via action $\pi$). Given $s\in \mathcal{S}_\mathcal{B}$ and $u\in \mathcal{B}$, since the functional $w\mapsto (f, \pi(u)w)$ defined on a dense subspace $W$ of $L^2(\mathbb{R})$ is continuous, by Riesz representation theorem there exists a unique $g\in L^2(\mathbb{R})$ such that $(s, \pi(u)w) = (g, w)$ for all $w\in W$. Let $\pi(u^*)s := g$; this is how elements of $\mathcal{B}$ act on $\mathcal{S}_\mathcal{B}$ via $\pi$. This Schwartz space $\mathcal{S}_{\mathcal{B}}$ has a natural topology given by seminorms
$$
N_u(f) := ||uf||_{L^2}, \quad \mbox{$u$ runs through a basis in $\mathcal{B}$.}
$$
The key property of the Schwartz space $\mathcal{S}_{\mathcal{B}}$ is the following:

\begin{theorem}
\label{3_thm:W_density}
The space $W$ is dense in the Schwartz space $\mathcal{S}_{\mathcal{B}}$.
\end{theorem}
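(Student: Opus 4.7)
The strategy is to approximate an arbitrary $f\in\mathcal{S}_\mathcal{B}$ by elements of $W$ in two successive regularizations: first damping $f$ by a Gaussian, then truncating a Taylor expansion. Throughout, the key structural fact to exploit is that every monomial $u$ in the generators $X^{\pm 1},Y,\widetilde X^{\pm 1},\widetilde Y$, after using the relations $XY=q^2YX$ etc.\ to normal-order, acts on entire functions in the form
\begin{align*}
\pi(u) = c_u\, e^{\alpha_u x}\, T_{iy_u},\qquad T_{iy}g(x):=g(x+iy),
\end{align*}
for some constants $c_u\in\mathbb{C}$, $\alpha_u\in\mathbb{R}$, $y_u\in\mathbb{R}$. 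So membership in $\mathcal{S}_\mathcal{B}$ is equivalent to $f$ extending to an entire function all of whose weighted shifts $e^{\alpha x}f(x+iy)$ (for a countable dense family of $(\alpha,y)$) lie in $L^2(\mathbb{R},dx)$; the seminorms $N_u$ are exactly the $L^2$-norms of these shifted, weighted copies of $f$.

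\textbf{Step 1 (Gaussian damping).} For $\epsilon>0$, set $f_\epsilon(x):=e^{-\epsilon x^2/2}f(x)$; since both factors are entire, so is $f_\epsilon$, and a direct check shows $f_\epsilon\in\mathcal{S}_\mathcal{B}$. I claim $f_\epsilon\to f$ in $\mathcal{S}_\mathcal{B}$ as $\epsilon\searrow 0$. Using the normal-ordered form of $\pi(u)$,
\begin{align*}
\pi(u)(f-f_\epsilon)(x) = c_u\,e^{\alpha_u x} f(x+iy_u)\bigl(1-e^{-\epsilon(x+iy_u)^2/2}\bigr),
\end{align*}
and the bracket is bounded in modulus by $1+e^{\epsilon y_u^2/2}$ and tends to $0$ pointwise. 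Dominated convergence with majorant $|\pi(u)f|\in L^2$ yields $N_u(f-f_\epsilon)\to 0$ for each $u$.

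\textbf{Step 2 (Taylor truncation).} Fix $\epsilon>0$ and expand the entire function $f(z)=\sum_{n\geq 0}a_n z^n$. Let $S_N(z):=\sum_{n=0}^N a_n z^n$ and $g_N(x):=e^{-\epsilon x^2/2}S_N(x)\in W$. For each $u$,
\begin{align*}
\pi(u)(f_\epsilon-g_N)(x) = c_u\,e^{\alpha_u x}e^{-\epsilon(x+iy_u)^2/2}\bigl(f(x+iy_u)-S_N(x+iy_u)\bigr),
\end{align*}
so the reduction is to
\begin{align*}
\int_{\mathbb{R}} e^{2\alpha_u x-\epsilon x^2}\,|f(x+iy_u)-S_N(x+iy_u)|^2\,dx\ \longrightarrow\ 0.
\end{align*}
The entire function $f$ is subjected to $L^2$-bounds on every horizontal line with every positive exponential weight, so Phragm\'en--Lindel\"of on strips $|\Im z|\leq Y$ forces $f$ to be of order at most two with finite type on each strip. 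This gives the Cauchy-type bound $|a_n|\leq C\,r^n/\Gamma(n/2+1)$ for every $r>0$, whence the tail $f-S_N$ on any horizontal line is pointwise dominated by $\sum_{n>N}|a_n|(|x|+|y_u|)^n$, which decays super-geometrically on compacts. The Gaussian factor $e^{-\epsilon x^2}$ kills polynomial growth of $(x+iy_u)^n$ on $\mathbb{R}$, so dominated convergence again closes the integral limit.

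\textbf{Main obstacle.} The delicate point is Step 2: converting the pointwise Phragm\'en--Lindel\"of bound into the $L^2$-tail estimate uniformly in the shift $y_u$ and multiplier $\alpha_u$, using only one polynomial sequence $\{S_N\}$ for every seminorm. If Taylor sums prove too rigid, the backup is to expand $f_\epsilon$ in the Hermite basis $h_n(x)=H_n(x)e^{-x^2/2}\in W$ and argue that the topology of $\mathcal{S}_\mathcal{B}$ coincides with (or is majorized by) a Gelfand--Shilov $S^{1/2}_{1/2}$-type topology in which the Hermite system is a Schauder basis. Either way, the modular-double aspect (simultaneous $b$- and $b^{-1}$-actions) imposes no extra difficulty because a single Gaussian cutoff $e^{-\epsilon x^2/2}$ and a single polynomial in $x$ are symmetric in $b\leftrightarrow b^{-1}$; the seminorms for $\widetilde X,\widetilde Y$ are handled by the same estimate with $b$ replaced by $b^{-1}$.
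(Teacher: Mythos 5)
The paper does not actually prove this theorem; it refers to Goncharov~\cite{Goncharov} for the proof, so there is no in-paper argument to compare against. Your attempt, however, has a genuine gap at its foundation. You assert that membership in $\mathcal{S}_\mathcal{B}$ forces $f$ to extend to an \emph{entire} function, and Step~2 (both the Taylor-truncation version and the Hermite/Gelfand--Shilov backup) depends entirely on this. But $\mathcal{B}=\mathcal{B}_{q\widetilde q}$ is generated by $X,Y,\widetilde X,\widetilde Y$ \emph{without} $X^{-1}$ or $\widetilde X^{-1}$ (see Definition~\ref{3_def:modular_double_quantum_plane} and Remarks~\ref{3_rem:omit_X_inverse}, \ref{4_rem:shifting_contour_X}), so every normal-ordered monomial acts as $e^{\alpha_u x}T_{iy_u}$ with $\alpha_u\ge 0$ \emph{and} $y_u\ge 0$: all shifts go into the upper half-plane. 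Membership in $\mathcal{S}_\mathcal{B}$ therefore only forces analyticity for $\Im z\ge 0$. A concrete obstruction: $f(x)=e^{-x^2}/(x+i)$ lies in $\mathcal{S}_\mathcal{B}$ (its Fourier transform has Gaussian decay as $\xi\to-\infty$, so $e^{-2\pi k b^{\pm 1}\xi}\hat f\in L^2$ for all $k$, and $e^{\alpha x}f\in L^2$ for all $\alpha$), yet it has a pole at $z=-i$ and is not entire.

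Because of this, Step~2 cannot work. For such an $f$, the Taylor series of $f_\epsilon(z)=e^{-\epsilon z^2/2}f(z)$ about $0$ has radius of convergence $1$, so the partial sums $S_N(x)$ do not converge to $f_\epsilon(x)$ for $|x|>1$ and the tail estimate $\sum_{n>N}|a_n|(|x|+|y_u|)^n$ is vacuous; moreover the Phragm\'en--Lindel\"of bound $|a_n|\le C r^n/\Gamma(n/2+1)$ requires analyticity on \emph{symmetric} strips $|\Im z|\le Y$, which you do not have. The Hermite fallback fails for the same reason: $S^{1/2}_{1/2}$ consists of entire functions, so $\mathcal{S}_\mathcal{B}\not\subset S^{1/2}_{1/2}$, and the Hermite coefficients of a non-entire $f_\epsilon$ need not decay fast enough to survive the exponentially growing weights $e^{-2\pi b^{\pm 1}k\xi}$ against $\|e^{-2\pi b^{\pm 1}k\xi}\hat h_n\|_{L^2}$. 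Your Step~1 (Gaussian damping and dominated convergence on the seminorms $N_u$) is sound, but the heart of the problem is constructing an approximating polynomial-times-Gaussian from a function that is analytic only on a half-plane, and for that you need a genuinely different mechanism — for instance working on the Fourier side, where $W$ is Fourier-invariant, and truncating there. You would need to redo Step~2 entirely starting from the correct (one-sided) characterization of $\mathcal{S}_\mathcal{B}$.
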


One can interpret Theorem \ref{3_thm:W_density} by saying that the $*$-algebra $\mathcal{B}$ is essentially self-adjoint in $L^2(\mathbb{R})$ (via $\pi$). See \cite{Goncharov} for proofs and other details.

\vs

We now define ``integrable'' representations for the modular double $\mathcal{B} = \mathcal{B}_{q\widetilde{q}}$:

\begin{definition}
The quadruple $(\pi(X),\pi(Y),\pi(\widetilde{X}),\pi(\widetilde{Y}))$ of positive definite operators on a Hilbert space defines an ``integrable'' representation of (the $*$-algebra) $\mathcal{B}$, if all of the following are satisfied:
\begin{enumerate}
\item[\rm 1)] the pair $(\pi(X), \pi(Y))$ defines an integrable representation of the subalgebra $\mathcal{B}_q$,

\item[\rm 2)] the pair $(\pi(\widetilde{X}), \pi(\widetilde{Y}))$ defines an integrable representation of the subalgebra $\mathcal{B}_{\widetilde{q}}$,

\item[\rm 3)] one has $\pi(X)^{1/b^2} = \pi(\widetilde{X})$, $\pi(Y)^{1/b^2} = \pi(\widetilde{Y})$, and $\pi(u)$ commutes with $\pi(\widetilde{v})$ for $u=X,Y$ and $v=\widetilde{X},\widetilde{Y}$, and

\item[\rm 4)] the $*$-algebra $\mathcal{B}$ is essentially self-adjoint (via $\pi$),
\end{enumerate}
where the subalgebras in $1), 2)$ are defined by $\mathcal{B}_q = \langle X,Y\rangle$ and $\mathcal{B}_{\widetilde{q}} = \langle \widetilde{X}, \widetilde{Y}\rangle$.
\end{definition}

\begin{proposition}
\label{3_prop:repres}
There exists a unique (up to unitary equivalence) irreducible integrable representation of (the $*$-algebra) $\mathcal{B}$ by positive-definite operators $\pi(u)$, $u=X,Y,\widetilde{X},\widetilde{Y}$, defined on $\mathcal{H} \equiv L^2(\mathbb{R},dx)$  by
\begin{align*}
\pi (X) = e^{-2\pi b p}, \quad
\pi (Y) = e^{2\pi b x}, \quad
\pi (\widetilde{X}) = e^{-2\pi b^{-1} p}, \quad
\pi (\widetilde{Y}) = e^{2\pi b^{-1} x},
\end{align*}
where the common domain of operators $\pi(u)$ ($u\in \mathcal{B}$) is $\mathcal{S}_\mathcal{B}$.
\end{proposition}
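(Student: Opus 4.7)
My plan is to split the proof into an existence part and a uniqueness part, with most of the heavy lifting done by Proposition \ref{3_prop:integrable_representation} and Theorem \ref{3_thm:W_density}.

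For existence, I would check directly that the explicit formulas define an integrable representation. Positive-definiteness of $\pi(u)$ for $u=X,Y,\widetilde{X},\widetilde{Y}$ is immediate since $p,x$ are self-adjoint on suitable domains of $L^2(\mathbb{R})$ and exponentials of self-adjoint operators multiplied by real scalars are positive. The $q$-commutation $\pi(X)\pi(Y) = q^2 \pi(Y)\pi(X)$ reduces to a scalar Baker--Campbell--Hausdorff computation: $[-2\pi b p, 2\pi b x] = -4\pi^2 b^2 \cdot \frac{1}{2\pi i} = 2\pi i b^2$, so $e^{-2\pi b p}e^{2\pi b x} = e^{2\pi i b^2} e^{2\pi b x}e^{-2\pi b p} = q^2 \,\pi(Y)\pi(X)$, and the same computation with $b$ replaced by $b^{-1}$ gives the $\widetilde{q}$-commutation. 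For the cross-commutation between tilde and non-tilde generators, the only nontrivial case is $[-2\pi b p, 2\pi b^{-1} x] = 2\pi i$, whose exponential equals $1$, so $\pi(X)$ and $\pi(\widetilde{Y})$ commute; the other three cross-commutations are trivial. The compatibility $\pi(\widetilde{X}) = \pi(X)^{1/b^2}$ and $\pi(\widetilde{Y}) = \pi(Y)^{1/b^2}$ is an immediate consequence of the exponential form together with the Borel functional calculus for the self-adjoint generators $p$ and $x$. Integrability of the two subpairs $(\pi(X),\pi(Y))$ and $(\pi(\widetilde{X}),\pi(\widetilde{Y}))$ then follows from Proposition \ref{3_prop:integrable_representation} (applied with parameter $b$ and $b^{-1}$ respectively), and essential self-adjointness of the full $*$-algebra $\mathcal{B}$ is exactly the content of Theorem \ref{3_thm:W_density}. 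Irreducibility of $\pi$ follows from irreducibility of $(\pi(X),\pi(Y))$ alone, since any bounded operator commuting with all of $\pi(\mathcal{B})$ in particular commutes with $\pi(X),\pi(Y)$.

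For uniqueness, let $\pi'$ be any irreducible integrable representation of $\mathcal{B}$ by positive-definite operators on some Hilbert space $\mathcal{H}'$. By condition (1) of the integrability definition, $(\pi'(X),\pi'(Y))$ is an integrable representation of $\mathcal{B}_q$ by positive-definite operators, so Proposition \ref{3_prop:integrable_representation} supplies a unitary $U:L^2(\mathbb{R})\to \mathcal{H}'$ intertwining $(\pi(X),\pi(Y))$ with $(\pi'(X),\pi'(Y))$. Condition (3) of the integrability definition then forces $U^{-1}\pi'(\widetilde{X})U = U^{-1}\pi'(X)^{1/b^2} U = (U^{-1}\pi'(X)U)^{1/b^2} = \pi(X)^{1/b^2} = \pi(\widetilde{X})$, since unitary conjugation commutes with the functional calculus of a self-adjoint operator; the same reasoning gives $U^{-1}\pi'(\widetilde{Y})U = \pi(\widetilde{Y})$. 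Hence $U$ intertwines $\pi$ and $\pi'$ on all of $\mathcal{B}$, proving uniqueness up to unitary equivalence.

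The main obstacle, as far as I can see, is not algebraic but analytic: one must be careful that the functional-calculus identity $\pi'(\widetilde{X}) = \pi'(X)^{1/b^2}$ in condition (3) is interpreted in the strong sense compatible with the Weyl-form exponentiated commutation relations used to define integrability, and that the self-adjoint extensions guaranteed by Proposition \ref{3_prop:integrable_representation} are the ones used in forming the fractional power. Once this is set up, the two Schm\"udgen-type uniqueness inputs (one for $\mathcal{B}_q$, one for $\mathcal{B}_{\widetilde{q}}$) are forced to live on the same Hilbert space via the same unitary, which is exactly the role of condition (3) in the definition of integrability for the modular double.
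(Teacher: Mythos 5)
The paper states this proposition without proof, so there is no author argument to compare against; your reconstruction follows the natural route that the paper evidently intends. The existence half is sound: the BCH computations for the $q$- and $\widetilde{q}$-commutations, the cross-commutation check $[-2\pi bp,\,2\pi b^{-1}x]=2\pi i$, positivity of exponentials of real multiples of self-adjoint generators, the fractional-power compatibility via Borel functional calculus, and the delegation of integrability and essential self-adjointness to Proposition \ref{3_prop:integrable_representation} and Theorem \ref{3_thm:W_density} are all in order; passing irreducibility up from the $\mathcal{B}_q$-pair to all of $\mathcal{B}$ is immediate since the commutant of $\pi(\mathcal{B})$ is contained in that of $\pi(\mathcal{B}_q)$.

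There is a genuine gap in the uniqueness half. You invoke the uniqueness clause of Proposition \ref{3_prop:integrable_representation} for the restricted pair $(\pi'(X),\pi'(Y))$, but that clause holds only among \emph{irreducible} integrable representations of $\mathcal{B}_q$ by positive-definite operators (a non-irreducible integrable representation is a multiple of the irreducible one and is clearly not unitarily equivalent to it). You never establish that the restriction of the irreducible $\mathcal{B}$-representation $\pi'$ to the subalgebra $\mathcal{B}_q$ is itself irreducible, and this is not automatic: restriction to a subalgebra generally destroys irreducibility. This is exactly where condition (3) of the integrability definition for the modular double has to be used a second time. Since $\pi'(\widetilde{X})=\pi'(X)^{1/b^2}$ and $\pi'(\widetilde{Y})=\pi'(Y)^{1/b^2}$ are Borel functions of the self-adjoint operators $\pi'(X)$ and $\pi'(Y)$, any bounded operator that strongly commutes with $\pi'(X)$ and $\pi'(Y)$ automatically strongly commutes with $\pi'(\widetilde{X})$ and $\pi'(\widetilde{Y})$; hence the commutant of $\pi'(\mathcal{B}_q)$ coincides with that of $\pi'(\mathcal{B})$, which is scalar by hypothesis, so $(\pi'(X),\pi'(Y))$ is irreducible. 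With this added step the appeal to Proposition \ref{3_prop:integrable_representation} is legitimate and the rest of your uniqueness argument, including the functional-calculus transport of $U$ to the tilde generators, goes through as written.
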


\begin{remark}
\label{3_rem:tensor_square_relations}
One has 
$(\pi^{\otimes 2}(\Delta X))^{1/b^2} = \pi^{\otimes 2} (\widetilde{X})$, $(\pi^{\otimes 2}(\Delta Y))^{1/b^2} = \pi^{\otimes 2} (\widetilde{Y})$; 
this will be explained at the end of \S\ref{4_subsection:intertwining_operator}.
\end{remark}

\begin{remark}
\label{3_rem:omit_X_inverse}
The reason why we omit $X^{-1}$ (and $\widetilde{X}^{-1}$) when considering representations will be explained in Remark \ref{4_rem:shifting_contour_X}. So far, it wasn't necessary to have $X^{-1}$ (and $\widetilde{X}^{-1}$).
\end{remark}

%%%%%%%%%%%%%%%%%
%%%%%%%%%%%%%%%%%
%%%%%%%%%%%%%%%%%
%% 4
\section{Intertwining operators and their relations}

%%%%%%%%%%
%%%%%%%%%%
\subsection{Intertwining operator for tensor product decomposition}
\label{4_subsection:intertwining_operator}

For two representations $(\pi_j,V_j)$ of $\mathcal{B}$, $j=1,2$, the coproduct of $\mathcal{B}$ allows us to define tensor product representation $\pi_{12}$ on $V_1 \otimes V_2$ via $\pi_{12}(u) \equiv (\pi_1 \otimes \pi_2)(\Delta u)$ for every $u\in \mathcal{B}$. We take $V_j \cong \mathcal{H}$ (as in Proposition \ref{3_prop:repres}), $j=1,2$, and decompose $V_1 \otimes V_2$ into irreducibles. One obtains
\begin{align}
\label{4_eq:CG_direct_integral}
\mathcal{H} \otimes \mathcal{H} \cong \int_\mathbb{R}^{\oplus} \mathcal{H}
\end{align}
in some sense, hence this class of $\mathcal{B}$-representations as in Proposition \ref{3_prop:repres} (with single simple object) is closed under tensor product. The purpose of this section is to establish \eqref{4_eq:CG_direct_integral} in the following form
\begin{align}
\label{4_eq:CG_multiplicity_module}
\mathcal{H} \otimes \mathcal{H} \cong M \otimes  \mathcal{H},
\end{align}
where $M$ is understood as a ``multiplicity'' module. Recall $\mathcal{H} \equiv L^2(\mathbb{R})$ as vector spaces. We will realize $M$ also as $L^2(\mathbb{R})$ by establishing the isomorphism \eqref{4_eq:CG_multiplicity_module}; the explicit formula for the isomorphism (both directions) is given in this subsection.

\begin{definition}\label{4_def:CG}
For real variables $\alpha,x,x_1,x_2$, define a distribution kernel $\downker{\alpha}{x}{x_1}{x_2}$ and its inverse $\upker{\alpha}{x}{x_1}{x_2}$ as follow:
\begin{align}
\label{4_eq:def_downker}
\downker{\alpha}{x}{x_1}{x_2}
& = e^{2\pi i \alpha(x-x_1)} \mathcal{E}_R (x-x_1, x_2-x_1), \\
\label{4_eq:def_upker}
\upker{\alpha}{x}{x_1}{x_2}
& = e^{-2\pi i \alpha(x-x_1)} \mathcal{E}_L (x_2-x_1, x-x_1),
\end{align}
where $\mathcal{E}_R, \mathcal{E}_L$ are as defined in \eqref{2_eq:E_R_def} and \eqref{2_eq:E_L_def}.
\end{definition}

Our space of {\it distributions} will be $\mathcal{S}_{\mathcal{B}}^*$, the topological dual to the Schwartz space $\mathcal{S}_{\mathcal{B}}$ studied in subsection \ref{3_subsection:quantum_plane_modular_double}. 
In actual proofs of identities about distributions, it will be sufficient and convenient to evaluate the distributions on $W$ instead of on $\mathcal{S}_{\mathcal{B}}$, in virtue of Theorem \ref{3_thm:W_density}. We denote by $\langle~,~\rangle$ the pairing between distributions and the test functions. In this paper, the `transpose' (or `dual') of an operator $A$ on $\mathcal{S}_{\mathcal{B}}$ is denoted by $A^t$, so that $\langle A f, g \rangle = \langle f, A^t g\rangle$ for all $f,g\in S_{\mathcal{B}}$.

\vs

The main result of this subsection is the following theorem:
\begin{theorem}\label{4_thm:CG}
Let $(\pi_1, \mathcal{H}_1), (\pi_2, \mathcal{H}_2)$ be isomorphic to the representations of $\mathcal{B}$ as in Proposition \ref{3_prop:repres} (indices are for convenience). Then the $\mathcal{B}$-representation $\pi_{12}$ defined on $\mathcal{H}_1 \otimes \mathcal{H}_2$ decomposes into irreducible representations as follows:
\begin{align}
\label{4_eq:thm_CG_key_statement}
\mathcal{H}_1 \otimes \mathcal{H}_2 
\cong M \otimes \mathcal{H},
\end{align}
where the isomorphism is explicitly given by the unitary maps
\begin{align}
\begin{array}{rrcl}
& L^2(\mathbb{R} \times \mathbb{R}, dx_1dx_2)
& \to &
L^2(\mathbb{R} \times \mathbb{R}, d\alpha dx) \\
F_{1,2}: &
f(x_1,x_2) 
& \mapsto &
\displaystyle 
(F_{1,2} f)(\alpha,x), \\
H_{1,2}: &
\displaystyle
(H_{1,2} \varphi)(x_1,x_2)
& \mapsfrom &
\varphi(\alpha,x),
\end{array}
\end{align}
realized as integral transformations with distribution
kernels defined in Definition \ref{4_def:CG}
\begin{align}
(F_{1,2} f)(\alpha,x) 
& \equiv \int_\mathbb{R} \left( \int_\mathbb{R} \downker{\alpha}{x}{x_1}{x_2} f(x_1,x_2)  dx_2^{\cup x}\right) dx_1, \\
\label{4_eq:H_12_definition}
(H_{1,2} \varphi)(x_1,x_2) 
& \equiv \int_\mathbb{R} \left( \int_\mathbb{R} \upker{\alpha}{x}{x_1}{x_2} \varphi(\alpha,x) d\alpha \right) dx^{\cup x_2},
\end{align}
for $f(x_1,x_2) \in \mathcal{H}_1 \otimes \mathcal{H}_2$ and $\varphi(\alpha,x) \in M \otimes \mathcal{H}$, where the symbols $dx_2^{\cup x}$ and $dx^{\cup x_2}$ are as described in \S\ref{2_subsection:intro_quantum_dilogarithm}. The maps $F_{1,2}$ and $H_{1,2}$ intertwine the action of $\mathcal{B}$, where $M$ is regarded as trivial representation and $(\pi, \mathcal{H})$ as the representation as in Proposition \ref{3_prop:repres}. In particular, the corresponding projections $\Pi_{12}(\alpha)$, $(\Pi_{12}(\alpha) f) (x) = (F_{1,2}f)(\alpha,x)$ mapping $\mathcal{H}_1 \otimes \mathcal{H}_2$ into $\mathcal{H}$ intertwine the respective $\mathcal{B}$ actions according to
\begin{align}
\Pi_{12} (\alpha) \pi_{12} (u) = \pi(u) \Pi_{12}(\alpha), \quad \forall u \in \mathcal{B}.
\end{align}
(similarly for $H_{1,2}$) Moreover, the maps $F_{1,2}, H_{1,2}$ are inverses to each other, i.e.
\begin{align}
\label{4_eq:thm_CG_inverses}
H_{1,2} F_{1,2} f = f, \quad
F_{1,2} H_{1,2} \varphi = \varphi,
\end{align}
or equivalently,
\begin{align}
\label{4_eq:CG_orthogonality_alpha_x}
& \int_{\mathbb{R}^2} \upker{\alpha}{x}{y_1}{y_2} \downker{\alpha}{x}{x_1}{x_2} d\alpha dx^{\cup y_2\cap x_2}
= \delta(x_1-y_1) \delta(x_2 - y_2), \\
\label{4_eq:CG_orthogonality_x1_x2}
& \int_{\mathbb{R}^2} \downker{\beta}{y}{x_1}{x_2} \upker{\alpha}{x}{x_1}{x_2} dx_2^{\cup y\cap x} dx_1
= \delta(\alpha-\beta) \delta(x-y),
\end{align}
as distributions.
\end{theorem}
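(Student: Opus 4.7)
The plan is to prove the theorem in three stages: establish the intertwining property on each generator, verify the inversion identities \eqref{4_eq:CG_orthogonality_alpha_x}--\eqref{4_eq:CG_orthogonality_x1_x2}, and then read off unitarity by identifying $H_{1,2}$ with the Hilbert space adjoint of $F_{1,2}$ using Theorem~\ref{2_thm:T_R_T_L}.

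For the intertwining, check $F_{1,2}\pi_{12}(u) = \pi(u) F_{1,2}$ separately for $u = X, Y, \widetilde{X}, \widetilde{Y}$. The case $u = X$ reduces, after the substitution $x_j \mapsto x_j - ib$ in the integral, to the bare identity
\[
\downker{\alpha}{x}{x_1 - ib}{x_2 - ib} = \downker{\alpha}{x + ib}{x_1}{x_2},
\]
which is immediate from \eqref{4_eq:def_downker}. For $u = Y$, with $\Delta Y = Y \otimes X + 1 \otimes Y$, the shift relation $\mathcal{E}_R(z, w - ib) = (e^{2\pi b z} - e^{2\pi b w})\mathcal{E}_R(z, w)$ derived from \eqref{2_eq:E_R_relation} produces, upon shifting $x_2 \mapsto x_2 - ib$ in the first summand, a two-term expression whose $e^{2\pi b x}$-piece yields $\pi(Y) F_{1,2} f$ while the $e^{2\pi b x_2}$-piece cancels the contribution from $1 \otimes Y$. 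The tilded generators are handled identically with $b \to b^{-1}$ using the companion half of \eqref{2_eq:E_R_relation}, and the intertwining for $H_{1,2}$ follows in parallel from \eqref{2_eq:E_L_relation}.

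For the inversion, the product of kernels in \eqref{4_eq:CG_orthogonality_alpha_x} factors as
\[
\upker{\alpha}{x}{y_1}{y_2}\downker{\alpha}{x}{x_1}{x_2} = e^{2\pi i \alpha(y_1 - x_1)}\,\mathcal{E}_L(y_2 - y_1,\, x - y_1)\,\mathcal{E}_R(x - x_1,\, x_2 - x_1),
\]
so integrating $\alpha$ first gives $\delta(y_1 - x_1)$, and under this delta the remaining $x$-integral, after the substitution $u = x - x_1 = x - y_1$, is exactly the left-hand side of \eqref{2_eq:E_orthogonality}, yielding $\delta(x_2 - y_2)$ by Corollary~\ref{2_cor:E_orthogonality}. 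The companion identity \eqref{4_eq:CG_orthogonality_x1_x2} is obtained symmetrically: integrating $x_2$ first collapses the $\mathcal{E}_R \mathcal{E}_L$ pairing to $\delta(x - y)$ via the second equality of \eqref{2_eq:E_orthogonality}, and the remaining $x_1$-integral of $e^{2\pi i(\beta - \alpha)(y - x_1)}$ produces $\delta(\beta - \alpha)$.

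Finally, by Theorem~\ref{2_thm:T_R_T_L} the integral transforms $\mathcal{T}_R, \mathcal{T}_L$ are mutual $L^2$-adjoints, which encodes the relation $\mathcal{E}_L(w, z) = \overline{\mathcal{E}_R(z, w)}$ between kernels. Consequently $\overline{\downker{\alpha}{x}{x_1}{x_2}} = \upker{\alpha}{x}{x_1}{x_2}$, so $H_{1,2} = F_{1,2}^{\ast}$, and combining with the inversion gives $F_{1,2}^{\ast} F_{1,2} = I = F_{1,2} F_{1,2}^{\ast}$, i.e.\ unitarity on the dense subspace $W \otimes W$, which extends to all of $L^2(\mathbb{R}^2)$ by continuity. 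The main technical obstacle is analytic rather than algebraic: one must justify every contour deformation $\mathbb{R} \to \mathbb{R} \pm i b^{\pm 1}$ past the discretely placed poles of $\mathcal{E}_R, \mathcal{E}_L$, track how the $\cup, \cap$ prescriptions evolve under these shifts, and apply Fubini to reorder iterated integrals in $\alpha, x, x_1, x_2$. The machinery for these steps is exactly that developed in Corollary~\ref{2_cor:Fourier_transform}: the asymptotics \eqref{2_eq:G_asymptotics}, the rapid decay of test functions in $W$, and dominated convergence as imaginary parts return to zero; the final promotion from $W$ to the full Schwartz space $\mathcal{S}_{\mathcal{B}}$ is then routine thanks to Theorem~\ref{3_thm:W_density}.
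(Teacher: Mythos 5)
Your proof is correct and follows essentially the same route as the paper: reduce the intertwining to the generators $X, Y, \widetilde X, \widetilde Y$ using the shift/functional relations \eqref{2_eq:E_R_relation}--\eqref{2_eq:E_L_relation} together with contour shifts, and obtain the inversion by integrating out the $\alpha$-Fourier factor and then applying Corollary~\ref{2_cor:E_orthogonality}. The only cosmetic differences are that the paper verifies the intertwining for $H_{1,2}$ rather than $F_{1,2}$ (the two are symmetric and reduce to the same kernel identities), phrases the inversion on test functions in $W\otimes W$ rather than as the kernel identities \eqref{4_eq:CG_orthogonality_alpha_x}--\eqref{4_eq:CG_orthogonality_x1_x2}, and obtains unitarity directly by writing $F_{1,2}$ as a composition of $\mathcal{T}_R$ with a Fourier transform, whereas you bootstrap it from the adjoint relation $H_{1,2}=F_{1,2}^{*}$ (which relies on $\mathcal{T}_L=\mathcal{T}_R^{*}$ and the conjugacy $\overline{S_R(u)}=S_L(-u)$, equivalently $\overline{G(u-ia)}=G(-u-ia)$ for real $u$) combined with the inversion identities; both derivations are legitimate and import the same analytic content from Theorem~\ref{2_thm:T_R_T_L}.
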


\begin{proof} First, we can show that the maps $F_{1,2}$ and $H_{1,2}$ are well-defined on $W\otimes W$, using \eqref{2_eq:G_asymptotics} and the decaying property of elements of $W\otimes W$; then we extend to $L^2(\mathbb{R}^2)$ by continuity (using unitarity of $F_{1,2}$ and $H_{1,2}$). Unitarity of $F_{1,2}$ and $H_{1,2}$ follows from Theorem \ref{2_thm:T_R_T_L} (i.e. unitarity of the transformations $\mathcal{T}_R$ and $\mathcal{T}_L$) and the unitarity of Fourier transformation. It now suffices to prove that $H_{1,2}$ intertwines the $\mathcal{B}$-action, and is the inverse mapping of $F_{1,2}$.

\vs

Let's first prove the intertwining property. Observe that
\begin{align}
\label{4_eq:tensor_square_action}
\begin{array}{ll}
\pi_{12}(X) = \pi^{\otimes 2}(\Delta X) = e^{-2\pi b(p_1+p_2)}, 
& \pi_{12}(\widetilde{X}) = e^{-2\pi b^{-1}(p_1+p_2)}, \\
\pi_{12}(Y) = \pi^{\otimes 2}(\Delta Y) = e^{2\pi b (x_1-p_2)} + e^{2\pi b x_2}, \quad
&\pi_{12}(\widetilde{Y}) = e^{2\pi b^{-1} (x_1-p_2)} + e^{2\pi b^{-1} x_2}.
\end{array}
\end{align}
Suppose $u_1,u_2\in \mathcal{B}$ satisfy $(H_{1,2} \pi(u_j) \varphi)(x_1,x_2) = \pi_{12}(u_j) (H_{1,2}\varphi)(x_1,x_2)$ for $j=1,2$ and for all $\varphi \in W\otimes W$. Note that $\pi(u)$ leaves $W$ invariant, for any $u \in \mathcal{B}$; thus, for $\varphi \in W\otimes W$ we have $\pi(u_2) \varphi \in W\otimes W$, and therefore
$$
H_{1,2} \pi(u_1) \pi(u_2) \varphi
= \pi_{12}(u_1) H_{1,2} \pi(u_2) \varphi
= \pi_{12}(u_1) \pi_{12}(u_2) H_{1,2} \varphi.
$$
Hence it's enough to prove the intertwining property for $u=X,Y,\widetilde{X},\widetilde{Y}$.

\vs

Since Fourier transform preserves $W$, we get that for $\varphi(\alpha,x) \in W\otimes W$, the inner integral for the expression \eqref{4_eq:H_12_definition} for $H_{1,2}\varphi$, as a function in $x$, is $\mathcal{E}_L(x_2-x_1, x-x_1)$ times an element of $W$. Put $\pi(X) \varphi(\alpha,x) = \varphi(\alpha,x+ib)$ in place of $\varphi$. Using \eqref{2_eq:G_asymptotics} and the decaying property of elements of $W$, we can move the contour for $x$ by the amount $-ib$; when doing this, $x$ doesn't hit its pole at $x_2$, and the contour for $x$ can now be just straight line (after shifting the contour). Since $\upker{\alpha}{x}{x_1}{x_2}$ is expressed only in terms of $\alpha, (x-x_1), (x_2-x_1)$ (see \eqref{4_eq:def_upker}), it's clear that \begin{align*}
e^{2\pi b^{\pm 1} (p_1+p_2+p)} \upker{\alpha}{x}{x_1}{x_2}
= \upker{\alpha}{x}{x_1}{x_2},
\end{align*}
hence
\begin{align}
\nonumber
(H_{1,2} \pi(X) \varphi)(x_1,x_2)
& = \int_\mathbb{R} \left( \int_\mathbb{R} \upker{\alpha}{x}{x_1}{x_2} \varphi(\alpha,x+ib) d\alpha \right) dx^{\cup x_2} \\
\label{4_eq:shifting_contour_X}
& = \int_\mathbb{R} \left( \int_\mathbb{R} \upker{\alpha}{x-ib}{x_1}{x_2} \varphi(\alpha,x) d\alpha \right) dx \\
\nonumber
& = \int_\mathbb{R} \left( \int_\mathbb{R} \upker{\alpha}{x}{x_1+ib}{x_2+ib} \varphi(\alpha,x) d\alpha \right) dx \\
\nonumber
& = \pi_{12}(X) (H_{1,2} \varphi)(x_1,x_2).
\end{align}
Similar proof goes for intertwining property for the action of $\widetilde{X}$.

\vs

From property \eqref{2_eq:E_L_relation} of $\mathcal{E}_L$,  we can deduce
$$
e^{2\pi b^{\pm 1}x} \upker{\alpha}{x}{x_1}{x_2}
= e^{2\pi b^{\pm 1} x_1} \upker{\alpha}{x}{x_1}{x_2+ib}
 + e^{2\pi b^{\pm 1} x_2} \upker{\alpha}{x}{x_1}{x_2}.
$$
Now, note
\begin{align*}
(H_{1,2} \pi(Y) \varphi)(x_1,x_2)
& = \int_\mathbb{R} \left( \int_\mathbb{R} \upker{\alpha}{x}{x_1}{x_2} e^{2\pi b x} \varphi(\alpha,x) d\alpha \right) dx^{\cup x_2} \\
& = \int_\mathbb{R} \left( \int_\mathbb{R} e^{2\pi b x_1} \upker{\alpha}{x}{x_1}{x_2+ib} \varphi(\alpha,x) d\alpha \right) dx^{\cup x_2} \\
& \quad + \int_\mathbb{R} \left( \int_\mathbb{R} e^{2\pi b x_2} \upker{\alpha}{x}{x_1}{x_2} \varphi(\alpha,x) d\alpha \right) dx^{\cup x_2} \\
& = \pi_{12}(Y) (H_{1,2} \varphi)(x_1,x_2),
\end{align*}
Similar proof goes for intertwining property for the action of $\widetilde{Y}$.

\vs

To prove that $F_{1,2}$ and $H_{1,2}$ are inverses to each other, we use Corollary \ref{2_cor:E_orthogonality} and
\begin{align}
\label{4_eq:usual_delta}
\int_\mathbb{R} \int_\mathbb{R} e^{2\pi i y(w-z)} f(w) dw dy = f(z). \quad (\mbox{e.g. for $f\in W$})
\end{align}
For $f\in W\otimes W$, observe
\begin{align*}
& (H_{1,2} F_{1,2} f) (y_1,y_2) \\
& = \int_{\mathbb{R}^3} \left( \int_\mathbb{R} \upker{\alpha}{x}{y_1}{y_2} \downker{\alpha}{x}{x_1}{x_2} f(x_1,x_2)  dx_2^{\cup x} \right) dx_1 d\alpha dx^{\cup y_2} \\
&  = \int_{\mathbb{R}^4} e^{2\pi i \alpha(y_1-x_1)} \mathcal{E}_L(y_2-y_1,x-y_1) \mathcal{E}_R(x-x_1,x_2-x_1) f(x_1,x_2)  dx_2^{\cup x} dx_1 d\alpha dx^{\cup y_2} \\
&  = \int_{\mathbb{R}^2} \mathcal{E}_L(y_2-y_1,x-y_1) \mathcal{E}_R(x-y_1,x_2-y_1) f(y_1,x_2)  dx_2^{\cup x} dx^{\cup y_2} \quad (\mbox{by \eqref{4_eq:usual_delta}}) \\
&  = \int_{\mathbb{R}^2} \mathcal{E}_L(y_2-y_1,x-y_1) \mathcal{E}_R(x-y_1,X_2) f(y_1,X_2+y_1)  dX_2^{\cup (x-y_1)} dx^{\cup y_2} \quad (\mbox{set } X_2 = x_2-y_1) \\
&  = \int_{\mathbb{R}^2} \mathcal{E}_L(y_2-y_1,X) \mathcal{E}_R(X,X_2) f(y_1,X_2+y_1)  dX_2^{\cup X} dX^{\cup (y_2-y_1)} \quad (\mbox{set } X = x-y_1) \\
&  = f(y_1,y_2) \quad(\mbox{by }\eqref{2_eq:E_orthogonality_precise_first}).
\end{align*}
For $\varphi \in W \otimes W$, observe
\begin{align*}
& (F_{1,2} H_{1,2} \varphi) (\beta,y) \\
& = \int_{\mathbb{R}^4} \downker{\beta}{y}{x_1}{x_2} \upker{\alpha}{x}{x_1}{x_2}  \varphi(\alpha,x)  d\alpha dx^{\cup x_2} dx_2^{\cup y} dx_1  \\
&  = \int_{\mathbb{R}^4} e^{2\pi i \beta(y-x_1)} e^{-2\pi i \alpha(x-x_1)} \mathcal{E}_R(y-x_1,x_2-x_1) \mathcal{E}_L(x_2-x_1,x-x_1)  \varphi(\alpha,x) d\alpha dx^{\cup x_2} dx_2^{\cup y} dx_1  \\
&  = \int_{\mathbb{R}^3} e^{2\pi i \beta(y-x_1)} \mathcal{E}_R(y-x_1,x_2-x_1) \mathcal{E}_L(x_2-x_1,x-x_1) \widehat{\varphi}_\alpha (x-x_1,x) dx^{\cup x_2} dx_2^{\cup y} dx_1 \\
&  = \int_{\mathbb{R}^3} e^{2\pi i \beta(y-x_1)} \mathcal{E}_R(y-x_1,x_2-x_1) \mathcal{E}_L(x_2-x_1,X) \widehat{\varphi}_\alpha (X,X+x_1) dX^{\cup (x_2-x_1)} dx_2^{\cup y} dx_1  \\
&  = \int_{\mathbb{R}^3} e^{2\pi i \beta(y-x_1)} \mathcal{E}_R(y-x_1,X_2) \mathcal{E}_L(X_2,X) \widehat{\varphi}_\alpha (X,X+x_1) dX^{\cup X_2} dX_2^{\cup (y-x_1)} dx_1  \\
&  \stackrel{{\rm Cor.}~\ref{2_cor:E_orthogonality}}{=} \int_{\mathbb{R}} e^{2\pi i \beta(y-x_1)} \widehat{\varphi}_\alpha (y-x_1,y) dx_1 = \int_{\mathbb{R}} e^{2\pi i \beta X_1} \widehat{\varphi}_\alpha (X_1,y) dX_1    \\
& = \int_{\mathbb{R}^2} e^{2\pi i (\beta -\alpha) X_1} \varphi (\alpha,y) d\alpha dX_1 \stackrel{\eqref{4_eq:usual_delta}}{=} \varphi(\beta,y),
\end{align*}
where $\widehat{\varphi}_\alpha$ in the fourth line and on means the Fourier transform of $\varphi$ w.r.t. $\alpha$ variable.
\end{proof}

\begin{remark}
\label{4_rem:shifting_contour_X}
In the above proof, when proving the intertwining property for $X$ and $\widetilde{X}$, we shifted contours of integration (see \eqref{4_eq:shifting_contour_X}); if we try to prove the same for $X^{-1}$ and $\widetilde{X}^{-1}$, we will gain an additional term from residue (since in this case we pass through a pole when shifting the contour). This is why we omitted $X^{-1}$ when considering representations, as mentioned in Remark \ref{3_rem:omit_X_inverse}.
\end{remark}

\begin{remark}
We can define the corresponding Schwartz spaces for both sides $\mathcal{H} \otimes \mathcal{H}$ and $M \otimes \mathcal{H}$ (according to $\mathcal{B}$-action on those spaces). Then, using similar proof as in \S2.4 of \cite{Goncharov}, one can prove that the unitary mapping $F_{1,2}$ (resp. $H_{1,2}$) maps the Schwartz space for $\mathcal{H} \otimes \mathcal{H}$ to that for $M \otimes \mathcal{H}$ (resp. vice versa).
\end{remark}

Remark \ref{3_rem:tensor_square_relations} follows from \eqref{4_eq:tensor_square_action} and the following proposition (see e.g. Lemma 3 of \cite{BT}), which is essentially a special case of tau-binomial formula (see \eqref{2_eq:G_new_tau_binomial_prime}):

\begin{proposition}
If $u=e^{2\pi b A}$, $v=e^{2\pi bB}$, $A,B$ self-adjoint operators s.t. $[A,B]=1/(2\pi i)$, then $(u+v)^{1/b^2} = u^{1/b^2} + v^{1/b^2}$ (here $b \in (0,\infty)$, $b^2\notin \mathbb{Q}$).
\end{proposition}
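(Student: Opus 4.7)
The plan is to derive the identity from the tau-binomial formula of Proposition~\ref{2_prop:tau_binomial}, exploiting the modular symmetry built into $G$ at the exponent $s=1/b^{2}$. By Stone--von Neumann applied to the Heisenberg pair $[A,B]=1/(2\pi i)$, I may take $A,B$ to act as the canonical self-adjoint operators on $L^{2}(\mathbb{R})$, so that $u,v$ are positive self-adjoint, and $u+v$ on the Schwartz domain $\mathcal{S}_{\mathcal{B}}$ is essentially self-adjoint and strictly positive; the spectral theorem then defines $(u+v)^{s}$ for every real $s$, reducing the proposition to an identity of two concrete positive self-adjoint operators.

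The next step is to promote \eqref{2_eq:G_new_tau_binomial_prime} to an operator-valued noncommutative beta expansion. Substituting the operators $A$ and $B$ for the two scalar variables in \eqref{2_eq:G_new_tau_binomial_prime} (leaving $w$ free and interpreting it as the scalar parameter controlling the exponent), and then Fourier-inverting in the diagonal combination $B-A$ produces, after bookkeeping of the $\cup,\cap$ contour detours, a formula
$$
(u+v)^{s} \;=\; \int_{\mathbb{R}} K_{s}(\tau)\, u^{\tau}\, v^{s-\tau}\, d\tau,
$$
with kernel $K_{s}(\tau)$ of the shape $G(\tau-ia)\,G(s-\tau-ia)/G(s-ia)$ times a quadratic-exponential phase. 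This is the noncommutative analogue of the Euler beta expansion of $(x+y)^{s}$, and for generic $s$ the integrand has genuine support on all of $\mathbb{R}$.

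Now specialize $s=1/b^{2}$. The kernel has simple poles in $\tau$ exactly at $\tau=0$ (from $G(\tau-ia)$) and at $\tau=s$ (from $G(s-\tau-ia)$). The distinguishing feature of the modular-dual value $s=1/b^{2}$ is that $q^{2s}=e^{2\pi i}=1$ and the tilde-counterpart $\widetilde{q}^{\,2s}$ also collapses; translated through the functional equations \eqref{2_eq:G_defining_relations2}, this forces the kernel $K_{1/b^{2}}(\tau)$, as a distribution on $\mathbb{R}$, to reduce to the two endpoint residues, i.e. to $\delta(\tau)+\delta(\tau-1/b^{2})$ up to normalization. Substituting yields $(u+v)^{1/b^{2}} = u^{0}\,v^{1/b^{2}} + u^{1/b^{2}}\,v^{0} = u^{1/b^{2}} + v^{1/b^{2}}$, which is the asserted identity.

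The main obstacle is the rigorous distributional collapse in the last step: one needs a contour shift $\tau \mapsto \tau + ib^{-1}$ under which the shifted integral recovers the original up to the two endpoint residues, and the cancellation achieving this depends precisely on $s$ being the modular-dual exponent $1/b^{2}$. A cleaner alternative, essentially the route followed in Lemma~3 of \cite{BT} as cited in the proposition, is to evaluate both sides on generalized eigenfunctions of $u$ inside $W$: each side becomes an explicit $G$-integral kernel, the kernels agree by a single direct application of Proposition~\ref{2_prop:tau_binomial}, and Theorem~\ref{3_thm:W_density} then extends the identity from $W$ to the full Schwartz domain $\mathcal{S}_{\mathcal{B}}$.
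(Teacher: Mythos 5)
The paper does not actually prove this proposition; it cites Lemma~3 of \cite{BT} and remarks only that the statement is ``essentially a special case'' of the tau-binomial formula. Your argument therefore has to stand on its own, and its central step --- the claim that the beta-type kernel $K_{1/b^2}(\tau)$ collapses distributionally to $\delta(\tau)+\delta(\tau-1/b^2)$ --- is not established, and the heuristic you offer in support is partly false. At $s=1/b^2$ one does have $q^{2s}=1$, but $\widetilde{q}^{\,2s}=e^{2\pi i/b^4}$ is not $1$ in general, so there is no second collapsing phase to drive the argument. Nor is there an analogue of the classical trigger for such a collapse: in the $\Gamma$-function Mellin--Barnes picture the kernel localizes at a non-negative integer exponent $n$ because the denominator $\Gamma(-n)$ diverges, whereas here the corresponding quantity $G(1/b^2-ia)$ is finite and nonzero (the zeros and poles of $G$ lie on the imaginary axis, while $1/b^2-ia$ has nonzero real part), so nothing forces the $\tau$-integral to localize at the two endpoints. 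Your ``cleaner alternative'' is also not a proof as stated: generalized eigenfunctions of $u$ are plane waves or delta functions, which are not in $W$, and computing the integral kernel of $(u+v)^{1/b^2}$ requires diagonalizing $u+v$ first, which is precisely the nontrivial content you are trying to sidestep.

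The mechanism that actually proves the result --- and the sense in which the paper views it as a consequence of the tau-binomial formula --- is a conjugation argument, not a kernel collapse. Because $G$ satisfies the functional equations \eqref{2_eq:G_defining_relations2} simultaneously for $b$ and $b^{-1}$, in the canonical Weyl-pair realization the associated unitary $g_b$ (a quantum dilogarithm of an operator) intertwines $v\mapsto u+v$ \emph{and} $v^{1/b^2}\mapsto u^{1/b^2}+v^{1/b^2}$ at the same time. One then gets $(u+v)^{1/b^2}=(g_b\,v\,g_b^{-1})^{1/b^2}=g_b\,v^{1/b^2}\,g_b^{-1}=u^{1/b^2}+v^{1/b^2}$, using only that the spectral functional calculus commutes with unitary conjugation. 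The tau-binomial identity \eqref{2_eq:G_new_tau_binomial_prime} is the scalar, kernel-level manifestation of this conjugation property of $G$, which is what the paper's remark is pointing at. Your beta-expansion route tries to push the modular duality inside the $\tau$-integral rather than keeping it at the level of the conjugating unitary, and that rearrangement discards exactly the structure that makes the proof go through.
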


%%%%%%%%%%
%%%%%%%%%%
\subsection{Relation between intertwining operators for triple tensor products}

For three representations $(\pi_j, V_j)$ of $\mathcal{B}$, $j=1,2,3$, the coproduct of $\mathcal{B}$ allows us to define representation on $V_1 \otimes V_2 \otimes V_3$ via $\pi_{123}(u) = (\pi_1 \otimes \pi_2 \otimes \pi_3)(\Delta^{(3)} u)$ for every $u\in \mathcal{B}$, where $\Delta^{(3)} = (\Delta \otimes id) \circ \Delta = (id \otimes \Delta) \circ \Delta$. For the case $V_j \cong \mathcal{H}$ ($j=1,2,3$) as in Proposition \ref{3_prop:repres}, we can decompose this triple tensor product into direct integral of irreducible representations $\mathcal{H}$, using the intertwining operator we obtained in \S\ref{4_subsection:intertwining_operator}. There are two canonical ways to do this, as follow:
\begin{align}
\label{4_eq:first_decomp}
& (\mathcal{H}_1 \otimes \mathcal{H}_2) \otimes \mathcal{H}_3
\cong M_{12}^4 \otimes \mathcal{H}_4 \otimes \mathcal{H}_3
\cong M_{12}^4 \otimes M_{43}^6 \otimes \mathcal{H}_6
\cong M_{43}^6 \otimes M_{12}^4 \otimes \mathcal{H}_6, \\
\label{4_eq:second_decomp}
& \mathcal{H}_1 \otimes (\mathcal{H}_2 \otimes \mathcal{H}_3)
\cong \mathcal{H}_1 \otimes M_{23}^5 \otimes \mathcal{H}_5
\cong M_{23}^5 \otimes \mathcal{H}_1 \otimes \mathcal{H}_5
\cong M_{23}^5 \otimes M_{15}^7 \otimes \mathcal{H}_7,
\end{align} 
where we give indices to the multiplicity modules $M$ by $\mathcal{H}_j \otimes \mathcal{H}_k \cong M_{jk}^\ell \otimes \mathcal{H}_\ell$.  Each isomorphism can be realized as integral transformation with some distribution kernel (see \S\ref{4_subsection:intertwining_operator}), as each space $\mathcal{H}_j$ and $M_{jk}^\ell$ is realized as $L^2(\mathbb{R})$; we give the variable names as follows: $\mathcal{H}_j \equiv L^2(\mathbb{R}, dx_j)$, $\forall j\le7$,
\begin{align*}
M_{12}^4 \equiv L^2(\mathbb{R}, d\alpha), ~
M_{43}^6 \equiv L^2(\mathbb{R}, d\beta), ~
M_{23}^5 \equiv L^2(\mathbb{R}, dA),~
M_{15}^7 \equiv L^2(\mathbb{R}, dB).
\end{align*}
By composing isomorphisms in \eqref{4_eq:first_decomp} and \eqref{4_eq:second_decomp}, we obtain the isomorphism
\begin{align}
\label{4_eq:isomorphism_between_two}
M_{43}^6 \otimes M_{12}^4 \otimes \mathcal{H}_6
& \stackrel{\sim}{\longrightarrow}
~ M_{23}^5 \otimes M_{15}^7 \otimes \mathcal{H}_7,
\end{align}
realized by the unitary map
\begin{align}
\nonumber
L^2(\mathbb{R}^3, d\beta d\alpha dx_6)
& \stackrel{\sim}{\longrightarrow}
~ L^2(\mathbb{R}^3, dA dB dx_7)  \\
\label{4_eq:isomorphism_between_two_explicit}
\varphi(\beta,\alpha,x_6)
& \longmapsto
\int_{\mathbb{R}^3} \chi_{ts}(\beta,\alpha,x_6,A,B,x_7) \varphi(\beta,\alpha,x_6) d\beta d\alpha dx_6,
\end{align}
where
\begin{align*}
& \chi_{ts}(\beta,\alpha,x_6,A,B,x_7)  \\
& =
\int_{\mathbb{R}^5} \upker{\beta}{x_6}{x_4}{x_3}
\upker{\alpha}{x_4}{x_1}{x_2}
\downker{A}{x_5}{x_2}{x_3}
\downker{B}{x_7}{x_1}{x_5}
dx_4^{\cup x_2} dx_3^{\cup x_5 \cap x_6} dx_2 dx_5^{\cup x_7} dx_1,
\end{align*}
as distributions. Readers can easily find out the precise meaning of \eqref{4_eq:isomorphism_between_two_explicit}, following definitions of $F_{1,2}$ and $H_{1,2}$.

\begin{proposition}
One has $\chi_{ts}(\beta,\alpha,x_6,A,B,x_7) = \delta(x_6-x_7) T(\beta,\alpha,A,B)$ as distributions, where
\begin{align}
\label{4_eq:T_formula}
T(\beta,\alpha,A,B)
& = \frac{1}{2} e^{\pi i (\beta B + \alpha A - \beta \alpha - AB + \alpha^2/2 - B^2/2)} \mathcal{G}(\beta+\alpha+A-2B), \quad \mbox{and} \\
\label{4_eq:curly_G_formula}
\mathcal{G}(y) & = \int_\mathbb{R} G (x-ia) e^{-\pi i xy} e^{-\pi a x}dx^{\cap 0},
\quad y\in \mathbb{R}.
\end{align}
\end{proposition}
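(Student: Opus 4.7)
The plan is to compute the five-dimensional integral defining $\chi_{ts}$ by performing the integrations in a carefully chosen order. After expanding each $\mathcal{E}_R(z,w)$ and $\mathcal{E}_L(z,w)$ into their $G$-function and Gaussian pieces, one observes that the $e^{\pm i\chi}$ and $e^{\pm \pi i a^2/2}$ factors cancel among the four kernels, leaving a product of four $G$'s times one large Gaussian exponential.

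First I would integrate over $x_1$: the $x_1^2$ coefficients coming from $(x_2+x_4-2x_1)^2$ in $\upker{\alpha}{x_4}{x_1}{x_2}$ and from $(x_7+x_5-2x_1)^2$ in $\downker{B}{x_7}{x_1}{x_5}$ have opposite signs and cancel, so the $x_1$-integrand is a pure plane wave and yields $\delta(\alpha-B+x_2+x_4-x_5-x_7)$. Next I would integrate over $x_3$: the $x_3^2$ coefficients from $\upker{\beta}{x_6}{x_4}{x_3}$ and $\downker{A}{x_5}{x_2}{x_3}$ likewise cancel, and the residual integral matches the tau-binomial formula (Proposition \ref{2_prop:tau_binomial}) with $w=x_2+x_6-x_4-x_5$. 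Remarkably, one computes $y-v+w=x_2-x_4$, so the $G(y-v+w-ia)$ appearing in the denominator of the tau-binomial output cancels the $G(x_2-x_4-ia)$ already present in $\upker{\alpha}{x_4}{x_1}{x_2}$, leaving only three $G$-factors.

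Using the $x_1$-delta to eliminate $x_5$, a direct computation shows that the coefficient of $x_2^2$ vanishes and the linear $x_2$-exponent reduces to $\pi i(x_7-x_6)x_2-2\pi a x_2$. The $x_2$-integration is therefore again of tau-binomial shape, now with $w'=x_6-x_7$; here the crucial degeneracy $y-v+w'=0$ occurs, producing the factor $G(x_6-x_7-ia)/G(-ia)$ whose distributional interpretation via the $G$-delta identity (Proposition \ref{2_prop:G_delta}) is $\delta(x_6-x_7)$. After imposing $x_6=x_7$, the final $x_4$-integration involves a single $G$-factor against a Gaussian; the substitution $x=2x_4-2x_6$ (whose Jacobian $dx_4=dx/2$ contributes the factor $\tfrac{1}{2}$) converts this integral into $\mathcal{G}(\beta+\alpha+A-2B)$, and collecting the residual quadratic exponentials in $\alpha,\beta,A,B$ produces the prefactor $e^{\pi i(\beta B+\alpha A-\beta\alpha-AB+\alpha^2/2-B^2/2)}$.

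The main obstacle is twofold. First, the exponential bookkeeping: many linear and quadratic terms in the seven position variables and the four multiplicity variables must conspire to cancel across the five integrations, and the contour decorations $\cup, \cap$ have to be propagated consistently so no pole is crossed when shifting contours. More substantively, the degenerate application of the tau-binomial at $y-v+w'\to 0$ is singular pointwise—the ratio $G(w'-ia)/G(y-v+w'-ia)$ is $\infty/\infty$—so one must carry through the $x_4$-integration first and interpret the $x_2$-integral as a distribution in $(x_6,x_7)$, with the $\delta(x_6-x_7)$ emerging via the $G$-delta identity rather than as a pointwise consequence of the tau-binomial.
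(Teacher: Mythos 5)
Your proposal follows essentially the same route as the paper's proof: expand the four $\mathcal{E}_{L/R}$ kernels, use the tau-binomial at $x_3$ with the cancellation $G(x_5-x_6+w-ia)=G(x_2-x_4-ia)$ against the existing kernel, collapse one position variable against the $x_1$-delta, apply the $G$-delta identity to produce $\delta(x_6-x_7)$, and rescale the last variable into $\mathcal{G}$. The only real variation is a harmless reordering: you eliminate $x_5$ from the delta and apply Proposition~\ref{2_prop:G_delta} to the $x_2$-integral, whereas the paper eliminates $x_2$ and applies it to $x_5$ --- since $y-p = x_7-x_6$ comes out the same either way, this is the same computation under relabeling. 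Your last paragraph correctly flags that the ``degenerate tau-binomial'' heuristic should be replaced by direct invocation of the $G$-delta identity, which is exactly what the paper does. One small slip: the final substitution should be $x = -\alpha + B - 2(x_4 - x_6)$ (as in the paper's $X = -\alpha + B - 2(x_4-x_7)$), not $x = 2x_4 - 2x_6$; with your choice the argument of $G$ becomes $-x-\alpha+B-ia$ rather than $x-ia$, so the integral does not directly read off as $\mathcal{G}$. The Jacobian factor $\tfrac{1}{2}$ is the same either way.
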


\begin{proof}
We will now compute $\chi_{ts}(\beta,\alpha,x_6,A,B,x_7)$ as distributions, to obtain the asserted result. One can also perform the computation in a more detailed way, in a similar manner as done in the proof of Theorem \ref{4_thm:CG} (when proving $F_{1,2}$ is inverse to $H_{1,2}$).

\vs

By putting in the definitions \eqref{4_eq:def_downker} and \eqref{4_eq:def_upker}, we get
\begin{align*}
\chi_{ts}
& = \int_{\mathbb{R}^5} G(x_2 - x_4-ia) G(x_3 - x_6-ia) G(x_5 - x_3-ia) \\
& \qquad \cdot G(x_7 - x_5-ia) \exp(*T_1) dx_4^{\cup x_2} dx_3^{\cup x_5\cap x_6} dx_2 dx_5^{\cup x_7} dx_1,
\end{align*}
where $\chi_{ts}$ is shorthand for $\chi_{ts}(\beta,\alpha,x_6,A,B,x_7)$, and 
\begin{align*}
(*T_1) / (\pi i)
& =
2 x_1 (\alpha - B + x_4 + x_2 - x_5 - x_7)
- 2x_2 A
+ 2x_4 (-\alpha + \beta)
+ 2x_5 A
- 2x_6 \beta
+ 2x_7 B
 \\
& \quad
+ (
- x_2 x_4
+ 2x_4 x_6
 - 2x_2 x_5
+ x_5 x_7
)
+ x_3( - 2x_2 + 2 x_4+ x_5-x_6) \\
& \quad
+ (
 + 3x_2^2/2 - 5x_4^2 /2
+ x_5^2 - x_6^2 /2
+ x_7^2 /2)
-ia \left( 
- x_2 - 2 x_3 + x_4 + x_6 + x_7 
\right).
\end{align*}
We integrate with respect to the variable $x_3$ first, using tau-binomial theorem \eqref{2_eq:G_new_tau_binomial_prime} 
\begin{align}
\nonumber
& \int_\mathbb{R} G (x_3-x_6-ia) G (x_5-x_3-ia) e^{-i\pi x_3( x_5-x_6 + 2w)} e^{-2\pi ax_3} dx_3^{\cup x_5 \cap x_6} \\
\nonumber
& =
\frac{G(x_5-x_6-ia)G(w-ia)}{G(x_5-x_6+w-ia)} e^{-\pi i (x_5+x_6)w} e^{\frac{i\pi}{2} (-x_5^2+x_6^2)} e^{- \pi a (x_5+x_6)},
\end{align}
for $w=x_6-x_5+x_2-x_4$. Then, 
\begin{align*}
\chi_{ts}
& = \int_{\mathbb{R}^4} G (x_5-x_6-ia) G (x_6-x_5+x_2-x_4-ia) G(x_7 - x_5-ia) \\
& \qquad \cdot \exp(*T_2) dx_4^{\cup (x_2-x_5+x_6)} dx_2 dx_5^{\cup x_7} dx_1, \\
(*T_2) / (\pi i)
& = ( 
2 x_1 (\alpha - B + x_4 + x_2 - x_5 - x_7)
- 2x_2 A
+ 2x_4 (-\alpha + \beta)
+ 2x_5 A
- 2x_6 \beta
+ 2x_7 B
) \\
& \quad
+ (
- x_2 x_4 + 3x_4 x_6  - 3x_2 x_5 + x_5 x_7+ x_4 x_5 - x_2 x_6 
) \\
& \quad
+ \left(
 + 3x_2^2/2 - 5x_4^2 /2
+ 3x_5^2/2 - x_6^2
+ x_7^2 /2
\right)
- ia \left( 
- x_2 + x_4 -x_5  + x_7 
\right).
\end{align*}
Integrating w.r.t. $x_1$ yields the factor $\delta(\alpha - B + x_4 + x_2 - x_5 - x_7)$, because
$
\int_\mathbb{R} e^{2\pi i xy} dx = \delta(y)
$
 as distributions, for real variable $y$. Then, integrating w.r.t $x_2$ has the effect of replacing all $x_2$ by $-\alpha+B-x_4+x_5+x_7$. Thus
\begin{align*}
\chi_{ts}
& = \int_{\mathbb{R}^2} G(x_5-x_6-ia) G(-\alpha + B - 2x_4 + x_6 + x_7-ia) G(x_7 - x_5-ia)  \\
& \qquad \cdot
\exp(*T_3) dx_4^{\cup (-\alpha+B+x_6+x_7)/2}
dx_5^{\cup x_7}, \\
(*T_3) / (\pi i)
& = ( 
2\alpha A- 2AB + 3\alpha^2/2 + 3B^2/2 - 3\alpha B ) \\
& \quad
+ (
x_4(+ 2 A + 2\alpha + 2\beta - 4B)
+ x_6 (- 2 \beta + \alpha - B)
+ x_7 ( - 3 \alpha - 2 A + 5 B )
) \\
& \quad
+ (
 4 x_4 x_6 - 4x_4 x_7 - x_6 x_7  - x_6^2 + 2x_7^2)
+ x_5(x_7 - x_6) 
- i a \left( 
\alpha - B + 2x_4 - 2x_5
\right).
\end{align*}
From \eqref{2_eq:G_delta2} we have
\begin{align*}
\int_\mathbb{R} G(x_5 - x_6-ia) G(x_7 - x_5-ia)
e^{\pi i x_5 (x_7 - x_6)} e^{-2\pi a x_5} dx_5^{\cup x_7}
= \delta( x_7- x_6) e^{-\pi a (x_6 + x_7)}.
\end{align*}
Hence
\begin{align*}
\chi_{ts}
& = \delta(x_7 - x_6) \int_\mathbb{R} G(-\alpha + B - 2x_4 + x_6 + x_7-ia) \exp(*T_4)
dx_4^{\cup (-\alpha+B+x_6+x_7)/2}, \\
(*T_4) / (\pi i)
& = ( 
2\alpha A- 2AB + 3\alpha^2/2 + 3B^2/2 - 3\alpha B ) \\
& \quad
+ (
x_4(+ 2 A + 2\alpha + 2\beta - 4B)
+ x_6 (- 2 \beta + \alpha - B)
+ x_7 ( - 3 \alpha - 2 A + 5 B )
) \\
& \quad
+ (
4 x_4 x_6 - 4x_4 x_7 - x_6 x_7  - x_6^2 + 2x_7^2)
-ia \left( 
\alpha - B + 2x_4 - x_6 - x_7
\right)
\end{align*}
Now that we have $\delta(x_7-x_6)$, we can replace $x_6$'s by $x_7$'s outside $\delta(x_7-x_6)$, which makes sense for distributions. Therefore
\begin{align*}
\chi_{ts}
& = \delta(x_7 - x_6) \int_\mathbb{R} G (-\alpha + B - 2x_4 + 2x_7-ia) \exp(*T_5)  dx_4^{\cup (-\alpha/2+B/2+x_7)}, \\
(*T_5) / (\pi i)
& = ( 
2\alpha A- 2AB + 3\alpha^2/2 + 3B^2/2 - 3\alpha B ) \\
& \quad
+ 2 (x_4 - x_7)( \alpha + \beta + A - 2B)
-ia \left( 
+\alpha - B + 2(x_4 -x_7)
\right).
\end{align*}
Change of variables: set $X = -\alpha + B - 2(x_4-x_7)$. Then
\begin{align*}
\chi_{ts}
= \frac{1}{2} \delta(x_7 - x_6) e^{\pi i ( 
\beta B + \alpha A  -  \beta\alpha - AB + \alpha^2/2 - B^2/2 )} \int_\mathbb{R} G(X-ia) 
e^{- \pi i X(\alpha + \beta + A - 2B)}e^{ - \pi a X} dX^{\cap 0},
\end{align*}
as desired.
\end{proof}

\begin{remark}
If the distribution kernels of the integral transformations $F_{1,2}$ and $H_{1,2}$ are modified in a certain way, $\chi_{ts}(\beta,\alpha,x_6,A,B,x_7)$ can be obtained as a closed form (i.e. without integral in its expression); the modification (new kernels are denoted with subscript $*$)
\begin{align*}
\downker{\alpha}{x}{x_1}{x_2}_* & = \frac{\overline{\zeta_b} e^{-\pi i(x-x_1)^2} }{G_b(a + i\alpha)} \downker{\alpha}{x}{x_1}{x_2}, \quad\mbox{and} \\
\upker{\alpha}{x}{x_1}{x_2}_* & = \zeta_b e^{\pi i (x-x_1)^2} G_b(a+i\alpha) \upker{\alpha}{x}{x_1}{x_2},
\end{align*}
where $G_b(a+ix) = G(-x)e^{-\frac{\pi i}{2}(a^2+x^2)}$ and $\zeta_b= \zeta^{-1}e^{\pi i/12}$, results in
$$
\chi_{ts}(\beta,\alpha,x_6,A,B,x_7)_*
= \delta(x_6-x_7) \frac{G_b(a+i\alpha) G_b(a+i\beta)}{G_b(a+iA) G_b(a+iB)} \frac{\overline{\zeta_b} e^{\pi i (\alpha-B)(\alpha+2A-2\beta+B)}}{G_b(2a - iB + i\beta)}.
$$
This modification of $F_{1,2}$ and $H_{1,2}$ is meaningful in the sense that now taking the limit as $b\to 0$ (in some sense) yields classical results; details about this limit will appear in \cite{Ivan}. If we take out $G_b(a+i\alpha)$ in the definition of the new kernels, then the resulting $\chi_{ts}$ is just $\delta(x_6-x_7) \overline{\zeta_b} e^{\pi i (\alpha-B)(\alpha+2A-2\beta+B)} / G_b(2a - iB + i\beta)$. This whole remark is due to Ivan Ip.
\end{remark}

Thus it now makes sense to identify the indices $6$ and $7$ in \eqref{4_eq:first_decomp}, \eqref{4_eq:second_decomp}, and in \eqref{4_eq:isomorphism_between_two}. We just proved that the mapping
\begin{align}
\label{4_eq:basic_T}
{\bf T} : M_{43}^6 \otimes M_{12}^4
& \stackrel{\sim}{\longrightarrow} M_{23}^5 \otimes M_{15}^6 \\
\nonumber
f(\beta,\alpha)
&\longmapsto \int_{\mathbb{R}^2} T(\beta,\alpha,A,B) f(\beta,\alpha) d\beta d\alpha
\end{align}
(where $T$ is as in \eqref{4_eq:T_formula}) makes the following diagram to commute:
\begin{align}
\label{4_eq:T_commutative_diagram}
\xymatrixcolsep{4pc}\xymatrix{
(\mathcal{H}_1 \otimes \mathcal{H}_2) \otimes \mathcal{H}_3
\ar[r]^{id \otimes id \otimes id} \ar[d]_{ (id \otimes F_{12,3})(F_{1,2} \otimes id) } &
\mathcal{H}_1 \otimes (\mathcal{H}_2 \otimes \mathcal{H}_3)
\ar[d]^{ (id \otimes F_{1,23})(P_{(12)} \otimes id)(id \otimes F_{2,3})} \\
M_{43}^6 \otimes M_{12}^4 \otimes \mathcal{H}_6
\ar[r]^{{\bf T}\otimes id} &
M_{23}^5 \otimes M_{15}^6 \otimes \mathcal{H}_6,
}
\end{align}
where the maps $F$ are as in Theorem \ref{4_thm:CG}, and $F_{12,3}, F_{1,23}$ can be understood as $F_{4,3}, F_{1,5}$ respectively, and $P_{(12)}$ is permutation of the two factors. All four arrows in the above diagram intertwine the $\mathcal{B}$-actions.

\vs

It is often convenient to encode facts in representation theory using various geometric pictures. We'll use the ones considered by Kashaev for quantization of the Teichm\"{u}ller spaces \cite{Kash98}. The full relation of representation theory of quantum plane and Kashaev's quantization will be revealed in the subsequent sections.

\vs

The decomposition $\mathcal{H}_1 \otimes \mathcal{H}_2 \cong M_{12}^3 \otimes \mathcal{H}_3$ can be encoded geometrically as a triangle with the edges named $1,2,3$, with a distinguished corner (indicated by a dot), as in Figure \ref{4_fig:one_triangle}.
\begin{figure}[htbp!]
\centering
\begin{pspicture}[showgrid=false](0,0)(3.3,2.8)
\rput[bl](0,-0.5){
\PstTriangle[unit=1.5,PolyName=P]
\pcline(P1)(P2)\ncput*{1}
\pcline(P2)(P3)\ncput*{2}
\pcline(P3)(P1)\ncput*{3}
\rput[l]{30}(P2){\hspace{1,7mm}$\bullet$}
}
\end{pspicture}
\caption{A triangle representing $M_{12}^3$, where $\mathcal{H}_1 \otimes \mathcal{H}_2 \cong M_{12}^3 \otimes \mathcal{H}_3$}
\label{4_fig:one_triangle}
\end{figure}
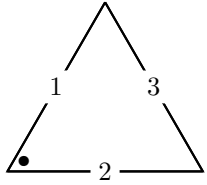
This triangle will represent the multiplicity module $M_{12}^3$, and any orientation-preserving deformation of this picture would mean the same.
Then, the transformation ${\bf T}$ as in \eqref{4_eq:basic_T} can also be encoded geometrically, as in Figure \ref{4_fig:T}.
\begin{figure}[htbp!]
\centering
\begin{pspicture}[showgrid=false](0,0)(8.4,3.5)
\rput[bl](0;0){
\PstSquare[unit=1.8,PolyName=P]
\pcline(P1)(P2)\ncput*{1}
\pcline(P2)(P3)\ncput*{2}
\pcline(P3)(P4)\ncput*{3}
\pcline(P4)(P1)\ncput*{6}
\pcline(P1)(P3)\ncput*{4}
\rput[l]{-45}(P2){\hspace{1,4mm}$\bullet$}
\rput[l]{22}(P3){\hspace{2,4mm}$\bullet$}
}
\rput[l](2.5,1.3){$j$}
\rput[l](1.2,2.5){$k$}
\rput[bl](4.5;0){
\PstSquare[unit=1.8,PolyName=P]
\pcline(P1)(P2)\ncput*{1}
\pcline(P2)(P3)\ncput*{2}
\pcline(P3)(P4)\ncput*{3}
\pcline(P4)(P1)\ncput*{6}
\pcline(P2)(P4)\ncput*{5}
\rput[l]{-23}(P2){\hspace{2,4mm}$\bullet$}
\rput[l]{41}(P3){\hspace{1,4mm}$\bullet$}
}
\rput[l](5.8,1.3){$j$}
\rput[l](6.8,2.5){$k$}
\rput[l](3.7,2){\pcline{->}(0,0)(1;0)\Aput{${\bf T}_{jk}$}}
\end{pspicture}
\caption{The move representing ${\bf T} : M_{43}^6 \otimes M_{12}^4 \to M_{23}^5 \otimes M_{15}^6$}
\label{4_fig:T}
\end{figure}
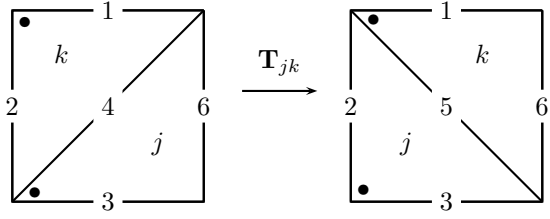

\vs

When we have tensor product of four copies of representations $\mathcal{H}$, we can consider the following commutative diagram (where all maps are identities):
\begin{align}
\label{4_eq:pentagon_H}
\begin{array}{c}
((\mathcal{H}_1 \otimes \mathcal{H}_2) \otimes \mathcal{H}_3) \otimes \mathcal{H}_4
\to
(\mathcal{H}_1 \otimes (\mathcal{H}_2 \otimes \mathcal{H}_3)) \otimes \mathcal{H}_4 
\to
\mathcal{H}_1 \otimes ((\mathcal{H}_2 \otimes \mathcal{H}_3) \otimes \mathcal{H}_4) \\
\searrow \qquad \qquad \qquad
\qquad \qquad \qquad
\qquad \qquad \qquad
\qquad \qquad
\swarrow \\
(\mathcal{H}_1 \otimes \mathcal{H}_2) \otimes (\mathcal{H}_3 \otimes \mathcal{H}_4)
\to
\mathcal{H}_1 \otimes (\mathcal{H}_2 \otimes (\mathcal{H}_3 \otimes \mathcal{H}_4))
\end{array}
\end{align}
If we translate the commutativity of the above diagram into the language of multiplicity modules $M$ and the operators ${\bf T}$, we will get the ``pentagon equation'' for ${\bf T}$,
which can be illustrated as in Figure \ref{4_fig:pentagon_eq}.
\begin{figure}[htbp!]
\centering
\begin{pspicture}[showgrid=false](0,-3.5)(14,4)
\rput[bl](0;0){
\PstPentagon[unit=1.8,PolyName=P]
% upper-left pentagon
\pcline(P2)(P3)\ncput*{1}
\pcline(P3)(P4)\ncput*{2}
\pcline(P4)(P5)\ncput*{3}
\pcline(P5)(P1)\ncput*{4}
\pcline(P1)(P2)\ncput*{n}
\pcline(P2)(P4)\ncput*{5}
\rput[l]{-10}(P3){\hspace{1,4mm}$\bullet$}
\pcline(P2)(P5)\ncput*{6}
\rput[l]{36}(P4){\hspace{1,4mm}$\bullet$}
\rput[l]{87}(P5){\hspace{3,2mm}$\bullet$}
}
% upper-middle pentagon
\rput[bl](5;0){
\PstPentagon[unit=1.8,PolyName=P]
\pcline(P2)(P3)\ncput*{1}
\pcline(P3)(P4)\ncput*{2}
\pcline(P4)(P5)\ncput*{3}
\pcline(P5)(P1)\ncput*{4}
\pcline(P1)(P2)\ncput*{n}
\pcline(P3)(P5)\ncput*{9}
\rput[l]{62}(P4){\hspace{1,4mm}$\bullet$}
\pcline(P2)(P5)\ncput*{6}
\rput[l]{0}(P3){\hspace{2,0mm}$\bullet$}
\rput[l]{87}(P5){\hspace{3,2mm}$\bullet$}
}
% upper-right pentagon
\rput[bl](10;0){
\PstPentagon[unit=1.8,PolyName=P]
\pcline(P2)(P3)\ncput*{1}
\pcline(P3)(P4)\ncput*{2}
\pcline(P4)(P5)\ncput*{3}
\pcline(P5)(P1)\ncput*{4}
\pcline(P1)(P2)\ncput*{n}
\pcline(P3)(P5)\ncput*{9}
\rput[l]{762}(P4){\hspace{1,4mm}$\bullet$}
\pcline(P3)(P1)\ncput*{8}
\rput[l]{16}(P3){\hspace{3,0mm}$\bullet$}
\rput[l]{111}(P5){\hspace{1,8mm}$\bullet$}
}
% lower-left pentagon
\rput[tl](2.5;-1.5){
\PstPentagon[unit=1.8,PolyName=P]
\pcline(P2)(P3)\ncput*{1}
\pcline(P3)(P4)\ncput*{2}
\pcline(P4)(P5)\ncput*{3}
\pcline(P5)(P1)\ncput*{4}
\pcline(P1)(P2)\ncput*{n}
\pcline(P2)(P4)\ncput*{5}
\pcline(P4)(P1)\ncput*{7}
\rput[l]{-10}(P3){\hspace{1,4mm}$\bullet$}
\rput[l]{53}(P4){\hspace{2,4mm}$\bullet$}
\rput[l]{120}(P5){\hspace{1,4mm}$\bullet$}
}
% lower-right pentagon
\rput[tl](7.5;-1){
\PstPentagon[unit=1.8,PolyName=P]
\pcline(P2)(P3)\ncput*{1}
\pcline(P3)(P4)\ncput*{2}
\pcline(P4)(P5)\ncput*{3}
\pcline(P5)(P1)\ncput*{4}
\pcline(P1)(P2)\ncput*{n}
\pcline(P4)(P1)\ncput*{7}
\pcline(P3)(P1)\ncput*{8}
\rput[l]{16}(P3){\hspace{3,0mm}$\bullet$}
\rput[l]{72}(P4){\hspace{1,8mm}$\bullet$}
\rput[l]{120}(P5){\hspace{1,4mm}$\bullet$}
}
\rput[l](4,2){\pcline{->}(0,0)(1;0)\Aput{${\bf T}_{k\ell}$}}
\rput[l](9,2){\pcline{->}(0,0)(1;0)\Aput{${\bf T}_{j\ell}$}}
\rput[l](2.2,0){\pcline{->}(0,0)(0.7,-0.7)\Bput{${\bf T}_{jk}$}}
\rput[l](6.5,-1.5){\pcline{->}(0,0)(1;0)\Aput{${\bf T}_{k\ell}$}}
\rput[l](12,0){\pcline{->}(0,0)(-0.7,-0.7)\Aput{${\bf T}_{jk}$}}
\rput[l](3,2.2){$j$}
\rput[l](1.8,1.2){$k$}
\rput[l](0.8,2){$\ell$}
\rput[l](7.9,2.1){$j$}
\rput[l](6.1,1){$k$}
\rput[l](6.5,2.4){$\ell$}
\rput[l](12.5,1.7){$j$}
\rput[l](11.2,1){$k$}
\rput[l](11.9,2.9){$\ell$}
\rput[l](5,-2.7){$j$}
\rput[l](4.7,-1.2){$k$}
\rput[l](3.3,-1.5){$\ell$}
\rput[l](10,-2.7){$j$}
\rput[l](8.7,-2){$k$}
\rput[l](9.3,-0.7){$\ell$}
\end{pspicture}
\caption{The pentagon equation for ${\bf T}$}
\label{4_fig:pentagon_eq}
\end{figure}
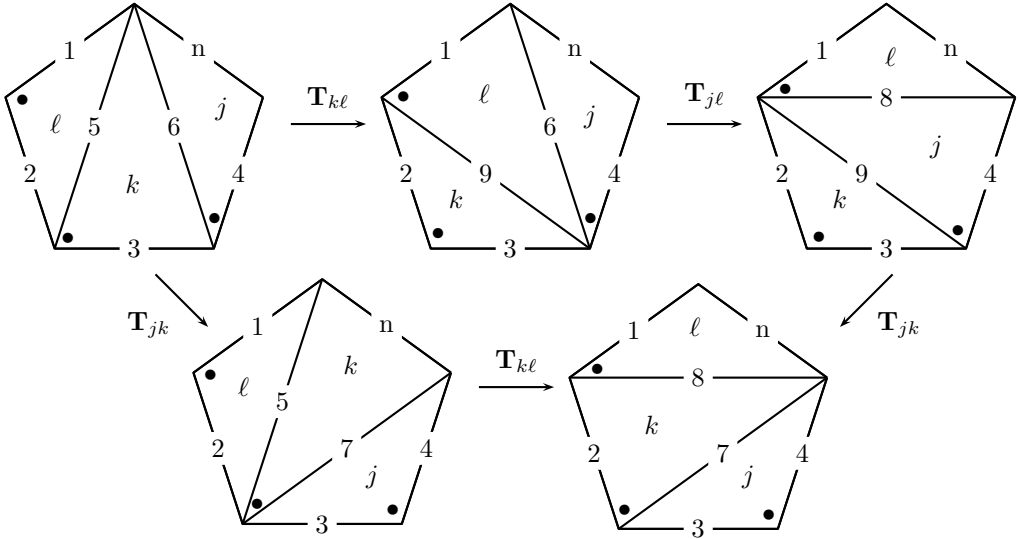
When writing in terms of multiplicity modules, we use $1,2,3$ instead of $j,k,\ell$ (then $1,2,3$ mean first, second, third factors in the tensor product of three multiplicity modules, respectively):
\begin{align}
\label{4_eq:pentagon_M}
\begin{array}{c}
M_{64}^n \otimes M_{53}^6 \otimes M_{12}^5 
\quad \stackrel{{\bf T}_{23}}{\longrightarrow} \quad
M_{64}^n \otimes M_{23}^9 \otimes M_{19}^6 
\quad \stackrel{{\bf T}_{13}}{\longrightarrow} \quad
M_{94}^8 \otimes M_{23}^9 \otimes M_{18}^n  \\
{}_{{\bf T}_{12}}
\searrow \qquad \qquad \qquad
\qquad \qquad \qquad
\qquad \qquad \qquad
\qquad
\swarrow
{}_{{\bf T}_{12}}  \\
M_{34}^7 \otimes M_{57}^n \otimes M_{12}^5
\quad \stackrel{{\bf T}_{23}}{\longrightarrow} \quad
M_{34}^7 \otimes M_{27}^8 \otimes M_{18}^n
\end{array}
\end{align}

\vs

We now formulate the pentagon equation as follows:
\begin{proposition}
\label{4_prop:pentagon}
(the pentagon equation) One has ${\bf T}_{23} {\bf T}_{12} = {\bf T}_{12} {\bf T}_{13} {\bf T}_{23}$.
\end{proposition}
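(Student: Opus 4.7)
The plan is to derive the pentagon equation from the trivial commutativity of the five-associator diagram \eqref{4_eq:pentagon_H} on the four-fold tensor product $\mathcal{H}_1 \otimes \mathcal{H}_2 \otimes \mathcal{H}_3 \otimes \mathcal{H}_4$. On the Hilbert-space side, all five vertices are literally the same space and all arrows are the identity, so the pentagon commutes for free; the entire content is the translation of this commutativity to the multiplicity-module side via Theorem \ref{4_thm:CG}.

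First, I would decompose each vertex of \eqref{4_eq:pentagon_H} using three successive applications of $F_{\cdot,\cdot}$ from Theorem \ref{4_thm:CG}, with the order and grouping of the compositions dictated by the parenthesization at that vertex. This realizes each vertex as a three-fold tensor of multiplicity modules together with a single residual copy of $\mathcal{H}_n$, labeled exactly as in the five vertices of \eqref{4_eq:pentagon_M}.

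Next, for each of the five edges of \eqref{4_eq:pentagon_H} I would invoke the commutative diagram \eqref{4_eq:T_commutative_diagram}, applied to the three consecutive factors that are being re-associated by that edge (with the remaining fourth factor passively tensored on). The content of \eqref{4_eq:T_commutative_diagram} is that an associator on three factors corresponds, on the multiplicity side, to a single ${\bf T}$ acting on the pair of multiplicity modules attached to those three factors, leaving the other multiplicity module and the residual $\mathcal{H}_n$ untouched. Reading off which pair of positions (in the common convention fixed by \eqref{4_eq:pentagon_M}) is affected gives the index ${\bf T}_{ij}$ at that edge. Traversing the pentagon one way gives the composition ${\bf T}_{23}{\bf T}_{12}$ and the other way gives ${\bf T}_{12}{\bf T}_{13}{\bf T}_{23}$, and commutativity of the Hilbert-space pentagon forces these two compositions to agree.

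The main obstacle is the bookkeeping: one must consistently track the ordering of multiplicity modules at each vertex, in particular accounting for the permutation $P_{(12)}$ in the right vertical arrow of \eqref{4_eq:T_commutative_diagram}, so that the labels $M_{ab}^c$ at the two endpoints of each edge agree and so that the ${\bf T}_{ij}$ assigned to each edge references the correct pair of positions in the three-fold ordering of \eqref{4_eq:pentagon_M}. Once this combinatorial matching is verified, the equation ${\bf T}_{23}{\bf T}_{12} = {\bf T}_{12}{\bf T}_{13}{\bf T}_{23}$ is immediate and requires no further analytic input beyond the unitarity and intertwining properties already established in Theorem \ref{4_thm:CG}.
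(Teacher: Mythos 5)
Your proposal is correct and takes essentially the same route as the paper: the paper's proof is precisely the observation that all five arrows in \eqref{4_eq:pentagon_H} are identities, followed by five applications of the commutative diagram \eqref{4_eq:T_commutative_diagram} to translate this into the commutativity of \eqref{4_eq:pentagon_M}. You have merely spelled out the bookkeeping (three successive $F$'s per vertex, the role of $P_{(12)}$ in the right vertical arrow) that the paper leaves implicit.
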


\begin{proof} Observe that all five arrows in \eqref{4_eq:pentagon_H} are identity maps. Using the commutative diagram \eqref{4_eq:T_commutative_diagram} suitably five times for \eqref{4_eq:pentagon_H}, the commutativity of the diagram \eqref{4_eq:pentagon_M} immediately follows.
\end{proof}

%%%%%%%%%%
%%%%%%%%%%
\subsection{Dual representation}

For a representation $(\rho,V)$ of $\mathcal{B}$,
the antipode $S$ allows us to turn the dual space $V'$ (we will discuss in the next paragraph which version of a dual space should be used) into a representation of
$\mathcal{B}$ (with action denoted by $\rho'$) via
\begin{align}
\label{4_eq:dual_action}
\langle \rho' (u) \xi, v \rangle
= \langle \xi, \rho (S(u)) v\rangle,
\quad
\forall u \in \mathcal{B},
~
\forall v \in V,
~
\forall \xi \in V',
\end{align}
where $\langle~,~\rangle$ is the natural pairing between $V'$ and $V$.
The reason why \eqref{4_eq:dual_action} yields a well-defined action $\rho'$ is that $S$ is an anti-automorphism (i.e. reverses the product order). Thus, we can replace $S$ by $S^{-1}$ in \eqref{4_eq:dual_action} to get a new action on the dual space, which we denote by ${}'\rho$ (prime on the upper left). 
Then, the following two mappings given by natural pairing commute with the action of
$\mathcal{B}$:
$$
V' \otimes V \to \mathbb{C}, \quad {}' V \otimes V \to \mathbb{C},
$$
where $\mathbb{C}$ on the right-hand-sides is the trivial representation.

\vs

We now study the dual of the representation $\mathcal{H} \equiv L^2(\mathbb{R})$ (as in Proposition \ref{3_prop:repres}). We realize the space $\mathcal{H}'$ as the Hilbert space $L^2(\mathbb{R})$, similarly as we did for $\mathcal{H}$. Using the natural pairing between $\mathcal{H}'$ and $\mathcal{H}$ given by
\begin{align}
\label{4_eq:pairing_with_dual}
\langle g, f \rangle
= \int_\mathbb{R} f(x) g(x) dx,
\end{align}
we formally have from \eqref{4_eq:dual_action} that
\begin{align*}
\langle \pi' (u) g, f \rangle
= \langle g, \pi (S(u)) f\rangle
= \langle (\pi (S(u)))^t  g,  f\rangle,
\quad
\forall u \in \mathcal{B},
~
\forall f \in \mathcal{H},
~
\forall g \in \mathcal{H}',
\end{align*}
where $(\pi (S(u)))^t$ is the transpose of the operator $\pi (S(u))$. This makes sense for $f,g\in W$, for example (see \eqref{2_eq:def_W} for definition of $W$). From this we can define the $\pi'$ action to be
\begin{align}
\label{4_eq:pi_dual_action}
\pi'(X) = e^{-2\pi b p}, ~
\pi'(Y) = - e^{-2\pi b p} e^{2\pi b x}, ~
\pi'(\widetilde{X}) = e^{-2\pi b^{-1} p}, ~
\pi'(\widetilde{Y}) = - e^{-2\pi b^{-1} p} e^{2\pi b^{-1} x},
\end{align}
on a dense subspace of $L^2(\mathbb{R})$.

\begin{remark}
Recall that the subalgebra $\mathcal{B} = \langle X,Y,\widetilde{X},\widetilde{Y} \rangle$ of $B_{q\widetilde{q}}$ doesn't have $X^{-1},\widetilde{X}^{-1}$ as its elements, while $S(u)$ for $u\in \mathcal{B}$ involves $X^{-1}$ or $\widetilde{X}^{-1}$. We can take \eqref{4_eq:dual_action} only in a formal sense, and just use \eqref{4_eq:pi_dual_action} as our definition of the dual representation $\pi'$.
\end{remark}

As done in \S\ref{3_subsection:quantum_plane_modular_double}, we first let the action $\pi'$ as in \eqref{4_eq:pi_dual_action} be defined on the dense subspace $W$, and we define the Schwartz space $\mathcal{S}_{\mathcal{B}'}$ to be the space of all $g\in L^2(\mathbb{R})$ so that
$$
w\mapsto ( g, \pi'(u)w)
$$
on $W$ is continuous in $L^2$-norm, for any chosen $u$ in $\mathcal{B}$, where $(\cdot,\cdot)$ is the inner product of $L^2(\mathbb{R})$. The Schwartz space $\mathcal{S}_{\mathcal{B}'}$ is the common domain of the operators $\pi'(u)$ for $u\in \mathcal{B}$. The space $\mathcal{S}_{\mathcal{B}'}$ is equipped with a topology defined in a similar manner as that for $\mathcal{S}_\mathcal{B}$ (see \S\ref{3_subsection:quantum_plane_modular_double}), and a result analogous to Theorem \ref{3_thm:W_density} can also be proved.

\vs

By replacing $S$ by $S^{-1}$, we obtain the `left dual' ${}'\mathcal{H}$ realized as $L^2(\mathbb{R})$, where the actions are given by
\begin{align*}
{}'\pi(X) = e^{-2\pi b p}, ~
{}'\pi(Y) = - e^{2\pi b x} e^{-2\pi b p}, ~
{}'\pi(\widetilde{X}) = e^{-2\pi b^{-1} p}, ~
{}'\pi(\widetilde{Y}) = - e^{2\pi b^{-1} x} e^{-2\pi b^{-1} p}.
\end{align*}
Again, ${}'\pi$ can first be defined on $W \subset L^2(\mathbb{R})$, and then the Schwartz space $\mathcal{S}_{{}'\mathcal{B}}$ can be defined and given the topology in the similar way as for $\mathcal{S}_\mathcal{B'}$; also, a result analogous to Theorem \ref{3_thm:W_density} can be proved.

\vs

Throughout this subsection, the spaces $\mathcal{H}$, $\mathcal{H}'$, ${}'\mathcal{H}$ will always be realized as $L^2(\mathbb{R}, dx)$, $L^2(\mathbb{R}, dy)$, $L^2(\mathbb{R}, dz)$ with those variable names, respectively, if not designated otherwise (also with spaces indexed by subscripts correspondingly; e.g. $\mathcal{H}_3'$ by $L^2(\mathbb{R}, dy_3)$).  Actually all three $\mathcal{B}$-representations $\mathcal{H}$, $\mathcal{H}'$, ${}'\mathcal{H}$ are isomorphic:

\begin{definition}
For real variables $x,y$, define the following distribution kernels:
\begin{align}
\label{4_eq:dual_kernel_definition}
k(x,y) = e^{\pi i (x-y)^2 + 2\pi a(x-y)}, \quad
K(x,y) = e^{-\pi i (x-y)^2 - 2\pi a(x-y)}.
\end{align}
\end{definition}

\begin{proposition}
\label{4_prop:duals_isomorphisms}
The following maps provide isomorphisms of $\mathcal{B}$-represenations:
\begin{align*}
\begin{array}{cc}
\begin{array}{rrcl}
& \mathcal{H} \equiv L^2(\mathbb{R},dx)
& \leftrightarrow &
\mathcal{H}' \equiv L^2(\mathbb{R}, dy) \\
C': & f(x) 
& \mapsto & 
\int_\mathbb{R} k(x,y) f(x)dx, \\
D': &
\int_\mathbb{R} K(x,y) \varphi(y) dy 
& \mapsfrom &
 \varphi(y),
\end{array}
&
\begin{array}{rrcl}
& \mathcal{H} \equiv L^2(\mathbb{R},dx)
& \leftrightarrow &
{}' \mathcal{H} \equiv L^2(\mathbb{R}, dz) \\
{}'C: & f(x) 
& \mapsto & 
\int_\mathbb{R} k(z,x) f(x) dx, \\
{}'D: &
\int_\mathbb{R} K(z,x) \varphi(z)dz
& \mapsfrom &
 \varphi(z),
\end{array}
\end{array}
\end{align*}
where each map is defined on a dense subspace of $L^2(\mathbb{R})$, e.g. on $W$. Furthermore,
\begin{align}
\label{4_eq:dual_intertwiners_inverses}
(D') (C') = (C') (D') = ({}'D) ({}'C) = ({}'C) ({}'D) = id.
\end{align}
\end{proposition}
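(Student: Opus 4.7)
The plan is to prove the claim in detail for $(C', D')$; the case of $({}'C, {}'D)$ is identical, with the replacement $S \mapsto S^{-1}$ corresponding to a reversal of operator order in the $Y$ and $\widetilde{Y}$ actions on the dual. Well-definedness on $W$ is immediate: the kernels $k(x,y)$ and $K(x,y)$ are entire Gaussians in $x-y$ with growth bounded by $e^{2\pi a|\Re(x-y)|}$ on horizontal strips, while elements of $W$ decay faster than any exponential along $\mathbb{R}$, so the integrals converge absolutely and the outputs again extend analytically.

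For the intertwining property, I would reduce to the four generators $u = X, Y, \widetilde{X}, \widetilde{Y}$. For $C'$, the relation $C'\pi(u) = \pi'(u)C'$ on $W$ is equivalent, via integration by parts plus a mild contour shift in the translation cases, to the kernel identity
\begin{align*}
(\pi(u)_x)^t\,k(x,y) \;=\; \pi'(u)_y\,k(x,y),
\end{align*}
and analogously for $D'$ one checks $\pi(u)_x\,K(x,y) = (\pi'(u)_y)^t\,K(x,y)$. Since $k$ and $K$ depend only on $x-y$, the cases $u = X, \widetilde{X}$ reduce to the triviality that shifting $x$ by $-ib^{\pm 1}$ coincides with shifting $y$ by $+ib^{\pm 1}$. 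For $u = Y$, after expanding $(x-y\mp ib)^2 = (x-y)^2 \mp 2ib(x-y) - b^2$, the identity collapses to the scalar relation $e^{2\pi i a b} = -e^{\pi i b^2}$, which is immediate from $2\pi i a b = \pi i (b^2+1)$; the case $\widetilde{Y}$ is the same with $b$ replaced by $b^{-1}$. The corresponding verifications for ${}'C$ and ${}'D$ use instead the order-reversed ${}'\pi(Y)_z = -e^{2\pi b z} e^{-2\pi b p_z}$, but the underlying scalar identity is unchanged.

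For the mutual-inverse relations, I would compute directly. A short expansion gives
\begin{align*}
K(x,y)\,k(x',y) \;=\; e^{2\pi i(x-x')y}\, e^{-\pi i(x-x')(x+x')}\, e^{-2\pi a(x-x')},
\end{align*}
so that $\int_{\mathbb{R}} K(x,y)\,k(x',y)\,dy = \delta(x-x')$ because the $y$-independent prefactor equals $1$ on the diagonal $x=x'$; hence $D'C' = \mathrm{id}$. The symmetric identity
\begin{align*}
k(x,y)\,K(x,y') \;=\; e^{-2\pi i(y-y')x}\, e^{\pi i(y-y')(y+y')}\, e^{-2\pi a(y-y')}
\end{align*}
yields $C'D' = \mathrm{id}$ by integrating in $x$, and the relations ${}'D\,{}'C = {}'C\,{}'D = \mathrm{id}$ follow by relabeling. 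The only genuine care-point in the whole argument will be justifying the contour shift used in the $X$ and $\widetilde{X}$ intertwining verification, which is legitimate on $W$ because both $k,K$ and elements of $W$ are entire with controlled Gaussian growth; every other step is purely algebraic manipulation of Gaussians governed by the single identity $2\pi i a b^{\pm 1} = \pi i (b^{\pm 2}+1)$.
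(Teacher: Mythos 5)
Your proof is correct and follows essentially the same route as the paper: well-definedness on $W$ from Gaussian decay, intertwining reduced to the kernel-level identities for $X,\widetilde X$ (using dependence of $k,K$ on $x-y$ only) and for $Y,\widetilde Y$ (using the scalar identity $2\pi iab^{\pm1}=\pi i(b^{\pm2}+1)$), and the mutual-inverse relations via the Gaussian integral identities $\int K(x,y)k(x',y)\,dy=\delta(x-x')$ and $\int k(x,y)K(x,y')\,dx=\delta(y-y')$. The paper additionally closes its proof by noting that the four maps extend to the appropriate Schwartz spaces by factoring each as a unitary times $e^{\mp2\pi ap}$ and invoking the arguments of \cite{Goncharov}; since the proposition itself only asks for the maps to be defined on a dense subspace such as $W$, your omission of this refinement is a minor gap rather than a genuine one.
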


\begin{proof}
First, the four maps are well-defined on $W$, because of the decaying property of elements of $W$. From \eqref{4_eq:dual_kernel_definition} we can observe
\begin{align}
\label{4_eq:dual_intertwining_eq}
\begin{array}{ll}
e^{2\pi b^{\pm 1} (p_x+p_y)} k(x,y) = k(x,y), 
& e^{2\pi b^{\pm 1}p_y} k(x,y) = -e^{2\pi b^{\pm 1}(y-x)}k(x,y), \\
e^{2\pi b^{\pm 1} (p_x+p_y)} K(x,y) = K(x,y),
& e^{2\pi b^{\pm 1} p_y}K(x,y) = -e^{2\pi b^{\pm 1}(x-y)}K(x,y),
\end{array}
\end{align}
which yields
\begin{align*}
(C'  \pi(u)w) (y) = \int_\mathbb{R} k(x,y) (\pi_x(u)w(x)) dx
& = \int_\mathbb{R} ((\pi_x(u))^t k(x,y)) w(x) dx \\
& \stackrel{\eqref{4_eq:dual_intertwining_eq}}{=} 
\int_\mathbb{R} (\pi'_y(u) k(x,y)) w(x) dx
= \pi'_y(u) (C'w)(y)
\end{align*}
for $u=X,Y,\widetilde{X},\widetilde{Y}$ and for every $w\in W$, therefore for all $u\in \mathcal{B}$. Hence the intertwining property. It is routine to show
\begin{align}
\label{4_eq:dual_intertwiners_inverses_integral}
\int_\mathbb{R} k(x,y) K(X,y) dy = \delta(x-X), \quad
\int_\mathbb{R} K(x,y) k(x,Y) dx = \delta(y-Y)
\end{align}
as distributions, hence \eqref{4_eq:dual_intertwiners_inverses}.

\vs

To complete the proof, we should prove that each mappping is well-defined on the corresponding Schwartz space, maps the Schwartz space to the other Schwartz space (corresponding to the codomain of the mapping), and intertwines the $\mathcal{B}$-action. It suffices prove the first assertion, i.e. that each mapping is well-defined on the corresponding Schwartz space, because then similar proof as in \S2.4 of \cite{Goncharov} yields the latter two results. It's easy to see that each of the four mappings differ from a unitary mapping by $e^{-2\pi ap}$ (times a constant of modulus $1$); for example, $C' = e^{\pi i a^2} C'_0 e^{-2\pi a p}$, where $C'_0$ is a unitary mapping defined by
$$
C'_0 : f(x) \mapsto \int_\mathbb{R} e^{\pi i (x-y)^2} f(x) dx.
$$
Now, we modify the Schwartz space $\mathcal{S}_\mathcal{B}$ by replacing $\mathcal{B}$ by $\mathcal{B} \cup \{ (X\widetilde{X})^{1/2} \}$ in Definition \ref{3_def:Schwartz_space}, in the sense that we require for any $f$ in the new Schwartz space that the functional $w\mapsto (f, e^{-2\pi ap} w)$ on $W$ be continuous in addition to the functionals $w\mapsto (f, \pi(u) w)$ for $u\in \mathcal{B}$. Then the operator $e^{-2\pi ap}$ is well-defined on the new Schwartz space. We also modify $\mathcal{S}_{\mathcal{B}'}$ and $\mathcal{S}_{{}'\mathcal{B}}$ in a similar manner. Then the four mappings are well-defined on corresponding Schwartz spaces, hence a similar proof as in \S2.4 of \cite{Goncharov} yields the desired results, as mentioned.
\end{proof}

\begin{proposition}
\label{4_prop:dual_intertwiners_property}
The operators $C'$, $D'$, ${}'C$, ${}'D$ defined in Proposition \ref{4_prop:duals_isomorphisms} satisfy
\begin{align}
\label{4_eq:dual_intertwiners_doubles}
& (C') (C') = (C') ({}' D) = ({}' D) (C') = e^{-4\pi a p},
\quad
(D') (D') = (D') ({}' C) = ({}' C) (D') = e^{4\pi a p}, \\
\label{4_eq:dual_intertwiners_transpose}
& 
\langle (C'g)(y), (D'f)(x)\rangle = \langle e^{-4\pi a p_y} g(y), f(x)\rangle,
\quad
\langle ({}'C g)(z), ({}'D f)(x) \rangle
= \langle g(z), e^{-4\pi a p_x}f(x)\rangle, \\
\label{4_eq:dual_intertwiners_transpose_triv}
& \langle (C'g)(y), ({}'D f)(x)\rangle
= \langle g(y), f(x)\rangle.
\end{align}
Observe that \eqref{4_eq:dual_intertwiners_doubles} provides isomorphisms $\mathcal{H} \cong \mathcal{H}''$ and $\mathcal{H} \cong {}''\mathcal{H}$. Proof of \eqref{4_eq:dual_intertwiners_doubles}--\eqref{4_eq:dual_intertwiners_transpose_triv} is straightforward from $k(y,x) = e^{4\pi a p_y} k(x,y)$, $K(y,x) = e^{4\pi a p_y} K(x,y)$, and \eqref{4_eq:dual_intertwiners_inverses_integral}. $\qed$
\end{proposition}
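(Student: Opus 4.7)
The strategy is to reduce each identity, via the kernel symmetries $k(y,x)=e^{4\pi a p_y}k(x,y)$ and $K(y,x)=e^{4\pi a p_y}K(x,y)$ (which translate, since $e^{4\pi a p_y}$ shifts $y\mapsto y-2ia$, into the concrete shift relations $k(x,y+2ia)=k(x-2ia,y)$ and $K(x,y+2ia)=K(x-2ia,y)$, allowing one to shuttle a $\pm 2ia$ between the two arguments of either kernel), to a direct application of \eqref{4_eq:dual_intertwiners_inverses_integral}.

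I would start with the triviality \eqref{4_eq:dual_intertwiners_transpose_triv}, as it requires no contour shift: expanding the dual pairing and interchanging the order via Fubini gives
\[
\langle C'g,{}'Df\rangle=\int g(x')\,f(x'')\left(\int k(x',y)\,K(x'',y)\,dy\right)dx'\,dx'',
\]
whose inner integral is $\delta(x'-x'')$ by \eqref{4_eq:dual_intertwiners_inverses_integral}, collapsing the expression to $\langle g,f\rangle$. For the remaining pairing identities \eqref{4_eq:dual_intertwiners_transpose}, the analogous expansion produces an inner integral such as $\int k(x',y)\,K(y,x'')\,dy$, which is not in admissible form; I would then apply the kernel symmetry to rewrite $K(y,x'')=K(x'',y-2ia)$ and shift the $y$-contour by $+2ia$, converting the inner integral to $\int k(x'-2ia,y)\,K(x'',y)\,dy=\delta(x'-2ia-x'')$, and the $2ia$-shift thus absorbed is exactly the operator $e^{-4\pi a p}$ acting on the appropriate test function (yielding the right-hand side of \eqref{4_eq:dual_intertwiners_transpose} after the $x'$-integration).

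The composition identities in \eqref{4_eq:dual_intertwiners_doubles} are handled by the same template: express each composition as a double integral $\int f(x)\bigl(\int\kappa_1(\cdot,y)\kappa_2(\cdot,y)\,dy\bigr)dx$ with $\kappa_1,\kappa_2\in\{k,K\}$, use the kernel symmetries (applied to whichever factor is convenient) to place $y$ as the second argument of both factors and arrange the pair as $k\cdot K$ or $K\cdot k$, shift the $y$-contour by $\pm 2ia$ if needed, and collapse via \eqref{4_eq:dual_intertwiners_inverses_integral}. The $\pm 2ia$ absorbed by the contour deformation realises the shift operator $e^{\mp 4\pi a p}$; the sign discrepancy between the two lines of \eqref{4_eq:dual_intertwiners_doubles} reflects whether one reduces the inner integral to $k\cdot K$ (producing $e^{-4\pi a p}$) or $K\cdot k$ (producing $e^{+4\pi a p}$).

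The main technical point is the justification of the contour shift $y\mapsto y+2ia$ together with the Fubini interchange, both standard from the Gaussian decay of $k$ and $K$ in the strip $|\Im y|<a$ combined with the analyticity of test functions in $W$; these manipulations are of exactly the same type as those carried out in the proof of Theorem \ref{4_thm:CG}, so I expect routine bookkeeping rather than any real obstacle.
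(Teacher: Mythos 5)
Your template works for the pairing identities \eqref{4_eq:dual_intertwiners_transpose}, \eqref{4_eq:dual_intertwiners_transpose_triv} and for the mixed compositions: the strategy of expanding, using Fubini, shuttling a $\pm 2ia$ shift between the two kernel arguments via $k(y,x)=e^{4\pi a p_y}k(x,y)$ (and similarly for $K$), and collapsing by \eqref{4_eq:dual_intertwiners_inverses_integral} is precisely what the paper's one-line proof hint indicates, and it does deliver $({}'D)(C')=(C')({}'D)=e^{-4\pi a p}$ and $({}'C)(D')=(D')({}'C)=e^{4\pi a p}$, as well as the pairing identities.

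The gap is in the claim that the \emph{same} template covers $(C')(C')$ and $(D')(D')$. You write that one can always ``arrange the pair as $k\cdot K$ or $K\cdot k$,'' but the composition $(C')(C')$ produces a $k\cdot k$ integrand and $(D')(D')$ a $K\cdot K$ one, and the kernel symmetries only shuffle arguments — they never convert a $k$ into a $K$. Formula \eqref{4_eq:dual_intertwiners_inverses_integral} gives a delta only for a mixed $k\cdot K$ pair; a $k\cdot k$ integral such as $\int_{\mathbb{R}} k(x,y)\,k(y,\tilde y)\,dy$ is a genuine Gaussian (Fresnel) integral and evaluates to $\tfrac{e^{\pi i/4}}{\sqrt 2}\,e^{\pi i(x-\tilde y)^2/2 + 2\pi a(x-\tilde y)}$, a chirp kernel, not a shifted delta. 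So your template does not prove $(C')(C')=e^{-4\pi a p}$ or $(D')(D')=e^{4\pi a p}$, and in fact those two equalities do not hold for the literal integral operators of Proposition \ref{4_prop:duals_isomorphisms} (the second $C'$ is an intertwiner $\mathcal{H}\to\mathcal{H}'$, not $\mathcal{H}'\to\mathcal{H}''$, so the composition is not even an intertwiner to $\mathcal{H}''$; the map $\mathcal{H}'\to\mathcal{H}''$ that would make the square close has a different kernel, $\overline{k}$ rather than $k$). The rest of the paper only ever invokes the mixed compositions from \eqref{4_eq:dual_intertwiners_doubles} (e.g.\ $({}'C)(D')$, $(D')({}'C)$ in the proof of Proposition \ref{4_prop:A_orderthree}, and $({}'D)(C')$ in Proposition \ref{4_prop:ATA_ATA}), so the discrepancy does not propagate, but you should not claim your argument establishes the two ``repeated'' compositions; instead flag them and restrict your proof to the remaining identities, which your method does prove correctly.
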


\vs

We will now construct an operator ${\bf A} : M_{12}^3 \to M_{23}^1$ in a canonical way, where
\begin{align}
\label{4_eq:two_M_definitions}
\mathcal{H}_1 \otimes \mathcal{H}_2 \cong M_{12}^3 \otimes \mathcal{H}_3, \quad
\mathcal{H}_2 \otimes \mathcal{H}_3 \cong M_{23}^1 \otimes \mathcal{H}_1,
\end{align}
and the isomorphisms in \eqref{4_eq:two_M_definitions} are in the sense of Theorem \ref{4_thm:CG}. Using \eqref{4_eq:two_M_definitions}, we have a canonical identification of $M_{12}^3$ with $Hom_{\mathcal{B}} (\mathcal{H}_3, \mathcal{H}_1 \otimes \mathcal{H}_2)$; an element $\phi(\alpha) \in W\subset M_{12}^3 \equiv L^2(\mathbb{R}, d\alpha)$ gives rise to the following element of $Hom_{\mathcal{B}}(\mathcal{H}_3, \mathcal{H}_1 \otimes \mathcal{H}_2)$:
\begin{align}
\label{4_eq:identification_I_formula}
f(x_3) \mapsto \int_{\mathbb{R}^2} \phi(\alpha) f(x_3) \upker{\alpha}{x_3}{x_1}{x_2} d\alpha dx_3^{\cup x_2}.
\end{align}
We denote this isomorphism by
\begin{align}
\label{4_eq:identification_I}
I_{123} : M_{12}^3 \to Hom_{\mathcal{B}}(\mathcal{H}_3, \mathcal{H}_1 \otimes \mathcal{H}_2).
\end{align}
(similarly for $I_{231}$) We consider another canonical identification
\begin{align}
\label{4_eq:identification_J}
J_{123} : Hom_\mathbb{C}(\mathcal{H}_3, \mathcal{H}_1 \otimes \mathcal{H}_2)
\to \mathcal{H}_1 \otimes \mathcal{H}_2 \otimes \mathcal{H}_3',
\end{align}
using the pairing map $\mathcal{H}_3' \otimes \mathcal{H}_3\to \mathbb{C}$. Restriction of $J_{123}$ to the $\mathcal{B}$-invariant subspace yields a mapping from  $Hom_{\mathcal{B}} (\mathcal{H}_3, \mathcal{H}_1 \otimes \mathcal{H}_2)$ to $Inv(\mathcal{H}_1 \otimes \mathcal{H}_2 \otimes \mathcal{H}_3')$, denoted by $J_{123}$ again.

\vs

We construct another canonical map 
\begin{align}
\label{4_eq:cyclic_map_A}
A^{Inv}_{123}: Inv(\mathcal{H}_1 \otimes \mathcal{H}_2 \otimes \mathcal{H}_3') \to Inv(\mathcal{H}_2 \otimes \mathcal{H}_3 \otimes \mathcal{H}_1')
\end{align}
as the restriction to the $\mathcal{B}$-invariant subspace of the mapping
$$
A_{123}: \mathcal{H}_1  \otimes \mathcal{H}_2 \otimes \mathcal{H}_3'  \to
\mathcal{H}_2 \otimes \mathcal{H}_3 \otimes \mathcal{H}_1',
$$
given by the following composition:
\begin{align}
\label{4_eq:composition_three}
\xymatrixcolsep{3pc}
\xymatrix{
\mathcal{H}_1 \otimes \mathcal{H}_2 \otimes \mathcal{H}_3' 
\ar[r]^{ {}'C \otimes id \otimes D'} &
{}' \mathcal{H}_1 \otimes \mathcal{H}_2 \otimes \mathcal{H}_3
\ar[r]^{ E_{123} } &
\mathcal{H}_2 \otimes \mathcal{H}_3 \otimes \mathcal{H}_1',
}
\end{align}
where the first arrow just comes from Proposition \ref{4_prop:duals_isomorphisms}, and the more important second arrow $E_{123}$ comes from the two canonical $\mathcal{B}$-isomorphisms (defined for any three $\mathcal{B}$-representations $V_1,V_2,V_3$)
\begin{align}
\label{4_eq:canonical_isomorphisms_Hom_and_primes}
{}' V_1\otimes V_2 \otimes V_3
\cong
Hom_\mathbb{C}(V_1,V_2 \otimes V_3)
\cong
V_2 \otimes V_3 \otimes V_1',
\end{align}
where the first isomorphism is due to the pairing map $V_1 \otimes {}'V_1 \to \mathbb{C}$, and the second one is due to the pairing map $V_1' \otimes V_1 \to \mathbb{C}$; thus $E_{123}$ is given by
$$
E_{123}: \varphi(z_1,x_2,x_3) \mapsto \varphi(y_1,x_2,x_3).$$

\vs

Now, define the map
$
A^{Hom}_{123} : Hom_{\mathcal{B}}(\mathcal{H}_3, \mathcal{H}_1 \otimes \mathcal{H}_2) \to Hom_{\mathcal{B}}(\mathcal{H}_1, \mathcal{H}_2 \otimes \mathcal{H}_3)
$ to be the unique map which makes the following diagram commute:
\begin{align}
\label{4_eq:A_Hom_definition}
\xymatrix{
Hom_{\mathcal{B}}(\mathcal{H}_3, \mathcal{H}_1 \otimes \mathcal{H}_2) 
\ar[d]_{J_{123}} \ar[r]^{A^{Hom}_{123}} &
Hom_{\mathcal{B}}(\mathcal{H}_1, \mathcal{H}_2 \otimes \mathcal{H}_3)
\ar[d]_{J_{231}} \\
Inv(\mathcal{H}_1 \otimes \mathcal{H}_2 \otimes \mathcal{H}_3')
\ar[r]^{A^{Inv}_{123}} &
Inv(\mathcal{H}_2 \otimes \mathcal{H}_3 \otimes \mathcal{H}_1').
}
\end{align}
Finally, our map ${\bf A}$ is defined as follows:
 
\begin{definition}
\label{4_def:A_operator}
The operator ${\bf A}: M_{12}^3 \to M_{23}^1$ is defined to be the unique mapping which makes the following diagram commute:
\begin{align}
\label{4_eq:def_A_operator_diagram}
\xymatrix{
M_{12}^3 \ar[r]^{{\bf A}} \ar[d]_{I_{123}} &
M_{23}^1 \ar[d]^{I_{231}} \\
Hom_{\mathcal{B}}(\mathcal{H}_3, \mathcal{H}_1 \otimes \mathcal{H}_2) 
\ar[r]^{A^{Hom}_{123}} &
Hom_{\mathcal{B}}(\mathcal{H}_1, \mathcal{H}_2 \otimes \mathcal{H}_3).
}
\end{align}
\end{definition}

\vs

Before computing the formula for ${\bf A}$, we can prove the following result:

\begin{proposition}
\label{4_prop:A_orderthree}
One has ${\bf A}^3 = id$ on $M_{12}^3$.
\end{proposition}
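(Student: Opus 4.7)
The plan is to reduce the assertion to a computation on the invariant subspace and to conclude using the dualization identities of Proposition \ref{4_prop:dual_intertwiners_property} together with the $\mathcal{B}$-invariance condition. Since $I_{ijk}$ and $J_{ijk}$ in Definition \ref{4_def:A_operator} are canonical identifications that behave covariantly under cyclic relabeling of indices, iterating the commutative diagram \eqref{4_eq:def_A_operator_diagram} three times shows that proving ${\bf A}^3 = id$ on $M_{12}^3$ is equivalent to verifying
$$
A^{Inv}_{312} \circ A^{Inv}_{231} \circ A^{Inv}_{123} = id \quad \mbox{on} \quad Inv(\mathcal{H}_1 \otimes \mathcal{H}_2 \otimes \mathcal{H}_3'),
$$
where $A^{Inv}_{231}$ and $A^{Inv}_{312}$ are defined exactly as $A^{Inv}_{123}$ in \eqref{4_eq:composition_three} with the indices cyclically permuted.

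To verify the latter, I would track each of the three slots through the composition. Each factor $A^{Inv}_{ijk} = E_{ijk} \circ ({}'C_i \otimes id \otimes D'_k)$ acts at the Hilbert-space level by ${}'C$ on the first tensor factor, $D'$ on the third, followed by a mere relabeling of the integration variable (the nontriviality of $E_{ijk}$ as a $\mathcal{B}$-intertwiner is subsumed by the invariance condition on the subspace). A direct inspection shows that after three cyclic applications every slot returns to its original tensor position and type, with slot $1$ acted on by $D'_1 \circ {}'C_1$, slot $2$ by $D'_2 \circ {}'C_2$, and slot $3$ by ${}'C_3 \circ D'_3$. By Proposition \ref{4_prop:dual_intertwiners_property}, each of these three compositions equals $e^{4\pi a p}$ on the corresponding single-variable $L^2$ factor, so the triple composition equals
$$
e^{4\pi a p_1} \otimes e^{4\pi a p_2} \otimes e^{4\pi a p_{y_3}} = e^{4\pi a(p_1 + p_2 + p_{y_3})}
$$
as an operator on $L^2(\mathbb{R}^3, dx_1 dx_2 dy_3) \cong \mathcal{H}_1 \otimes \mathcal{H}_2 \otimes \mathcal{H}_3'$.

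To finish, $\mathcal{B}$-invariance of $\varphi \in Inv(\mathcal{H}_1 \otimes \mathcal{H}_2 \otimes \mathcal{H}_3')$ under the actions of $X$ and $\widetilde X$, which via \eqref{4_eq:pi_dual_action} act on the third slot as $e^{-2\pi b p_{y_3}}$ and $e^{-2\pi b^{-1}p_{y_3}}$ respectively, together with $\Delta^{(3)} X = X\otimes X\otimes X$ and the analogous formula for $\widetilde X$, gives $e^{-2\pi b(p_1 + p_2 + p_{y_3})}\varphi = \varphi$ and $e^{-2\pi b^{-1}(p_1 + p_2 + p_{y_3})}\varphi = \varphi$; multiplying these yields $e^{-4\pi a(p_1 + p_2 + p_{y_3})}\varphi = \varphi$, and hence also $e^{+4\pi a(p_1 + p_2 + p_{y_3})}\varphi = \varphi$. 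The triple composition therefore acts as the identity on the invariant subspace, completing the proof. The main technical difficulty lies in the slot-tracking step — correctly bookkeeping the position and type transitions through three rounds of $E_{ijk}$ permutations and verifying that the two-step compositions $D' \circ {}'C$ and ${}'C \circ D'$ indeed arise in the claimed orders so that Proposition \ref{4_prop:dual_intertwiners_property} applies verbatim, with the apparent asymmetry between slot $3$ and the others being resolved precisely because both orderings of ${}'C$ and $D'$ produce the same operator $e^{4\pi a p}$.
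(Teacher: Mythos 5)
Your proposal is correct and follows essentially the same route as the paper's own proof: reduce to the composition $A^{Inv}_{312}\circ A^{Inv}_{231}\circ A^{Inv}_{123}$ on the invariant subspace, identify this at the operator level (via the slot-tracking through the ${}'C$, $D'$, and relabeling maps) with $e^{4\pi a(p_{x_1}+p_{x_2}+p_{y_3})}$ using Proposition~\ref{4_prop:dual_intertwiners_property}, and then use invariance under $X$ and $\widetilde X$ to conclude this operator acts as the identity there. Your slot bookkeeping ($D'\circ{}'C$ on slots $1,2$ and ${}'C\circ D'$ on slot $3$) matches the paper's rearrangement $({}'C_3)(D'_3)(D'_2)({}'C_2)(D'_1)({}'C_1)$ exactly.
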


\begin{proof}
Consider the following diagram:
\begin{align}
\label{4_eq:large_triangular_diagram}
\xymatrix{
\mathcal{H}_1 \otimes \mathcal{H}_2 \otimes \mathcal{H}_3'
\ar[r]^{A_{123}} \ar[dr] &
\mathcal{H}_2 \otimes \mathcal{H}_3 \otimes \mathcal{H}_1'
\ar[r]^{A_{231}} \ar[dr] &
\mathcal{H}_3 \otimes \mathcal{H}_1 \otimes \mathcal{H}_2'
\ar[r]^{A_{312}} \ar[dr] &
\mathcal{H}_1 \otimes \mathcal{H}_2 \otimes \mathcal{H}_3' \\
&
{}' \mathcal{H}_1 \otimes \mathcal{H}_2 \otimes \mathcal{H}_3
\ar[u]^{E_{123}}  &
{}' \mathcal{H}_2 \otimes \mathcal{H}_3 \otimes \mathcal{H}_1
\ar[u]^{E_{231}}  &
{}' \mathcal{H}_3 \otimes \mathcal{H}_1 \otimes \mathcal{H}_2,
\ar[u]^{E_{312}}
}
\end{align}
where the three diagonal arrows are all ${}' C \otimes id \otimes D'$ as in \eqref{4_eq:composition_three}. Recall that $A_{ijk}$ are defined so that the diagram \eqref{4_eq:large_triangular_diagram} commutes. 

\vs

Observe that the composition of the six non-horizontal arrows of the diagram \eqref{4_eq:large_triangular_diagram} is
\begin{align*}
\mathcal{H}_1 \otimes \mathcal{H}_2 \otimes \mathcal{H}_3'
& \to 
\mathcal{H}_1 \otimes \mathcal{H}_2 \otimes \mathcal{H}_3'
\\
\varphi(x_1,x_2,y_3) & \mapsto
((D'_2) ({}'C_3) (D'_1) ({}'C_2) (D'_3) ({}'C_1) \varphi)(x_1,x_2,y_3) \\
& = ( ({}'C_3)(D'_3)  (D'_2)({}'C_2)  (D'_1)({}'C_1) \varphi)(x_1,x_2,y_3) \\
& = e^{4\pi a (p_{x_1}+p_{x_2}+p_{y_3})} \varphi(x_1,x_2,y_3),
\quad (\mbox{by } \eqref{4_eq:dual_intertwiners_doubles})
\end{align*}
where the subscripts of ${}'C$ and $D'$ indicate the variable names (hence operators having distinct subscripts commute)
and the domain $\mathcal{H}_1 \otimes \mathcal{H}_2 \otimes \mathcal{H}_3'$ and the codomain $\mathcal{H}_1 \otimes \mathcal{H}_2 \otimes \mathcal{H}_3'$ are both realized as $L^2(\mathbb{R}^3, dx_1 dx_2 dy_3)$. Notice that if $\varphi(x_1,x_2,y_3) \in Inv(\mathcal{H}_1 \otimes \mathcal{H}_2 \otimes \mathcal{H}_3')$, then from the invariance under the action of $X, \widetilde{X}$ we have $e^{2\pi b^{\pm 1}(p_{x_1} + p_{x_2} + p_{y_3})} \varphi(x_1,x_2,y_3) = \varphi (x_1,x_2,y_3)$, which yields $e^{4\pi a (p_{x_1} + p_{x_2} + p_{y_3})} \varphi(x_1,x_2,y_3) = \varphi(x_1,x_2,y_3)$.

\vs

Thus, the composition of the six non-horizontal arrows of the diagram \eqref{4_eq:large_triangular_diagram} applied to invariant elements is the identity map (if the first and last spaces are considered identical), and hence so is that of the three horizontal arrows. Following the definition of ${\bf A}$ as in \eqref{4_eq:def_A_operator_diagram} (for three possible cyclic permutations of the indices) and also \eqref{4_eq:A_Hom_definition}, we get that ${\bf A} \circ {\bf A} \circ {\bf A}: M_{12}^3 \to M_{23}^1 \to M_{31}^2 \to M_{12}^3$ is the identity map (if the first and last $M_{12}^3$ are considered identical). 
\end{proof}

We can compute the explicit formula for ${\bf A}$, using its definition \eqref{4_eq:def_A_operator_diagram}:

\begin{proposition}
\label{4_prop:A_formula}
The operator ${\bf A}:L^2(\mathbb{R}, d\alpha)\to L^2(\mathbb{R}, d\beta)$ is given by:
\begin{align}
\label{4_eq:A_formula}
{\bf A}: \phi(\alpha) \mapsto \frac{\zeta e^{-\pi i /12}}{\sqrt{3}} 
\int_{\mathbb{R}} e^{\pi i(2\alpha^2/3 + 2\alpha\beta/3 - \beta^2/3)} (e^{2\pi a p_\alpha} \phi(\alpha)) d\alpha, \quad \mbox{for } \phi \in W.
\end{align}
\end{proposition}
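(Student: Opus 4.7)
The plan is to unwind Definition \ref{4_def:A_operator} by chasing $\phi(\alpha)\in W\subset M_{12}^3$ along \eqref{4_eq:def_A_operator_diagram}. The composition $J_{123}\circ I_{123}$ is immediate from \eqref{4_eq:identification_I_formula}: the image in $Inv(\mathcal{H}_1\otimes\mathcal{H}_2\otimes\mathcal{H}_3')$ is the distribution kernel $K_\phi(x_1,x_2,y_3)=\int_\mathbb{R}\phi(\alpha)\upker{\alpha}{y_3}{x_1}{x_2}d\alpha$, and applying $A^{Inv}_{123}=E_{123}\circ({}'C\otimes id\otimes D')$ produces the kernel
\begin{equation*}
\Psi(y_1,x_2,x_3)=\int_{\mathbb{R}^2}k(y_1,x_1)\,K(x_3,y_3)\,K_\phi(x_1,x_2,y_3)\,dx_1\,dy_3
\end{equation*}
in $Inv(\mathcal{H}_2\otimes\mathcal{H}_3\otimes\mathcal{H}_1')$; the final step $I_{231}^{-1}\circ J_{231}^{-1}$ amounts to extracting $\tilde\phi(\beta):=({\bf A}\phi)(\beta)$ from the relation $\Psi(y_1,x_2,x_3)=\int_\mathbb{R}\tilde\phi(\beta)\upker{\beta}{y_1}{x_2}{x_3}d\beta$.

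Next I would evaluate $\Psi$ by two Fresnel integrations. Expanding $k,K$ (from \eqref{4_eq:dual_kernel_definition}) and $\upker{\alpha}{y_3}{x_1}{x_2}$ (from Definition \ref{4_def:CG}), the integrand's $x_1$-dependence is purely Gaussian with linear coefficient $c=x_2+y_3+\alpha-y_1+ia$, so the standard formula $\int e^{-\pi ix_1^2+2\pi icx_1}dx_1=e^{-i\pi/4+\pi ic^2}$ dispatches the $x_1$-integral. The surviving $y_3$-exponent then collapses to linear, and after a change of variables the $y_3$-integral reduces to $\int e^{-2\pi izv}S_L(v)dv=e^{2i\chi+i\pi/4}S_R(z)$ with $z=x_3-y_1$; this is Proposition \ref{2_prop:G_Fourier} read as an inverse Fourier identity, and equivalently follows by applying \eqref{2_eq:G_Four_-_-} to the expansion of $S_L$ via \eqref{2_eq:S_L_def} together with the identity $-4i\chi+\pi ia^2=2i\chi+i\pi/2$. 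At this stage $\Psi$ is a fully explicit Gaussian in $(y_1,x_2,x_3,\alpha)$ times $S_R(x_3-y_1)$, integrated against $\phi$.

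The next key step uses the identity
\begin{equation*}
S_R(x_3-y_1)=S_L(x_3-y_1)\,e^{2i\chi+\pi i(x_3-y_1)^2+2\pi a(x_3-y_1)-\pi ia^2},
\end{equation*}
immediate from \eqref{2_eq:S_R_def}--\eqref{2_eq:S_L_def}, which converts $S_R$ into the $S_L$ required by $\upker{\beta}{y_1}{x_2}{x_3}$ at the cost of an extra Gaussian. Setting $w:=y_1-x_2$, a direct expansion then shows that the $(y_1,x_2,x_3)$-Gaussian in $\Psi$ factors exactly as $e^{-2\pi i(x_3-x_2)w}$ (the non-$\beta$ Gaussian part of $\upker{\beta}{y_1}{x_2}{x_3}$) times $e^{2\pi iw^2+2\pi aw}$, so $\Psi$ assumes the desired form with Fourier transform $\hat{\tilde\phi}(w)=\int\tilde\phi(\beta)e^{-2\pi i\beta w}d\beta$ given explicitly as a global constant times $e^{2\pi iw^2+2\pi aw}$ times the $\alpha$-integral of $\phi(\alpha)$ against $e^{\pi i(\alpha-w)^2-2\pi a\alpha}$. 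Recovering $\tilde\phi(\beta)=\int e^{2\pi i\beta w}\hat{\tilde\phi}(w)dw$ by swapping orders and performing one more Fresnel integral $\int e^{3\pi iw^2+2\pi iCw}dw=(e^{i\pi/4}/\sqrt3)\,e^{-\pi iC^2/3}$ with $C=\beta-\alpha-ia$ produces both the $1/\sqrt3$ prefactor and the fractional exponent $\pi i(2\alpha^2/3+2\alpha\beta/3-\beta^2/3)$ appearing in \eqref{4_eq:A_formula}.

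I expect the main difficulty to be bookkeeping the multiplicative constant. After the three Fresnel integrations and the $S_R/S_L$ conversion, using $e^{2i\chi}=\zeta^{-1}e^{-\pi i/6}$, the accumulated prefactor is $\zeta^4 e^{-\pi i/12+\pi ia^2/3}/\sqrt3$ and the residual integrand is $\phi(\alpha)\cdot e^{Q(\alpha,\beta)-4\pi a\alpha/3-2\pi a\beta/3}$ with $Q(\alpha,\beta):=\pi i(2\alpha^2/3+2\alpha\beta/3-\beta^2/3)$. The identity $Q(\alpha+ia,\beta)-Q(\alpha,\beta)=-4\pi a\alpha/3-2\pi a\beta/3-2\pi ia^2/3$ shows that the residual linear-in-$a$ factors are exactly what is needed to absorb the shift $\alpha\mapsto\alpha-ia$ present in \eqref{4_eq:A_formula}: a contour shift transforms the integral into $e^{2\pi ia^2/3}\int\phi(\alpha-ia)\,e^{Q(\alpha,\beta)}d\alpha=e^{2\pi ia^2/3}\int(e^{2\pi ap_\alpha}\phi)(\alpha)\,e^{Q(\alpha,\beta)}d\alpha$, and the reconciliation $\zeta^4 e^{\pi ia^2/3}\cdot e^{2\pi ia^2/3}=\zeta^4 e^{\pi ia^2}=\zeta$ reduces to the definition $\zeta^3=e^{-\pi ia^2}$. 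The contour shifts in $x_1$, $y_3$, and $\alpha$ are legitimate because elements of $W$ extend to entire analytic functions and $G$ has the asymptotics \eqref{2_eq:G_asymptotics}.
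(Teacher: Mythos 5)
Your proposal is correct and arrives at \eqref{4_eq:A_formula} by a genuinely different computational route than the paper's. The paper tests the defining relation $J_{231}I_{231}{\bf A}\phi = A^{Inv}_{123}J_{123}I_{123}\phi$ against $\downker{\sigma}{Y_1}{x_2}{x_3}$ and invokes the orthogonality \eqref{4_eq:CG_orthogonality_x1_x2} to isolate $({\bf A}\phi)(\sigma)\delta(y_1-Y_1)$ on the left, which then forces it to evaluate the $x_3$-integral by the $G$--$G$ delta identity \eqref{2_eq:G_delta2} (in addition to the Fresnel in $x_1$, the Fourier formula \eqref{2_eq:G_Four_-_-} in $y_3$, and one more Fresnel in $X_2$). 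You instead keep $\Psi(y_1,x_2,x_3)=\int\tilde\phi(\beta)\upker{\beta}{y_1}{x_2}{x_3}d\beta$ as the target, perform only the $x_1$-Fresnel and the $y_3$-Fourier (Proposition \ref{2_prop:G_Fourier} in inverse form), convert the resulting $S_R(x_3-y_1)$ into $S_L(x_3-y_1)$ via the elementary relation between \eqref{2_eq:S_R_def} and \eqref{2_eq:S_L_def}, and then \emph{read off} the Fourier transform $\hat{\tilde\phi}(w)$, $w=y_1-x_2$, by matching with the explicit kernel $\upker{\beta}{y_1}{x_2}{x_3}=e^{-2\pi i\beta w}e^{-2\pi i(x_3-x_2)w}S_L(x_3-y_1)$. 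This trades the heavier quantum-dilogarithm delta formula \eqref{2_eq:G_delta2} for a structural consistency check (that the Gaussian from the $S_R\to S_L$ conversion and the earlier integrations factors as $e^{-2\pi i(x_3-x_2)w}$ times a function of $w$ and $\alpha$ alone), and it saves one integration; the price is that the factoring check is a nontrivial algebraic verification which you assert as ``a direct expansion'' without displaying it. The bookkeeping of constants (Fresnel prefactors, the $\zeta^3=e^{-\pi i a^2}$ reconciliation, and the absorption of the residual $e^{\pi a(\cdot)}$ factors by the contour shift $\alpha\mapsto\alpha-ia$) all checks out and matches the $\zeta e^{-\pi i/12}/\sqrt3$ prefactor in \eqref{4_eq:A_formula}.
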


\begin{proof}
The diagram \eqref{4_eq:def_A_operator_diagram} and \eqref{4_eq:A_Hom_definition} together define ${\bf A}$ by
\begin{align}
\label{4_eq:A_definition_equation}
J_{231} I_{231} {\bf A} \phi
= A^{Inv}_{123} J_{123} I_{123} \phi, \quad \phi \in W\subset M_{12}^3.
\end{align}
Following the definitions of $I_{123}$ (as in \eqref{4_eq:identification_I} and \eqref{4_eq:identification_I_formula}), $J_{123}$ (as in \eqref{4_eq:identification_J}), and $A^{Inv}_{123}$ (as in \eqref{4_eq:composition_three}), and considering $M_{12}^3 \equiv L^2(\mathbb{R}, d\alpha)$ and $M_{23}^1 \equiv L^2(\mathbb{R}, d\beta)$, the equation \eqref{4_eq:A_definition_equation} means
\begin{align*}
\int_\mathbb{R} ({\bf A} \phi)(\beta) \upker{\beta}{y_1}{x_2}{x_3} d\beta
= \int_{\mathbb{R}^3} \phi(\alpha) \upker{\alpha}{y_3}{x_1}{x_2} K(x_3,y_3) k(y_1,x_1) d\alpha dy_3^{\cup x_2} dx_1.
\end{align*}
Multiply $\downker{\sigma}{Y_1}{x_2}{x_3}$ to both sides and integrate w.r.t. $x_2,x_3$ along a suitable contour: from \eqref{4_eq:CG_orthogonality_x1_x2}, we get 
\begin{align}
\label{4_eq:to_integrate_A}
({\bf A} \phi)(\sigma) \delta(y_1-Y_1)
= \int_{\mathbb{R}^5} \phi(\alpha) \upker{\alpha}{y_3}{x_1}{x_2} K(x_3,y_3) k(y_1,x_1) \downker{\sigma}{Y_1}{x_2}{x_3} d\alpha dy_3^{\cup x_2} dx_1 dx_3^{\cup Y_1} dx_2,
\end{align}
as distributions. We put $e^{-2\pi a p_\alpha} \phi(\alpha)$ into the place of $\phi(\alpha)$, and let $R(\sigma,y_1,Y_1)$ be the RHS of \eqref{4_eq:to_integrate_A}. By putting in all the definitions (see \eqref{4_eq:def_downker}, \eqref{4_eq:def_upker}, and \eqref{4_eq:dual_kernel_definition}) and simplifying, we get
$$
R(\sigma,y_1,Y_1) = \int_{\mathbb{R}^5} \phi(\alpha) G(x_2-y_3-ia) G(Y_1 - x_3-ia) \exp(*A_1) d\alpha dy_3^{\cup x_2} dx_1 dx_3^{\cup Y_1} dx_2,
$$
(we moved $e^{-2\pi a p_\alpha}$ by transposing w.r.t. $\alpha$ variable; shift the contour of $\alpha$ by $- ia$) where
\begin{align*}
(*A_1) / (\pi i)
& =
( -x_1^2 +3x_2^2/2 - x_3^2/2 + y_1^2 +Y_1^2/2 - 3y_3^2/2)
+ 2x_1( \alpha  + x_2 - y_1  + y_3  ) \\
& \quad
-2\alpha y_3+ 2\sigma Y_1- 2\sigma x_2
- 2x_2 x_3 - 2x_2 Y_1 - x_2 y_3+ x_3Y_1  + 2 x_3 y_3 \\
& \quad
- ia (-x_2-3x_3+2y_1+Y_1+y_3).
\end{align*}
We first integrate w.r.t. $x_1$, using
\begin{align}
\label{4_eq:finite_integral}
\int_\mathbb{R} e^{\pi i (r x^2 + sx)} dx
= \int_\mathbb{R} e^{\pi i (r (x+\frac{s}{2r})^2 - \frac{s^2}{4r})} dx
= \frac{e^{\pi i/4} e^{- \pi i \frac{s^2}{4r}}}{\sqrt{r}}, \quad
r>0,
\end{align}
which holds as distributions. By taking complex conjugate, we get a similar formula for the case $r<0$, which we will also refer to as just \eqref{4_eq:finite_integral} throughout this paper. Performing \eqref{4_eq:finite_integral} for the variable $x_1$ yields
\begin{align*}
R(\sigma,y_1,Y_1) & = e^{-\pi i/4} \int_{\mathbb{R}^4} \phi(\alpha) G(x_2-y_3-ia) G(Y_1 - x_3-ia) \exp(*A_2) d\alpha dy_3^{\cup x_2} dx_3^{\cup Y_1} dx_2, \\
(*A_2) / (\pi i)
& =
( \alpha^2  +5x_2^2/2 - x_3^2/2 + 2y_1^2 +Y_1^2/2 - y_3^2/2)
+ y_3(x_2  + 2 x_3 - 2y_1) \\
& \quad
+ 2\alpha x_2 - 2\alpha y_1 + 2\sigma Y_1- 2\sigma x_2
- 2x_2 x_3  - 2x_2 y_1 - 2x_2 Y_1+ x_3Y_1 \\
& \quad
- ia (-x_2-3x_3+2y_1+Y_1+y_3).
\end{align*}
By a simple change of variables for the Fourier transform formula \eqref{2_eq:G_Four_-_-} we get
\begin{align*}
\int_\mathbb{R} G(x_2-y_3-ia)e^{-\frac{\pi i}{2}y_3^2} e^{\pi i  y_3(x_2+2u)} e^{\pi a y_3} dy_3^{\cup x_2}
= e^{-2i\chi}e^{-\pi i/4} G(u-ia) e^{2\pi i x_2 u} e^{\frac{\pi i}{2} (u^2+x_2^2)} e^{\pi a (u+x_2)},
\end{align*}
for $u = x_3-y_1$:
\begin{align*}
R(\sigma,y_1,Y_1) & = e^{-2i\chi} e^{-\pi i/2} \int_{\mathbb{R}^3} \phi(\alpha) G(x_3-y_1-ia) G(Y_1 - x_3-ia) \exp(*A_3) d\alpha dx_3^{\cup Y_1\cap y_1} dx_2, \\
(*A_3) / (\pi i)
& =
( \alpha^2  + 3x_2^2 + 5y_1^2/2 +Y_1^2/2)
+ x_3(Y_1-y_1) \\
& \quad
+ (2\alpha x_2 - 2\alpha y_1 + 2\sigma Y_1- 2\sigma x_2
- 4x_2 y_1 - 2x_2 Y_1)
- ia (-2x_3+y_1+Y_1).
\end{align*}
From \eqref{2_eq:G_delta2} we have
$
\int_\mathbb{R} G(x_3 - y_1-ia) G(Y_1 - x_3-ia)
e^{\pi i x_3 (Y_1 - y_1)} e^{-2\pi a x_3} dx_3^{\cup Y_1 \cap y_1}
= \delta( Y_1- y_1) e^{-\pi a (y_1 + Y_1)},
$
hence
\begin{align*}
R(\sigma,y_1,Y_1) & = e^{-2i\chi} e^{-\pi i/2} \delta(y_1-Y_1) \int_{\mathbb{R}^2} \phi(\alpha) \exp(*A_4) d\alpha dx_2, \\
(*A_4) / (\pi i)
& =
( \alpha^2  + 3x_2^2 + 5y_1^2/2 +Y_1^2/2)
+ (2\alpha x_2 - 2\alpha y_1 + 2\sigma Y_1- 2\sigma x_2
- 4x_2 y_1 - 2x_2 Y_1).
\end{align*}
Because of the $\delta(y_1-Y_1)$ factor, we can replace all $Y_1$'s by $y_1$'s in $(*A_4)$; so $(*A_4)$ can be replaced by $\pi i (\alpha^2  + 3(x_2-y_1)^2 + 2(\alpha-\sigma) (x_2 - y_1))$. Use change of variables $x_2 \mapsto X_2 = x_2 - y_1$, and integrate w.r.t. $X_2$ using \eqref{4_eq:finite_integral}:
\begin{align*}
R(\sigma,y_1,Y_1) & = \frac{e^{-2i\chi} e^{-\pi i/4}}{\sqrt{3}}  \delta(y_1-Y_1) \int_{\mathbb{R}} \phi(\alpha) e^{\pi i (2\alpha^2/3 + 2\alpha \sigma/3 - \sigma^2/3 )} d\alpha.
\end{align*}
Therefore we just proved
\begin{align}
\label{4_eq:A_formula_clean}
({\bf A} e^{-2\pi a p_\alpha} \phi(\alpha))(\sigma)
= \frac{\zeta e^{-\pi i /12}}{\sqrt{3}} \int_\mathbb{R} e^{\pi i (2\alpha^2/3+2\alpha\sigma/3-\sigma^2/3)} \phi(\alpha) d\alpha
\end{align}
(recall $\zeta = e^{-2i\chi} e^{-\pi i /6}$), which amounts to \eqref{4_eq:A_formula}.
\end{proof}

We now prove the following useful property of ${\bf A}$:
\begin{proposition}
\label{4_prop:A_conjugation_action}
If ${\bf A}, p,x$ are thought of as acting on $L^2(\mathbb{R},dx)$, then
\begin{align}
\label{4_eq:A_conjugation_action}
{\bf A} x {\bf A}^{-1} = x+3p-ia, \quad
{\bf A} p {\bf A}^{-1} = -x-2p.
\end{align}
\end{proposition}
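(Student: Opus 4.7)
The plan is to read off the conjugation relations directly from the explicit integral formula \eqref{4_eq:A_formula} for ${\bf A}$. Write ${\bf A} = T_0 \cdot e^{2\pi a p}$, where $T_0$ is the (constant times the) Gaussian integral transform
\[
(T_0 \phi)(\beta) = \tfrac{\zeta e^{-\pi i/12}}{\sqrt 3}\int_{\mathbb R} K(\alpha,\beta)\,\phi(\alpha)\,d\alpha,
\qquad
K(\alpha,\beta) = e^{\pi i(2\alpha^2/3 + 2\alpha\beta/3 - \beta^2/3)}.
\]
Since every operation is tested against the Schwartz space $W$ (on which shifts, integrations by parts, and contour movements are justified by \eqref{2_eq:G_asymptotics} and the Gaussian decay of elements of $W$), all manipulations below make sense as operator identities on $W$ and extend by Theorem \ref{3_thm:W_density} to $\mathcal S_{\mathcal B}$.

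First I would record two kernel identities obtained by a direct differentiation of $K$:
\[
p_\alpha K = \tfrac{2\alpha+\beta}{3}\, K, \qquad p_\beta K = \tfrac{\alpha-\beta}{3}\, K,
\]
which after rearrangement give $\alpha K = (\beta + 3 p_\beta) K$ and $(2\alpha + \beta) K = 3 p_\alpha K$. Substituting the first into the second yields, as an operator statement on $W$,
\[
(T_0\,\alpha\,\phi)(\beta) = \int (\beta + 3 p_\beta) K\cdot \phi\, d\alpha = (\beta + 3 p_\beta)(T_0\phi)(\beta),
\]
i.e.\ $T_0\, \alpha\, T_0^{-1} = \beta + 3 p_\beta$. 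For the momentum, integrating by parts in $\alpha$ (the boundary terms vanish on $W$) gives
\[
(T_0\, p_\alpha\, \phi)(\beta) = -\int (p_\alpha K)\,\phi\, d\alpha = -\tfrac13\int(2\alpha+\beta)K\,\phi\,d\alpha = -(2p_\beta + \beta)(T_0\phi)(\beta),
\]
using once more $\alpha K = (\beta + 3p_\beta)K$. Hence $T_0\, p_\alpha\, T_0^{-1} = -\beta - 2 p_\beta$.

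Finally I would combine these with the shift factor. From $[p,x]=\tfrac{1}{2\pi i}$ one computes
\[
e^{2\pi a p}\, x\, e^{-2\pi a p} = x - ia, \qquad e^{2\pi a p}\, p\, e^{-2\pi a p} = p.
\]
Therefore, identifying the input and output variables $\alpha,\beta$ with the single variable $x$ on $L^2(\mathbb R, dx)$,
\[
{\bf A}\, x\, {\bf A}^{-1} = T_0\,(x - ia)\,T_0^{-1} = x + 3p - ia, \qquad
{\bf A}\, p\, {\bf A}^{-1} = T_0\, p\, T_0^{-1} = -x - 2p,
\]
which is \eqref{4_eq:A_conjugation_action}.

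The calculation is essentially algebraic; the only subtlety I anticipate is the justification of the contour/integration-by-parts steps, but this is routine given the explicit Gaussian-times-polynomial nature of elements of $W$ and was already used in the proof of Proposition \ref{4_prop:A_formula}, so it does not present a real obstacle.
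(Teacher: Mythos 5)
Your proof is correct and follows essentially the same route as the paper's: you compute how the Gaussian kernel intertwines $x$ and $p$ (the paper does this at the level of finite shifts $e^{2\pi i m p}$ combined via BCH, you do it at the infinitesimal level via differentiation and integration by parts), and then combine with the conjugation by the shift factor $e^{2\pi a p}$. The computations check out, and the distinction from the paper is only cosmetic.
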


\begin{proof}
We use \eqref{4_eq:A_formula_clean}. For convenience, let ${\bf A}^{(0)} := \zeta^{-1} {\bf A} e^{-2\pi a p_\alpha}$ (this definition is in accordance with the one that will appear later), so that ${\bf A}^{(0)} : L^2(\mathbb{R}, d\alpha) \to L^2(\mathbb{R}, d\beta)$ is an (unitary) integral transformation with distribution kernel $c_0 J(\alpha,\beta)$, where $c_0 = e^{-\pi i/12}/\sqrt{3}$ and $J(\alpha,\beta) = e^{\pi i (2\alpha^2/3 + 2\alpha\beta/3 - \beta^2/3)}$. Note for any real number $m$ that
\begin{align*}
e^{2\pi i m p_\alpha} J(\alpha,\beta)
= J(\alpha+m,\beta)
= e^{\pi i (2m^2/3)}e^{\pi i (4m \alpha/3 + 2m\beta/3)} J(\alpha,\beta)
\\
e^{2\pi i m p_\beta} J(\alpha,\beta)
= J(\alpha,\beta+m)
= e^{\pi i (-m^2/3)} e^{\pi i ( 2m \alpha/3 - 2m \beta/3)} J(\alpha,\beta),
\end{align*}
hence
$$
e^{2\pi i m (-2\alpha/3 + p_\alpha)} J(\alpha,\beta)
= e^{2\pi i m (\beta/3)} J(\alpha,\beta), \quad
e^{2\pi i m (\alpha/3)} J(\alpha,\beta)
= e^{2\pi i m(\beta/3 + p_\beta)} J(\alpha,\beta).
$$
For $\phi \in W$'s, we thus get (by transposing)
$$
{\bf A}^{(0)} e^{2\pi i m (-2\alpha/3 - p_\alpha)} \phi
= e^{2\pi i m (\beta/3)} {\bf A}^{(0)} \phi, \quad
{\bf A}^{(0)} e^{2\pi i m (\alpha/3)} \phi
= e^{2\pi i m(\beta/3 + p_\beta)} {\bf A}^{(0)} \phi,
$$
yielding
\begin{align}
\label{4_eq:A_0_conjugation}
{\bf A}^{(0)} (-2x/3 - p) ({\bf A}^{(0)})^{-1} = x/3, \quad
{\bf A}^{(0)} (x/3) ({\bf A}^{(0)})^{-1} = x/3+p,
\end{align}
if ${\bf A}^{(0)},x,p$ are thought of as acting on $L^2(\mathbb{R}, dx)$. It's easy to see
$$
e^{-2\pi a p} x e^{2\pi a p} = x+ia, \quad
e^{-2\pi a p} p e^{2\pi a p} = p,
$$
which together with \eqref{4_eq:A_0_conjugation} yields \eqref{4_eq:A_conjugation_action}.
\end{proof}

\begin{remark}
Proposition \ref{4_prop:A_conjugation_action} could've been proved without Proposition \ref{4_prop:A_formula}, using the equation \eqref{4_eq:A_definition_equation} $J_{231} I_{231} {\bf A} \phi = A^{Inv}_{123} J_{123} I_{123} \phi$ and the following (for any real numbers $m$):
\begin{align*}
e^{2\pi i m p_{x_1}} \upker{\alpha}{y_3}{x_1}{x_2}
& =e^{2\pi i m (-p_{y_3}-p_{x_2})} \upker{\alpha}{y_3}{x_1}{x_2}
= e^{-2\pi i m^2} e^{2\pi i m\alpha} e^{2\pi i m(x_2+y_3-2x_1)} \upker{\alpha}{y_3}{x_1}{x_2}, \\
e^{2\pi i m p_\alpha} \upker{\alpha}{y_3}{x_1}{x_2}
& = e^{-2\pi i m(y_3-x_1)} \upker{\alpha}{y_3}{x_1}{x_2}, \\
e^{2\pi i m p_{x_1}} k(y_1,x_1)
& = e^{\pi i m^2 - 2\pi a m} e^{2\pi i m(x_1-y_1)} k(y_1,x_1), \\
e^{2\pi i m p_{y_3}} K(x_3,y_3)
& = e^{-\pi i m^2 + 2\pi a m} e^{2\pi i m(x_3-y_3)} K(x_3,y_3).
\end{align*}
Actually, \eqref{4_eq:A_conjugation_action} uniquely determines the operator ${\bf A}$ (constant is fixed by ${\bf A}^3=id$).
\end{remark}

The ${\bf A}$ operator can be encoded geometrically as in Figure \ref{4_fig:A}.
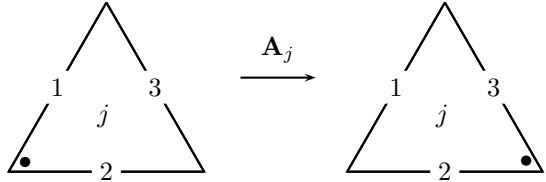
\begin{figure}[htbp!]
\centering
\begin{pspicture}[showgrid=false](0,0.8)(8,3)
\rput[bl](0,0){
\PstTriangle[unit=1.5,PolyName=P]
\pcline(P1)(P2)\ncput*{1}
\pcline(P2)(P3)\ncput*{2}
\pcline(P3)(P1)\ncput*{3}
\rput[l]{30}(P2){\hspace{1,7mm}$\bullet$}
}
\rput[l](1.5,1.5){$j$}
\rput[bl](4.5,0){
\PstTriangle[unit=1.5,PolyName=P]
\pcline(P1)(P2)\ncput*{1}
\pcline(P2)(P3)\ncput*{2}
\pcline(P3)(P1)\ncput*{3}
\rput[l]{145}(P3){\hspace{1,7mm}$\bullet$}
}
\rput[l](6,1.5){$j$}
\rput[l](3.4,2){\pcline{->}(0,0)(1;0)\Aput{${\bf A}_{j}$}}
\end{pspicture}
\caption{The move representing ${\bf A}:M_{12}^3 \to M_{23}^1$}
\label{4_fig:A}
\end{figure}
We will now study some more properties of the ${\bf A}$ operator in the next subsection.

%%%%%%%%%%
%%%%%%%%%%
\subsection{Relations involving ${\bf T}$ and ${\bf A}$}

Recall that we proved that ${\bf T}$ satisfies pentagon equation (Proposition \ref{4_prop:pentagon}), and that ${\bf A}^3 = id$ (Proposition \ref{4_prop:A_orderthree}). In this subsection, we prove two relations involving both ${\bf T}$ and ${\bf A}$.

\vs

We can check if the diagram
\begin{align}
\label{4_eq:diagram_ATA_ATA}
\xymatrix{
M_{12}^n \otimes M_{n3}^\ell
\ar[r]^{{\bf T}_{21}} \ar[d]_{{\bf A}_1^{-1} {\bf A}_2} &
M_{1m}^\ell \otimes M_{23}^m \ar[d]_{{\bf A}_1^{-1} {\bf A}_2} \\
M_{n1}^2 \otimes M_{3\ell}^n \ar[r]^{{\bf T}_{12}} &
M_{\ell1}^m \otimes M_{3m}^2
}
\end{align}
commutes.  The above diagram \eqref{4_eq:diagram_ATA_ATA} can be geometrically encoded as in Figure \ref{4_fig:ATA_ATA}.
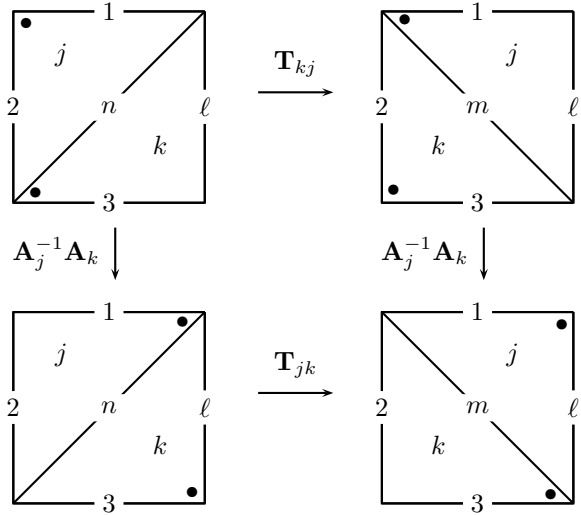
\begin{figure}[htbp!]
\centering
\begin{pspicture}[showgrid=false](0,-3.4)(8.4,3.1)
%% upper-left
\rput[bl](0;0){
\PstSquare[unit=1.8,PolyName=P]
\pcline(P1)(P2)\ncput*{1}
\pcline(P2)(P3)\ncput*{2}
\pcline(P3)(P4)\ncput*{3}
\pcline(P4)(P1)\ncput*{$\ell$}
\pcline(P1)(P3)\ncput*{$n$}
\rput[l]{-45}(P2){\hspace{1,4mm}$\bullet$}
\rput[l]{22}(P3){\hspace{2,4mm}$\bullet$}
}
\rput[l](2.5,1.3){$k$}
\rput[l](1.2,2.5){$j$}
%% lower-left
\rput[bl](0,-4){
\PstSquare[unit=1.8,PolyName=P]
\pcline(P1)(P2)\ncput*{1}
\pcline(P2)(P3)\ncput*{2}
\pcline(P3)(P4)\ncput*{3}
\pcline(P4)(P1)\ncput*{$\ell$}
\pcline(P1)(P3)\ncput*{$n$}
\rput[l]{202}(P1){\hspace{2,4mm}$\bullet$}
\rput[l]{135}(P4){\hspace{1,4mm}$\bullet$}
}
\rput[l](2.5,-2.7){$k$}
\rput[l](1.2,-1.5){$j$}
%% upper-right
\rput[bl](4.9,0){
\PstSquare[unit=1.8,PolyName=P]
\pcline(P1)(P2)\ncput*{1}
\pcline(P2)(P3)\ncput*{2}
\pcline(P3)(P4)\ncput*{3}
\pcline(P4)(P1)\ncput*{$\ell$}
\pcline(P2)(P4)\ncput*{$m$}
\rput[l]{-22}(P2){\hspace{2,4mm}$\bullet$}
\rput[l]{45}(P3){\hspace{1,4mm}$\bullet$}
}
\rput[l](6.2,1.3){$k$}
\rput[l](7.2,2.5){$j$}
%% lower-right
\rput[bl](4.9,-4){
\PstSquare[unit=1.8,PolyName=P]
\pcline(P1)(P2)\ncput*{1}
\pcline(P2)(P3)\ncput*{2}
\pcline(P3)(P4)\ncput*{3}
\pcline(P4)(P1)\ncput*{$\ell$}
\pcline(P2)(P4)\ncput*{$m$}
\rput[l]{-135}(P1){\hspace{1,4mm}$\bullet$}
\rput[l]{155}(P4){\hspace{2,4mm}$\bullet$}
}
\rput[l](6.2,-2.7){$k$}
\rput[l](7.2,-1.5){$j$}
\rput[l](3.9,2){\pcline{->}(0,0)(1;0)\Aput{${\bf T}_{kj}$}}
\rput[l](2,0.2){\pcline{->}(0,0)(0,-0.7)\Bput{${\bf A}_j^{-1} {\bf A}_k$}}
\rput[l](3.9,-2){\pcline{->}(0,0)(1;0)\Aput{${\bf T}_{jk}$}}
\rput[l](6.9,0.2){\pcline{->}(0,0)(0,-0.7)\Bput{${\bf A}_j^{-1} {\bf A}_k$}}
\end{pspicture}
\caption{Geometric realization of the diagram \eqref{4_eq:diagram_ATA_ATA}}
\label{4_fig:ATA_ATA}
\end{figure}

\vs

Before formulating and proving this assertion, we introduce another formulation of ${\bf T}$ operator, for convenience. Suppose that the situation is as in \eqref{4_eq:T_commutative_diagram}, and define ${\bf T}^{Hom}$ to be the unique mapping which makes the following diagram to commute:
\begin{align}
\label{4_eq:T_Hom_diagram1}
\xymatrix{
{\begin{array}{l}
Hom_{\mathcal{B}} (\mathcal{H}_6, \mathcal{H}_4 \otimes \mathcal{H}_3) \\
\otimes Hom_{\mathcal{B}} (\mathcal{H}_4, \mathcal{H}_1 \otimes \mathcal{H}_2)
\end{array}}
\ar[r]^{{\bf T}^{Hom}}  &
{\begin{array}{l}
Hom_{\mathcal{B}} (\mathcal{H}_5, \mathcal{H}_2 \otimes \mathcal{H}_3) \\
\otimes Hom_{\mathcal{B}} (\mathcal{H}_6, \mathcal{H}_1 \otimes \mathcal{H}_5)
\end{array}} \\
M_{43}^6 \otimes M_{12}^4 \ar[r]^{{\bf T}} \ar[u]_{I_{436} \otimes I_{124}} &
M_{23}^5 \otimes M_{15}^6, \ar[u]^{I_{235} \otimes I_{156}}
}
\end{align}
where ${\bf T}$ is as in \eqref{4_eq:basic_T} and $I_{jk\ell}$ are as in \eqref{4_eq:identification_I}. Using the similar idea as in \eqref{4_eq:identification_I}, from the two explicit isomorphisms \eqref{4_eq:first_decomp} and \eqref{4_eq:second_decomp} (as studied in that subsection) we get a canonical identification of $M_{43}^6 \otimes M_{12}^4$ with $Hom_{\mathcal{B}}(\mathcal{H}_6, (\mathcal{H}_1\otimes \mathcal{H}_2)\otimes \mathcal{H}_3)$ and that of $M_{23}^5 \otimes M_{15}^6$ with $Hom_{\mathcal{B}}(\mathcal{H}_6, \mathcal{H}_1\otimes (\mathcal{H}_2 \otimes \mathcal{H}_3))$. Using these identifications, the diagram
\begin{align}
\label{4_eq:T_Hom_diagram2}
\xymatrixcolsep{4pc}\xymatrix{
M_{43}^6 \otimes M_{12}^4 \ar[r]^{{\bf T}} \ar[d] &
M_{23}^5 \otimes M_{15}^6 \ar[d] \\
Hom_{\mathcal{B}}(\mathcal{H}_6, (\mathcal{H}_1\otimes \mathcal{H}_2)\otimes \mathcal{H}_3)
\ar[r]^{(id\otimes id\otimes id)_*} &
Hom_{\mathcal{B}}(\mathcal{H}_6, \mathcal{H}_1\otimes (\mathcal{H}_2 \otimes \mathcal{H}_3)),
}
\end{align}
commutes, in view of \eqref{4_eq:T_commutative_diagram}. Suppose $\sum f_1 \otimes f_2$ and $\sum h_1 \otimes h_2$ are elements of  $Hom_{\mathcal{B}} (\mathcal{H}_6, \mathcal{H}_4 \otimes \mathcal{H}_3) \otimes Hom_{\mathcal{B}} (\mathcal{H}_4, \mathcal{H}_1 \otimes \mathcal{H}_2)$ and $Hom_{\mathcal{B}} (\mathcal{H}_5, \mathcal{H}_2 \otimes \mathcal{H}_3) \otimes Hom_{\mathcal{B}} (\mathcal{H}_6, \mathcal{H}_1 \otimes \mathcal{H}_5)$ respectively, where $f_1 \in Hom_{\mathcal{B}} (\mathcal{H}_6, \mathcal{H}_4 \otimes \mathcal{H}_3)$, $f_2 \in Hom_{\mathcal{B}} (\mathcal{H}_4, \mathcal{H}_1 \otimes \mathcal{H}_2)$, $h_1 \in Hom_{\mathcal{B}} (\mathcal{H}_5, \mathcal{H}_2 \otimes \mathcal{H}_3)$, and $h_2\in Hom_{\mathcal{B}} (\mathcal{H}_6, \mathcal{H}_1 \otimes \mathcal{H}_5)$. Combining the two diagrams \eqref{4_eq:T_Hom_diagram1} and \eqref{4_eq:T_Hom_diagram2} (and investigating the vertical arrows of the two diagrams), we can deduce that
\begin{align}
\label{4_eq:T_Hom_formulation}
{\bf T}^{Hom} \left( \sum f_1 \otimes f_2 \right)
= \sum h_1 \otimes h_2
\quad \Leftrightarrow \quad
\sum (f_2 \otimes id) f_1
= \sum (id \otimes h_1) h_2,
\end{align}
where $\sum (f_2 \otimes id) f_1$ and $\sum (id \otimes h_1) h_2$ are elements of $Hom_{\mathcal{B}}(\mathcal{H}_6, \mathcal{H}_1\otimes \mathcal{H}_2 \otimes \mathcal{H}_3)$. We now turn back to our original interest:

\begin{proposition}
\label{4_prop:ATA_ATA}
One has ${\bf A}_2 {\bf T}_{21} {\bf A}_1 = {\bf A}_1 {\bf T}_{12} {\bf A}_2$.
\end{proposition}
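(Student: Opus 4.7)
The strategy is to reduce the identity, as was done for the pentagon equation (Proposition \ref{4_prop:pentagon}), to the commutativity of a diagram of canonical maps in the rigid tensor category of $\mathcal{B}$-representations. Concretely, I plan to reformulate both sides in terms of Hom spaces, and then to identify them, via the duality maps ${}'C$ and $D'$ of Proposition \ref{4_prop:duals_isomorphisms}, with canonical endomorphisms of a single space of invariants in a quadruple tensor product.

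First I would use an analogue of \eqref{4_eq:T_Hom_formulation} to identify both $M_{12}^n \otimes M_{n3}^\ell$ and $M_{1m}^\ell \otimes M_{23}^m$ with $Hom_{\mathcal{B}}(\mathcal{H}_\ell, \mathcal{H}_1 \otimes \mathcal{H}_2 \otimes \mathcal{H}_3)$ by composition, corresponding to the two parenthesizations $(\mathcal{H}_1 \otimes \mathcal{H}_2) \otimes \mathcal{H}_3$ and $\mathcal{H}_1 \otimes (\mathcal{H}_2 \otimes \mathcal{H}_3)$. Under these identifications ${\bf T}_{12}$ (and, with the appropriate index labeling, ${\bf T}_{21}$) becomes literally the identity on $Hom_{\mathcal{B}}(\mathcal{H}_\ell, \mathcal{H}_1 \otimes \mathcal{H}_2 \otimes \mathcal{H}_3)$. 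Applying the $J$ isomorphism \eqref{4_eq:identification_J} using the pairing $\mathcal{H}_\ell' \otimes \mathcal{H}_\ell \to \mathbb{C}$ then embeds everything into $Inv(\mathcal{H}_1 \otimes \mathcal{H}_2 \otimes \mathcal{H}_3 \otimes \mathcal{H}_\ell')$.

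The next step is to trace the definition of ${\bf A}$ through \eqref{4_eq:A_Hom_definition} and the cyclic map \eqref{4_eq:cyclic_map_A}--\eqref{4_eq:composition_three}. Applied to the first factor $M_{12}^n$, the operator ${\bf A}_1$ becomes, after the $I_{123}$ and $J_{123}$ identifications, a cyclic rotation of the three tensor factors $\mathcal{H}_1, \mathcal{H}_2, \mathcal{H}_n'$ accompanied by the duality isomorphisms ${}'C \otimes id \otimes D'$; likewise for ${\bf A}_2$ on the second factor and on the triple $\mathcal{H}_n, \mathcal{H}_3, \mathcal{H}_\ell'$. Working inside the larger space $Inv(\mathcal{H}_1 \otimes \mathcal{H}_2 \otimes \mathcal{H}_3 \otimes \mathcal{H}_\ell')$, the intermediate index $n$ (respectively $m$) gets contracted against its dual, so both ${\bf A}_2 {\bf T}_{21} {\bf A}_1$ and ${\bf A}_1 {\bf T}_{12} {\bf A}_2$ become the composition of the same cyclic rotation of the four factors $\mathcal{H}_1, \mathcal{H}_2, \mathcal{H}_3, \mathcal{H}_\ell'$ with the tautological associativity, and the identity follows.

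The main obstacle is bookkeeping: one has to verify that the two triple‐rotations coming from ${\bf A}_1$ and ${\bf A}_2$ on different intermediate indices (via $n$ on one side and via $m$ on the other) genuinely compose to the same four-fold cyclic rotation, which requires using the compatibility of ${}'C, D'$ with the pairings $V' \otimes V \to \mathbb{C}$ and ${}'V \otimes V \to \mathbb{C}$ collected in Proposition \ref{4_prop:dual_intertwiners_property}, together with the $\mathcal{B}$-invariance of the elements of $Inv(\mathcal{H}_1 \otimes \mathcal{H}_2 \otimes \mathcal{H}_3 \otimes \mathcal{H}_\ell')$ as was used in Proposition \ref{4_prop:A_orderthree}. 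A direct verification is also in principle available from the explicit kernels \eqref{4_eq:A_formula} and \eqref{4_eq:T_formula}, but it would reduce to a considerably more involved identity for integrals of the function $\mathcal{G}$ and Gaussian factors, which we avoid.
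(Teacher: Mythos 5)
Your proposal is correct and follows essentially the same route as the paper's proof: reformulate in Hom spaces via \eqref{4_eq:T_Hom_formulation} so that ${\bf T}^{Hom}$ becomes the tautological identity after composing into $Hom_{\mathcal{B}}(\mathcal{H}_\ell,\mathcal{H}_1\otimes\mathcal{H}_2\otimes\mathcal{H}_3)$, trace ${\bf A}^{Hom}$ through the duality isomorphisms ${}'C, D'$, and finish using Proposition \ref{4_prop:dual_intertwiners_property} together with the $X,\widetilde{X}$-invariance argument from Proposition \ref{4_prop:A_orderthree}. The ``bookkeeping'' you flag as the main obstacle is indeed the entire content of the paper's proof (the chain of cancellations applied to $\int \widetilde{f}_2\widetilde{f}_1\,dx_n = \int \widetilde{h}_2\widetilde{h}_1\,dx_m$), so your outline is accurate but would need that calculation written out to count as a complete proof.
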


\begin{proof} Consider the diagram
\begin{align}
\label{4_eq:ATA_ATA_proof_Hom_diagram}
\xymatrix{
{\begin{array}{l}
Hom_{\mathcal{B}} (\mathcal{H}_n, \mathcal{H}_1 \otimes \mathcal{H}_2) \\
\otimes Hom_{\mathcal{B}} (\mathcal{H}_\ell, \mathcal{H}_n \otimes \mathcal{H}_3)
\end{array}}
\ar[r]^{{\bf T}^{Hom}_{21}}
\ar[d]^{(A^{Hom}_{n12})^{-1} A^{Hom}_{n3\ell}} &
{\begin{array}{l}
Hom_{\mathcal{B}} (\mathcal{H}_\ell, \mathcal{H}_1 \otimes \mathcal{H}_m) \\
\otimes Hom_{\mathcal{B}} (\mathcal{H}_m, \mathcal{H}_2 \otimes \mathcal{H}_3)
\end{array}}
\ar[d]^{(A^{Hom}_{\ell1m})^{-1} A^{Hom}_{23m}} \\
{\begin{array}{l}
Hom_{\mathcal{B}} (\mathcal{H}_2, \mathcal{H}_n \otimes \mathcal{H}_1) \\
\otimes Hom_{\mathcal{B}} (\mathcal{H}_n, \mathcal{H}_3 \otimes \mathcal{H}_\ell)
\end{array}}
\ar[r]^{{\bf T}^{Hom}_{12}}  &
{\begin{array}{l}
Hom_{\mathcal{B}} (\mathcal{H}_m, \mathcal{H}_\ell \otimes \mathcal{H}_1) \\
\otimes Hom_{\mathcal{B}} (\mathcal{H}_2, \mathcal{H}_3 \otimes \mathcal{H}_m)
\end{array}}
}
\end{align}
In view of diagrams \eqref{4_eq:def_A_operator_diagram} and \eqref{4_eq:T_Hom_diagram1}, it suffices to prove the commutativity of the diagram \eqref{4_eq:ATA_ATA_proof_Hom_diagram}.

\vs

An element of $Hom_{\mathcal{B}} (\mathcal{H}_n, \mathcal{H}_1 \otimes \mathcal{H}_2) \otimes Hom_{\mathcal{B}} (\mathcal{H}_\ell, \mathcal{H}_n \otimes \mathcal{H}_3)$ can be written in the form $\sum f_1 \otimes f_2$, where $f_1 \in Hom_{\mathcal{B}} (\mathcal{H}_n, \mathcal{H}_1 \otimes \mathcal{H}_2) \otimes$ and $f_2 \in Hom_{\mathcal{B}} (\mathcal{H}_\ell, \mathcal{H}_n \otimes \mathcal{H}_3)$. Let
\begin{align}
\label{4_eq:ATA_ATA_proof_T_relation}
{\bf T}^{Hom}_{21} \left(\sum f_1 \otimes f_2\right)
= \sum h_1 \otimes h_2,
\end{align}
where $h_1 \in Hom_{\mathcal{B}} (\mathcal{H}_\ell, \mathcal{H}_1 \otimes \mathcal{H}_m)$ and $h_2 \in Hom_{\mathcal{B}} (\mathcal{H}_m, \mathcal{H}_2 \otimes \mathcal{H}_3)$, and let
\begin{align}
\label{4_eq:ATA_ATA_proof_A_relations}
F_1 = (A^{Hom}_{n12})^{-1} f_1, \quad
F_2 = A^{Hom}_{n3\ell} f_2, \quad
H_1 = (A^{Hom}_{\ell1m})^{-1} h_1, \quad
H_2 = A^{Hom}_{23m} h_2.
\end{align}
Using the map $J_{ijk} : Hom_{\mathcal{B}}(\mathcal{H}_k, \mathcal{H}_i \otimes \mathcal{H}_j) \to Inv(\mathcal{H}_i \otimes \mathcal{H}_j \otimes \mathcal{H}_k')$ and another canonical map ${}'J_{ijk} : Hom_{\mathcal{B}}(\mathcal{H}_k, \mathcal{H}_i \otimes \mathcal{H}_j) \to Inv({}'\mathcal{H}_k \otimes \mathcal{H}_i \otimes \mathcal{H}_j)$ as in \eqref{4_eq:canonical_isomorphisms_Hom_and_primes}, let
\begin{align*}
& \widetilde{f}_1 = {}' J_{12n} f_1, \quad
\widetilde{f}_2 = J_{n3\ell} f_2, \quad
\widetilde{h}_1 = {}' J_{1m\ell} h_1, \quad
\widetilde{h}_2 = J_{23m} h_2, \\
& \widetilde{F}_1 = J_{n12} F_1, \quad
\widetilde{F}_2 = {}' J_{3\ell n} F_2, \quad
\widetilde{H}_1 = J_{\ell1m} H_1, \quad
\widetilde{H}_2 = {}' J_{3m2} H_2.
\end{align*}
What we should prove is that \eqref{4_eq:ATA_ATA_proof_A_relations} and \eqref{4_eq:ATA_ATA_proof_T_relation} imply
\begin{align}
\label{4_eq:ATA_ATA_proof_T_relation_target}
{\bf T}^{Hom}_{12} \left(\sum F_1 \otimes F_2\right)
= \sum H_1 \otimes H_2.
\end{align}

\vs

So, assume \eqref{4_eq:ATA_ATA_proof_A_relations} and \eqref{4_eq:ATA_ATA_proof_T_relation}. In view of \eqref{4_eq:A_Hom_definition}, the four equations in \eqref{4_eq:ATA_ATA_proof_A_relations} mean
\begin{align}
\label{4_eq:ATA_ATA_proof_tilde_relation}
\widetilde{f}_1 = ({}'C_n)(D_2') \widetilde{F}_1, \quad
\widetilde{f}_2 = ({}'D_n)(C_\ell') \widetilde{F}_2, \quad
\widetilde{h}_1 = ({}'C_\ell)(D_m') \widetilde{H}_1, \quad
\widetilde{h}_2 = ({}'D_2)(C_m') \widetilde{H}_2.
\end{align}
In view of \eqref{4_eq:T_Hom_formulation}, the equation \eqref{4_eq:ATA_ATA_proof_T_relation} means $\sum (f_1\otimes id) f_2 = \sum (id \otimes h_2) h_1$, hence
$$
\int \widetilde{f}_2(x_n,x_3,x_\ell) \phi(x_\ell) \widetilde{f}_1(x_n,x_1,x_2) dx_\ell dx_n
= \int \widetilde{h}_2(x_2,x_3,x_m) \phi(x_\ell) \widetilde{h}_1(x_\ell,x_1,x_m) dx_\ell dx_m,
$$
for any $\phi(x_\ell) \in \mathcal{H}_\ell \equiv L^2(\mathbb{R}, x_\ell)$. Therefore, as distributions,
\begin{align*}
\int \widetilde{f}_2(x_n,x_3,x_\ell)\widetilde{f}_1(x_n,x_1,x_2) dx_n
& = \int \widetilde{h}_2(x_2,x_3,x_m) \widetilde{h}_1(x_\ell,x_1,x_m) dx_m \\
\int (({}'D_n)(C_\ell') \widetilde{F}_2)(({}'C_n)(D_2') \widetilde{F}_1) dx_n
& = \int (({}'D_2)(C_m') \widetilde{H}_2) (({}'C_\ell)(D_m') \widetilde{H}_1) dx_m \quad (\mbox{by }\eqref{4_eq:ATA_ATA_proof_tilde_relation}) \\
\int (e^{-4\pi a p_n} (C_\ell') \widetilde{F}_2)((D_2') \widetilde{F}_1) dx_n
& = \int (({}'D_2) e^{-4\pi a p_m} \widetilde{H}_2) (({}'C_\ell) \widetilde{H}_1) dx_m \quad (\mbox{by }\eqref{4_eq:dual_intertwiners_transpose}) \\
\int (e^{-4\pi a p_n} ({}'D_\ell) (C_\ell') \widetilde{F}_2) \widetilde{F}_1 dx_n
& = \int ((C_2') ({}'D_2) e^{-4\pi a p_m} \widetilde{H}_2)  \widetilde{H}_1 dx_m \\
& \qquad\qquad (\mbox{apply $({}'D_\ell)(C_2')$ to both sides, and use \eqref{4_eq:dual_intertwiners_inverses}}) \\
\int (e^{-4\pi a (p_n+p_\ell)} \widetilde{F}_2) \widetilde{F}_1 dx_n
& = \int (e^{-4\pi a (p_m+p_2)} \widetilde{H}_2)  \widetilde{H}_1 dx_m \quad (\mbox{by }\eqref{4_eq:dual_intertwiners_doubles}) \\
\int (e^{-4\pi a (p_n+p_3+p_\ell)} \widetilde{F}_2) \widetilde{F}_1 dx_n
& = \int (e^{-4\pi a (p_2+p_3+p_m)} \widetilde{H}_2)  \widetilde{H}_1 dx_m \quad (\mbox{by } \mbox{apply $e^{-4\pi a p_3}$}).
\end{align*}
Since $\widetilde{F}_2 \in Inv({}' \mathcal{H}_n \otimes \mathcal{H}_3 \otimes \mathcal{H}_\ell)$ and $\widetilde{H}_2 \in Inv({}' \mathcal{H}_2 \otimes \mathcal{H}_3 \otimes \mathcal{H}_m)$, invariance property under the action of $X$ and $\widetilde{X}$ yields $e^{2\pi b^{\pm 1} (p_n+p_3+p_\ell)} \widetilde{F}_2  = \widetilde{F}_2$ and $e^{2\pi b^{\pm 1} (p_2+p_3+p_m)} \widetilde{H}_2 = \widetilde{H}_2$, yielding $e^{-4\pi a (p_n+p_3+p_\ell)} \widetilde{F}_2  = \widetilde{F}_2$ and $e^{-4\pi a (p_2+p_3+p_m)} \widetilde{H}_2 = \widetilde{H}_2$. Thus, we get
\begin{align*}
\int \widetilde{F}_2(x_n,x_3,x_\ell) \widetilde{F}_1(x_n,x_1,x_2) dx_n
= \int \widetilde{H}_2(x_2,x_3,x_m)  \widetilde{H}_1(x_\ell,x_1,x_m) dx_m
\end{align*}
as distributions, which is equivalent to $\sum(F_2 \otimes id) F_1 = \sum (id \otimes H_1) H_2$, which is again equivalent to \eqref{4_eq:ATA_ATA_proof_T_relation_target} in view of \eqref{4_eq:T_Hom_formulation}. Thus, the diagram \eqref{4_eq:ATA_ATA_proof_Hom_diagram} commutes, as desired.
\end{proof}

\vs

Lastly, we can check if the diagram
\begin{align}
\label{4_eq:diagram_TAT_AAP}
\xymatrix{
M_{12}^n \otimes M_{n3}^\ell
\ar[r]^{{\bf T}_{21}} \ar[d]_{{\bf A}_1 {\bf A}_2 P_{(12)}} &
M_{1m}^\ell \otimes M_{23}^m \ar[d]^{{\bf A}_1} \\
M_{3\ell}^n \otimes M_{2n}^1 &
M_{m\ell}^1 \otimes M_{23}^m \ar[l]^{{\bf T}_{12}}
}
\end{align}
commutes,  where $P_{(12)}$ is permutation of the two factors. 
Geometrically, this is encoded as in Figure \ref{4_fig:TAT_AAP}.
\begin{figure}[htbp!]
\centering
\begin{pspicture}[showgrid=false](0,-3.5)(8.4,3.1)
%% upper-left
\rput[bl](0;0){
\PstSquare[unit=1.8,PolyName=P]
\pcline(P1)(P2)\ncput*{1}
\pcline(P2)(P3)\ncput*{2}
\pcline(P3)(P4)\ncput*{3}
\pcline(P4)(P1)\ncput*{$\ell$}
\pcline(P1)(P3)\ncput*{$n$}
\rput[l]{-45}(P2){\hspace{1,4mm}$\bullet$}
\rput[l]{22}(P3){\hspace{2,4mm}$\bullet$}
}
\rput[l](2.5,1.3){$k$}
\rput[l](1.2,2.5){$j$}
%% lower-left
\rput[bl](0,-4){
\PstSquare[unit=1.8,PolyName=P]
\pcline(P1)(P2)\ncput*{1}
\pcline(P2)(P3)\ncput*{2}
\pcline(P3)(P4)\ncput*{3}
\pcline(P4)(P1)\ncput*{$\ell$}
\pcline(P1)(P3)\ncput*{$n$}
\rput[l]{68}(P3){\hspace{2,4mm}$\bullet$}
\rput[l]{135}(P4){\hspace{1,4mm}$\bullet$}
}
\rput[l](2.5,-2.7){$j$}
\rput[l](1.2,-1.5){$k$}
%% upper-right
\rput[bl](4.9,0){
\PstSquare[unit=1.8,PolyName=P]
\pcline(P1)(P2)\ncput*{1}
\pcline(P2)(P3)\ncput*{2}
\pcline(P3)(P4)\ncput*{3}
\pcline(P4)(P1)\ncput*{$\ell$}
\pcline(P2)(P4)\ncput*{$m$}
\rput[l]{-22}(P2){\hspace{2,4mm}$\bullet$}
\rput[l]{45}(P3){\hspace{1,4mm}$\bullet$}
}
\rput[l](6.2,1.3){$k$}
\rput[l](7.2,2.5){$j$}
%% lower-right
\rput[bl](4.9,-4){
\PstSquare[unit=1.8,PolyName=P]
\pcline(P1)(P2)\ncput*{1}
\pcline(P2)(P3)\ncput*{2}
\pcline(P3)(P4)\ncput*{3}
\pcline(P4)(P1)\ncput*{$\ell$}
\pcline(P2)(P4)\ncput*{$m$}
\rput[l]{112}(P4){\hspace{2,4mm}$\bullet$}
\rput[l]{45}(P3){\hspace{1,4mm}$\bullet$}
}
\rput[l](6.2,-2.7){$k$}
\rput[l](7.2,-1.5){$j$}
\rput[l](3.9,2){\pcline{->}(0,0)(1;0)\Aput{${\bf T}_{kj}$}}
\rput[l](2,0.2){\pcline{->}(0,0)(0,-0.7)\Bput{${\bf A}_j {\bf A}_k P_{(12)}$}}
\rput[l](3.9,-2){\pcline{->}(1;0)(0,0)\Aput{${\bf T}_{jk}$}}
\rput[r](6.9,0.2){\pcline{->}(0,0)(0,-0.7)\Bput{${\bf A}_j$}}
\end{pspicture}
\caption{Geometric realization of the diagram \eqref{4_eq:diagram_TAT_AAP}}
\label{4_fig:TAT_AAP}
\end{figure}

\vs

\begin{proposition}\label{4_prop:TAT_AAP}
One has $
{\bf T}_{12} {\bf A}_1 {\bf T}_{21}
= {\bf A}_1 {\bf A}_2 P_{(12)}.
$
\end{proposition}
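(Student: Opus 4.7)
My strategy mirrors the proof of Proposition \ref{4_prop:ATA_ATA}: translate the identity first to the Hom-space picture via the $I$-maps and the characterization \eqref{4_eq:T_Hom_formulation} of ${\bf T}^{Hom}$, then to the Inv-space picture via the $J$- and ${}'J$-maps, where by \eqref{4_eq:A_Hom_definition} each $A^{Hom}$ becomes an action of ${}'C \otimes id \otimes D'$ on the corresponding $\widetilde{f}$.

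\textbf{Reduction to an Inv-space identity.} Given $\sum f_1 \otimes f_2 \in Hom_\mathcal{B}(\mathcal{H}_n, \mathcal{H}_1 \otimes \mathcal{H}_2) \otimes Hom_\mathcal{B}(\mathcal{H}_\ell, \mathcal{H}_n \otimes \mathcal{H}_3)$, write ${\bf T}^{Hom}_{21}(\sum f_1 \otimes f_2) = \sum h_1 \otimes h_2$, and set $H_1 = A^{Hom}_{1m\ell}(h_1)$, $F_1 = A^{Hom}_{12n}(f_1)$, $F_2 = A^{Hom}_{n3\ell}(f_2)$. The commutative diagrams \eqref{4_eq:T_Hom_diagram1}, \eqref{4_eq:T_Hom_diagram2} and \eqref{4_eq:def_A_operator_diagram} reduce the proposition --- via \eqref{4_eq:T_Hom_formulation} applied to ${\bf T}_{12}^{Hom}$ --- to the Hom-space claim
\[
\sum (h_2 \otimes id)\, H_1 \;=\; \sum (id \otimes F_2)\, F_1 \quad\text{in } Hom_\mathcal{B}(\mathcal{H}_1, \mathcal{H}_2 \otimes \mathcal{H}_3 \otimes \mathcal{H}_\ell),
\]
under the ${\bf T}_{21}^{Hom}$-hypothesis $\sum (f_1 \otimes id)\, f_2 = \sum (id \otimes h_2)\, h_1$ in $Hom_\mathcal{B}(\mathcal{H}_\ell, \mathcal{H}_1 \otimes \mathcal{H}_2 \otimes \mathcal{H}_3)$.

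\textbf{Execution and difficulty.} Pass both sides to Inv via $J$-maps and substitute for $F_1, F_2, H_1$ their expressions as ${}'C D'$-transforms of $\widetilde{f}_1, \widetilde{f}_2, \widetilde{h}_1$. On the RHS, the inner pairing over the $\mathcal{H}_n \leftrightarrow \mathcal{H}_n'$ variable produces an integral of the form
\[
\int K(u, Y_n)\, k(u, x_n)\, du \;=\; \delta(Y_n - x_n),
\]
by the second half of \eqref{4_eq:dual_intertwiners_inverses_integral}; this collapses the RHS to an integral of $k(y_1, x_1)\, K(x_\ell, y_\ell)\, \widetilde{f}_1\, \widetilde{f}_2$. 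On the LHS, $\widetilde{H}_1 = ({}'C_1\, D'_\ell) \widetilde{h}_1$ introduces the same kernel pair $k(y_1, x_1)\, K(x_\ell, y_\ell)$, multiplied by the convolution $\int \widetilde{h}_2(x_2, x_3, y_m)\, \widetilde{h}_1(x_1, y_m, y_\ell)\, dy_m$, which by the hypothesis coincides with $\int \widetilde{f}_1(x_1, x_2, y_n)\, \widetilde{f}_2(y_n, x_3, y_\ell)\, dy_n$, matching the RHS. The main obstacle is the combinatorial bookkeeping of variable labels and the consistent choice of ${}'C$ versus $C'$ (and $D'$ versus ${}'D$) for each $A^{Hom}$; by contrast to Proposition \ref{4_prop:ATA_ATA}, no invariance identity is required here, since all three $A^{Hom}$'s go in the same direction and the residual $k, K$ kernels pair cleanly via \eqref{4_eq:dual_intertwiners_inverses_integral}, rather than leaving behind exponentials $e^{\pm 4\pi a p_\bullet}$ that would need to be killed by invariance.
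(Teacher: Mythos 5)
Your proposal is correct and follows essentially the same approach as the paper's proof: reduce to the Hom-picture via \eqref{4_eq:T_Hom_formulation}, then pass to the Inv-picture via the $J$/${}'J$-maps, where $A^{Hom}$ becomes conjugation by ${}'C \otimes id \otimes D'$, and observe that the ${}'C$/$D'$ kernels decouple from the $n$- and $m$-convolutions so the ${\bf T}^{Hom}_{21}$-hypothesis passes directly to the ${\bf T}^{Hom}_{12}$-conclusion. The only differences from the paper are cosmetic: you substitute forward $\widetilde{F}=({}'C)(D')\widetilde{f}$ and invoke the $\delta$-function identity \eqref{4_eq:dual_intertwiners_inverses_integral} for the $n$-collapse, whereas the paper substitutes $\widetilde{f}=({}'D)(C')\widetilde{F}$ into the hypothesis and cancels via \eqref{4_eq:dual_intertwiners_transpose_triv} and \eqref{4_eq:dual_intertwiners_inverses}; and your observation that, unlike in Proposition \ref{4_prop:ATA_ATA}, no $\mathcal{B}$-invariance argument is needed because all three $A^{Hom}$'s point the same way is exactly the feature the paper's proof exploits. (Minor slip: the leftover kernel from ${}'C_1$ should be written $k(z_1,x_1)$, not $k(y_1,x_1)$, since ${}'\mathcal{H}$ uses the $z$-variable.)
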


\begin{proof} Consider the diagram
\begin{align}
\label{4_eq:TAT_AAP_proof_Hom_diagram}
\xymatrix{
{\begin{array}{l}
Hom_{\mathcal{B}} (\mathcal{H}_n, \mathcal{H}_1 \otimes \mathcal{H}_2) \\
\otimes Hom_{\mathcal{B}} (\mathcal{H}_\ell, \mathcal{H}_n \otimes \mathcal{H}_3)
\end{array}}
\ar[r]^{{\bf T}^{Hom}_{21}}
\ar[d]_{(A^{Hom}_{n3\ell} \otimes A^{Hom}_{12n})P_{(12)} } &
{\begin{array}{l}
Hom_{\mathcal{B}} (\mathcal{H}_\ell, \mathcal{H}_1 \otimes \mathcal{H}_m) \\
\otimes Hom_{\mathcal{B}} (\mathcal{H}_m, \mathcal{H}_2 \otimes \mathcal{H}_3)
\end{array}}
\ar[d]^{A^{Hom}_{1m\ell} \otimes id} \\
{\begin{array}{l}
Hom_{\mathcal{B}} (\mathcal{H}_n, \mathcal{H}_3 \otimes \mathcal{H}_\ell) \\
\otimes Hom_{\mathcal{B}} (\mathcal{H}_1, \mathcal{H}_2 \otimes \mathcal{H}_n)
\end{array}}
 &
{\begin{array}{l}
Hom_{\mathcal{B}} (\mathcal{H}_1, \mathcal{H}_m \otimes \mathcal{H}_\ell) \\
\otimes Hom_{\mathcal{B}} (\mathcal{H}_m, \mathcal{H}_2 \otimes \mathcal{H}_3)
\end{array}}
\ar[l]_{{\bf T}^{Hom}_{12}}
}
\end{align}
In view of diagrams \eqref{4_eq:def_A_operator_diagram} and \eqref{4_eq:T_Hom_diagram1}, it suffices to prove the commutativity of the diagram \eqref{4_eq:TAT_AAP_proof_Hom_diagram}.

\vs

As we did in the proof of Proposition \ref{4_prop:ATA_ATA}, let $\sum f_1 \otimes f_2 \in Hom_{\mathcal{B}} (\mathcal{H}_n, \mathcal{H}_1 \otimes \mathcal{H}_2) \otimes Hom_{\mathcal{B}} (\mathcal{H}_\ell, \mathcal{H}_n \otimes \mathcal{H}_3)$, where $f_1 \in Hom_{\mathcal{B}} (\mathcal{H}_n, \mathcal{H}_1 \otimes \mathcal{H}_2) \otimes$ and $f_2 \in Hom_{\mathcal{B}} (\mathcal{H}_\ell, \mathcal{H}_n \otimes \mathcal{H}_3)$. Denote
\begin{align}
\label{4_eq:TAT_AAP_proof_T_relation}
{\bf T}^{Hom}_{21} \left(\sum f_1 \otimes f_2\right)
= \sum h_1 \otimes h_2,
\end{align}
where $h_1 \in Hom_{\mathcal{B}} (\mathcal{H}_\ell, \mathcal{H}_1 \otimes \mathcal{H}_m)$ and $h_2 \in Hom_{\mathcal{B}} (\mathcal{H}_m, \mathcal{H}_2 \otimes \mathcal{H}_3)$, and let
\begin{align}
\label{4_eq:TAT_AAP_proof_A_relations}
F_1 = A^{Hom}_{12n} f_1, \quad
F_2 = A^{Hom}_{n3\ell} f_2, \quad
H_1 = A^{Hom}_{1m\ell} h_1.
\end{align}
We use the maps $J_{ijk} : Hom_{\mathcal{B}}(\mathcal{H}_k, \mathcal{H}_i \otimes \mathcal{H}_j) \to Inv(\mathcal{H}_i \otimes \mathcal{H}_j \otimes \mathcal{H}_k')$ and ${}'J_{ijk} : Hom_{\mathcal{B}}(\mathcal{H}_k, \mathcal{H}_i \otimes \mathcal{H}_j) \to Inv({}'\mathcal{H}_k \otimes \mathcal{H}_i \otimes \mathcal{H}_j)$, similarly as in the proof of Proposition \ref{4_prop:ATA_ATA}, to define the following (be careful that a same symbol may have a different definition than as in the proof of Proposition \ref{4_prop:ATA_ATA}):
\begin{align*}
& \widetilde{f}_1 = J_{12n} f_1, \quad
\widetilde{f}_2 = J_{n3\ell} f_2, \quad
\widetilde{h}_1 = J_{1m\ell} h_1, \quad
\widetilde{h}_2 = J_{23m} h_2, \\
& \widetilde{F}_1 = {}' J_{2n1} F_1, \quad
\widetilde{F}_2 = {}' J_{3\ell n} F_2, \quad
\widetilde{H}_1 = {}' J_{m\ell1} H_1.
\end{align*}
What we should prove is that \eqref{4_eq:TAT_AAP_proof_A_relations} and \eqref{4_eq:TAT_AAP_proof_T_relation} imply
\begin{align}
\label{4_eq:TAT_AAP_proof_T_relation_target}
{\bf T}^{Hom}_{12} \left(\sum H_1 \otimes h_2\right)
= \sum F_2 \otimes F_1.
\end{align}

\vs

So, assume \eqref{4_eq:TAT_AAP_proof_A_relations} and \eqref{4_eq:TAT_AAP_proof_T_relation}. In view of \eqref{4_eq:A_Hom_definition}, the three equations in \eqref{4_eq:TAT_AAP_proof_A_relations} mean
\begin{align}
\label{4_eq:TAT_AAP_proof_tilde_relation}
\widetilde{f}_1 = ({}'D_1)(C_n') \widetilde{F}_1, \quad
\widetilde{f}_2 = ({}'D_n)(C_\ell') \widetilde{F}_2, \quad
\widetilde{h}_1 = ({}'D_1)(C_\ell') \widetilde{H}_1.
\end{align}
In view of \eqref{4_eq:T_Hom_formulation}, the equation \eqref{4_eq:TAT_AAP_proof_T_relation} means $\sum (f_1\otimes id) f_2 = \sum (id \otimes h_2) h_1$, hence
$$
\int \widetilde{f}_1(x_1,x_2,x_n) \widetilde{f}_2(x_n,x_3,x_\ell) \phi(x_\ell) dx_\ell dx_n
= \int \widetilde{h}_2(x_2,x_3,x_m) \widetilde{h}_1(x_1,x_m,x_\ell) \phi(x_\ell) dx_\ell dx_m,
$$
for any $\phi(x_\ell) \in \mathcal{H}_\ell \equiv L^2(\mathbb{R}, x_\ell)$. Therefore, as distributions,
\begin{align*}
\int \widetilde{f}_1(x_1,x_2,x_n) \widetilde{f}_2(x_n,x_3,x_\ell)  dx_n
& = \int \widetilde{h}_2(x_2,x_3,x_m) \widetilde{h}_1(x_1,x_m,x_\ell) dx_m \\
\int (({}'D_1)(C_n') \widetilde{F}_1) (({}'D_n)(C_\ell') \widetilde{F}_2) dx_n
& = 
\int \widetilde{h}_2 (({}'D_1)(C_\ell') \widetilde{H}_1) dx_m \quad (\mbox{by }\eqref{4_eq:TAT_AAP_proof_tilde_relation}) \\
\int (({}'D_1) \widetilde{F}_1) ((C_\ell') \widetilde{F}_2) dx_n
& = 
\int \widetilde{h}_2 (({}'D_1)(C_\ell') \widetilde{H}_1) dx_m \quad (\mbox{by }\eqref{4_eq:dual_intertwiners_transpose_triv}) \\
\int \widetilde{F}_1 \widetilde{F}_2 dx_n
& = 
\int \widetilde{h}_2 \widetilde{H}_1 dx_m. \quad (\mbox{apply $({}'U_1)(D_\ell')$ and use \eqref{4_eq:dual_intertwiners_inverses}})
\end{align*}
Thus we obtained
$$
\int \widetilde{F}_1 (x_1,x_2,x_n)
\widetilde{F}_2 (x_n,x_3,n_\ell) dx_n
= 
\int \widetilde{h}_2 (x_2,x_3,x_m) 
\widetilde{H}_1 (x_1,x_m,x_\ell) dx_m
$$
as distributions, which is equivalent to $\sum (id \otimes F_2)F_1 = \sum (h_2 \otimes id)H_1$, which is again equivalent to \eqref{4_eq:TAT_AAP_proof_T_relation_target} in view of \eqref{4_eq:T_Hom_formulation}. Thus, the diagram \eqref{4_eq:TAT_AAP_proof_Hom_diagram} commutes, as desired.
\end{proof}

The summary of Propositions \ref{4_prop:pentagon}, \ref{4_prop:A_orderthree}, \ref{4_prop:ATA_ATA}, and \ref{4_prop:TAT_AAP} is:

\begin{theorem}
\label{4_thm:summary_T_A}
The operators ${\bf T}$, ${\bf A}$ satisfy
\begin{align}
\label{4_eq:thm_summary_T_A_eq1}
{\bf A}^3 = id, ~
{\bf T}_{23} {\bf T}_{12} = {\bf T}_{12} {\bf T}_{13} {\bf T}_{23}, ~
{\bf A}_2 {\bf T}_{21} {\bf A}_1 = {\bf A}_1 {\bf T}_{12} {\bf A}_2, ~
{\bf T}_{12} {\bf A}_1 {\bf T}_{21} = {\bf A}_1 {\bf A}_2 P_{(12)}.
\end{align}
\end{theorem}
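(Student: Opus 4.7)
The plan is short because Theorem \ref{4_thm:summary_T_A} is literally the concatenation of Propositions \ref{4_prop:pentagon}, \ref{4_prop:A_orderthree}, \ref{4_prop:ATA_ATA}, and \ref{4_prop:TAT_AAP}, which together cover all four identities in \eqref{4_eq:thm_summary_T_A_eq1}. The proof I would write therefore consists of nothing more than a line citing those four results and noting that they are the four clauses of the theorem.

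If one wanted to record the intellectual content of the combined statement in one place (rather than repeating the individual arguments), the skeleton would be: (i) ${\bf A}^3 = id$ comes from wrapping the factorization ${}'C \otimes id \otimes D'$ followed by $E$ three times around the triangle \eqref{4_eq:large_triangular_diagram}, collecting the resulting operator $e^{4\pi a(p_{x_1}+p_{x_2}+p_{y_3})}$, and killing this factor using the $X$- and $\widetilde{X}$-invariance constraints $e^{2\pi b^{\pm 1}(p_{x_1}+p_{x_2}+p_{y_3})} \varphi = \varphi$ on $Inv(\mathcal{H}_1 \otimes \mathcal{H}_2 \otimes \mathcal{H}_3')$; (ii) the pentagon equation ${\bf T}_{23} {\bf T}_{12} = {\bf T}_{12} {\bf T}_{13} {\bf T}_{23}$ follows by applying the commutative square \eqref{4_eq:T_commutative_diagram} five times to the identity-diagram \eqref{4_eq:pentagon_H}, which translates associativity of $\otimes$ into the corresponding equation \eqref{4_eq:pentagon_M} on multiplicity modules; (iii) both mixed relations reduce, via the $Hom$-reformulation \eqref{4_eq:T_Hom_formulation} of ${\bf T}$ and the definition \eqref{4_eq:A_Hom_definition} of $A^{Hom}$, to kernel identities that collapse using the dual intertwiner relations \eqref{4_eq:dual_intertwiners_inverses}, \eqref{4_eq:dual_intertwiners_doubles}, \eqref{4_eq:dual_intertwiners_transpose}, \eqref{4_eq:dual_intertwiners_transpose_triv} together with the same $X, \widetilde{X}$-invariance principle that absorbed $e^{-4\pi a(p_n+p_3+p_\ell)}$ in the proof of Proposition \ref{4_prop:ATA_ATA}.

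There is no genuine obstacle at this stage: the conceptual difficulty has already been absorbed into the cyclic diagram argument for ${\bf A}^3 = id$ and, more substantively, into the careful bookkeeping required to move between the $M$-space and $Hom$-space pictures in the proofs of Propositions \ref{4_prop:ATA_ATA} and \ref{4_prop:TAT_AAP}. What remains for Theorem \ref{4_thm:summary_T_A} itself is purely organizational, amounting to the observation that the four propositions, written in the common notation ${\bf T}_{jk}$, ${\bf A}_j$, $P_{(jk)}$, give exactly the four asserted relations.
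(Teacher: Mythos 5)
Your proposal matches the paper exactly: the paper introduces Theorem \ref{4_thm:summary_T_A} with the sentence ``The summary of Propositions \ref{4_prop:pentagon}, \ref{4_prop:A_orderthree}, \ref{4_prop:ATA_ATA}, and \ref{4_prop:TAT_AAP} is:'' and gives no further proof, so citing those four propositions is the intended argument. Your optional sketch of how each of those propositions is proved is also a faithful summary of the paper's reasoning.
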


%%%%%%%%%%
%%%%%%%%%%
\subsection{Family of operators ${\bf A}^{(m)}$, $m\in \mathbb{R}$}

For any real number $m$, put
\begin{align}
\label{4_eq:A_m_definition}
{\bf A}^{(m)} := \zeta^{m^2-1} {\bf A} e^{2\pi a(m-1)p},
\end{align}
where $\zeta = e^{-\pi i a^2/3} = e^{-2i\chi} e^{-\pi i/6}$. In particular, ${\bf A} = {\bf A}^{(1)}$. Then we have an interesting observation:

\begin{proposition}
\label{4_prop:A_m_relations}
The operators ${\bf T}$, ${\bf A}^{(m)}$ satisfy
\begin{align}
\label{4_eq:prop_A_m_relations_eq1}
\begin{array}{ll}
({\bf A}^{(m)})^3 = id, &
{\bf T}_{23} {\bf T}_{12} = {\bf T}_{12} {\bf T}_{13} {\bf T}_{23}, \\
{\bf A}_2^{(m)} {\bf T}_{21} {\bf A}_1^{(m)} = {\bf A}_1^{(m)} {\bf T}_{12} {\bf A}_2^{(m)}, &
{\bf T}_{12} {\bf A}_1^{(m)} {\bf T}_{21} = \zeta^{1-m^2} {\bf A}_1^{(m)} {\bf A}_2^{(m)} P_{(12)}.
\end{array}
\end{align}
\end{proposition}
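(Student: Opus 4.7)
The pentagon relation in \eqref{4_eq:prop_A_m_relations_eq1} does not involve ${\bf A}^{(m)}$ and is identical to that in Theorem \ref{4_thm:summary_T_A}, so it is just Proposition \ref{4_prop:pentagon}. For the remaining three identities, the plan is to substitute the definition ${\bf A}_j^{(m)} = \zeta^{m^2-1} {\bf A}_j\, e^{u p_j}$ with $u := 2\pi a(m-1)$ into each side, apply the corresponding relation of Theorem \ref{4_thm:summary_T_A} on the interior, and track the residual scalar factors produced by Baker-Campbell-Hausdorff.

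For $({\bf A}^{(m)})^3 = \mathrm{id}$, set $\sigma := \mathrm{Ad}({\bf A})$. Proposition \ref{4_prop:A_conjugation_action} gives $\sigma(p) = -x-2p$, whence $\sigma^2(p) = x+p+ia$ and $\sigma^3(p) = p$ (in agreement with ${\bf A}^3 = \mathrm{id}$). Applying ${\bf A}\, e^{up} = e^{u\sigma(p)}\, {\bf A}$ three times,
\begin{equation*}
({\bf A}\, e^{up})^3 = e^{u\sigma(p)}\, e^{u\sigma^2(p)}\, e^{up}.
\end{equation*}
Each exponent is affine in $x,p$, so all pairwise commutators are central scalars and the iterated BCH formula collapses the product to a scalar exponential. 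A short calculation using $\sigma(p)+\sigma^2(p)+p = ia$ together with $[\sigma(p),\sigma^2(p)] + [\sigma(p),p] + [\sigma^2(p),p] = -\tfrac{1}{2\pi i}$ yields
\begin{equation*}
({\bf A}\, e^{up})^3 = \exp\!\Big( uia - \tfrac{u^2}{4\pi i}\Big) = e^{i\pi a^2 (m^2-1)},
\end{equation*}
which cancels $\zeta^{3(m^2-1)} = e^{-i\pi a^2(m^2-1)}$ exactly, so $({\bf A}^{(m)})^3 = 1$.

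For \eqref{4_eq:prop_A_m_relations_eq1}(c) and (d), the procedure is uniform. Substitute ${\bf A}_j^{(m)} = \zeta^{m^2-1} {\bf A}_j\, e^{u p_j}$ in both sides; the $\zeta$ prefactors agree on each side of (c) and mismatch by exactly $\zeta^{1-m^2}$ on (d), which is what we want to account for. Then move each shift operator $e^{u p_j}$ past the adjacent ${\bf A}_k$ using ${\bf A}_j\, e^{u p_j} = e^{-u(x_j+2p_j)}\, {\bf A}_j$ (Proposition \ref{4_prop:A_conjugation_action}), and exploit the trivial commutation of $e^{u p_j}$ with ${\bf A}_k$ and $e^{u p_k}$ for $k\ne j$. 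Apply Theorem \ref{4_thm:summary_T_A}(c) or (d) once on the central ${\bf A}_i{\bf T}_{ij}{\bf A}_k$ (resp.\ ${\bf T}_{12}{\bf A}_1{\bf T}_{21}$) subexpression to reduce it to its $m=1$ form. What remains are products of exponentials of affine expressions in $x_j,p_j$, together with one instance of the conjugation $\mathrm{Ad}({\bf T}_{jk})$ acting on such affine data. A final BCH bookkeeping (parallel to the one used in the proof of the order-three relation above) collapses the residue to a pure scalar: in (c) it equals $1$, and in (d) it equals $\zeta^{1-m^2}$.

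The main obstacle is computing $\mathrm{Ad}({\bf T}_{jk})$ on the affine generators $x_j,p_j$; this is the one non-formal step and requires a direct shift-and-compare calculation on the explicit kernel $T(\beta,\alpha,A,B)$ in \eqref{4_eq:T_formula}, using that $T$ is a Gaussian in the variables times $\mathcal{G}$ of a linear form. Once $\mathrm{Ad}({\bf T}_{jk})$ on linear combinations of $x_j,p_j$ is in hand, both the cancellation in (c) and the appearance of $\zeta^{1-m^2}$ in (d) reduce to algebraic manipulations of scalar exponentials of the form already handled in the order-three calculation.
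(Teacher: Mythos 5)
Your proposal is correct and follows essentially the same route as the paper: substitute ${\bf A}^{(m)} = \zeta^{m^2-1}{\bf A}\,e^{2\pi a(m-1)p}$, conjugate the shift operators through ${\bf A}$ via Proposition~\ref{4_prop:A_conjugation_action} and through ${\bf T}$ via Proposition~\ref{5_prop:T_action_on_heisenberg}, reduce to the $m=1$ relations of Theorem~\ref{4_thm:summary_T_A}, and track the scalars by BCH; your explicit order-three calculation agrees with the paper's. One small slip in the final sentence: for relation (d) the residual exponential factors cancel exactly (residue $=1$, just as in (c)), and the full $\zeta^{1-m^2}$ is already supplied by the $\zeta$-prefactor ratio $\zeta^{m^2-1}/\zeta^{2(m^2-1)}$ rather than by an additional BCH contribution.
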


\begin{proof}
Let's view $p_j,x_j,{\bf A}_j, {\bf A}_j^{(m)}, {\bf T}_{12}$ and ${\bf T}_{21}$ (for $j=1,2$) as acting on $L^2(\mathbb{R}, dx_1) \otimes L^2(\mathbb{R}, dx_2)$. For the first relation, we view $p,x,{\bf A}$ as acting on $L^2(\mathbb{R}, dx)$. In this proof, the result \eqref{4_eq:A_conjugation_action} (from Proposition \ref{4_prop:A_conjugation_action}) is constantly used:
\begin{align*}
{\bf A} x {\bf A}^{-1} = x+3p-ia, \quad
{\bf A} p {\bf A}^{-1} = -x-2p.
\end{align*}
Observe that
\begin{align*}
({\bf A}^{(m)})^3
& = \zeta^{3(m^2-1)} {\bf A} e^{2\pi a (m-1)p} {\bf A} e^{2\pi a (m-1)p} {\bf A} e^{2\pi a (m-1)p} \\
& = \zeta^{3(m^2-1)} {\bf A} e^{2\pi a (m-1)p} {\bf A} e^{2\pi a (m-1)p} e^{2\pi a (m-1)(-x-2p)} {\bf A} \quad (\mbox{by }\eqref{4_eq:A_conjugation_action}) \\
& = \zeta^{3(m^2-1)} e^{\pi i (m-1)^2a^2} {\bf A} e^{2\pi a (m-1)p} {\bf A} e^{2\pi a (m-1)(-x-p)} {\bf A} \quad(\mbox{by }\mbox{BCH}) \\
& = \zeta^{3(m^2-1)} e^{\pi i (m-1)^2a^2} {\bf A} e^{2\pi a (m-1)p} e^{2\pi a (m-1)(-p+ia)} {\bf A} {\bf A} \quad (\mbox{by }\eqref{4_eq:A_conjugation_action}) \\
& = \zeta^{3(m^2-1)} e^{\pi i ((m-1)^2+2 (m-1)) a^2} {\bf A} {\bf A} {\bf A}
= \zeta^{3(m^2-1)} e^{\pi i (m^2-1) a^2} {\bf A}^3
= id,
\end{align*}
because $\zeta^{3(m^2-1)} = e^{-\pi i (m^2-1)a^2}$ and ${\bf A}^3  = id$ (by Proposition \ref{4_prop:A_orderthree}); by BCH we mean the Baker-Campbell-Hausdorff formula.

\vs

In this proof, we also use the conjugation action of ${\bf T}_{12}$ on the heisenberg generators $p_j,x_j$, as studied in Proposition \ref{5_prop:T_action_on_heisenberg}; among them, we use
\begin{align}
\label{4_eq:T_conjugation_action_1}
{\bf T}_{12} e^{2\pi a (m-1) (-p_1)} e^{2\pi a (m-1) (-x_2-2p_2)} {\bf T}_{12}^{-1} & = e^{2\pi a (m-1) (-p_1)} e^{2\pi a (m-1)(-x_2-2p_2)}, \\
\label{4_eq:T_conjugation_action_2}
{\bf T}_{12} e^{2\pi a (m-1) (-p_1-2p_1)} {\bf T}_{12}^{-1} & = e^{2\pi a (m-1) (-p_1-2p_1)} e^{2\pi a (m-1)(-x_2-2p_2)}.
\end{align}
From \eqref{4_eq:A_m_definition} and \eqref{4_eq:A_conjugation_action}, we have ${\bf A}^{(m)} = \zeta^{m^2-1} e^{2\pi a (m-1)(-x-2p)} {\bf A}$. Observe
\begin{align*}
{\bf A}_2^{(m)} {\bf T}_{21} {\bf A}_1^{(m)} e^{-2\pi a (m-1) p_1}
& = \zeta^{m^2-1} {\bf A}_2^{(m)} {\bf T}_{21} {\bf A}_1
= \zeta^{2(m^2-1)} e^{2\pi a (m-1)(-x_2-2p_2)} {\bf A}_2 {\bf T}_{21} {\bf A}_1, \\
{\bf A}_1^{(m)} {\bf T}_{12} {\bf A}_2^{(m)} e^{-2\pi a (m-1) p_1}
& = \zeta^{m^2-1} {\bf A}_1^{(m)} {\bf T}_{12} e^{-2\pi a (m-1) p_1} e^{2\pi a (m-1) (-x_2-2p_2)} {\bf A}_2 \\
& = \zeta^{m^2-1} {\bf A}_1^{(m)} e^{-2\pi a (m-1) p_1} e^{2\pi a (m-1) (-x_2-2p_2)} {\bf T}_{12} {\bf A}_2 \quad (\mbox{by } \eqref{4_eq:T_conjugation_action_1}) \\
& = \zeta^{2(m^2-1)} {\bf A}_1 e^{2\pi a (m-1)(-x_2-2p_2)} {\bf T}_{12} {\bf A}_2 \\
& = \zeta^{2(m^2-1)} e^{2\pi a (m-1)(-x_2-2p_2)} {\bf A}_1 {\bf T}_{12} {\bf A}_2.
\end{align*}
Using ${\bf A}_2 {\bf T}_{21} {\bf A}_1 = {\bf A}_1 {\bf T}_{12} {\bf A}_2$, we can now deduce ${\bf A}_2^{(m)} {\bf T}_{21} {\bf A}_1^{(m)} = {\bf A}_1^{(m)} {\bf T}_{12} {\bf A}_2^{(m)}$. Note also that
\begin{align*}
{\bf T}_{12} {\bf A}_1^{(m)} {\bf T}_{21}
& = \zeta^{m^2-1} {\bf T}_{12} e^{2\pi a (m-1) (-x_1-2p_1)} {\bf A}_1 {\bf T}_{21} \\
& = \zeta^{m^2-1} e^{2\pi a (m-1) (-x_1-2p_1)} e^{2\pi a (m-1) (-x_2-2p_2)} {\bf T}_{12} {\bf A}_1 {\bf T}_{21}, \quad (\mbox{by } \eqref{4_eq:T_conjugation_action_1}) \\
{\bf A}_1^{(m)} {\bf A}_2^{(m)} P_{(12)}
& =\zeta^{2(m^2-1)} e^{2\pi a (m-1)(-x_1-2p_1)} {\bf A}_1
 e^{2\pi a (m-1)(-x_2-2p_2)} {\bf A}_2 P_{(12)} \\
& =\zeta^{2(m^2-1)} e^{2\pi a (m-1)(-x_1-2p_1)}  e^{2\pi a (m-1) (-x_2-2p_2)} {\bf A}_1 {\bf A}_2 P_{(12)}.
\end{align*}
Using ${\bf T}_{12} {\bf A}_1 {\bf T}_{21} = {\bf A}_1 {\bf A}_2 P_{(12)}$, we can now deduce ${\bf T}_{12} {\bf A}_1^{(m)} {\bf T}_{21} = \zeta^{1-m^2} {\bf A}_1^{(m)} {\bf A}_2^{(m)} P_{(12)}$.
\end{proof}

\begin{remark}
In view of \eqref{4_eq:prop_A_m_relations_eq1}, the operator ${\bf A}^{(-1)}$ is also special as our original operator ${\bf A}^{(1)}$,  because $m=-1$ makes $\zeta^{1-m^2}=1$ in the relation \eqref{4_eq:prop_A_m_relations_eq1}. It is an interesting question to find a conceptual way of constructing ${\bf A}^{(-1)}$.
\end{remark}

%%%%%%%%%%%%%%%%%
%%%%%%%%%%%%%%%%%
%%%%%%%%%%%%%%%%%
%% 5
\section{Relation to the Kashaev representation}

%%%%%%%%%%
%%%%%%%%%%
\subsection{Kashaev's construction}

Kashaev defined the following (unitary) operators (\cite{Kash00}):
\begin{align}
\label{5_eq:Kashaev_operators_def}
\widetilde{\bf T}_{12} \equiv e^{2\pi i p_1 x_2} \Psi_b(x_1 + p_2 - x_2)^{-1}, \quad
\widetilde{\bf A} \equiv e^{-\pi i/3} e^{3\pi i x^2} e^{\pi i (p+x)^2},
\end{align}
where $\Psi_b$ is defined as $\Psi_b(z) = G(-z) e^{\frac{\pi i}{2} z^2 + i\chi}$, the operator $\widetilde{\bf T}_{12}$ acts on $L^2(\mathbb{R}^2, dx_1dx_2)$, and the operator $\widetilde{\bf A}$ on $L^2(\mathbb{R},dx)$. The operator $\widetilde{\bf A}$ can be realized as integral transformation (e.g. as in \cite{Kash99}):
\begin{align}
\label{5_eq:Kashaev_A_def}
\widetilde{\bf A} : L^2(\mathbb{R}, d\alpha) \to L^2(\mathbb{R}, d\beta), \quad
f(\alpha) \mapsto
\int_\mathbb{R} e^{2\pi i \alpha\beta} e^{\pi i \beta^2 - \pi i/12} f(\alpha)d\alpha.
\end{align}

\begin{theorem}\label{5_thm:Kashaev_operators} (Kashaev \cite{Kash99})
The operators $\widetilde{\bf T}, \widetilde{\bf A}$ satisfy
\begin{align}
\label{5_eq:Kashaev_rel}
\widetilde{\bf A}^3 = id, ~
\widetilde{\bf T}_{12} \widetilde{\bf T}_{13} \widetilde{\bf T}_{23} = \widetilde{\bf T}_{23} \widetilde{\bf T}_{12}, ~
\widetilde{\bf A}_1 \widetilde{\bf T}_{12} \widetilde{\bf A}_2 = \widetilde{\bf A}_2 \widetilde{\bf T}_{21} \widetilde{\bf A}_1, ~
\widetilde{\bf T}_{12} \widetilde{\bf A}_1 \widetilde{\bf T}_{21} = \zeta \widetilde{\bf A}_1 \widetilde{\bf A}_2 P_{(12)},
\end{align}
where $\zeta = e^{-\pi i a^2/3}$ is same as ours.
\end{theorem}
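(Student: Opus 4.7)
The plan is to verify the four relations of \eqref{5_eq:Kashaev_rel} directly from the explicit formulas \eqref{5_eq:Kashaev_operators_def}, exploiting the fact that all the operators involved are built from only two kinds of pieces: Gaussian exponentials of quadratic expressions in the Heisenberg generators, and the single transcendental function $\Psi_b$. The identity $\widetilde{\mathbf{A}}^3 = \mathrm{id}$ should follow by the same Heisenberg-conjugation technique used in Proposition \ref{4_prop:A_conjugation_action}: since $\widetilde{\mathbf{A}} = e^{-\pi i/3} e^{3\pi i x^2} e^{\pi i (p+x)^2}$ is a scalar times the exponential of a polynomial of degree $\le 2$ in $x$ and $p$, its conjugation action on $(x,p)$ is a linear symplectic transformation, and one applies the Baker--Campbell--Hausdorff formula to check both that the induced transformation has order three and that the scalar prefactor $e^{-\pi i/3}$ collects correctly to give $\widetilde{\mathbf{A}}^3 = \mathrm{id}$.

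For the pentagon equation $\widetilde{\mathbf{T}}_{12}\widetilde{\mathbf{T}}_{13}\widetilde{\mathbf{T}}_{23} = \widetilde{\mathbf{T}}_{23}\widetilde{\mathbf{T}}_{12}$, the essential input is the operator form of the quantum pentagon identity for $\Psi_b$: for any Heisenberg pair $[P,X] = 1/(2\pi i)$ one has
\begin{equation*}
\Psi_b(P)\Psi_b(X) = \Psi_b(X)\Psi_b(P+X)\Psi_b(P),
\end{equation*}
which is a known consequence of the functional equations \eqref{2_eq:G_defining_relations} for $G$ (equivalently, the Faddeev--Kashaev quantization of Rogers' five-term identity). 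The plan is to conjugate each Weyl factor $e^{2\pi i p_j x_k}$ in $\widetilde{\mathbf{T}}_{jk}$ past the $\Psi_b$-factors using BCH, reducing the pentagon for $\widetilde{\mathbf{T}}$ to the above identity applied to a specific pair of noncommuting linear combinations of the six Heisenberg generators $p_1,x_1,p_2,x_2,p_3,x_3$.

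The two mixed relations I would handle by the same strategy: conjugate the $\Psi_b$-factor inside $\widetilde{\mathbf{T}}$ by the Gaussian operators making up $\widetilde{\mathbf{A}}$, using BCH to track the quadratic shifts, and then use the pentagon for $\Psi_b$ together with the Fourier-type identity \eqref{2_eq:G_Fourier} for $G$ to collapse the resulting products of $\Psi_b$-factors. The central scalar $\zeta$ appearing in $\widetilde{\mathbf{T}}_{12}\widetilde{\mathbf{A}}_1\widetilde{\mathbf{T}}_{21} = \zeta\widetilde{\mathbf{A}}_1\widetilde{\mathbf{A}}_2 P_{(12)}$ is expected to emerge entirely from collecting Gaussian prefactors via BCH; comparing with Proposition \ref{4_prop:A_m_relations}, the value of $\zeta$ (matching $\zeta^{1-m^2}$ at $m=0$) provides a useful consistency check and already suggests that Kashaev's $\widetilde{\mathbf{A}}$ corresponds to our $\mathbf{A}^{(0)}$ rather than $\mathbf{A} = \mathbf{A}^{(1)}$.

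The main obstacle in the plan is the bookkeeping for the mixed relations, where several noncommuting Heisenberg generators and multiple $\Psi_b$-factors interact simultaneously. To keep this manageable I would introduce once-and-for-all abbreviations for the linear combinations that appear as arguments of $\Psi_b$ after conjugation, check by hand the few BCH commutators that enter, and then feed these into a single application of the $\Psi_b$-pentagon plus, where needed, one application of \eqref{2_eq:G_Fourier}. Once this is set up, the verification of the scalar $\zeta$ is a finite Gaussian computation and there are no further analytic subtleties.
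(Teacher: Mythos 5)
The paper does not prove Theorem \ref{5_thm:Kashaev_operators}: it is stated as a citation to Kashaev \cite{Kash99} and no argument is given. Your proposal is therefore not a reconstruction of the paper's route but essentially a sketch of Kashaev's own direct verification. The plan is sound in outline: the conjugation action of $\widetilde{\bf A}$ on $(x,p)$ is indeed the order-three linear symplectic map $x\mapsto p-x$, $p\mapsto -x$ recorded in Proposition~\ref{5_prop:Kashaev_operators_conjugation_action}, and the pentagon for $\widetilde{\bf T}$ reduces, after conjugating the Weyl factors through, to the operator pentagon for $\Psi_b$. One adjustment is needed in the mixed relations: the scalar $\zeta$ in the last relation does not come from the Fourier identity \eqref{2_eq:G_Fourier} you cite but from the inversion relation $\Psi_b(z)\Psi_b(-z)=e^{\pi i z^2}e^{2i\chi}$, an immediate consequence of $G(z)G(-z)=1$ together with the definition $\Psi_b(z)=G(-z)e^{\frac{\pi i}{2}z^2+i\chi}$; since $e^{2i\chi}=\zeta^{-1}e^{-\pi i/6}$, this is exactly where the central charge enters. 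With that ingredient in place your scheme should close, and the bookkeeping concern you flag is real but only technical.

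It is worth contrasting this with what the paper's own framework supplies. Theorem~\ref{4_thm:summary_T_A} establishes the four relations for $({\bf T},{\bf A})$ entirely from the tensor-category structure of the quantum-plane representations, with no appeal to any explicit identity for $\Psi_b$; Proposition~\ref{4_prop:A_m_relations} transports them to $({\bf T},{\bf A}^{(0)})$, producing the central scalar $\zeta^{1-m^2}|_{m=0}=\zeta$; and Theorem~\ref{5_thm:equivalence_representations} exhibits a unitary ${\bf U}$ conjugating $({\bf T},{\bf A}^{(0)})$ into $(\widetilde{\bf T},\widetilde{\bf A})$. Since unitary conjugation preserves operator relations and central scalars, Theorem~\ref{5_thm:Kashaev_operators} follows, independently of Kashaev's original computation. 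Your direct approach buys a self-contained, elementary verification that makes no use of the multiplicity-module machinery; the paper's indirect route buys a conceptual explanation of why the pentagon and the specific value of $\zeta$ must appear, at the cost of the heavier representation-theoretic setup.
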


\begin{definition}\label{5_def:Kashaev_group}
For an index set $I$, the Kashaev group $G_I$ associated to $I$ is the group with generators $a_j, t_{jk}, p_{jk}$ ($j,k\in I$), with the relations for all $j,k,\ell \in I$
\begin{align*}
a_j^3 & = e, \\
a_j t_{jk} a_k & = a_k t_{kj} a_j, \\
t_{jk} a_j t_{kj} & = a_j a_k p_{jk}, \\
t_{k\ell} t_{jk} & = t_{jk} t_{j\ell} t_{k\ell},
\end{align*}
and $p_{j,k}$ satisfies the relations of an involution of $j,k \in I$ in the permutation group of the set $I$. We also define the central extension $\widehat{G}_I$ of $G_I$ with the corresponding generators $\widehat{a}_{j}$, $\widehat{t}_{jk}$, $\widehat{p}_{jk}$ and the central element $z$ satisfying the same relations as $G_I$ except $t_{jk} a_j t_{kj} = a_j a_k p_{jk}$, which is replaced by
\begin{align*}
\widehat{t}_{ij} \widehat{a}_i \widehat{t}_{ji}
= z \widehat{a}_i \widehat{a}_j \widehat{p}_{ij}.
\end{align*}
\end{definition}

\vs 
From \eqref{5_eq:Kashaev_rel} and Proposition \ref{4_prop:A_m_relations}, $({\bf T}, {\bf A}^{(m)})$ (for each $m\in\mathbb{R}$) is a representation of $\widehat{G}_I$ with $z$ represented by the identity times $\zeta^{1-m^2}$, while by Theorem \ref{5_thm:Kashaev_operators} $(\widetilde{\bf T}, \widetilde{\bf A})$ is a representation of $\widehat{G}_I$ with $z$ represented by the identity times $\zeta$.

\vs

We will now construct a family $(\widetilde{\bf T}, \widetilde{\bf A}^{(m)})$ of representations of the central extension $\widehat{G}_I$ of the Kashaev group, having Kashaev's pair of operators $(\widetilde{\bf T}, \widetilde{\bf A})$ as its member, and prove the (unitary) equivalence of the family with another one, namely $({\bf T}, {\bf A}^{(m)})$.

%%%%%%%%%%
%%%%%%%%%%
\subsection{Conjugation action on Heisenberg generators}

As a preliminary, we study how the operators $\widetilde{\bf T}$ and $\widetilde{\bf A}$ act on the Heisenberg generators $p_j,x_j$ by conjugation:

\begin{proposition}
\label{5_prop:Kashaev_operators_conjugation_action}
For any $\ell \in \mathbb{R}$, one has
\begin{align*}
%%% 1
& \widetilde{\bf T}_{12} e^{\ell (p_1+p_2)} \widetilde{\bf T}_{12}^{-1}
= e^{\ell p_2}, \quad
%%% 2
\widetilde{\bf T}_{12} e^{\ell x_1} \widetilde{\bf T}_{12}^{-1}
=
e^{\ell (x_1+x_2)}, \quad
%%% 3
\widetilde{\bf T}_{12} e^{\ell (p_1+x_2)} \widetilde{\bf T}_{12}^{-1}
= e^{\ell (p_1+x_2)}, \\
%%% 4
& \widetilde{\bf T}_{12} e^{2\pi b^{\pm 1} p_1} \widetilde{\bf T}_{12}^{-1}
= e^{2\pi b^{\pm 1} p_1} (1 + e^{\pi i b^{\pm 2}} e^{2\pi b^{\pm 1} (x_1 - p_1 + p_2)}), \quad
%%% 5
\widetilde{\bf A} x \widetilde{\bf A}^{-1} = p-x, \quad
%%% 5
\widetilde{\bf A} p \widetilde{\bf A}^{-1} = -x.
\end{align*}
\end{proposition}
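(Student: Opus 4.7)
The plan is to factor each operator into simpler pieces and combine Baker--Campbell--Hausdorff (BCH) manipulations with the functional equation of the quantum dilogarithm.

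For $\widetilde{\bf T}_{12}$, I will write $\widetilde{\bf T}_{12} = U_1 U_2$ with $U_1 = e^{2\pi i p_1 x_2}$ and $U_2 = \Psi_b(z)^{-1}$, where $z = x_1 + p_2 - x_2$. The factor $U_1$ is the exponential of a bilinear in Heisenberg generators, so its adjoint action terminates after a single commutator, giving $U_1 p_1 U_1^{-1} = p_1$, $U_1 p_2 U_1^{-1} = p_2 - p_1$, $U_1 x_1 U_1^{-1} = x_1 + x_2$, $U_1 x_2 U_1^{-1} = x_2$. The first three identities of the proposition then follow at once once one checks that each of $p_1 + p_2$, $x_1$, and $p_1 + x_2$ commutes with $z$ (a direct Heisenberg bracket computation, e.g.\ $[p_1 + x_2, z] = [p_1, x_1] + [x_2, p_2] = 0$), so $U_2$ commutes with the corresponding exponentials and only the action of $U_1$ contributes. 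For the fourth and most substantial identity, I will use the shift $[p_1, z] = 1/(2\pi i)$, which gives $e^{2\pi b^{\pm 1} p_1} F(z) = F(z - i b^{\pm 1}) e^{2\pi b^{\pm 1} p_1}$ for any holomorphic $F$. Applying this with $F = \Psi_b$ yields $U_2 e^{2\pi b^{\pm 1} p_1} U_2^{-1} = [\Psi_b(z - i b^{\pm 1})/\Psi_b(z)]\, e^{2\pi b^{\pm 1} p_1}$, and then $\Psi_b(w) = G(-w) e^{\frac{\pi i}{2} w^2 + i\chi}$ together with the functional equation \eqref{2_eq:G_defining_relations2} collapses the ratio to $1 + e^{-\pi i b^{\pm 2}} e^{2\pi b^{\pm 1} z}$. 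Conjugating the result by $U_1$ transforms $z$ into $x_1 - p_1 + p_2$ while leaving $e^{2\pi b^{\pm 1} p_1}$ fixed, and a single BCH reordering using $[2\pi b^{\pm 1} p_1,\, 2\pi b^{\pm 1}(x_1 - p_1 + p_2)] = -2\pi i b^{\pm 2}$ moves $e^{2\pi b^{\pm 1} p_1}$ to the left, converting the scalar $e^{-\pi i b^{\pm 2}}$ into $e^{\pi i b^{\pm 2}}$ and producing the stated form.

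For $\widetilde{\bf A}$, I will factor $\widetilde{\bf A} = e^{-\pi i/3} V_1 V_2$ with $V_1 = e^{3\pi i x^2}$ and $V_2 = e^{\pi i(p+x)^2}$. Both exponents are quadratic in $p,x$, so the adjoint action on linear combinations terminates after a few nested commutators. A short computation using $[(p+x)^2, x] = (p+x)/(\pi i)$ together with $[\pi i(p+x)^2,\, p+x] = 0$ gives $V_2 x V_2^{-1} = p + 2x$ and $V_2 p V_2^{-1} = -x$; likewise $[3\pi i x^2, p] = -3x$ yields $V_1 p V_1^{-1} = p - 3x$ and $V_1 x V_1^{-1} = x$. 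Composing, $\widetilde{\bf A} x \widetilde{\bf A}^{-1} = V_1(p + 2x) V_1^{-1} = (p - 3x) + 2x = p - x$ and $\widetilde{\bf A} p \widetilde{\bf A}^{-1} = V_1(-x) V_1^{-1} = -x$; the central phase $e^{-\pi i/3}$ is irrelevant under conjugation.

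The main obstacle is the $\Psi_b$ ratio computation and the subsequent BCH rearrangement, where one must carefully track the Gaussian factors and the sign of the scalar phase so that the final form matches exactly the expression stated in the proposition. All the remaining manipulations are routine finite-step BCH on Heisenberg generators.
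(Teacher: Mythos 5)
Your proof is correct, and for the one identity the paper proves explicitly—the fourth $\widetilde{\bf T}_{12}$ relation—you use essentially the same mechanism: push $e^{2\pi b^{\pm 1}p_1}$ through $\Psi_b(z)^{-1}$ via the shift $[p_1,z]=1/(2\pi i)$, apply the functional equation to get $1+e^{-\pi i b^{\pm 2}}e^{2\pi b^{\pm 1}z}$, conjugate by $e^{2\pi i p_1 x_2}$ to replace $z$ by $x_1-p_1+p_2$, and then reorder with a single central BCH commutator (the paper writes the reordered form directly at the end). The difference is one of completeness rather than strategy: the paper cites Kashaev for the $\widetilde{\bf A}$ relations and defers the first three $\widetilde{\bf T}_{12}$ relations to the combination of Proposition~\ref{5_prop:T_action_on_heisenberg} (conjugation action of the kernel ${\bf T}$) and the unitary equivalence in Theorem~\ref{5_thm:equivalence_representations}, while noting ``it's equally easy to get the first three here.'' Your argument is precisely that ``equally easy'' direct route: observe that $p_1+p_2$, $x_1$, and $p_1+x_2$ each commute with $z=x_1+p_2-x_2$, so only $U_1=e^{2\pi ip_1x_2}$ acts, and compute its first-order adjoint action; and for $\widetilde{\bf A}$, split into $V_1=e^{3\pi ix^2}$, $V_2=e^{\pi i(p+x)^2}$ and do the finite nested-commutator computations. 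This makes the proposition self-contained and independent of the kernel-level computation in Proposition~\ref{5_prop:T_action_on_heisenberg} and of the equivalence theorem—a mild but genuine structural simplification, since the paper's deferral creates an apparent (if benign) circular reference between the two conjugation-action propositions.
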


\vs

\begin{proof}
The last two relations about $\widetilde{\bf A}$ is already mentioned by Kashaev (\cite{Kash00}). We use the following: for a function $f$ (that has a necessary analytic continuation) and for $\ell\in\mathbb{R}$, one has
\begin{align}
\label{5_eq:commutation_r_s}
e^{-\ell p} f(x) e^{\ell p} = f(x +\frac{i\ell}{2\pi}).
\end{align}

\vs

The first three relations will be obtained as corollary of Proposition \ref{5_prop:T_action_on_heisenberg} and Theorem \ref{5_thm:equivalence_representations}, hence we only prove the fourth one (though it's equally easy to get the first three here). Using \eqref{5_eq:commutation_r_s}, one can get
\begin{align*}
e^{2\pi b^{\pm 1} p_1} \Psi_b(x_1+p_2-x_2) e^{-2\pi b^{\pm 1} p_1}
& = \Psi_b(x_1+p_2-x_2 - ib^{\pm1}), \\
e^{2\pi i p_1 x_2} e^{ 2\pi b^{\pm 1} p_1} e^{-2\pi i p_1 x_2}
& = e^{2\pi b^{\pm 1} p_1}, \\
e^{2\pi i p_1 x_2} e^{2\pi b^{\pm 1}(x_1+p_2-x_2)} e^{-2\pi i p_1 x_2}
& = e^{2\pi b^{\pm 1} (x_1-p_1+p_2)}.
\end{align*}
Thus, using the above obtained formulas and functional equations of $\Psi_b$:
\begin{align*}
\Psi_b(w)^{-1} \Psi_b (w-ib^{\pm 1}) = (1 + e^{-\pi i b^{\pm 2}} e^{2\pi b^{\pm 1} w}),
\end{align*}
we obtain
\begin{align*}
\widetilde{\bf T}_{12} e^{2\pi b^{\pm 1} p_1} \widetilde{\bf T}_{12}^{-1}
& =
e^{2\pi i p_1 x_2} \Psi_b(x_1 + p_2 - x_2)^{-1} 
e^{2\pi b^{\pm 1} p_1}
\Psi_b(x_1 + p_2 - x_2) e^{-2\pi i p_1 x_2} \\
& = e^{2\pi i p_1 x_2} \Psi_b(x_1 + p_2 - x_2)^{-1} 
\Psi_b(x_1 + p_2 - x_2 - i b^{\pm 1}) e^{2\pi b^{\pm 1} p_1} e^{-2\pi i p_1 x_2} \\
& = e^{2\pi i p_1 x_2} (1 + e^{-\pi i b^{\pm 2}} e^{2\pi b (x_1 + p_2 - x_2)}) e^{2\pi b^{\pm 1} p_1} e^{-2\pi i p_1 x_2} \\
& = (1 + e^{-\pi i b^{\pm 2}} e^{2\pi b^{\pm 1} (x_1 - p_1 + p_2)}) e^{2\pi b^{\pm 1} p_1} = e^{2\pi b^{\pm 1} p_1} (1 + e^{\pi i b^{\pm 2}} e^{2\pi b^{\pm 1} (x_1 - p_1 + p_2)}).
\end{align*}
\end{proof}

\vs

Kashaev's operator $\widetilde{\bf T}$ serves as a quantum version of the mutation operator corresponding to a certain change of triangulation of a punctured surface (see \cite{Kash98} and \cite{GL}); Proposition \ref{5_prop:Kashaev_operators_conjugation_action} shows the action of $\widetilde{\bf T}$ on the generators of the quantized algebra. We will see in the following proposition that our ${\bf T}_{12}$ acts (by conjugation) on the same generators with $p$ replaced by $-p$ and $x$ by $-x-2p$ exactly in the same way as Kashaev's $\widetilde{\bf T}_{12}$. This suggests that our ${\bf T}$ can also be viewed as a quantum mutation operator.

\begin{proposition}\label{5_prop:T_action_on_heisenberg}
If ${\bf T}_{12}$ is understood as a mapping from $L^2(\mathbb{R}^2, d\beta d\alpha)$ to $L^2(\mathbb{R}^2, dAdB)$, then for $\ell \in \mathbb{R}$ one has
\begin{align*}
{\bf T}_{12} e^{\ell (-p_\beta-p_\alpha)} 
& = e^{\ell (-p_B)} {\bf T}_{12}, \\
{\bf T}_{12} e^{\ell (-\beta - 2p_\beta)}
& = e^{\ell (-A-2p_A)} e^{\ell (-B-2p_B)} {\bf T}_{12}, \\
{\bf T}_{12} e^{\ell (-p_\beta)} e^{\ell (-\alpha-2p_\alpha)}
& = e^{\ell (-p_A)} e^{\ell (-B-2p_B)} {\bf T}_{12}, \\
 {\bf T}_{12} e^{2\pi b^{\pm 1} (- p_\beta)}
& = e^{2\pi b^{\pm 1} (-p_A)} (1+e^{\pi i b^{\pm 2}} e^{2\pi b^{\pm 1} ((-A-2p_A)-(-p_A)+(-p_B))}) {\bf T}_{12}, \\
{\bf A}^{(0)} (-x-2p) ({\bf A}^{(0)})^{-1} & = (-p) - (-x-2p), \\
{\bf A}^{(0)} (-p) ({\bf A}^{(0)})^{-1} & = -(-x-2p).
\end{align*}
\end{proposition}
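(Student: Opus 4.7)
My proof proposal splits into two parts: the two assertions about ${\bf A}^{(0)}$ are a direct corollary of Proposition \ref{4_prop:A_formula}, while the four assertions about ${\bf T}_{12}$ require kernel calculations based on the explicit formula \eqref{4_eq:T_formula}--\eqref{4_eq:curly_G_formula}.

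For ${\bf A}^{(0)}$, the proof of Proposition \ref{4_prop:A_formula} already produced the two identities in \eqref{4_eq:A_0_conjugation}, namely ${\bf A}^{(0)}(-2x/3-p)({\bf A}^{(0)})^{-1}=x/3$ and ${\bf A}^{(0)}(x/3)({\bf A}^{(0)})^{-1}=x/3+p$. Multiplying the first by $3$ and using the second, one gets ${\bf A}^{(0)}(-2x-3p)({\bf A}^{(0)})^{-1}=x$ and ${\bf A}^{(0)}(x)({\bf A}^{(0)})^{-1}=x+3p$. The plan is then to write the target linear combinations as combinations of $x$ and $-2x-3p$: one checks $-x-2p = \tfrac{1}{3}x+\tfrac{2}{3}(-2x-3p)$ and $-p = \tfrac{2}{3}x+\tfrac{1}{3}(-2x-3p)$, and then conjugation by ${\bf A}^{(0)}$ transports these linear combinations to $\tfrac{1}{3}(x+3p)+\tfrac{2}{3}x = x+p = (-p)-(-x-2p)$ and $\tfrac{2}{3}(x+3p)+\tfrac{1}{3}x = x+2p = -(-x-2p)$, as asserted.

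For the ${\bf T}_{12}$ relations, the strategy is to translate each identity ${\bf T}_{12} O_1 = O_2 {\bf T}_{12}$ into a kernel identity $O_1^{t_{\beta,\alpha}}\, T = O_2^{A,B}\, T$ (as distributions), where the superscript $t$ denotes transpose in the integration variables and $O_2^{A,B}$ acts in the target variables. All operators $O_i$ involved are exponentials of real-linear expressions in $p$'s, $\beta,\alpha,A,B$, so their action on the kernel is a combination of multiplication (for the non-momentum part) and complex shifts of arguments (using $e^{\ell p}f(x)=f(x-i\ell/2\pi)$). For the first three identities the verification reduces to elementary quadratic algebra: the argument $u=\beta+\alpha+A-2B$ of $\mathcal{G}$ is invariant under the paired shift $\beta,\alpha\mapsto\beta-c,\alpha-c$ versus $B\mapsto B+c$ (giving the first identity), and the quadratic exponent $\phi=\beta B+\alpha A-\beta\alpha-AB+\alpha^2/2-B^2/2$ has the property that the correction terms produced by the two sides of each identity cancel exactly. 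One should also invoke Baker--Campbell--Hausdorff to split composite exponentials into commuting pieces, but all commutators are $c$-numbers and the bookkeeping is mechanical.

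The main obstacle is the fourth identity, which involves the shift $e^{-2\pi b^{\pm 1}p_\beta}$ of $\beta$ by $ib^{\pm 1}$ and must reproduce the two-term factor $1+e^{\pi ib^{\pm 2}}e^{2\pi b^{\pm 1}((-A-2p_A)-(-p_A)+(-p_B))}=1+e^{-2\pi b^{\pm 1}(A+p_A+p_B)}$ on the right. The plan is to first establish a functional equation for $\mathcal{G}$ of the form
\[
\mathcal{G}(y-ib^{\pm 1}) = \bigl(1+e^{\pi ib^{\pm 2}}e^{-2\pi b^{\pm 1}y}\bigr)\mathcal{G}(y),
\]
by substituting the $G$-functional equation \eqref{2_eq:G_defining_relations2} into the defining integral \eqref{4_eq:curly_G_formula} for $\mathcal{G}$ and shifting the contour of $x$ by $\pm ib^{\pm 1}$, taking care to avoid the pole of $G(x-ia)$ at $x=0$. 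Once this is in hand, the shift by $-ib^{\pm 1}$ in the argument $u=\beta+\alpha+A-2B$ of $\mathcal{G}$ splits the kernel into two pieces; matching each piece against the right-hand side and simplifying the quadratic prefactors (again with BCH reorderings of $A,p_A,p_B$) yields the stated identity. The delicate part of this step is tracking the contour deformation and verifying that the quadratic corrections coming from $\phi$ combine correctly with the exponential prefactors produced by the $\mathcal{G}$ functional equation; once these constants are lined up, the identity follows.
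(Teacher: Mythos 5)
Your handling of the two $\mathbf{A}^{(0)}$ identities is correct and is exactly the paper's route: both follow by taking real linear combinations of the two generators conjugated in \eqref{4_eq:A_0_conjugation}, and your arithmetic checks out. For the first three $\mathbf{T}_{12}$ identities your kernel strategy (transpose in $\beta,\alpha$, act in $A,B$, match prefactors and the argument of $\mathcal{G}$) is also what the paper does, although the phrase ``$u$ is invariant under $\beta,\alpha\mapsto\beta-c,\alpha-c$ versus $B\mapsto B+c$'' should read $B\mapsto B-c$ (the kernel is invariant under the simultaneous shift $e^{\ell(p_\beta+p_\alpha+p_B)}$; the ``$+c$'' form is only how the shift appears after the identity has been rearranged to $e^{\ell(p_\beta+p_\alpha)}T=e^{-\ell p_B}T$).

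For the fourth $\mathbf{T}_{12}$ relation there is a genuine gap. First, note that the paper does not prove this relation by a direct kernel computation; it derives it from Proposition \ref{5_prop:Kashaev_operators_conjugation_action} together with Theorem \ref{5_thm:equivalence_representations}, and only remarks in passing that a direct argument via the $G$-relations is possible. Your choice to go direct is legitimate, but the functional equation you propose for $\mathcal{G}$ is wrong. Working out what the kernel identity actually demands shows that all three shifted values of $\mathcal{G}$ appear: the left side $e^{2\pi b^{\pm 1}p_\beta}T$ produces $\mathcal{G}(u-ib^{\pm 1})$, the summand $e^{-2\pi b^{\pm 1}p_A}T$ produces $\mathcal{G}(u+ib^{\pm 1})$ (the argument is shifted \emph{up}, not left fixed, since $u=\beta+\alpha+A-2B$ contains $A$), and the summand $e^{-2\pi b^{\pm 1}p_A}\,e^{\pi i b^{\pm 2}}e^{-2\pi b^{\pm 1}(A+p_A+p_B)}T$, after BCH and cancellation, produces $\mathcal{G}(u)$. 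Matching the quadratic prefactors one finds that the identity needed is the \emph{second-order} difference relation
\begin{align*}
\mathcal{G}(y-ib^{\pm 1})-\mathcal{G}(y+ib^{\pm 1}) = e^{\pi i b^{\pm 2}/2}\,e^{-\pi b^{\pm 1}y}\,\mathcal{G}(y),
\end{align*}
not the first-order relation $\mathcal{G}(y-ib^{\pm 1})=(1+e^{\pi i b^{\pm 2}}e^{-2\pi b^{\pm 1}y})\mathcal{G}(y)$ that you posit. Indeed, the latter would say $\mathcal{G}$ behaves like a quantum dilogarithm $S_R$, which it does not: $\mathcal{G}$ is defined by \eqref{4_eq:curly_G_formula} as a Fourier-type transform of $G(x-ia)e^{-\pi a x}$ \emph{without} the Gaussian $e^{\pm\frac{\pi i}{2}x^2}$ factor that would make it expressible via $G$ in closed form, so it only satisfies the three-term relation. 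That relation does follow from \eqref{2_eq:G_defining_relations2}: using $G(x-ia+ib^{\pm 1})=-2i\sinh(\pi b^{\pm 1}x)\,G(x-ia)$ in the identity $e^{-\pi b^{\pm 1}x}-e^{\pi b^{\pm 1}x}=-2\sinh(\pi b^{\pm 1}x)$ and then shifting $x\mapsto x+ib^{\pm 1}$ (which does not cross the pole at $x=0$, since poles of $G(x-ia)$ lie at $x=0$ and in the lower half-plane) yields exactly the displayed equation. So the overall plan can be salvaged, but the key lemma must be replaced by this three-term $\mathcal{G}$-relation before the fourth identity can be concluded.
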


\begin{proof}
The last two relations about ${\bf A}^{(0)}$ come immediately from \eqref{4_eq:A_0_conjugation}, and the fourth relation about ${\bf T}_{12}$ can be obtained from Proposition \ref{5_prop:Kashaev_operators_conjugation_action} and
Theorem \ref{5_thm:equivalence_representations}. Recall that ${\bf T}_{12}$ is realized as an integral transformation  with distribution kernel $T(\beta,\alpha,A,B)$ (see \eqref{4_eq:T_formula} for its definition). For brevity, $T(\beta,\alpha,A,B)$ will be denoted by $T$.

\vs

Note that $\mathcal{G}(\alpha+\beta+A-2B) = \mathcal{G}(A + (\alpha - B) + (\beta-B))$, and that
\begin{align*}
\alpha A + \beta B - \alpha\beta - AB + \alpha^2/2 - B^2/2
= A(\alpha-B) + (\alpha - B)((\alpha-\beta) + (B - \beta))/2.
\end{align*}
Hence $e^{\ell (p_\beta+p_\alpha+p_B)} T = T$ (above shows $T$ depends only on $A, (\alpha-B), (\alpha-\beta), (B-\beta)$), so for all $\ell\in \mathbb{R}$ we have
\begin{align}
\label{5_eq:T_res1}
e^{\ell (p_\beta+p_\alpha)} T = e^{-\ell p_B} T.
\end{align}

\vs

Note that $\mathcal{G}(\alpha+\beta+A-2B) = \mathcal{G} ( \alpha + (\beta-B) + (A-B))$, and that
\begin{align*}
& \alpha A + \beta B - \alpha\beta - AB + \alpha^2/2 - B^2/2
= \alpha (A - \beta) + B(\beta - A) + \alpha^2/2 - B^2/2,
\end{align*}
hence it's easy to see that
$
e^{2\pi  m(p_\beta + p_A+p_B)}T
= e^{\pi m(\beta-A-B)} e^{\pi i m^2/2} T,
$ for any $m\in \mathbb{R}$; using BCH formula, we get 
\begin{align}
\label{5_eq:T_res2}
e^{\pi m (-\beta + 2p_\beta)} T
= e^{\pi m(-A-2p_A)} e^{\pi m(-B-2p_B)} T.
\end{align}

\vs

Note that $\mathcal{G}(\alpha+\beta+A-2B) = \mathcal{G}( \alpha + (\beta + A) - 2B)$, and that
\begin{align*}
& \alpha A + \beta B - \alpha\beta - AB + \alpha^2/2 - B^2/2
= (\alpha-B) A + \beta ( B-\alpha) + \alpha^2/2 - B^2/2,
\end{align*}
hence it's easy to see for any $m\in\mathbb{R}$ that
\begin{align}
\label{5_eq:intermediate1}
e^{2\pi m(p_A - p_\beta)} T
= e^{\pi (m(\alpha-B) - m(B-\alpha))} T
= e^{2\pi m (\alpha-B)} T,
\end{align}
thus $e^{\ell (p_A - p_\beta)} T = e^{\ell(\alpha-B)}T$, for any $\ell \in \mathbb{R}$. Since $e^{2\ell(p_\beta+p_\alpha+p_B)}T = T$ (by \eqref{5_eq:T_res1}), we can now deduce from \eqref{5_eq:intermediate1} that
$$
e^{\ell (+ p_\beta + 2p_\alpha + p_A +2p_B)} T
= e^{\ell(p_A-p_\beta)} e^{2\ell(p_\beta+p_\alpha+p_B)} T
= e^{\ell (\alpha-B)}T,
$$
which (together with BCH formula) yields
\begin{align}
\label{5_eq:T_res3}
e^{\ell p_\beta} e^{\ell(-\alpha+2p_\alpha)} T
= e^{\ell (-p_A)} e^{\ell (-B-2p_B)}T.
\end{align}
By taking transposes of \eqref{5_eq:T_res1}, \eqref{5_eq:T_res2}, and \eqref{5_eq:T_res3}, we get the first three lines of the assertion of this proposition. (The fourth relation can also be proved similarly, with the help of the functional relations of $G$ \eqref{2_eq:G_defining_relations}.)
\end{proof}

%%%%%%%%%%%%%%
\subsection{Family of projective representations of the Kashaev group}

For any real number $m$, put
\begin{align}
\label{5_eq:A_tilde_m_definition}
\widetilde{{\bf A}}^{(m)} := \zeta^{m^2} \widetilde{{\bf A}} e^{-2\pi a mp}.
\end{align}
In particular, Kashaev's operator is $\widetilde{{\bf A}} = \widetilde{{\bf A}}^{(0)}$. Using similar argument as in Proposition \ref{4_prop:A_m_relations} (which now relies on Proposition \ref{5_prop:Kashaev_operators_conjugation_action}), we obtain the following result:
\begin{proposition}
\label{5_prop:A_tilde_m_relations}
The operators $\widetilde{{\bf T}}$, $\widetilde{{\bf A}}^{(m)}$ satisfy
\begin{align}
\label{5_eq:prop_A_tilde_m_relations_eq1}
\begin{array}{ll}
(\widetilde{{\bf A}}^{(m)})^3 = id, &
\widetilde{{\bf T}}_{23} \widetilde{{\bf T}}_{12} = \widetilde{{\bf T}}_{12} \widetilde{{\bf T}}_{13} \widetilde{{\bf T}}_{23}, \\
\widetilde{{\bf A}}_2^{(m)} \widetilde{{\bf T}}_{21} \widetilde{{\bf A}}_1^{(m)} = \widetilde{{\bf A}}_1^{(m)} \widetilde{{\bf T}}_{12} \widetilde{{\bf A}}_2^{(m)}, &
\widetilde{{\bf T}}_{12} \widetilde{{\bf A}}_1^{(m)} \widetilde{{\bf T}}_{21} = \zeta^{1-m^2} \widetilde{{\bf A}}_1^{(m)} \widetilde{{\bf A}}_2^{(m)} P_{(12)}.
\end{array}
\end{align}
$\qed$
\end{proposition}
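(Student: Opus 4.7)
The plan is to mirror the proof of Proposition \ref{4_prop:A_m_relations} verbatim, substituting Kashaev's data for ours. Namely, I will use Theorem \ref{5_thm:Kashaev_operators} in place of Theorem \ref{4_thm:summary_T_A}, and the conjugation identities of Proposition \ref{5_prop:Kashaev_operators_conjugation_action} in place of those of Proposition \ref{4_prop:A_conjugation_action} and Proposition \ref{5_prop:T_action_on_heisenberg}. The pentagon identity $\widetilde{\bf T}_{23}\widetilde{\bf T}_{12}=\widetilde{\bf T}_{12}\widetilde{\bf T}_{13}\widetilde{\bf T}_{23}$ holds on the nose by Kashaev's theorem, so there is nothing to do there.

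For $(\widetilde{\bf A}^{(m)})^3=id$, I expand
\[
(\widetilde{\bf A}^{(m)})^3=\zeta^{3m^2}\,\widetilde{\bf A}\,e^{-2\pi a mp}\,\widetilde{\bf A}\,e^{-2\pi a mp}\,\widetilde{\bf A}\,e^{-2\pi a mp},
\]
and push all three $\widetilde{\bf A}$'s to the right using $\widetilde{\bf A} p\widetilde{\bf A}^{-1}=-x$ (so $\widetilde{\bf A}\,e^{-2\pi amp}=e^{2\pi amx}\,\widetilde{\bf A}$) together with $\widetilde{\bf A} x\widetilde{\bf A}^{-1}=p-x$ and $\widetilde{\bf A}^{-1}x\widetilde{\bf A}=-p$ (the latter obtained by iterating and using $\widetilde{\bf A}^3=id$). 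The three $\widetilde{\bf A}$-factors collapse to $\widetilde{\bf A}^3=id$ by Theorem \ref{5_thm:Kashaev_operators}, and what remains is a product of three exponentials whose arguments sum to zero; applying the Baker--Campbell--Hausdorff formula produces a scalar BCH correction $e^{\pi i a^2 m^2}$, which cancels exactly against $\zeta^{3m^2}=e^{-\pi i a^2 m^2}$ since $\zeta=e^{-\pi i a^2/3}$.

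For the two mixed identities I write $\widetilde{\bf A}_j^{(m)}=\zeta^{m^2}\widetilde{\bf A}_j e^{-2\pi a mp_j}$ and commute the exponential shift factors through the $\widetilde{\bf T}$'s using the identities in Proposition \ref{5_prop:Kashaev_operators_conjugation_action}; the key facts are $\widetilde{\bf T}_{12}e^{\ell(p_1+p_2)}\widetilde{\bf T}_{12}^{-1}=e^{\ell p_2}$ and $\widetilde{\bf T}_{12}e^{\ell(p_1+x_2)}\widetilde{\bf T}_{12}^{-1}=e^{\ell(p_1+x_2)}$, which let one slide $e^{-2\pi a mp_1}$ past $\widetilde{\bf T}_{12}$ with a controllable auxiliary factor. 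After collecting all shift factors on one side, they match up identically on the two sides of the first mixed relation (producing no residual phase), while on the two sides of the second mixed relation they combine with an explicit BCH phase equal to $\zeta^{-m^2}$, which together with the three $\zeta^{m^2}$'s in $\widetilde{\bf A}^{(m)}$ and Kashaev's factor $\zeta$ from $\widetilde{\bf T}_{12}\widetilde{\bf A}_1\widetilde{\bf T}_{21}=\zeta\widetilde{\bf A}_1\widetilde{\bf A}_2 P_{(12)}$ yields the advertised $\zeta^{1-m^2}$. Reducing to Kashaev's relations in Theorem \ref{5_thm:Kashaev_operators} finishes both identities.

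The main obstacle is not conceptual but bookkeeping: correctly tracking the BCH-generated scalars through the commutations so that they assemble to the stated $\zeta$-powers. This is exactly the same delicate point encountered in the proof of Proposition \ref{4_prop:A_m_relations}, and the structural parallelism --- Proposition \ref{5_prop:Kashaev_operators_conjugation_action} plays the role of Propositions \ref{4_prop:A_conjugation_action} and \ref{5_prop:T_action_on_heisenberg} --- ensures that the arithmetic works out in the same way.
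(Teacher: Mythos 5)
Your proposal takes exactly the approach the paper takes: the paper's proof of Proposition~\ref{5_prop:A_tilde_m_relations} is literally the single sentence ``Using similar argument as in Proposition~\ref{4_prop:A_m_relations} (which now relies on Proposition~\ref{5_prop:Kashaev_operators_conjugation_action}),'' and you have spelled out precisely that argument, with the correct conjugation identities and the correct BCH scalar $e^{\pi i a^2 m^2}=\zeta^{-3m^2}$ appearing in the cube identity. One small imprecision: for the last relation no genuine Baker--Campbell--Hausdorff phase arises at all once one rewrites $\widetilde{\bf A}^{(m)}_j = \zeta^{m^2} e^{2\pi a m x_j}\widetilde{\bf A}_j$; the $\zeta$-powers balance directly ($\zeta^{m^2}\cdot\zeta = \zeta^{1-m^2}\cdot\zeta^{2m^2}$), so calling the residual $\zeta^{-m^2}$ a ``BCH phase'' is loose language rather than an actual commutator correction, though the arithmetic you arrive at is right.
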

Thus we have two families of representations of the central extension $\widehat{G}_I$ of the Kashaev group: $({\bf T}, {\bf A}^{(m)})$ and $(\widetilde{{\bf T}}, \widetilde{{\bf A}}^{(m)})$. In the next subsection we prove that these two families are equivalent.

%%%%%%%%%%%%%%
\subsection{Equivalence of the two families $({\bf T}, {\bf A}^{(m)})$ and $(\widetilde{{\bf T}}, \widetilde{{\bf A}}^{(m)})$}

Let $U:L^2(\mathbb{R},dx) \to L^2(\mathbb{R},dy)$ and $U^{-1}:L^2(\mathbb{R},dy) \to L^2(\mathbb{R},dx)$ be the unitary transformations given by
\begin{align}
U: f(x) \mapsto 
\frac{1}{\sqrt{2}} \int_\mathbb{R} e^{-\pi i (x+y)^2/2} f(x) dx, \quad
U^{-1}: \varphi(y) \mapsto
\frac{1}{\sqrt{2}} \int_\mathbb{R} e^{\pi i (x+y)^2/2} \varphi(y) dy.
\end{align}
It is easy to show that these two are indeed inverses to each other, and that
\begin{align}
\label{5_eq:U_action_on_single_heisenberg}
U p U^{-1} = -p, \quad
U x U^{-1} = -x-2p,
\end{align}
if $U$ and $U^{-1}$ are understood as acting on $L^2(\mathbb{R}, dx)$. Using $U$, we can consider the unitary transformation ${\bf U}$ from $\otimes_{j\in I} \mathcal{H}_j$ (here $I$ is an index set, as in Definition \ref{5_def:Kashaev_group}) to itself 
such that
\begin{align}
\label{5_eq:U_action_on_heisenberg}
{\bf U} p_j {\bf U}^{-1} = -p_j, \quad
{\bf U} x_j {\bf U}^{-1} = -x_j-2p_j
\end{align}
for every $j\in I$. We are now ready to prove the following theorem:

\begin{theorem}
\label{5_thm:equivalence_representations}
For any real number $m$, we have ${\bf U}^{-1} {\bf T} {\bf U} = \widetilde{\bf T}$ and ${\bf U}^{-1} {\bf A}^{(m)} {\bf U} = \widetilde{\bf A}^{(m)}$, i.e. the representations $({\bf T}, {\bf A}^{(m)})$ and $(\widetilde{{\bf T}}, \widetilde{{\bf A}}^{(m)})$ of the central extension $\widehat{G}_I$ of the Kashaev group are equivalent via the unitary transformation ${\bf U}$.
\end{theorem}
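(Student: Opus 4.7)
My plan is to prove the two identities by different methods. For the ${\bf A}^{(m)}$ identity I reduce to the single case $m=0$ by algebraic manipulation and then match Gaussian integral kernels. For the ${\bf T}$ identity I verify that $V:={\bf U}^{-1}{\bf T}_{12}{\bf U}$ and $W:=\widetilde{{\bf T}}_{12}$ induce the same conjugation on the Heisenberg operators $p_{1},x_{1},p_{2},x_{2}$, conclude $V=\lambda W$ by Stone--von Neumann irreducibility, and then fix the scalar $\lambda=1$ via the pentagon.

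\textbf{The ${\bf A}^{(m)}$ identity.} The relation $UpU^{-1}=-p$ gives $U^{-1}pU=-p$, and combining the definitions ${\bf A}^{(m)}=\zeta^{m^{2}-1}{\bf A}\,e^{2\pi a(m-1)p}$, ${\bf A}=\zeta{\bf A}^{(0)}e^{2\pi ap}$, and $\widetilde{{\bf A}}^{(m)}=\zeta^{m^{2}}\widetilde{{\bf A}}\,e^{-2\pi amp}$ collapses the identity for every $m$ to the single case $U^{-1}{\bf A}^{(0)}U=\widetilde{{\bf A}}$. Proposition~\ref{4_prop:A_formula} (with the shift $e^{-2\pi ap}$ absorbed) gives
\[
({\bf A}^{(0)}\phi)(\beta)=\frac{e^{-\pi i/12}}{\sqrt 3}\int_{\mathbb{R}} e^{\pi i(\tfrac{2}{3}\alpha^{2}+\tfrac{2}{3}\alpha\beta-\tfrac{1}{3}\beta^{2})}\phi(\alpha)\,d\alpha,
\]
so the kernel of $U^{-1}{\bf A}^{(0)}U$ is the triple integral of the Gaussians $\tfrac{1}{\sqrt 2}e^{i\pi(z+\beta)^{2}/2}$, $\tfrac{e^{-i\pi/12}}{\sqrt 3}e^{i\pi(\tfrac{2}{3}\alpha^{2}+\tfrac{2}{3}\alpha\beta-\tfrac{1}{3}\beta^{2})}$, $\tfrac{1}{\sqrt 2}e^{-i\pi(\alpha+w)^{2}/2}$ over $\beta$ and $\alpha$. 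I would complete the square first in $\beta$ and then in $\alpha$ using the Fresnel formula \eqref{4_eq:finite_integral}: the quadratic cross-terms collapse, the Fresnel prefactors combine to $e^{-\pi i/12}$, and the final kernel $e^{-\pi i/12}e^{i\pi(z^{2}+2zw)}$ agrees with Kashaev's kernel $e^{2\pi i\alpha\beta+i\pi\beta^{2}-i\pi/12}$ from \eqref{5_eq:Kashaev_A_def}.

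\textbf{The ${\bf T}$ identity.} Using $U^{-1}p_{j}U=-p_{j}$ and $U^{-1}x_{j}U=2p_{j}-x_{j}$, each relation of Proposition~\ref{5_prop:T_action_on_heisenberg} translates into a conjugation identity for $V$. The three linear relations yield $V(p_{1}+p_{2})V^{-1}=p_{2}$, $Vx_{1}V^{-1}=x_{1}+x_{2}$, $V(p_{1}+x_{2})V^{-1}=p_{1}+x_{2}$, which are exactly the first three relations of Proposition~\ref{5_prop:Kashaev_operators_conjugation_action}; and the nonlinear fourth relation, after ${\bf U}$-conjugation of the inner exponent $(-A-2p_{A})-(-p_{A})+(-p_{B})=-A-p_{A}-p_{B}$ (which passes to $A-p_{A}+p_{B}=x_{1}-p_{1}+p_{2}$), becomes Kashaev's quantum relation $Ve^{2\pi b^{\pm 1}p_{1}}V^{-1}=e^{2\pi b^{\pm 1}p_{1}}(1+e^{\pi ib^{\pm 2}}e^{2\pi b^{\pm 1}(x_{1}-p_{1}+p_{2})})$. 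Set $T:=W^{-1}V$. From the quantum relation $T$ commutes with $e^{2\pi bp_{1}}$ and $e^{2\pi b^{-1}p_{1}}$; since $b^{2}\notin\mathbb{Q}$, the set $\{nb+mb^{-1}:n,m\in\mathbb{Z}\}$ is dense in $\mathbb{R}$, so $T$ commutes with $e^{2\pi itp_{1}}$ for every $t\in\mathbb{R}$, hence with $p_{1}$; then commutation with $p_{1}+p_{2}$ and $p_{1}+x_{2}$ yields commutation with $p_{2}$ and $x_{2}$, while $x_{1}$ is immediate. Thus $T$ commutes with the whole Heisenberg algebra, which acts irreducibly on $L^{2}(\mathbb{R}^{2})$, so $T=\lambda\in\mathbb{C}$. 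Finally, conjugating the pentagon ${\bf T}_{23}{\bf T}_{12}={\bf T}_{12}{\bf T}_{13}{\bf T}_{23}$ (Theorem~\ref{4_thm:summary_T_A}) by ${\bf U}^{\otimes 3}$ and using Kashaev's pentagon (Theorem~\ref{5_thm:Kashaev_operators}) gives $\lambda^{2}=\lambda^{3}$, so $\lambda=1$.

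\textbf{Main obstacle.} The delicate step is handling the quantum (fourth) relation correctly: one must conjugate the composite exponential $e^{2\pi b^{\pm 1}(-A-p_{A}-p_{B})}$ by ${\bf U}$ and verify it lands on $e^{2\pi b^{\pm 1}(x_{1}-p_{1}+p_{2})}$ with the correct phase $e^{\pi ib^{\pm 2}}$ out front. A secondary subtlety is that the paper's own proof of that fourth relation in Proposition~\ref{5_prop:T_action_on_heisenberg} invokes the very theorem we are trying to prove, so for a self-contained argument I would first derive the fourth relation directly from the explicit kernel $T(\beta,\alpha,A,B)$ via the functional equations \eqref{2_eq:G_defining_relations} of $G$, which the paper notes parenthetically is possible.
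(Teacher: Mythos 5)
Your proposal is correct, and for the $\mathbf{T}$ identity it takes a genuinely different route from the paper. The $\mathbf{A}^{(m)}$ half is essentially identical in method to the paper: reduce to $m=0$ via the exponential factors, then evaluate a triple Gaussian integral with the Fresnel formula \eqref{4_eq:finite_integral}; note only that the paper works in the opposite direction (computing $U\widetilde{\mathbf{A}}U^{-1}$ and matching it to $\mathbf{A}^{(0)}$), which is equivalent.

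For the $\mathbf{T}$ identity, the paper carries out a direct kernel computation: it rewrites $\mathbf{U}\widetilde{\mathbf{T}}_{12}\mathbf{U}^{-1}=e^{2\pi i p_1(x_2+2p_2)}\Psi_b(-x_1-2p_1+x_2+p_2)^{-1}$, introduces auxiliary unitaries $U,U'$ to express the composite operator as an integral transform, and reduces the resulting multi-integral kernel to $T(\beta,\alpha,A,B)$ by repeated Gaussian integration and the Fourier formula \eqref{2_eq:G_Four_-_0}. Your approach is conceptual instead: compare conjugation actions on the Heisenberg generators, invoke Stone--von Neumann irreducibility to pin down $V=\lambda W$, then fix $\lambda=1$ via the two pentagon equations. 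This is cleaner structurally and makes transparent why the identification exists; it also exploits the pentagon in a pleasant way. On the other hand, you correctly identify the circularity in the source material (the paper's Proposition~\ref{5_prop:T_action_on_heisenberg}, fourth relation, cites Theorem~\ref{5_thm:equivalence_representations} for its proof, and the first three relations of Proposition~\ref{5_prop:Kashaev_operators_conjugation_action} are cited as following from the theorem as well), and you propose the right fix: prove the fourth relation of Proposition~\ref{5_prop:T_action_on_heisenberg} directly from the kernel $T(\beta,\alpha,A,B)$ via the functional equation \eqref{2_eq:G_defining_relations}, as the paper parenthetically asserts is possible, and prove the first three of Proposition~\ref{5_prop:Kashaev_operators_conjugation_action} directly from the formula \eqref{5_eq:Kashaev_operators_def} (both easy: $e^{2\pi ip_1x_2}$ shifts $p_2\mapsto p_2-p_1$ and $x_1\mapsto x_1+x_2$, while $\Psi_b(x_1+p_2-x_2)^{-1}$ commutes with $p_1+p_2$, $x_1$, and $p_1+x_2$). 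A caveat worth stating: once this gap is filled, the kernel manipulation needed for the fourth relation of Proposition~\ref{5_prop:T_action_on_heisenberg} is comparable in effort to the paper's own Gaussian-integral calculation of $\mathbf{U}\widetilde{\mathbf{T}}\mathbf{U}^{-1}$, so the conceptual economy of Stone--von Neumann plus pentagon is qualitative rather than a net reduction in computation. Two minor technical remarks: since $T$ is unitary and $e^{2\pi b p_1}$ is positive self-adjoint, commutation with the latter already forces commutation with $p_1$ by the spectral theorem, so the density of $\{nb+mb^{-1}\}$ is not actually needed; and the equalities of unbounded conjugates $Ve^{2\pi b^{\pm 1}p_1}V^{-1}=We^{2\pi b^{\pm 1}p_1}W^{-1}$ should be understood in the integrable-representation framework of \S\ref{3_subsection:quantum_plane_modular_double} to avoid domain issues.
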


\begin{proof}
It suffices to prove
\begin{align}
\label{5_eq:to_prove_equivalence}
{\bf U}^{-1} {\bf T} {\bf U} = \widetilde{\bf T},\quad
\mbox{and}\quad {\bf U}^{-1} {\bf A}^{(0)} {\bf U} = \widetilde{\bf A}^{(0)},
\end{align}
because from \eqref{4_eq:A_m_definition}, \eqref{5_eq:A_tilde_m_definition}, \eqref{5_eq:U_action_on_single_heisenberg}, and \eqref{5_eq:to_prove_equivalence} we can get
\begin{align*}
U^{-1} {\bf A}^{(m)} U
= U^{-1} (\zeta^{m^2} {\bf A}^{(0)} e^{2\pi a mp}) U
= \zeta^{m^2} \widetilde{{\bf A}}^{(0)} e^{-2\pi a mp}
= \widetilde{{\bf A}}^{(m)}
\end{align*}
for any real number $m$.

\vs

Recall \eqref{5_eq:Kashaev_A_def} for the definition of $\widetilde{\bf A}$.
We now compute
$$
\xymatrix{
U\widetilde{\bf A}U^{-1} : L^2(\mathbb{R},d\alpha)
\ar[r]^-{U^{-1}} &
L^2(\mathbb{R},dx) \ar[r]^-{\widetilde{\bf A}} &
L^2(\mathbb{R},dy) \ar[r]^-{U} & L^2(\mathbb{R},d\beta)
}
$$
as follows:
\begin{align*}
(U \widetilde{\bf A} U^{-1} f) (\beta)
& = \int \frac{1}{2} e^{-\pi i (y+\beta)^2/2} e^{2\pi i xy + \pi i y^2 - \pi i /12} e^{\pi i (x+\alpha)^2/2} f(\alpha) d\alpha dx dy \\
& = \frac{\sqrt{2} e^{\pi i/4}}{2} \int e^{\pi i (-\beta^2/2-3y^2/2 - \beta y - 2y\alpha) - \pi i /12} f(\alpha) d\alpha dy 
\quad (\mbox{by } \eqref{4_eq:finite_integral} \mbox{ for $x$}) \\
& = \frac{e^{-\pi i/12}}{\sqrt{3}} \int e^{\pi i (2\alpha^2/3 + 2\alpha\beta/3-\beta^2/3)} f(\alpha)d\alpha 
\quad (\mbox{by } \eqref{4_eq:finite_integral} \mbox{ for $y$})  \\
& = \zeta^{-1} ({\bf A} e^{-2\pi a p_\alpha} f)(\beta) = ({\bf A}^{(0)} f)(\beta). \quad (\mbox{by } \eqref{4_eq:A_formula_clean}, \eqref{4_eq:A_m_definition})
\end{align*}
This proves ${\bf U}^{-1} {\bf A}^{(0)} {\bf U} = \widetilde{{\bf A}} = \widetilde{{\bf A}}^{(0)}$.

\vs

From \eqref{5_eq:U_action_on_heisenberg}, we first note that
\begin{align}
\label{5_eq:transformed_Kashaev_T}
{\bf U} \widetilde{\bf T}_{12} {\bf U}^{-1}
= e^{2\pi i p_1(x_2+2p_2)} \Psi_b(-x_1-2p_1+x_2+p_2)^{-1}.
\end{align}
Our aim is to realize \eqref{5_eq:transformed_Kashaev_T} as an integral transformation and to compute its distribution kernel. To do this, we introduce some transformations: $U' : L^2(\mathbb{R}, dx) \to L^2(\mathbb{R}, dy)$, $({U'})^{-1} : L^2(\mathbb{R}, dy) \to L^2(\mathbb{R}, dx)$, given by
\begin{align*}
U' : f(x) \mapsto \int_\mathbb{R} e^{\pi i x^2 - 2\pi i xy} f(x)dx, \quad
({U'})^{-1} : \varphi(y) \mapsto \int_\mathbb{R} e^{- \pi i x^2 + 2\pi i xy} \varphi(y) dy.
\end{align*}
Then it's easy to show that indeed above defined $U'$ and $({U'})^{-1}$ are inverses to each other, and
\begin{align}
\label{5_eq:U_prime_action_on_single_heisenberg}
U' (p+x) (U')^{-1} = x, \quad
U' x (U')^{-1} = -p.
\end{align}
The idea is to put the identity operators $({U'})^{-1} U'$ and $UU^{-1}$ appropriately, and use \eqref{5_eq:U_action_on_single_heisenberg} and \eqref{5_eq:U_prime_action_on_single_heisenberg}. 
First, let's compute $I(x_1,x_2) := \Psi_b(-x_1-2p_1+x_2+p_2)^{-1} f (x_1,x_2)$, for $f\in W\otimes W$:
\begin{align*}
& I(x_1,x_2) = \Psi_b (-x_1-2p_1+x_2+p_2)^{-1} f (x_1,x_2) \\
& = \int_\mathbb{R} e^{-\pi i x_2^2 + 2\pi i x_2 y_2} \Psi_b (-x_1-2p_1 +y_2)^{-1}  (\int_\mathbb{R} e^{\pi i x_2'^2 - 2\pi i x_2' y_2}  f(x_1,x_2') dx_2')  dy_2 \\
& = \int_\mathbb{R} e^{-\pi i x_2^2 + 2\pi i x_2 y_2} \\
& \quad \cdot \left[ \int_\mathbb{R} \frac{e^{-\pi i (y_1+x_1)^2/2}}{\sqrt{2}} \Psi_b (y_1 +y_2)^{-1}  \left( \int_\mathbb{R} \frac{e^{\pi i (y_1+x_1')^2/2}}{\sqrt{2}}  (\int_\mathbb{R} e^{\pi i x_2'^2 - 2\pi i x_2' y_2}  f(x_1',x_2') dx_2') dx_1' \right) dy_1 \right]  dy_2.
\end{align*}
Recall $\Psi_b(z) = G(-z) e^{\frac{\pi i}{2} z^2 + i\chi}$; thus $\Psi_b(z)^{-1} = G(z) e^{-\frac{\pi i}{2}z^2-i\chi}$. So
\begin{align*}
I(x_1,x_2)
& = \frac{1}{2} e^{-i\chi} \int_{\mathbb{R}^4} G (y_1+y_2) \exp(*C_1) f(x_1',x_2') dy_1 dy_2 dx_1' dx_2', \\
(*C_1) / (\pi i)
& = - x_2^2 + 2 x_2 y_2
- (y_1+x_1)^2/2
- (y_1+y_2)^2/2
+ (y_1+x_1')^2/2
+ x_2'^2 - 2 x_2' y_2  \\
& =
- Y^2/2
+ y_1 (x_1'  - x_1- 2x_2 +2x_2') 
- 2 Y (x_2' - x_2)
+(- x_1^2/2 - x_2^2
+ {x_1'}^2/2 + x_2'^2)
\end{align*}
for $y_1+y_2=Y$ (change of variables: use $Y$ instead of $y_2$). Hence, integration w.r.t. $y_1$ yields the factor $2\delta(x_1'  - x_1- 2x_2 +2x_2')$.
Now, use Fourier transform formula \eqref{2_eq:G_Four_-_0}:
$$
\int_\mathbb{R} G(Y) e^{-\frac{\pi i}{2}Y^2} e^{-2\pi i Yw} dY
= e^{i\chi} G(w-ia) e^{\frac{\pi i}{2} w^2} e^{-\pi a w},
$$
for $w = x_2' - x_2$; integration w.r.t. $Y$ yields
\begin{align*}
I(x_1,x_2)
& = \int_{\mathbb{R}^2} \delta(x_1'  - x_1- 2x_2 +2x_2') G (x_2'-x_2-ia) \exp(*C_3) f(x_1',x_2') dx_1'dx_2'^{\cap x_2}, \\
%%%%%%%
(*C_3) / (\pi i)
& =
(- x_1^2/2 - x_2^2
+ {x_1'}^2/2 + x_2'^2)
+ ({x_2'}^2/2 + x_2^2/2 - x_2' x_2)
+ ia (x_2'-x_2).
\end{align*}
Now, integration w.r.t. $x_1'$ has the effect of replacing $x_1'$ by $x_1+2x_2-2x_2'$, because of the $\delta(x_1'-x_1-2x_2+2x_2')$ factor:
\begin{align}
\label{5_eq:proof_equivalence_H_toput}
I(x_1,x_2)
& = \int_\mathbb{R} G(x_2'-x_2-ia) \exp(*C_4) f(x_1+2x_2-2x_2', x_2') dx_2'^{\cap x_2}, \\
\nonumber
(*C_4) / (\pi i)
& =
3x_2^2/2 + 7{x_2'}^2/2
+ 2x_1 x_2 - 2x_1 x_2' - 5x_2 x_2'
+ ia (x_2'-x_2).
\end{align} 
By putting the identity operator $UU^{-1}$ appropriately and using \eqref{5_eq:U_action_on_single_heisenberg}, we get
\begin{align}
\nonumber
{\bf U} \widetilde{\bf T}_{12} {\bf U}^{-1} f(x_1,x_2)
& = e^{2\pi i p_1 (x_2+2p_2)} I(x_1,x_2) \\
\nonumber
& = \int_\mathbb{R} \frac{e^{-\pi i (w_2+x_2)^2/2}}{\sqrt{2}} \left[ e^{2\pi i p_1(-w_2)}  \int_\mathbb{R} \frac{e^{\pi i (w_2+z_2)^2/2}}{\sqrt{2}}  I(x_1,z_2) dz_2 \right] dw_2 \\
\label{5_eq:proof_equivalence_H_puthere}
& = \frac{1}{2} \int_\mathbb{R} e^{-\pi i (w_2+x_2)^2/2} \left[  \int_\mathbb{R} e^{\pi i (w_2+z_2)^2/2}  I(x_1-w_2,z_2) dz_2 \right] dw_2.
\end{align}
Now, by putting \eqref{5_eq:proof_equivalence_H_toput} into \eqref{5_eq:proof_equivalence_H_puthere}, we get
\begin{align*}
{\bf U} \widetilde{\bf T}_{12} {\bf U}^{-1} & f(x_1,x_2)
= \frac{1}{2}
\int_{\mathbb{R}^3} G(x_2'-z_2-ia) f(x_1-w_2+2z_2-2x_2', x_2') \exp(*C_5) dx_2'^{\cap z_2} dz_2 dw_2, \\
%%%%%
(*C_5) / (\pi i)
& = + 2z_2^2 + 7{x_2'}^2/2
+ 2x_1z_2  - 2(x_1-w_2) x_2' - 5z_2 x_2'
+ ia (x_2'-z_2)
 - x_2^2/2 - w_2x_2
 -w_2 z_2.
\end{align*}
Do changes of variables $z_2 \mapsto z = x_2' - z_2$ and $w_2 \mapsto W = x_1-w_2-2z$:
\begin{align*}
{\bf U} \widetilde{\bf T}_{12} {\bf U}^{-1} & f(x_1,x_2)
= \frac{1}{2} \int_{\mathbb{R}^3} G (z-ia) f(W, x_2') \exp(*C_6) dx_2' dW dz^{\cap 0}, \\
(*C_6) / (\pi i)
& =  (W x_2 + x_2' x_1
- W x_2'  - x_1 x_2
+ {x_2'}^2/2 - x_2^2/2)
 + ia z
 + z(  - W - x_2' - x_1 + 2 x_2).
 \end{align*}
It is easy to check by inspection that ${\bf U} \widetilde{\bf T}_{12} {\bf U}^{-1} f(x_1,x_2) = ({\bf T}_{12} f)(x_1,x_2)$ (see \eqref{4_eq:basic_T}, \eqref{4_eq:T_formula}, and \eqref{4_eq:curly_G_formula} for definition of ${\bf T}_{12}$). Thus, ${\bf U}^{-1} {\bf T} {\bf U} = \widetilde{\bf T}$.
\end{proof}

\begin{remark}
Recall ${\bf A} = {\bf A}^{(1)}$, $\widetilde{{\bf A}} = \widetilde{{\bf A}}^{(0)}$, thus ${\bf U}^{-1} {\bf A} {\bf U} \neq \widetilde{{\bf A}}$. Recall also that ${\bf A}^{(-1)}$ was special because $m=-1$ makes $\zeta^{1-m^2}=1$ in the relation \eqref{4_eq:prop_A_m_relations_eq1}. Thus, $({\bf T}, {\bf A}^{(1)})$ and $({\bf T}, {\bf A}^{(-1)})$ are genuine representations (as opposed to projective ones) of the Kashaev group. The operator ${\bf A}^{(0)}$, which corresponds to the the Kashaev's operator, lies ``in between'' ${\bf A}^{(1)}$ and ${\bf A}^{(-1)}$.
\end{remark}

\vs

One might wonder where the modified Heisenberg pair $(-p,-x-2p)$ (acting on the multiplicity modules $M$) come from.
Consider the following elements of the fraction field of $\mathcal{B} \otimes \mathcal{B}$:
$$
Z_1 = 1\otimes X + Y^{-1} \otimes Y, \quad
Z_2 = (X^{-1} \otimes 1 + X^{-1} Y \otimes Y^{-1} X)^{-1},
$$
and also $\widetilde{Z}_1$ and $\widetilde{Z}_2$, obtained by replacing $X,Y$ by $\widetilde{X}, \widetilde{Y}$ in definition of $Z_1$ and $Z_2$, respectively.
It's easy to check that all these four elements commute with $\Delta X$ and $\Delta Y$, and
$$
Z_1 Z_2 = q^2 Z_2 Z_1, \quad
\widetilde{Z}_1 \widetilde{Z}_2 = \widetilde{q}^2 \widetilde{Z}_2 \widetilde{Z}_1, \quad
Z_1^{1/b^2} = \widetilde{Z}_1, \quad Z_2^{1/b^2} = \widetilde{Z}_2,
$$
where the last two relations are as operators (represented via $\pi^{\otimes 2}$).

\vs

By first letting these four elements act on $\mathcal{H} \otimes \mathcal{H}$ by $\pi \otimes \pi$ and then transferring these actions to $M\otimes \mathcal{H}$ via the isomorphism $\mathcal{H} \otimes \mathcal{H} \stackrel{\sim}{\longrightarrow} M \otimes \mathcal{H}$, we find out that these elements act only on $M$ (not on $\mathcal{H}$), and the actions of $Z_1$, $Z_2$, $\widetilde{Z}_1$, $\widetilde{Z}_2$ are $e^{2\pi b p_\alpha}$, $e^{2\pi b (-\alpha-2p_\alpha)}$, $e^{2\pi b^{-1} p_\alpha}$, $e^{2\pi b^{-1} (-\alpha-2p_\alpha)}$, respectively.

%%%%%%%%%%%%%%%%%
%%%%%%%%%%%%%%%%%
%%%%%%%%%%%%%%%%%
%% 6
\section{Finite-dimensional and universal quantum Teichm\"{u}ller spaces}

%%%%%%%%%%%%%%%%%%
\subsection{Quantum Teichm\"{u}ller spaces of Riemann surfaces}

In general one can define a quantization of the Teichm\"{u}ller space of genus $g$ surfaces with $s$ punctures and $r$ boundary components with $(\delta_1, \delta_2, \ldots, \delta_r)$ distinguished points \cite{Penner4}. We'll consider first a simple example of a disk with $n$ distinguished points on the boundary, which we'll view as an $n$-gon. 

\vs

Let the edges of the $n$-gon be enumerated counterclockwise from $0$ to $n-1$, where $i$-th edge corresponds to the representation $\mathcal{H}_i \cong \mathcal{H}$. The special role of the $0$-edge is accounted in the decoration of the $n$-gon with $n-2$ dots near all vertices except the endpoints of the $0$-edge. It is well known that various triangulations of $n$-gon are in one-to-one correspondence with various arrangements of brackets (i.e. parentheses) in the product $\mathcal{H}_1 \otimes \mathcal{H}_2 \otimes \cdots \otimes \mathcal{H}_{n-1}$, where the role of dots is as in Figure \ref{4_fig:one_triangle}. Note that a choice of triangulation also uniquely determines the placement of each dot near every vertex in a particular triangle of the triangulation. For example, the decorated triangulation of a $6$-gon as in Figure \ref{6_fig:six_gon}
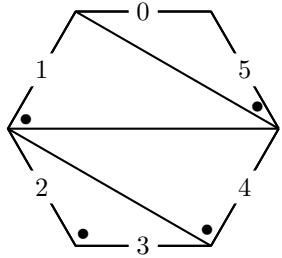
\begin{figure}[htbp!]
\centering
\begin{pspicture}[showgrid=false](0,0.2)(4,3.4)
\rput[bl](0,0){
\PstHexagon[unit=1.8,PolyName=P]
\pcline(P1)(P2)\ncput*{5}
\pcline(P2)(P3)\ncput*{0}
\pcline(P3)(P4)\ncput*{1}
\pcline(P4)(P5)\ncput*{2}
\pcline(P5)(P6)\ncput*{3}
\pcline(P6)(P1)\ncput*{4}
\pcline(P1)(P3)
\rput[l]{132}(P1){\hspace{3,2mm}$\bullet$}
\rput[l]{25}(P4){\hspace{1,8mm}$\bullet$}
\rput[l]{55}(P5){\hspace{1,0mm}$\bullet$}
\rput[l]{100}(P6){\hspace{1,3mm}$\bullet$}
\pcline(P1)(P4)
\pcline(P4)(P6)
}
\end{pspicture}
\caption{An example of a decorated triangulation of a $6$-gon}
\label{6_fig:six_gon}
\end{figure}
corresponds to the following arrangement of brackets
$$
Hom(\mathcal{H}_0, (\mathcal{H}_1 \otimes ((\mathcal{H}_2 \otimes \mathcal{H}_3) \otimes \mathcal{H}_4)) \otimes \mathcal{H}_5).
$$

\vs

By forgetting the brackets, we are now able to identify the quantum Teichm\"{u}ller space of an $n$-gon directly with the space of intertwining operators
\begin{align}
\label{6_eq:space_of_intertwiners}
Hom(\mathcal{H}_0, \mathcal{H}_1 \otimes \mathcal{H}_2 \otimes \cdots \otimes \mathcal{H}_{n-1}).
\end{align}
Clearly, we could consider a dual construction of the quantum Teichm\"{u}ller space of an $n$-gon identifying it with the space of intertwining operators
\begin{align}
\label{6_eq:dual_space_of_intertwiners}
Hom(\mathcal{H}_{n-1} \otimes \mathcal{H}_{n-2} \otimes \cdots \otimes \mathcal{H}_1, \mathcal{H}_0).
\end{align}
The isomorphism of the two quantizations results from isomorphisms $\mathcal{H}_i \cong \mathcal{H}_i'$ for all $i$, and the isomorphisms of $Hom$'s with their duals.

\vs

Combining two pictures together we obtain a quantization of an $n$-gon with a distinguished oriented diagonal rather than an edge. In this case the quantum Teichm\"{u}ller space becomes
\begin{align}
\label{6_eq:combined_space_of_intertwiners}
Hom(\mathcal{H}_{s_m} \otimes \cdots \otimes \mathcal{H}_{s_1}, \mathcal{H}_{r_1} \otimes \cdots \otimes \mathcal{H}_{r_k}),
\end{align}
where $k+m=n$. As in Figure \ref{6_fig:n_gon}, the distinguished diagonal
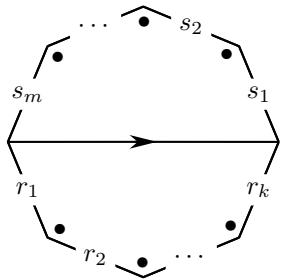
\begin{figure}[htbp!]
\centering
\begin{pspicture}[showgrid=false](0,0)(4,3.7)
\rput[bl](0,0){
\PstPolygon[PolyNbSides=8,unit=1.8,PolyName=P]
\pcline(P1)(P2)\ncput*{$s_1$}
\pcline(P2)(P3)\ncput*{$s_2$}
\pcline(P3)(P4)\ncput*{$\cdots$}
\pcline(P4)(P5)\ncput*{$s_m$}
\pcline(P5)(P6)\ncput*{$r_1$}
\pcline(P6)(P7)\ncput*{$r_2$}
\pcline(P7)(P8)\ncput*{$\cdots$}
\pcline(P8)(P1)\ncput*{$r_k$}
\rput[l]{210}(P2){\hspace{1,1mm}$\bullet$}
\rput[l]{270}(P3){\hspace{1,1mm}$\bullet$}
\rput[l]{310}(P4){\hspace{1,1mm}$\bullet$}
\rput[l]{30}(P6){\hspace{1,1mm}$\bullet$}
\rput[l]{90}(P7){\hspace{1,1mm}$\bullet$}
\rput[l]{120}(P8){\hspace{1,1mm}$\bullet$}
}
\psline[linewidth=0.8pt, 
arrowsize=2pt 4, 
arrowlength=2, 
arrowinset=0.3] 
{->}(P5)(2.1,1.8)
\psline{-}(1.8,1.8)(P1)
\end{pspicture}
\caption{Quantization of an $n$-gon with a distinguished oriented diagonal}
\label{6_fig:n_gon}
\end{figure}
divides the edges into two ordered sets and the orientation of the diagonal defines the order of edges within each of the two sets. Note that the decoration of the $n$-gon with the diagonal has $n-2$ dots near all vertices except the endpoints of this diagonal.

\vs

Constructions of quantum Teichm\"{u}ller spaces for a more general bordered surface with punctures from the representation $\mathcal{H}$ of the quantum plane are similar \cite{Kash98} \cite{GL} \cite{FC}. We will now consider another important example: construction of the quantum universal Teichm\"{u}ller space, which can be viewed as a certain limit of the quantum Teichm\"{u}ller spaces of $n$-gons when $n$ tends to infinity.

%%%%%%%%%%%%%%%%%%
\subsection{Quantum universal Teichm\"{u}ller space}

Penner introduced in \cite{Penner2} the ``largest'' version of the universal Teichm\"{u}ller space $Homeo_+(S^1)/M\ddot{o}b(S^1)$ modeled on the group of orientation preserving homeomorphisms of the unit circle factored by the M\"{o}bius group; he also constructed a parametrization of this space using the Farey tessellation (which is a triangulation of the hyperbolic plane; see Figure \ref{6_fig:Farey_tessellation}) with the distinguished oriented edge that connects the $0$-vertex to the $\infty$-vertex.
\begin{figure}[htbp!]
\centering
\begin{pspicture}[showgrid=false,linewidth=0.5pt](-2.6,-2.6)(2.6,2.6)
%circle
\psarc(0,0){2.4}{0}{360}
%four
\psarc(-2.4,2.4){2.4}{-90}{0}
\psarc(-2.4,-2.4){2.4}{0}{90}
\psarc(2.4,-2.4){2.4}{90}{180}
\psarc(2.4,2.4){2.4}{180}{-90}
% quadrant I - edges
\psarc[arcsep=0.5pt](2.4,1.2){1.2}{142.5}{-90}
\psarc[arcsep=0.5pt](0.8,2.4){0.8}{180}{-40}
\psarc[arcsep=0.5pt](2.4,0.8){0.8}{127}{-90}
\psarc[arcsep=0.5pt](1.714,1.714){0.343}{140}{-51}
\psarc[arcsep=0.5pt](0.48,2.4){0.48}{180}{-22}
\psarc[arcsep=0.5pt](1.2,2.1){0.3}{160}{-34}
\psarc[arcsep=0.5pt](2.4,0.6){0.6}{117}{-90}
\psarc[arcsep=0.5pt](2.03,1.29){0.185}{127}{-60}
% quadrant I - vertices
\rput(-2.6,0){$\frac{0}{1}$}
\rput(2.6,0){$\frac{1}{0}$}
\rput(0,2.7){-$\frac{1}{1}$}
\rput(1.5,2.2){-$\frac{2}{1}$}
\rput(2.09,1.58){-$\frac{3}{1}$}
\rput(0.87,2.5){-$\frac{3}{2}$}
\rput(2.35,1.15){-$\frac{4}{1}$}
% quadrant II - edges
\psarc[arcsep=0.5pt](-2.4,1.2){1.2}{-90}{37.5}
\psarc[arcsep=0.5pt](-0.8,2.4){0.8}{-140}{0}
\psarc[arcsep=0.5pt](-2.4,0.8){0.8}{-90}{53}
\psarc[arcsep=0.5pt](-1.714,1.714){0.343}{-129}{40}
\psarc[arcsep=0.5pt](-0.48,2.4){0.48}{-158}{0}
\psarc[arcsep=0.5pt](-1.2,2.1){0.3}{-146}{20}
\psarc[arcsep=0.5pt](-2.4,0.6){0.6}{-90}{63}
\psarc[arcsep=0.5pt](-2.03,1.29){0.185}{-120}{53}
% quadrant II - vertices
\rput(-1.55,2.2){-$\frac{1}{2}$}
\rput(-2.03,1.69){-$\frac{1}{3}$}
\rput(-0.89,2.5){-$\frac{2}{3}$}
\rput(-2.33,1.26){-$\frac{1}{4}$}
% quadrant III - edges
\psarc[arcsep=0.5pt](-2.4,-1.2){1.2}{-37.5}{90}
\psarc[arcsep=0.5pt](-0.8,-2.4){0.8}{0}{140}
\psarc[arcsep=0.5pt](-2.4,-0.8){0.8}{-53}{90}
\psarc[arcsep=0.5pt](-1.714,-1.714){0.343}{-40}{129}
\psarc[arcsep=0.5pt](-0.48,-2.4){0.48}{0}{158}
\psarc[arcsep=0.5pt](-1.2,-2.1){0.3}{-20}{146}
\psarc[arcsep=0.5pt](-2.4,-0.6){0.6}{-63}{90}
\psarc[arcsep=0.5pt](-2.03,-1.29){0.185}{-53}{120}
% quadrant III - vertices
\rput(-1.57,-2.13){$\frac{1}{2}$}
\rput(-2.02,-1.68){$\frac{1}{3}$}
\rput(-0.92,-2.5){$\frac{2}{3}$}
\rput(-2.30,-1.18){$\frac{1}{4}$}
% quadrant IV - edges
\psarc[arcsep=0.5pt](2.4,-1.2){1.2}{90}{-142.5}
\psarc[arcsep=0.5pt](0.8,-2.4){0.8}{40}{-180}
\psarc[arcsep=0.5pt](2.4,-0.8){0.8}{90}{-127}
\psarc[arcsep=0.5pt](1.714,-1.714){0.343}{51}{-140}
\psarc[arcsep=0.5pt](0.48,-2.4){0.48}{22}{-180}
\psarc[arcsep=0.5pt](1.2,-2.1){0.3}{34}{-160}
\psarc[arcsep=0.5pt](2.4,-0.6){0.6}{90}{-117}
\psarc[arcsep=0.5pt](2.03,-1.29){0.185}{60}{-127}
% quadrant IV - vertices
\rput(0,-2.7){$\frac{1}{1}$}
\rput(1.5,-2.2){$\frac{2}{1}$}
\rput(1.98,-1.71){$\frac{3}{1}$}
\rput(0.92,-2.5){$\frac{3}{2}$}
\rput(2.30,-1.18){$\frac{4}{1}$}
%%arrow
\psline[linewidth=0.5pt, 
arrowsize=2pt 4, 
arrowlength=2, 
arrowinset=0.3] 
{->}(-2.4,0)(0.2,0) 
\psline{-}(0,0)(2.4,0)
\end{pspicture}
\caption{The Farey tessellation with distinguished oriented edge}
\label{6_fig:Farey_tessellation}
\end{figure}
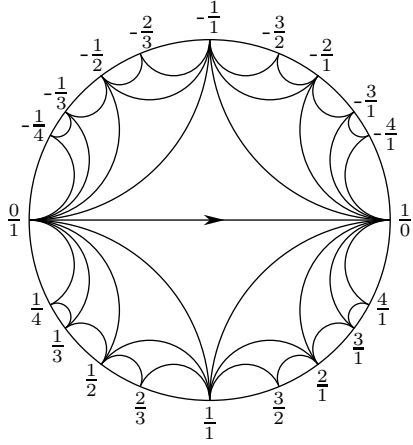

\vs

The vertices of the Farey tessellation are marked by the rational numbers including $\infty$: $\widehat{\mathbb{Q}} = \mathbb{Q} \cup \{\infty\}$. The ideal triangles of the Farey tessellation are in a natural one-to-one correspondence with nonzero rational numbers: $\mathbb{Q}^\times = \mathbb{Q}\setminus \{0\}$. The correspondence is obtained by assigning to a triangle its ``intermediate'' vertex in the upper or lower semicircle.

\vs

To construct the quantum universal Teichm\"{u}ller space we consider the Farey tessellation as an inductive limit of $n$-gons, all containing the oriented edge from $0$ to $\infty$. Suppose that a fixed $n$-gon has vertices at the points $0, \infty$, and $r_2, r_3, \ldots, r_k \in \mathbb{Q}_{>0}$ and $s_2,s_3,\ldots,s_m\in \mathbb{Q}_{<0}$, $k+m=n$. We set $r_1=0, s_1=\infty$, and we'll also mark the edges by the same rational numbers as their initial point with respect to the counterclockwise direction. Then the corresponding quantum Teichm\"{u}ller space will be precisely \eqref{6_eq:combined_space_of_intertwiners}.

\vs

Thus formally quantum universal Teichm\"{u}ller space can be identified with
\begin{align}
\label{6_eq:quantum_universal_Teichmuller_space1}
Hom\left(\bigotimes_{s\in - \mathbb{Q}^{-1}_{\ge 0}} \mathcal{H}_s, ~\bigotimes_{r\in \mathbb{Q}_{\ge 0}} \mathcal{H}_r\right),
\end{align}
where the product is taken in the increasing order of indices in $\mathbb{Q}_{\ge 0}$ and decreasing order of indices in $-\mathbb{Q}^{-1}_{\ge 0} \equiv \mathbb{Q}_{<0} \cup \{\infty\}$. By dualizing all $\mathcal{H}$'s in the first product of \eqref{6_eq:quantum_universal_Teichmuller_space1} we can obtain even more symmetric form of the quantum universal Teichm\"{u}ller space, namely
\begin{align}
\label{6_eq:quantum_universal_Teichmuller_space2}
Inv\left( \bigotimes_{r\in \widehat{\mathbb{Q}}} \mathcal{H}_r\right),
\end{align}
where the factors are ordered in the counterclockwise direction of the circle with the assumed cyclic symmetry.

\vs

The most natural way to make sense of the infinite tensor product in \eqref{6_eq:quantum_universal_Teichmuller_space2} is to realize it as the space of functions on a real Hilbert space
\begin{align}
H = \otimes_{r\in \widehat{\mathbb{Q}}} \mathbb{R} e_r, \quad (e_r, e_s) = \delta_{r,s},
\end{align}
with respect to a Gaussian measure. For any finite set $r_1,\ldots,r_n \in \widehat{\mathbb{Q}}$ we have a finite dimensional orthogonal projection in $H$
\begin{align}
P_{r_1\ldots r_n} : H \to H_{r_1\ldots r_n},
\end{align}
where $H_{r_1\ldots r_n} = \mathbb{R} e_{r_1} \oplus \cdots \oplus \mathbb{R} e_{r_n}$. Then for fixed $t>0$ one can define a Gaussian premeasure $\widetilde{\mu}^t$ on the cylinder sets in $H$ by
\begin{align}
\widetilde{\mu}^t(x\in H, P_{r_1\ldots r_n} x \in F) = (2\pi t)^{-n/2} \int_F e^{-\frac{||x||^2}{2t}} d^nx,
\end{align}
where $F$ is a Borel subset of $H_{r_1 \ldots r_n}$. To construct an actual Gaussian measure $\mu^t$ one needs to complete $H$ to a Banach space $B$ with respect to a measurable norm $| \cdot |$, see \cite{Kuo}, and extend $\widetilde{\mu}^t$ to cylinder sets in $B$. Then $\mu^t$ is $\sigma$-additive in the $\sigma$-field generated by cylinder sets. This completes our construction of the quantum universal Teichm\"{u}ller space. Note that it is invariant under the action of the modular group, which acts naturally on the basis of $H$.

\vs

One can look for an intrinsic geometric realization of the space $H$ related to the Farey tessellation that determines the coordinates of the classical Teichm\"{u}ller space. Penner in \cite{Penner3} proposed wavelet bases for the space of all complex-valued functions defined on the unit circle $S^1$. These wavelet bases are naturally indexed by the edges of the Farey tessellation, which are in one-to-one correspondence with $\widehat{\mathbb{Q}}\setminus\{0,1,\infty\}$, and three more real coordinates that parametrize the Lie algebra of the M\"{o}bius group. This brings us very close back to more traditional quantizations of two ``smaller'' versions of the universal Teichm\"{u}ller spaces $Diff_+(S^1)/M\ddot{o}b(S^1)$ \cite{KiY} and $QS_+(S^1)/M\ddot{o}b(S^1)$ \cite{NS}, modeled on the groups of orientation preserving diffeomorphisms and quasi-symmetric homeomorphisms of the unit circle.

\vs

Our construction, however, suggests one more (at this moment only heuristic) candidate for the quantum universal Teichm\"{u}ller space. It comes out from another remarkable appearance of the rational numbers as a natural index set in the classification of the projective modules for the quantum torus \cite{Connes}, which can be viewed as a unitary counterpart of the quantum plane. The projective modules $\mathcal{E}_r, r\in \widehat{\mathbb{Q}}$, can be extended to bimodules over another quantum torus with
\begin{align}
\label{6_eq:modular_q}
\widetilde{q}^2 = e^{2\pi i \frac{a\tau + b}{c\tau +d}}, \quad r= \frac{d}{c},
\end{align}
where $\left(\begin{smallmatrix} a & b\\ c& d\end{smallmatrix} \right) \in PSL_2(\mathbb{Z})$ is a representative of a coset in $\Gamma_\infty \backslash PSL_2(\mathbb{Z}) \cong \widehat{\mathbb{Q}}$, and $\Gamma_\infty$ is the subgroup of upper triangular matrices. These bimodules give rise to a Morita equivalence of quantum tori for all $\widetilde{q}$ in \eqref{6_eq:modular_q} \cite{Ri}, \cite{Connes}. For $r=0$ we obtain the modular double as the one studied in our paper, for $r=\infty$, $\mathcal{E}_\infty$ is the free module, i.e. the quantum torus algebra itself. The coincidence mentioned above leads to another version of the quantum universal Teichm\"{u}ller space in the unitary case, namely
\begin{align}
\label{6_eq:quantum_universal_Teichmuller_space3}
Inv\left( \bigotimes_{r\in \widehat{Q}} \mathcal{E}_r\right),
\end{align}
where the tensor product can be defined as in the case of the quantum plane though the unitarity is no longer preserved. Since $\mathcal{E}_\infty$ is the quantum torus algebra itself \eqref{6_eq:quantum_universal_Teichmuller_space3} simplifies to just the tensor product of all projective, but not free, modules
\begin{align}
\label{6_eq:quantum_universal_Teichmuller_space4}
\otimes_{r\in \mathbb{Q}} \mathcal{E}_r.
\end{align}
Returning back to the quantum plane, we conjecture that the Morita equivalence for $\widetilde{q}$ as in \eqref{6_eq:modular_q} still holds, in an appropriate sense, and the tensor product of the corresponding bimodules as in \eqref{6_eq:quantum_universal_Teichmuller_space4} gives rise to another quantization of the universal Teichm\"{u}ller space.


\begin{thebibliography}{PoT2}

\bibitem[A]{A} L. V. Ahlfors, {\it Some remarks on Teichm\"{u}ller's space of Riemann surfaces}, Ann. Math. {\bf 74} (1961) 171--191.

\bibitem[B]{B} E. W. Barnes, {\it Theory of the double gamma function}, Phil. Trans. Roy. Soc. A {\bf 196} (1901) 265--388.

\bibitem[ByT]{BT} A. G. Bytsko and J. Teschner, {\it R-operator, co-product and Haar-measure for the modular double of $U_q(\mathfrak{sl}(2,\mathbb{R}))$}, Comm. Math. Phys. {\bf 240} (2003) 171--196.

\bibitem[CF]{FC} L. Chekhov and V. V. Fock, {\it A quantum Teichm\"{u}ller space}, Theor. Math. Phys. {\bf 120} (1999) 511--528.

\bibitem[Co]{Connes} A. Connes, {\it $C^*$-alg\`{e}bres et g\'{e}om\'{e}trie diff\'{e}rentielle}, C. R. Acad. Sci. Paris {\bf 290} (1980), 599--604.

\bibitem[Fa]{F} L. D. Faddeev, {\it Discrete Heisenberg-Weyl group and modular group}, Lett. Math. Phys. {\bf 34} (1995) 249--254.

\bibitem[FaK]{FK} L. D. Faddeev and R. M. Kashaev, {\it Quantum dilogarithm}, Modern Phys. Lett. {\bf A9} (1994) 427--434.

\bibitem[F]{Fo} V. V. Fock, {\it Dual Teichm\"{u}ller spaces}, arXiv:dg-ga/9702018.


\bibitem[FG]{FG} V. V. Fock and A. B. Goncharov, {\it The quantum dilogarithm and representations of the quantum cluster varieties}, Inventiones Math. {\bf 175} (2009) 223--286.

\bibitem[G]{Goncharov} A. B. Goncharov, {\it Pentagon relation for the quantum dilogarithm and quantized $\mathcal{M}_{0,5}^{\rm cyc}$}, Progress in Math. {\bf 265} (2008) 415--428. Birkhauser Verlag, Basel.

\bibitem[GuL]{GL} R. Guo and X. Liu, {\it Quantum Teichm\"{u}ller space and Kashaev algebra}, Algebr. Geom. Topol. {\bf 9} (2009) 1791--1824.

\bibitem[I]{Ivan} I. Ip, {\it On the limit of the intertwiners of the representations of quantum plane}, in preparation.

\bibitem[K1]{Kash98} R. M. Kashaev, {\it Quantization of Teichm\"{u}ller spaces and the quantum dilogarithm}, Lett. Math. Phys. {\bf 43} (1998) 105--115.

\bibitem[K2]{Kash99} R. M. Kashaev, {\it The Liouville central charge in quantum Teichm\"{u}ller theory}, Proc. Steklov Inst. Math. {\bf 226} (1999) 63--71.

\bibitem[K3]{Kash00} R. M. Kashaev, {\it On the spectrum of Dehn twists in quantum Teichm\"{u}ller theory} in: Physics and Combinatorics (Nagoya, 2000), 63--81. World Sci. Publ., River Edge, NJ, 2001.

\bibitem[KiY]{KiY} A. A. Kirillov and D. V. Yuriev, {\it Representations of the Virasoro algebra by the orbit method}, J. Geom. and Phys. {\bf 5} (1988), 351--363.

\bibitem[Ku]{Kuo} H. -H. Kuo, {\it Gaussian measures in Banach spaces}, Lect. Notes in Math, vol {\bf 463}, Springer 1975.

\bibitem[NS]{NS} S. Nag and D. Sullivan, {\it Teichm\"{u}ller theory and the universal period mapping via quantum calculus and the $H^{1/2}$ space on the circle}, Osaka J. Math. {\bf 32} (1995), 1--34

\bibitem[P1]{Penner} R. C. Penner, {\it The decorated Teichm\"{u}ller space of punctured surfaces}, Comm. Math. Phys. {\bf 113} (1987) 299--339.

\bibitem[P2]{Penner2} R. C. Penner, {\it Universal constructions in Teichm\"{u}ller theory}, Adv. in Math. {\bf 98} (1993), 143--215.

\bibitem[P3]{Penner3} R. C. Penner, {\it On Hilbert, Fourier and wavelet transforms}, Comm. on Pure and Appl. Math. {\bf 55} (2002), 772--814.

\bibitem[P4]{Penner4} R. C. Penner, {\it Decorated Teichm\"{u}ller space of bordered surfaces}, Comm. Anal. and Geom. {\bf 12} (2004), 793--820




\bibitem[PoT1]{PT99} B. Ponsot and J. Teschner, {\it Liouville bootstrap via harmonic analysis on a noncompact quantum group}, arXiv:hep-th/9911110.

\bibitem[PoT2]{PT01} B. Ponsot and J. Teschner, {\it Clebsh-Gordan and Racah-Wigner Coefficients for a Continuous Series of Representations of $\mathcal{U}_q(\mathfrak{sl}(2,\mathbb{R}))$}, Comm. Math. Phys. {\bf 224} (2001) 613--655.

\bibitem[R]{Ri} M. A. Rieffel, {\it $C^*$-algebras associated with irrational rotations}, Pacific J. Math. {\bf 93} (1981), 415--429.


\bibitem[Ru1]{Rui97} S. N. M. Ruijsenaars, {\it First order analytic difference equations and integrable quantum systems}, J. Math. Phys. {\bf 38} (1997) 1069--1146.

\bibitem[Ru2]{Rui05} S. N. M. Ruijsenaars, {\it A unitary joint eigenfunction transform for the A$\Delta$O's $\exp(ia_{\pm 1} d/dz) + \exp(2\pi z /a_{\mp})$}, J. Nonlinear Math. Phys. {\bf 12} Suppl. 2 (2005) 253--294.


\bibitem[Sh]{Shin} T. Shintani, {\it On a Kronecker limit formula for real quadratic fields}, J. Fac. Sci. Univ. Tokyo Sect. 1A {\bf 24} (1977) 167--199.



\bibitem[Sc]{Sc} K. Schm\"{u}dgen, {\it Operator Representations of $\mathbb{R}^2_q$}, Publ. RIMS Kyoto Univ. {\bf 28} (1992) 1029--1061.


\bibitem[Th]{Th} W. P. Thurston, {\it The topology and geometry of $3$-manifolds}, Lecture notes, Princeton Univ., 1976--79.


\bibitem[V]{V} A. Yu. Volkov, {\it Noncommutative hypergeometry}, Comm. Math. Phys. {\bf 258} (2005) 257--273.

\bibitem[Wol]{Wolpert} S. Wolpert, {\it On the symplectic geometry of deformations of a hyperbolic surface}, Ann. Math. {\bf 117} (1983) 207--234.

\bibitem[Wor]{Woro} S. L. Woronowicz, {\it Quantum Exponential Function}, Reviews in Math. Phys. {\bf 12} (2000) 873--920.

\end{thebibliography}
\end{document}